\documentclass[letter, reqno, 12pt]{amsart}
\AtBeginDocument{\pagenumbering{arabic}}
\usepackage[usenames,dvipsnames]{color}
\usepackage{amsthm,amsfonts,amssymb,amsmath,amsxtra}
\usepackage{tikz}
\usepackage{verbatim}
\usepackage{amsmath}
\usepackage{amsxtra}
\usepackage{amscd}
\usepackage{amsthm}
\usepackage{amsfonts}
\usepackage{amssymb}
\usepackage{eucal}
\usepackage{stmaryrd}
\usepackage{romannum}
\usetikzlibrary{arrows}
\usepackage[all]{xy}
\SelectTips{cm}{}
\usepackage{xr-hyper}
\usepackage[colorlinks=
   citecolor=Black,
   linkcolor=Red,
  urlcolor=Blue]{hyperref}
\usepackage{verbatim}
\usepackage[T1]{fontenc}
\usepackage{leftidx}

\usepackage[margin=1.25in]{geometry}
\usepackage{mathrsfs}

\makeatletter
\newcommand{\extraNode}[6]%
{%
\dynkinPlaceRootRelativeTo{#1}{#2}{#3}{#4}{#5}
\dynkinIndefiniteSingleEdge{#1}{#2}
\dynkinRootMark{o}{#1}
\advance\dynkin@nodes by 1
\dynkinLabelRoot{#1}{#6} 
}%
\makeatother

\RequirePackage{xspace}
% load etoolbox package, for programming features
\RequirePackage{etoolbox}
% load varwidth package, for text environments which are automatically the natural width of the text they contain
\RequirePackage{varwidth}
% load enumitem package, for easy margin adjustment in enumerate and itemize environments
\RequirePackage{enumitem}
% load tensor package, for good placement of super/subscripts to the left of symbols
\RequirePackage{tensor}
% load mathtools package, for various extensions of amsmath
\RequirePackage{mathtools}
% load longtable package, which allows tables to (if needed) split over multiple pages
\RequirePackage{longtable}
% load multirow package, which allows cells spanning multiple rows in tables
\RequirePackage{multirow}

% put sections only (as opposed to subsections) in the table of contents
\setcounter{tocdepth}{1}

\def\<{\langle}
\def\>{\rangle}

\newcommand{\RNum}[1]{\uppercase\expandafter{\romannumeral #1\relax}}

\newcommand{\fkg}{\ensuremath{\mathfrak{g}}\xspace}

\newcommand{\fkn}{\ensuremath{\mathfrak{n}}\xspace}

\newcommand{\fkt}{\ensuremath{\mathfrak{t}}\xspace}

\newcommand{\fkB}{\ensuremath{\mathfrak{B}}\xspace}

\newcommand{\fkH}{\ensuremath{\mathfrak{H}}\xspace}

\newcommand{\heart}{{\heartsuit}}

\newcommand{\BA}{\ensuremath{\mathbb {A}}\xspace}

\newcommand{\BC}{\ensuremath{\mathbb {C}}\xspace}

\newcommand{\BF}{\ensuremath{\mathbb {F}}\xspace}
\newcommand{{\BG}}{\ensuremath{\mathbb {G}}\xspace}

\newcommand{{\BK}}{\ensuremath{\mathbb {K}}\xspace}

\newcommand{\BN}{\ensuremath{\mathbb {N}}\xspace}

\newcommand{\BQ}{\ensuremath{\mathbb {Q}}\xspace}
\newcommand{\BR}{\ensuremath{\mathbb {R}}\xspace}

\newcommand{\BZ}{\ensuremath{\mathbb {Z}}\xspace}
\newcommand{\Bk}{\ensuremath{\mathbf {k}}\xspace}

\newcommand{\CB}{\ensuremath{\mathcal {B}}\xspace}
\newcommand{\CC}{\ensuremath{\mathcal {C}}\xspace}

\newcommand{\CO}{\ensuremath{\mathcal {O}}\xspace}
\newcommand{\CP}{\ensuremath{\mathcal {P}}\xspace}

\newcommand{\CV}{\ensuremath{\mathcal {V}}\xspace}

\newcommand{\Ad}{{\mathrm{Ad}}}

\newcommand{\codim}{{\mathrm{codim}}}

\newcommand{\good}{{\mathrm{good}}}

\newcommand{\Str}{\mathrm{Str}}

\newcommand{\character}{\mathrm{char}}

\newcommand{\Gr}{\mathrm{Gr}}
\newcommand{\Fl}{\mathrm{Fl}}

\newcommand{\val}{\mathrm{val}}
\newcommand{\Inv}{\mathrm{Inv}}

\newcommand{\GL}{\mathrm{GL}}

\newcommand{\id}{\ensuremath{\mathrm{id}}\xspace}

\DeclareMathOperator{\Lie}{Lie}

\newcommand{\reg}{{\mathrm{reg}}}

\newcommand{\SL}{{\mathrm{SL}}}

\newcommand{\SO}{{\mathrm{SO}}}
\newcommand{\Sp}{{\mathrm{Sp}}}

%%% some additional macros

% Equation  \AMSname
% Theorem   \theoremname

% Theorem environments.
%
\newtheorem{theorem}{Theorem}
\newtheorem{proposition}[theorem]{Proposition}
\newtheorem{lemma}[theorem]{Lemma}

\newtheorem{corollary}[theorem]{Corollary}

\theoremstyle{definition}
\newtheorem*{acknowledgement}{Acknowledgement}
\newtheorem{definition}[theorem]{Definition}
\newtheorem{example}[theorem]{Example}

\newtheorem{remark}[theorem]{Remark}

\numberwithin{equation}{section}
\numberwithin{theorem}{section}

\theoremstyle{plain}

%%%% macros added by Brian
%%%% many of these require the etoolbox package, which should be loaded above

% gets rid of indentation in itemize and enumerate enivronments, and adds
% a small space between list items:
\setitemize[0]{leftmargin=*,itemsep=\the\smallskipamount}
\setenumerate[0]{leftmargin=*,itemsep=\the\smallskipamount}

% basic right arrow, short in inlines and long in displays
\renewcommand{\to}{%
   \ifbool{@display}{\longrightarrow}{\rightarrow}%
   }
% redefine \mapsto to be short in inlines and long in displays
\let\shortmapsto\mapsto
\renewcommand{\mapsto}{%
   \ifbool{@display}{\longmapsto}{\shortmapsto}%
   }
% stretchable labeled right (2nd is xy-style) & left arrows, well-behaved inline or displayed
\newlength{\olen}
\newlength{\ulen}
\newlength{\xlen}
\newcommand{\xra}[2][]{%
   \ifbool{@display}%
      {\settowidth{\olen}{$\overset{#2}{\longrightarrow}$}%
       \settowidth{\ulen}{$\underset{#1}{\longrightarrow}$}%
       \settowidth{\xlen}{$\xrightarrow[#1]{#2}$}%
       \ifdimgreater{\olen}{\xlen}%
          {\underset{#1}{\overset{#2}{\longrightarrow}}}%
          {\ifdimgreater{\ulen}{\xlen}%
             {\underset{#1}{\overset{#2}{\longrightarrow}}}
             {\xrightarrow[#1]{#2}}}}%
      {\xrightarrow[#1]{#2}}
   }
\makeatother
\newcommand{\xyra}[2][]{%
   \settowidth{\xlen}{$\xrightarrow[#1]{#2}$}%
   \ifbool{@display}%
      {\settowidth{\olen}{$\overset{#2}{\longrightarrow}$}%
       \settowidth{\ulen}{$\underset{#1}{\longrightarrow}$}%
       \ifdimgreater{\olen}{\xlen}%
          {\mathrel{\xymatrix@M=.12ex@C=3.2ex{\ar[r]^-{#2}_-{#1} &}}}%
          {\ifdimgreater{\ulen}{\xlen}%
             {\mathrel{\xymatrix@M=.12ex@C=3.2ex{\ar[r]^-{#2}_-{#1} &}}}
             {\mathrel{\xymatrix@M=.12ex@C=\the\xlen{\ar[r]^-{#2}_-{#1} &}}}}}%
      {\mathrel{\xymatrix@M=.12ex@C=\the\xlen{\ar[r]^-{#2}_-{#1} &}}}%
   }
\makeatletter
\newcommand{\xla}[2][]{%
   \ifbool{@display}%
      {\settowidth{\olen}{$\overset{#2}{\longleftarrow}$}%
       \settowidth{\ulen}{$\underset{#1}{\longleftarrow}$}%
       \settowidth{\xlen}{$\xleftarrow[#1]{#2}$}%
       \ifdimgreater{\olen}{\xlen}%
          {\underset{#1}{\overset{#2}{\longleftarrow}}}%
          {\ifdimgreater{\ulen}{\xlen}%
             {\underset{#1}{\overset{#2}{\longleftarrow}}}
             {\xleftarrow[#1]{#2}}}}%
      {\xleftarrow[#1]{#2}}
   }
% isomorphism arrow, short in inlines and long in displays
\newcommand{\isoarrow}{%
   \ifbool{@display}{\overset{\sim}{\longrightarrow}}{\xrightarrow\sim}%
   }
   \setcounter{tocdepth}{4}
 \setcounter{secnumdepth}{4}  
\begin{document}

\title[Braid representatives and transversal slices]{Good position braid representatives and transversal slices of unipotent orbits}

\thanks{}
\author[Chengze Duan]{Chengze Duan}
\address[C.Duan]{Department of Mathematics, University of Maryland, College Park, MD, 20740}
\keywords{Braid monoids; Good elements; Transversal slices; Unipotent orbits; Affine Springer fibers}
\subjclass[2020]{20G99,20F55,20E45}
\email{czduanwork@gmail.com}

\maketitle

\begin{abstract}
Let $G$ be a connected reductive group over an algebraically closed field and $W$ be its Weyl group. Steinberg's cross-section yields a transversal slice of the regular unipotent orbit, which corresponds to the conjugacy class of Coxeter elements in $W$ under Lusztig's map. This was generalized to unipotent orbits associated to any elliptic conjugacy classes of $W$ by He and Lusztig via good minimal length elements. In this paper, we construct transversal slices for all unipotent orbits using the good braid elements in the associated braid monoid of $W$. Furthermore, these braid elements can be generalized again and then related to affine Springer fibers.
\end{abstract}

\section{Introduction}
\subsection{Overview}
Let $G$ be a connected reductive group over an algebraically closed field $\Bk$ with Weyl group $W$ and $\fkg$ be its Lie algebra. In \cite{Ko}, Kostant constructed a section of the adjoint quotient $\fkg \to \fkg \sslash \Ad(G)$, which is also a transversal slice of the regular nilpotent orbit in $\fkg$. Steinberg's cross-section, which is a group version analogue of Kostant section, was constructed in \cite[Theorem 1.4]{St}. It is a transversal slice of the regular unipotent orbit in $G$. It is natural to ask how to construct the transversal slices of more general unipotent orbits. In \cite[\S 3]{HL}, He and Lusztig generalized Steinberg's cross-section and constructed transversal slices for all \textit{basic} unipotent orbits using minimal length elements in elliptic conjugacy classes of $W$ (see \cite[\S 4]{Lu1} for definition of \textit{basic}). These slices are also used in the later work of He on affine Deligne-Lusztig varieties (see \cite[\S 4]{He}).

A crucial point of the construction of He and Lusztig is using minimal length elements which are in good position with respect to the fundamental Weyl chamber in the sense of \cite[\S 5.2]{HN}. However, their construction does not work for all unipotent orbits since the length of minimal length elements are not large enough in non-elliptic case. In this paper, we will generalize the definition of good position to construct \textit{good position braid representatives} in the associated braid monoid of $W$ and use them to construct the new slices. This idea credits to He. Moreover, our construction does not require any assumption on the characteristic of the base field $\Bk$ and works for the disconnected case.

In \cite[Theorem 0.4]{Lu1}, Lusztig constructed a surjective map $\Phi$ from the conjugacy classes in $W$ to unipotent orbits in $G$. It was generalized to twisted case for disconnected groups in \cite[Theorem 1.3]{Lu21}. The map $\Phi$ is a bijection from elliptic conjugacy classes in $W$ to basic unipotent orbits in $G$. However, in general the preimage of an arbitrary unipotent orbit may contain several conjugacy classes in $W$. For any given unipotent orbit, there is a unique ``most elliptic'' conjugacy class in $W$ (see \cite[Theorem 0.2]{Lu2} and \cite[Theorem 1.16]{Lu21}). We shall see that the length of good position braid representatives of this conjugacy class will exactly match our requirements for the dimension of the slice.

Starting from the above braid elements, the new slices will be constructed based on the Deligne-Garside normal form of them. This generalizes He-Lusztig's results and provide transversal slices for all unipotent orbits. Moreover, theses braid elements and slices will be used in a later work of the author to generalize the results in \cite[\S4]{HL} and \cite{Lu3} on Lusztig varieties and Deligne-Lusztig varieties to parabolic version. In a unpublished work of Malten (see \cite{Ma}), he also generalized the slices of He and Lusztig. However, the elements he used are still Weyl group elements.

Finally, we will relate our new braid elements to affine Springer fibers. We will define a more general form of these good braid elements. The length of such elements will reflect the dimension of affine Springer fibers.

\subsection{Main results}
Let $\tilde{W}$ be a twisted finite Coxeter group with associated braid monoid $B^+(\tilde{W})$. 
A \textit{good position pair} consists of an element $\tilde{w}\in \tilde{W}$ and a sequence $\underline{\Theta}$ determined by eigenvalues of $\tilde{w}$-action on its reflection representation (see \S 3.1).

\begin{proposition}\label{A}(see also proposition \ref{a} for more precise statement) Let $(\tilde{w},\underline{\Theta})$ be a good position pair where $\underline{\Theta}$ is an increasing admissible sequence. Then there exists an element $b(\tilde{w},\underline{\Theta})\in B^+(\tilde{W})$, which projects to $\tilde{w}$ under the natural projection, with the ``good element'' property.
\end{proposition}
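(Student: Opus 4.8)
The plan is to construct $b(\tw,\underline{\Theta})$ by induction on the number of terms of the increasing admissible sequence $\underline{\Theta}=(\theta_1<\dots<\theta_r)$, peeling off one eigenvalue ``layer'' at a time. First I would translate the good position pair into a combinatorial skeleton: the eigenvalue decomposition of the $\tw$-action on the reflection representation $V$ together with $\underline{\Theta}$ produces a descending chain of \emph{standard} parabolic subsets $S = J_0 \supseteq J_1 \supseteq \dots \supseteq J_r$ of the underlying simple system and a $\tw$-stable flag $V = V_0 \supseteq V_1 \supseteq \dots \supseteq V_r = V^{\tw}$, where $V_i$ is the span of the coroots indexed by $J_i$ and $\tw$ acts on each graded piece $V_{i-1}/V_i$ as a rotation of argument $\theta_i$. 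The point of the word ``admissible'' is exactly that, once $\tw$ has been put into good position in the sense of \cite[\S 5.2]{HN}, such a flag built from \emph{standard} parabolics exists; this is the input that in the non-elliptic case replaces the ``minimal length'' hypothesis of \cite{HL} and forces one to work in the braid monoid rather than in $\tW$ itself.

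For a subset $J\subseteq S$ let $\pi_J\in B^+(\tW)$ denote the positive braid lift of $w_J^2$, i.e. the full twist of the parabolic submonoid $B^+(W_J)$, which is central in $B^+(W_J)$. The ``good element'' property to be proved for $b = b(\tw,\underline{\Theta})$ is an identity $b^d = \pi_{J_1}^{a_1}\pi_{J_2}^{a_2}\cdots\pi_{J_r}^{a_r}$ in $B^+(\tW)$, with $d$ and the exponents $a_i \ge 1$ read off from $\underline{\Theta}$; that this product is a well-defined element at all uses the nesting $J_1\supseteq\dots\supseteq J_r$ together with the centrality of each $\pi_{J_i}$ in its own submonoid. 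The base case $r = 1$ is where I would quote the existing theory: here $\tw$ is, after the good-position normalization, an elliptic element of $W_{J_1}$, and a good minimal length element of an elliptic conjugacy class in the sense of He--Lusztig \cite[\S 3]{HL} has a braid lift that is a good braid element in the sense of Geck--Michel, which is the required $b(\tw,\underline{\Theta})$.

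The inductive step isolates the outermost layer $\theta_1$. Using the good-position analysis of \cite[\S 5.2]{HN} I would produce $y\in\tW$ with $y\tw y\i$ in good position relative to $J_1$ and a length-additive factorization $y\tw y\i = \sigma\cdot w'$, $\ell(y\tw y\i) = \ell(\sigma)+\ell(w')$, where $w'\in W_{J_1}$ is a good position element for the truncated sequence $(\theta_2<\dots<\theta_r)$ and $\sigma$ realizes the $\theta_1$-rotation on $V_0/V_1$. Induction gives $b' = b(w',(\theta_2,\dots,\theta_r))\in B^+(W_{J_1})$ with the good element property, and one sets $b(\tw,\underline{\Theta})$ to be the positive braid obtained by conjugating $\wt\sigma\cdot b'$ — with $\wt\sigma$ the canonical positive lift of $\sigma$ — by a positive lift of $y$ and rewriting in Deligne--Garside normal form. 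That it projects to $\tw$ is clear. The good element identity then drops out of raising $\wt\sigma\cdot b'$ to the power $d$: the factor $b'$ contributes $\pi_{J_2}^{a_2}\cdots\pi_{J_r}^{a_r}$ by its own good element property, these may be collected to one side since they are central in $B^+(W_{J_1})$, and the relevant power of $\wt\sigma$ contributes precisely the remaining factor $\pi_{J_1}^{a_1}$, in direct analogy with the Coxeter identity $\mathbf c^{\,h} = \pi_S$.

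I expect the main obstacle to be the positivity and length bookkeeping in the inductive step. Conjugating $\wt\sigma\cdot b'$ by a lift of $y$ a priori only lands in the braid \emph{group}; one must check that the result is genuinely in the monoid $B^+(\tW)$ and, more delicately, that its length is the expected one, namely the length attached to a good position braid representative, since it is this number that later has to match the dimension of the transversal slice built from $b(\tw,\underline{\Theta})$. The remedy is to fix $y$, $\sigma$ and the data of all deeper layers at once by the good-position normalization, so that the total length decomposes as a single sum over the chain $J_0 \supseteq J_1 \supseteq \dots \supseteq J_r$, and then to apply the standard fact that conjugation by a length-additive (``straight'') element preserves $B^+(\tW)$ and is compatible with Deligne--Garside normal forms. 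A related subtlety is that $\pi_{J_i}$ is central only in $B^+(W_{J_i})$ and not in $B^+(\tW)$, so every commutation used to reach the telescoped form $\pi_{J_1}^{a_1}\cdots\pi_{J_r}^{a_r}$ must be carried out inside the correct parabolic submonoid — which is possible precisely because the subsets form a chain, this being the combinatorial content of the increasing admissible sequence $\underline{\Theta}$.
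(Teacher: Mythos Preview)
Your inductive scaffold---peel off the smallest eigenvalue layer $\theta_1$, factor $\tw$ length-additively through a parabolic $W_{J_1}$, apply induction there, and reassemble---is exactly the architecture the paper uses (the paper phrases it as induction on $|W|$, passing from $W$ to $W_{F_{i_1}}$, but this is the same recursion). The gap is in the single sentence ``with $\wt\sigma$ the canonical positive lift of $\sigma$'' together with the claim that ``the relevant power of $\wt\sigma$ contributes precisely the remaining factor $\pi_{J_1}^{a_1}$, in direct analogy with the Coxeter identity $\mathbf c^{\,h}=\pi_S$''. That analogy, which is \cite[Lemma~5.2]{HN}, only holds when the rotation angle lies in $(0,\pi)$. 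The entire point of Proposition~\ref{A} is to allow $\theta\in[0,\infty)$, and in the application one always encounters the layer $\theta=2\pi$ (the trivial eigenvalue, present exactly in the non-elliptic case). There the Weyl group element $\sigma$ realizing the rotation is the identity (up to the outer twist), so its canonical positive lift is the identity and no power of it will ever produce the required full-twist factor $\pi_{J}$. Concretely, your $b$ would then have length $\ell(\tw)$, whereas $l_{\good}(\CC)$ is strictly larger than any length in $\CC$ when $\CC$ is non-elliptic---this is precisely why one is forced out of $\tW$ into $B^+(\tW)$, as the introduction explains.

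The paper's replacement for your canonical lift is a genuinely different braid element built by a B\'ezout trick: write $\theta_{i_1}=2p\pi/q$ with $(p,q)=1$, choose $t,s$ with $pt=sq+1$, pass to $\tilde u':=\tilde u^{\,t}$ so that $\tilde u'$ now realizes the rotation by $2\pi/q\in(0,\pi)$ and \cite[Lemma~5.2]{HN} applies to it, and then take $(\underline{\tilde u'})^{p}\sigma^{-sq}$ as the braid lift of $\tilde u$. This has length $p\cdot\ell(\tilde u')=\tfrac{\theta_{i_1}}{\pi}(\ell(w_0)-\ell(w_1))$, which is the correct contribution to $l_{\good}(\CC)$. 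For $\theta_{i_1}\ge 2\pi$ one reduces to $[0,2\pi)$ and multiplies in an explicit factor $(\underline{w_1w_0}\cdot\underline{w_0w_1})^{k}$. None of this is visible if one only takes canonical lifts. Your worry about conjugation by $y$ landing outside $B^+(\tW)$ is, incidentally, a non-issue in the paper's argument: no conjugation is performed, since $\tw$ is already assumed to be in good position and one factors $\tw=u_1\tilde u$ directly.
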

We refer to \S 2.1 for the definition of increasing admissible sequence and refer to \S 3.1 for the ``good element'' property, which originally due to \cite[Theorem 1.1]{GM}.

Now let $\tilde{G}$ be an affine algebraic group over an algebraic closed field $\Bk$ with identity component $G$ which is reductive. Let $D$ be a connected component of $\tilde{G}$ whose image in $\tilde{G}/G$ is unipotent. Let $W$ be the Weyl group of $G$ and $\tilde{W}=W\rtimes \langle \delta \rangle$ be the twisted Weyl group where $\delta$ is induced by the conjugation action of $D$. The twist $\delta$ extends to a group automorphism of $G$ and we still denote it as $\delta$. Lusztig's map $\Phi$ restricts to a surjection from the set of $W$-conjugacy classes of $\tilde{W}$ contained in $W\delta$ to the unipotent $G$-orbits in $D$. Let $\CO$ be a unipotent $G$-orbit in $D$. We have the following generalization of Steinberg's cross-section.
\begin{theorem}\label{B}
        Let $\CO$ be any unipotent $G$-orbit in $D$ and $\CC$ be the ``most elliptic'' $W$-conjugacy class in $\Phi^{-1}(\CO)$. There exists a braid $\tilde{b}\in B^+(\tilde{W})$, which projects to some $\tilde{w}\in\CC$, and a corresponding slice $S_{Br}^D(\tilde{b})$ in $D$ satisfying: 

    (1)  The conjugation map 
    \begin{equation*}
         \Xi_{\tilde{b}}:U_{R^+\setminus R^{\tilde{w}}}\times S_{Br}^D(\tilde{b})\to S_{Br}^D(\tilde{b})U_{R^+\setminus R^{\tilde{w}}}
    \end{equation*}
    given by $(u,g)\mapsto ugu^{-1}$ is an isomorphism of varieties.

    (2) $S_{Br}^D(\tilde{b})$ intersects any $G$-conjugacy class in $D$ it meets transversally. In particular, it is a transversal slice of $\CO$.
\end{theorem}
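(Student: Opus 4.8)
The plan is to follow the blueprint of Steinberg's cross-section \cite{St} and of the He--Lusztig slices \cite[\S3]{HL}, but with the good minimal length element replaced by the good position braid representative furnished by Proposition~\ref{A} (see also Proposition~\ref{a}), carrying out the accompanying ``Steinberg-type'' computation in the braid monoid $B^+(\tilde W)$ rather than in $\tilde W$. Concretely, by the analysis of \S3 one first chooses a good position pair $(\tilde w,\underline\Theta)$ with $\tilde w\in\CC$ and $\underline\Theta$ an increasing admissible sequence, and sets $\tilde b:=b(\tilde w,\underline\Theta)$, which by Proposition~\ref{A} projects to $\tilde w$ and has the good element property. Writing the Deligne--Garside normal form $\tilde b=\tilde b_1\cdots\tilde b_k$, where each $\tilde b_i$ is the canonical lift of some $x_i\in\tilde W$, one has $\ell_{B^+}(\tilde b)=\sum_i\ell(x_i)$. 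Fixing compatible lifts to $N_{\tilde G}(T)$ of the simple reflections and of $\delta$ (the simple-reflection lifts satisfy the braid relations, so this determines a lift $\dot{\tilde b}$ and lifts $\dot x_i$), I would take $S_{Br}^D(\tilde b)$ to be the locally closed subvariety of $D$ obtained by interleaving the $\dot x_i$ with negative root subgroups, e.g.\ $S_{Br}^D(\tilde b)=\dot x_1 V_1^-\dot x_2 V_2^-\cdots\dot x_k V_k^-$ with $V_i^-=U^-\cap\dot x_i^{-1}U\dot x_i$, the twist $\delta$ being carried along by the $x_i$ so that $S_{Br}^D(\tilde b)\subseteq D$. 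Since $\dim V_i^-=\ell(x_i)$, this variety is isomorphic to affine space of dimension $\ell_{B^+}(\tilde b)$; by the length--codimension matching for the most elliptic class (the point emphasised in the introduction) one has $\ell_{B^+}(\tilde b)=\codim_D\CO$, which is the dimension forced on any transversal slice of $\CO$.

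To prove part (1), that $\Xi_{\tilde b}$ is an isomorphism, I would argue by ``peeling'': given $u\in U_{R^+\setminus R^{\tilde w}}$ and $g\in S_{Br}^D(\tilde b)$, push $u$ successively past $\dot x_1$, $V_1^-$, $\dot x_2,\dots$, using at each elementary step the standard commutation relations among root subgroups and the rank-one identities in the relevant $\SL_2$-subgroups, so as to rewrite $ugu^{-1}$ in the form $g'u'$ with $g'\in S_{Br}^D(\tilde b)$ and $u'\in U_{R^+\setminus R^{\tilde w}}$; each such step is invertible. The real content is that this recursion closes up after exactly $\ell_{B^+}(\tilde b)$ steps: the good position property keeps the roots produced at the intermediate stages inside a set on which the good element property of $\tilde b$ forces the ``excess'' unipotent part to be completely absorbed into the $V_i^-$'s, leaving no residual torus or positive-root factor. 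This is exactly where a braid, rather than a Weyl group element, is indispensable, since in the non-elliptic case the minimal length element of $\CC$ is too short for the recursion to run to completion; it is the generalization of the elliptic computation in \cite[\S3]{HL}. Reading off the maps produces an explicit two-sided inverse defined by polynomial formulas over $\Bk$, so $\Xi_{\tilde b}$ is an isomorphism of varieties independently of $\character\Bk$, and the disconnected case costs nothing, $\delta$ simply being transported through the computation. This bookkeeping --- pinning down which root subgroups occur after each conjugation and checking consistency against $\underline\Theta$ --- is the main obstacle.

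For part (2), transversality is then deduced from (1) as in \cite[\S3]{HL}. Let $C$ be any $G$-conjugacy class in $D$ meeting $S_{Br}^D(\tilde b)$ and let $x\in C\cap S_{Br}^D(\tilde b)$; consider the conjugation morphism $\mu\colon G\times S_{Br}^D(\tilde b)\to D$, $(g,s)\mapsto gsg^{-1}$, whose differential at $(1,x)$ has image $T_x(G\cdot x)+T_x S_{Br}^D(\tilde b)$. Differentiating $\Xi_{\tilde b}$ shows this image already contains $T_x\bigl(S_{Br}^D(\tilde b)\,U_{R^+\setminus R^{\tilde w}}\bigr)$; the complementary directions, coming from $U^-$, from $T$, and from the fixed-root part $U_{R^{\tilde w}\cap R^+}$, are supplied by differentiating $g\mapsto gxg^{-1}$ in those directions, using that $x$ is a lift of $\tilde w$ times a unipotent element and that $\tilde w$ has no nonzero fixed vector on $\bigoplus_{\beta\in R^+\setminus R^{\tilde w}}\fkg_\beta$, together with the equality $\dim S_{Br}^D(\tilde b)=\codim_D\CO$, so that $\operatorname{Im}(d\mu_{(1,x)})=T_xD$. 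Hence $\mu$ is smooth at $(1,x)$, therefore along $\{1\}\times S_{Br}^D(\tilde b)$; this forces $C\cap S_{Br}^D(\tilde b)$ to be smooth of the expected dimension $\dim C+\dim S_{Br}^D(\tilde b)-\dim D$ at each of its points, i.e.\ $S_{Br}^D(\tilde b)$ meets $C$ transversally, and taking $C=\CO$ with $\dim S_{Br}^D(\tilde b)=\codim_D\CO$ identifies $S_{Br}^D(\tilde b)$ as a transversal slice of $\CO$. The delicate point here is checking that the complementary directions genuinely fill out $T_xD$; this is precisely where the choice of the most elliptic class in $\Phi^{-1}(\CO)$, which controls $R^{\tilde w}$ and the length, is used.
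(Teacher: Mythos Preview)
Your overall architecture is right, but there is one concrete error and two genuine gaps.

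\textbf{The dimension is wrong.} You define $S_{Br}^D(\tilde b)$ as an interleaved product $\dot x_1 V_1^-\cdots\dot x_k V_k^-$ of dimension $\ell_{B^+}(\tilde b)$ and assert $\ell_{B^+}(\tilde b)=\codim_D\CO$. This is false in the non-elliptic case: the correct identity is $\codim_{\tilde G}\CO=l_{\good}(\CC)+\dim T^{\CC}$ (Proposition~\ref{3.5}), with $\dim T^{\CC}>0$ precisely when $\CC$ is non-elliptic. The paper's slice therefore carries an extra factor $T^{\tilde w}$: it is $S_{Br}^D(\tilde b)=T^{\tilde w}U^{\tilde b}\dot w\,\tilde g_D$ (Definition~\ref{4.2}). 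Without that torus factor your slice is too small to be transversal, which is exactly the failure mode the paper is designed to repair.

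\textbf{The ``peeling'' does not close up as described.} Pushing $u$ past the $\dot x_i$ one factor at a time does not terminate without further input; the individual DG factors $x_i$ are not themselves good elements. The paper's mechanism is different: it passes to the $d$-fold product $[h,\delta_G(h),\ldots,\delta_G^{d-1}(h)]$, uses the good element identity $\tilde b^d=\underline{w_0}^{d_0}\underline{w_1}^{d_1}\cdots$ to transport this into a product of $S(w_i)$'s via a bijection $\mathscr G_{\tilde b}$, and then exploits the single fact $U^{w_0}=U$ (so the leading $S(w_0)$ absorbs the stray $u$) to read off the correction $u_\xi$. The $T^{\tilde w}$ factor must also be tracked through this, which is why Lemmas~\ref{4.7}--\ref{4.8} carry around the spaces $\tilde U(\mathrm{DG}(\tilde b),T^{\tilde w},r)$.

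\textbf{Transversality needs the negative cross-section.} For part~(2) you wave at ``complementary directions coming from $U^-$'', but you have no mechanism to produce them. The paper obtains them by applying the anti-automorphism $\iota$ (swapping $U_\alpha\leftrightarrow U_{-\alpha}$) to the already-proved positive cross-section, yielding an isomorphism $U_{-(R^+\setminus R^{\tilde w})}\times\widehat{S_{Br}^D}(\tilde b)\to\widehat{S_{Br}^D}(\tilde b)\,U_{-(R^+\setminus R^{\tilde w})}$ (Corollary~\ref{4.10}); together with part~(1) this gives $\fkn+\fkn^-+\fkt^{\tilde w}$ in the tangent space, and the remaining $\fkt_{\tilde w}$ direction comes from $(\mathrm{Id}-\Ad(\dot w\tilde g_D))|_{\fkt_{\tilde w}}$ being invertible. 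Finally, one must separately check that the slice actually meets $\CO$ (Lemma~\ref{4.15}), which you have not addressed.
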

We refer to $\S 4.1$ for the definition of $U_{R^+\setminus R^{\tilde{w}}}$. The first part of this result is a generalization of the cross-section property. The second part consists of transversality, which is a condition on the tangent spaces, and the slice condition, which is about the dimension of $S_{Br}^D(\tilde{b})$ and the codimension of $\CO$ in $D$.

Our last main result relates these braid elements to affine Springer fibers. We generalized the definition of good position pair by considering a sequence of indecomposable subspaces in the real reflection representation of $W$. This enable us to construct good braid elements for certain affine Springer fibers. See \S5.1 for the definition of affine Springer fibers $\Gr_{\gamma}$ and the definition of $L\fkg$ and $T_{\gamma}$.
\begin{proposition}\label{C}(see also proposition \ref{c} for more precise statement)        Let $\gamma\in L\fkg$ be topologically nilpotent and generically regular semisimple. There exists $b\in B^+(W)$ which projects to an element in the conjugacy class $\CC$ in $W$ and
        \begin{equation*}
        \dim \Gr_{\gamma}=\frac{l(b)-(r-r_{\gamma})}{2},
\end{equation*}   
    where $r$ (resp. $r_{\gamma}$) is the semisimple rank of $G$ (resp. maximal split subtorus of $T_\gamma$).
\end{proposition}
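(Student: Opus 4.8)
The plan is to reduce the proposition to one combinatorial length identity, with the bridge between geometry and combinatorics supplied by the Kazhdan--Lusztig dimension formula for affine Springer fibers (proved in general by Bezrukavnikov): for $\gamma$ regular semisimple over $F=\Bk((t))$,
\begin{equation*}
\dim\Gr_{\gamma}=\tfrac12\bigl(d(\gamma)-(r-r_{\gamma})\bigr),\qquad d(\gamma):=\sum_{\alpha\in R}\val(\alpha(\gamma)),
\end{equation*}
where $R$ is the set of roots of the maximal torus $T_{\gamma}$ of $G$ over $F$ and $d(\gamma)$, the discriminant valuation, is finite precisely because $\gamma$ is regular semisimple. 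Granting this, the whole content of the statement is to produce a braid $b\in B^{+}(W)$ whose image in $W$ lies in $\CC$ and with $l(b)=d(\gamma)$.

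First I would extract the combinatorial shadow of $\gamma$. Since $\Bk$ is algebraically closed, $T_{\gamma}$ splits over some $F_{n}=\Bk((t^{1/n}))$, and $\Gal(F_{n}/F)=\BZ/n$ acts on its cocharacter lattice through the cyclic subgroup generated by a well-defined conjugacy class $\CC=[w]$ in $W$; in general this class is non-elliptic (a torus split over a quadratic extension already contributes a reflection). Conjugating $\gamma$ into a split Cartan over $F_{n}$ and recording the valuations $\val(\alpha(\gamma))$ for $\alpha\in R$ yields a function $R\to\BQ_{>0}$ — positivity expresses topological nilpotence and finiteness expresses regular semisimplicity — which is constant on $\langle w\rangle$-orbits, since Galois conjugates have equal valuation, and is additive in $\alpha$ in the evident sense. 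This function, together with the $w$-action on the real reflection representation, is exactly a generalized good position pair in the sense of \S5.1, encoded there as a sequence of indecomposable $w$-stable subspaces carrying slopes; in the equivalued case the slopes agree and one is back in the homogeneous situation of Proposition~\ref{A}. Feeding this pair into the generalization of Proposition~\ref{A} developed in \S5.1 produces the desired braid $b\in B^{+}(W)$ with the good-element property and with image in $\CC$.

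It remains to compute $l(b)$ and identify it with $d(\gamma)$. By the construction of good braid elements from (generalized) good position pairs via their Deligne--Garside normal form, $l(b)$ is a sum of local contributions indexed by the indecomposable blocks, equivalently by the $\langle w\rangle$-orbits on $R$; unwinding the identification of these blocks with their slopes from the previous paragraph, the sum is visibly $\sum_{\alpha\in R}\val(\alpha(\gamma))=d(\gamma)$. In the Coxeter case this specializes to the familiar fact that Steinberg's braid element has length $r$. Combining $l(b)=d(\gamma)$ with the displayed formula gives $\dim\Gr_{\gamma}=(l(b)-(r-r_{\gamma}))/2$, which is the assertion.

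The main obstacle is the identity $l(b)=d(\gamma)$ together with the correct bookkeeping in the previous two paragraphs: one must show that the purely Coxeter-combinatorial recipe for $l(b)$ — a priori phrased through the eigenvalue sequence $\underline{\Theta}$, respectively the sequence of indecomposable subspaces, and the normal form — reproduces block by block the arithmetic of the root valuations of $\gamma$, with the right multiplicities and with fractional slopes correctly absorbed by the passage to the braid monoid. A secondary verification is that the $W$-conjugacy class obtained from $T_{\gamma}$ coincides with the ``most elliptic'' class attached to $\gamma$ in \S5.1 and that the construction of \S5.1 really applies to the pair extracted above. If one wanted to avoid invoking Bezrukavnikov's theorem, one could instead try to pave $\Gr_{\gamma}$ directly by the cells predicted by the good position data and read off the dimension, but that appears to be a strictly harder route.
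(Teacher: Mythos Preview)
Your proposal is correct and follows essentially the same route as the paper: reduce via the Bezrukavnikov dimension formula to the identity $l(b)=\val(\Delta(\gamma))$, extract from the expansion $\gamma=\sum_{n}\varpi^{n/d}\gamma_n$ (with $\gamma_n$ in the $\omega^n$-eigenspace of $w$ on $\fkt$) a sequence $\CV$ of indecomposable real eigenspaces with slopes $\theta_k=2\pi n_k/d$, feed this into Proposition~\ref{3.7}, and match $l(b)$ to $\sum_{\alpha}\val(\alpha(\gamma))$ via the length formula of Corollary~\ref{3.9}.

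Two small corrections. First, the summands in $l(b)$ are indexed by the filtration jumps $\fkH_{F_{k-1}}\setminus\fkH_{F_k}$, not by $\langle w\rangle$-orbits on $R$; the actual work (which you correctly flag as the main obstacle) is the identification $\{\alpha: H_\alpha\in\fkH_{F_{k-1}}\setminus\fkH_{F_k}\}=\{\alpha:\val(\alpha(\gamma))=n_k/d\}$, which the paper obtains by passing from each $\gamma_{n_k}$ to its real part $v_k$ and checking that $(\alpha,V_k)=0\Leftrightarrow\alpha(\gamma_{n_k})=0$. Second, there is no ``most elliptic'' choice in this statement: the class $\CC$ is simply the Kazhdan--Lusztig type of $T_\gamma$, and that concept from \S3.3 plays no role here.
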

The conjugacy class $\CC$ is determined by the maximal torus $T_\gamma$. This result gave a reformulation of the dimension formula of affine Springer fibers using our braid elements when the characteristic of $\Bk$ is 0.

\begin{acknowledgement}
I would like to thank my advisor Xuhua He for suggesting the problem and his idea of using braid elements. I also thank Jeffrey Adams, Ana B\u{a}libanu, Thomas Haines, Sian Nie, Minh-T\^am Trinh and Kaitao Xie for helpful discussions and valuable feedbacks.
\end{acknowledgement}

\section{Preliminaries}

\subsection{Twisted Coxeter group and good position elements}
Let $\tilde{W}=W \rtimes \langle \delta \rangle$ be a twisted finite Coxeter group where $W$ is a finite Coxeter group and $\delta$ is a twist of $W$. Let $\{s_i \mid i\in I\}$ be the set of simple reflections of $W$. For any subset $J\subset I$, let $W_J$ be the corresponding standard parabolic subgroup of $W$. The twist $\delta: W \to W$ is a group automorphism preserving its simple reflections. The length function on $W$ induces a length function on $\tilde{W}$ where $l(\tilde{w})=l(w)$ for any $\tilde{w}= w \delta^i\in \tilde{W}$. Denote the set of conjugacy classes (resp. $W$-conjugacy classes) of $W$ (resp. $\tilde{W}$) by $[W]$ (resp. $[\tilde{W}]$). Two elements $w,w'\in W$ are $\delta$-conjugate if $w'=x^{-1}w\delta(x)$ for some $x\in W$. The map $w\mapsto w\delta$ gives a bijection between the $\delta$-conjugacy classes of $W$ and the $W$-conjugacy classes of $\tilde{W}$ contained in the coset $W\delta$ (see \cite[Remark 2.1]{GKP}).

A $\delta$-conjugacy class $\CC^{\delta}$ of $W$ is called $\delta$-\textit{elliptic} if $\CC^{\delta} \cap W_J=\emptyset$ for any $\delta$-stable proper subset $J$ of $I$. A $W$-conjugacy class $\tilde{\CC}$ of $\tilde{W}$ is called \textit{elliptic} if $\tilde{\CC}\cap (W_J\rtimes \langle \delta \rangle)=\emptyset$ for any $\delta$-stable proper subset $J$ of $I$.

Let $V$ be a real reflection representation of $\tilde{W}$ and $(,)$ be the inner product preserved by $\tilde{W}$. Let $\mathfrak{H}$ be the set of hyperplanes of $V$ corresponding to the reflections in $W$. If $W$ is the Weyl group of an algebraic group, then $\fkH$ is the set of root hyperplanes. A connected component of $V- \underset{H\in \fkH}{\bigcup} H$ is called a \textit{Weyl chamber}. Set $V_{\BC}=V\otimes_{\BR}\BC$. The $\tilde{W}$-action and the bilinear form $(,)$ can be extended to $V_{\BC}$ naturally and we still denote it as $(,)$. For any $\theta$, let $(V_{\BC})^{\theta}_{\tilde{w}}$ be the $e^{i\theta}$-eigenspace of $\tilde{w}$ in $V_{\BC}$. 

For any subset $V'$ of $V$, define $\mathfrak{H}_{V'}$ as the set of hyperplanes in $\mathfrak{H}$ containing $V'$ and $W_{V'}$ as the subgroup of $W$ consisting of elements fixing $V'$. We also define $I(V')$ as the subset of $I$ corresponding to the simple reflections that fix $V'$. A point $x\in V'$ is called \textit{regular} if it is not contained in any hyperplane $H\in \mathfrak{H}-\mathfrak{H}_{V'}$. We denote the subset of $V'$ consisting of its regular points by $(V')^{\text{reg}}$.

For any $\tilde{w} \in \tilde{W}$, define 
\begin{equation*}
    \Gamma_{\tilde{w}}^{[0,\pi]}=\{\theta \in [0,\pi] \mid e^{i\theta} \text{ is an eigenvalue of } \tilde{w} \text{ on } V_{\BC}\}.
 \end{equation*}
For $\theta \in \Gamma_{\tilde{w}}^{[0,\pi]}$, define
\begin{equation*}
   V_{\tilde{w}}^{\theta}=\{v \in V \mid \tilde{w}(v) + \tilde{w}^{-1}(v)=2 \cos \theta\cdot v\}.
\end{equation*}
For convenience, in this paper these $V_{\tilde{w}}^{\theta}$ are called \textit{real eigenspaces} of $\tilde{w}$. Then we have $V_{\tilde{w}}^{\theta}\otimes_{\BR}\BC=(V_{\BC})_{\tilde{w}}^{\theta} \oplus (V_{\BC})_{\tilde{w}}^{-\theta}$. It is also clear that $V=\bigoplus\limits_{\theta \in \Gamma_{\tilde{w}}^{[0,\pi]}} V_{\tilde{w}}^{\theta}$. 

Let $\underline{\Theta}=(\theta_1,\theta_2,\ldots,\theta_m)$ be a sequence of elements in $\Gamma_{\tilde{w}}^{[0,\pi]}$. We call $\underline{\Theta}$ \textit{increasing} if $\theta_1<\theta_2<\cdots<\theta_m$. Define $F_i=\sum\limits_{j=1}^i V_{\tilde{w}}^{\theta_j}$ for any $1\leq i \leq m$ and $F_0=0$. This gives a filtration of subspaces of $V$,
\begin{equation*}
    0=F_0 \subset F_1 \subset \cdots \subset F_m \subset V.
\end{equation*}
We say $\underline{\Theta}$ is \textit{admissible} if $F_m \cap V^{\text{reg}}$ is nonempty. In particular, we say $\underline{\Theta}$ is \textit{complete} if $F_m=V$. Notice that the filtration above gives another filtration
\begin{equation*}
   W=W_{F_0} \supset W_{F_1} \supset \cdots \supset W_{F_m}
\end{equation*}
of subgroups of $W$. It is clear that $W_{F_m}$ is trivial if $\underline{\Theta}$ is \textit{admissible}. The \textit{irredundant} subsequence of $\underline{\Theta}$ is $r(\underline{\Theta})=(\theta_{i_1},\theta_{i_2},\ldots,\theta_{i_l})$ such that $W_{F_{i_j}}=W_{F_{i_j+1}}=\cdots=W_{F_{i_{j+1}-1}}\neq W_{F_{i_{j+1}}}$ for any $0\leq j \leq l-1$ where $i_0$ is set as $0$. It is clear that $\fkH_{F_{i_j}}=\fkH_{F_{i_j+1}}=\cdots=\fkH_{F_{i_{j+1}-1}}\neq \fkH_{F_{i_{j+1}}}$ for any $0\leq j \leq l-1$.

Let $C$ be a Weyl chamber in $V- \underset{H\in \fkH}{\bigcup} H$ and $(\tilde{w},\underline{\Theta})$ be a pair as above. We say $C$ is \textit{in good position} with respect to $(\tilde{w},\underline{\Theta})$ if $\overline{C} \cap  F_i^{\reg}$ is nonempty for all $i$. One can check that this definition is equivalent to the original definition in \cite[\S 5.2]{HN}. Moreover, if the sequence $\underline{\Theta}$ is increasing and complete, then we say $C$ is \textit{in good position} with respect to $\tilde{w}$. In particular, we call $\tilde{w}$ a \textit{good position element} if $C$ is the fundamental Weyl chamber.

Let $B^+(W)$ be the positive braid monoid associated to $W$. Let $\pi_{W}: B^+(W) \to W$ be the natural projection map and $j_{W}: W \to B^+(W)$ be the natural embedding. These maps naturally extend to the twisted case for $\tilde{W}$ and $B^+(\tilde{W})$ and we denote them by $\pi_{\tilde{W}}$ and $j_{\tilde{W}}$. For convenience, we write $\underline{\tilde{w}}$ for $j_{\tilde{W}}(\tilde{w})$. Recall the definition in \cite[Theorem 1.1]{GM}, an element $\tilde{w}\in \tilde{W}$ is called \textit{good} if there exist a filtration $I\supset I_0 \supsetneq I_1 \supsetneq \cdots \supsetneq I_l$, a twist $\sigma \in \langle \delta \rangle$ and $d_0,\ldots,d_l\in 2\BZ_{>0}$ such that
\begin{equation*}
   \underline{\tilde{w}}^d=\underline{w_0}^{d_0}\cdots \underline{w_l}^{d_l}\sigma,
\end{equation*}   
where $d$ is the order of $\tilde{w}$ and $w_j$ is the longest element of $W_{I_j}$ for any $0\leq j\leq l$. We recall the following result of He and Nie.

\begin{theorem}\cite[Theorem 5.3]{HN} \label{2.1}
Let $(\tilde{w},\underline{\Theta})$ be a pair where $\underline{\Theta}$ is an increasing admissible sequence in $\Gamma_{\tilde{w}}^{[0,\pi]}$ with $r(\underline{\Theta})=(\theta_{i_1},\theta_{i_2},\ldots,\theta_{i_l})$. Suppose that the fundamental Weyl chamber $C_0$ is in good position with respect to $(\Tilde{w},\underline{\Theta})$. Then
\begin{equation*}
   \underline{\tilde{w}}^d= \underline{w_0}^\frac{d\theta_{i_1}}{\pi} \underline{w_1}^\frac{d(\theta_{i_2}-\theta_{i_1})}{\pi} \cdots \underline{w_{l-1}}^\frac{d(\theta_{i_l}-\theta_{i_{l-1}})}{\pi}\sigma^d,
\end{equation*}   
where $\sigma \in \langle \delta \rangle$ is the twist part of $\tilde{w}$. Here $d\in \BN$ such that $d\frac{\theta_{i_j}}{2\pi}\in \BZ$, and $w_j$ is the longest element in $W_{F_{i_j}}$ for $0\leq j \leq l$. Moreover, if $d$ is even, then 
\begin{equation*}
   \underline{\tilde{w}}^{\frac{d}{2}}=\underline{w_0}^\frac{d\theta_{i_1}}{2\pi} \underline{w_1}^\frac{d(\theta_{i_2}-\theta_{i_1})}{2\pi} \cdots \underline{w_{l-1}}^\frac{d(\theta_{i_l}-\theta_{i_{l-1}})}{2\pi}\sigma^{\frac{d}{2}} .
\end{equation*}   
\end{theorem}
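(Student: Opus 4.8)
The plan is to realize both sides as classes of paths in the complement of the complexified reflection arrangement and to read off the factorization from concatenation. Recall the standard topological picture (Brieskorn, Deligne; Brou\'e--Michel; Bessis): writing $V_{\BC}^{\reg}=V_{\BC}\setminus\bigcup_{H\in\fkH}H_{\BC}$, the group $W$ (and $\tilde W$, with the $\langle\delta\rangle$-twist) acts freely, $\pi_1$ of the quotient is the braid group $B(\tilde W)\supset B^{+}(\tilde W)$, concatenation of paths corresponds to multiplication, and a positive half-turn of a regular real vector represents the canonical lift of the longest element. Precisely, the local input I would use is: if $W'\subseteq W$ is the pointwise stabilizer of a subspace (hence, by Steinberg's theorem, a reflection subgroup conjugate to a standard parabolic), $U$ its reflection representation, $C'$ a chamber of $W'$ compatible with $C_0$ in the sense of the good-position condition, and $u\in C'$ is $W'$-regular, then the path $t\mapsto e^{\pi i t}u$ ($t\in[0,1]$) stays $W'$-regular, ends at $-u=w'_0\cdot u$ where $w'_0$ is the longest element of $W'$, and represents $\underline{w'_0}\in B^{+}(W')$; performing $k$ consecutive half-turns represents $\underline{w'_0}^{\,k}$. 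Two further facts: the inclusion $B^{+}(W')\hookrightarrow B^{+}(W)$ is injective and positivity-preserving, and rotating directions fixed pointwise by $W'$ does not change the class, since those directions lie in every hyperplane of the subarrangement of $W'$.

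With these in place I would construct one explicit path for $\underline{\tilde w}^{\,d}$. Using $V=\bigoplus_{\theta}V_{\tilde w}^{\theta}$ and the good-position hypothesis, choose a regular $v_0\in\overline{C_0}\cap F_{i_l}^{\reg}$ adapted to the filtration, so that for each $j$ the image of $v_0$ in $F_{i_{j-1}}^{\perp}$ is regular for $W_{F_{i_{j-1}}}$ and sits in the chamber meeting $\overline{C_0}$; this is exactly the content of $\overline{C_0}\cap F_{i}^{\reg}\neq\emptyset$. Then run the telescoping path: in stage $j$ (for $1\le j\le l$, and with $\theta_{i_0}:=0$) freeze $F_{i_{j-1}}$ and rotate its orthogonal complement $F_{i_{j-1}}^{\perp}=\bigoplus_{\theta_k>\theta_{i_{j-1}}}V_{\tilde w}^{\theta_k}$ through total angle $d(\theta_{i_j}-\theta_{i_{j-1}})$. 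By the hypothesis $d\theta_{i_j}/(2\pi)\in\BZ$, the integer $d(\theta_{i_j}-\theta_{i_{j-1}})/\pi$ is positive and even, so stage $j$ consists of that many half-turns of the reflection representation of $W_{F_{i_{j-1}}}$ and, by the local input, represents $\underline{w_{j-1}}^{\,d(\theta_{i_j}-\theta_{i_{j-1}})/\pi}$. The overshoot on an intermediate eigenspace $V_{\tilde w}^{\theta_k}$ with $\theta_{i_{j-1}}<\theta_k<\theta_{i_j}$ is harmless: $W_{F_{\bullet}}$ is constant there, hence so is $\fkH_{F_{\bullet}}$, which forces $V_{\tilde w}^{\theta_k}$ to be fixed pointwise by $W_{F_{i_{j-1}}}$, so its rotation crosses no hyperplane of the relevant subarrangement. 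Concatenating stages $1,\dots,l$ (followed by a hyperplane-free rotation landing exactly at $\tilde w^{d}v_0$) gives a path for $\underline{\tilde w}^{\,d}$, and multiplicativity yields $\underline{\tilde w}^{\,d}=\underline{w_0}^{d\theta_{i_1}/\pi}\cdots\underline{w_{l-1}}^{d(\theta_{i_l}-\theta_{i_{l-1}})/\pi}$ up to the trailing twist $\sigma^{d}$ coming from the $\delta$-part of $\tilde w$. For the last assertion, when $d$ is even the identical path run for $\tilde w^{d/2}$ performs $d(\theta_{i_j}-\theta_{i_{j-1}})/(2\pi)$ half-turns in stage $j$, which is an integer since the doubled quantity is even, giving the halved exponents.

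The step I expect to be the main obstacle is the bookkeeping that the telescoping path can genuinely be kept inside $V_{\BC}^{\reg}$ and that each stage represents a \emph{positive} braid landing in the \emph{correct} chamber, rather than merely an element of $B(\tilde W)$ landing in some chamber. This is precisely where the good-position hypothesis is indispensable: it guarantees that $\overline{C_0}$ meets each $F_i$ in its regular locus, so that $C_0$ lies in a single chamber of each $W_{F_{i_{j-1}}}$ and the half-turns can be steered through translates of $\overline{C_0}$; and the normalization $\theta\in[0,\pi]$, i.e. eigenvalues in the closed upper half-plane, is what fixes the orientation of the rotations so that the outcome lies in $B^{+}$. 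One must also take care in the twisted setting, carrying out the argument inside $B^{+}(W)\rtimes\langle\delta\rangle$ so that the twist of $\tilde w$ is tracked correctly and contributes exactly $\sigma^{d}$. A cleaner alternative would be induction on the length $l$ of the irredundant sequence: the case $l=1$ is essentially the assertion that $\tilde w$ is then a regular element in Springer's sense, for which the braid identity is due to Brou\'e--Michel and Bessis, and the inductive step peels off stage $1$ and reapplies the hypothesis inside $W_{F_{i_1}}$, using that $F_{i_1}^{\reg}\cap\overline{C_0}\neq\emptyset$ again puts one in a good-position situation.
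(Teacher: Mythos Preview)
This theorem is quoted from \cite{HN} and the paper does not re-prove it; the closest proof in the paper is \S3.2, which establishes the more general Proposition~\ref{a} by explicitly following the strategy of \cite[Theorem~5.3]{HN}. That strategy is the inductive one you sketch only at the very end of your proposal: peel off the first eigenspace $V_{\tilde w}^{\theta_{i_1}}$, use that $\overline{C_0}\cap F_{i_1}^{\reg}\neq\emptyset$ to write $\tilde w=u_1\tilde u$ with $u_1\in W_{F_{i_1}}$ and $\tilde u$ a length-zero double coset representative normalizing $W_{F_{i_1}}$ (so its conjugation acts as a twist $\delta_{\tilde u}$), apply the inductive hypothesis to $u_1\delta_{\tilde u}$ inside $\tilde W_{F_{i_1}}$, and then close the argument with the single key identity \cite[Lemma~5.2]{HN}, namely $(\underline{\tilde u'})^q=\underline{w_1w_0}\cdot\underline{w_0w_1}\cdot\sigma^{tq}$ for a suitable power $\tilde u'$ of $\tilde u$. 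Everything else is straightforward braid-monoid algebra. In particular, the base case is not Springer regularity in the sense of Brou\'e--Michel/Bessis; it is precisely this Lemma~5.2, whose proof in \cite{HN} is the only place where the path argument in $V_\BC^{\reg}$ is actually invoked.

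Your main proposal, the global telescoping path, is morally the same geometry but tries to do all stages at once rather than inductively, and this is where the real work hides. The difficulty you flag is genuine: to conclude that stage $j$ contributes $\underline{w_{j-1}}^{d(\theta_{i_j}-\theta_{i_{j-1}})/\pi}$ you must know not merely that the half-turns stay $W_{F_{i_{j-1}}}$-regular, but that the entire path stays in $V_\BC^{\reg}$ for $W$, and that at the handoff between stages the point lies in the correct $W_{F_{i_j}}$-chamber so the positive lifts match the canonical section $j_W$. You also need the endpoint to coincide with $\tilde w^d\cdot v_0$ on the nose, which is not automatic because $d\theta_k/2\pi$ is only assumed integral for the $\theta_k$ in the irredundant subsequence, not for all eigenvalues; your ``hyperplane-free rotation'' tail must absorb the discrepancy on the redundant eigenspaces, and you should check that those eigenspaces really are pointwise fixed by the relevant $W_{F_{i_{j-1}}}$. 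All of this can be made to work, but once you localize the path analysis to a single eigenspace and induct, you have essentially reproduced the \cite{HN} proof; the clean separation of the geometric input (one lemma about a single rotation) from the algebraic recursion is what the paper and \cite{HN} buy by organizing the argument inductively rather than telescopically.
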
   
\begin{remark}
    The moreover part is also called \textit{very good} in \cite[Theorem 1.1]{GM}.
\end{remark}
By \cite[\S 5.3]{HN}, good position elements are of minimal length in its conjugacy class.

\subsection{Unipotent orbits and Lusztig's map}
We recall some facts on unipotent orbits and Lusztig's map $\Phi$, which will be used in \S 3.4 for the slice condition. Let $G$ be the identity component of an affine algebraic group $\tilde{G}$ over an algebraic closed field $\Bk$ with $\text{char}(\Bk) \geq 0$. For convenience, in this subsection we may assume $G$ is almost simple and $\tilde{G}/G$ is cyclic. Let $D$ be a fixed connected component of $\tilde{G}$ whose image in $\tilde{G}/G$ is unipotent (i.e. its order is a power of $p$ if $p>1$ and $1$ if $p=0$). Let $[D_u]$ be the set of unipotent $G$-orbits in $D$. The conjugation action of $D$ on $G$ defines a twist $\delta:W \to W$ of the Weyl group $W$ of $G$ (see \cite[\S 1.1]{Lu21}). We denote the corresponding group automorphism of $G$ also as $\delta$. One can find $\tilde{g}_D\in D$ such that $\tilde{g}_Dx\tilde{g}_D^{-1}=\delta(x)$ for any $x\in G$. In this paper, following \cite{Lu21}, the disconnected case when $G$ is of type $\text{D}_4$ and $\text{char}(\Bk)=3$ is viewed as an exceptional case.

We first recall a combinatorial parametrization of $[D_u]$ for classical groups. Let $\mathcal{P}(n)$ be the set of partitions of $n$. For any partition $\lambda=(\lambda_1,\ldots , \lambda_m)$ and any $k\in \BZ_{>0}$, define $m_{\lambda}(k)=|\{i\mid \lambda_i=k\}|$. For $\epsilon=\pm 1$, we set 
\begin{equation*}
    \mathcal{P}_{\epsilon}(n)=\{\lambda \in \mathcal{P}(n)\mid m_{\lambda}(k) \text{ is even if } (-1)^k=\epsilon \}.
\end{equation*}

(1) If $\text{char}(\Bk)\neq 2$, then unipotent elements of $\tilde{G}$ are all contained in $G$ and $D$ must be equal to $G$. The unipotent $G$-orbits of $\tilde{G}$ are just unipotent orbits of $G$, which are parametrized as follows based on their Jordan forms. 
\begin{itemize}
  \item Type $\text{A}_n$: $\mathcal{P}(n+1)$;
  \item Type $\text{B}_n$: $\mathcal{P}_{1}(2n+1)$;
  \item Type $\text{C}_n$: $\mathcal{P}_{-1}(2n)$;
  \item Type $\text{D}_{2n+1}$: $\mathcal{P}_{1}(4n+2)$;
  \item Type $\text{D}_{2n}$: The unipotent orbits are parametrized by $\mathcal{P}_{1}(4n)$, except that every partition $\lambda$ with $\lambda_i$ all even actually corresponds to two unipotent orbits.
\end{itemize}   

(2) If $\text{char}(\Bk)= 2$, then $D$ may not be $G$ only in two cases (type $\text{A}$ and $\text{D}$) and the unipotent $G$-orbits of $D$ are parametrized as follows.
\begin{itemize}
  \item Type $\text{A}_n$ and $D=G$: Same as the case when $\text{char}(\Bk)\neq 2$;
  \item Type $\text{A}_n$ and $D\neq G$ (Type ${}^2\text{A}_n$): In this case, following \cite[\S I.2.7]{Sp}, we assume $\tilde{G}=G\cup D$ where $G=\GL_n$ and $D$ is the set of non-singular bilinear forms $\phi:V\times V\to \Bk$ by identifying $G$ as $\GL(V)$. 
  
  The group structure on $\tilde{G}$ is defined as follows. The structure on $G$ is usual. For $g\in G$ and $\phi\in D$, define $(g\phi)(v_1,v_2)=\phi(g^{-1}v_1,v_2)$ and $(\phi g)(v_1,v_2)=\phi(v_1,gv_2)$. For $\phi,\psi\in D$, define $\phi\psi$ as the unique element of $G$ such that $\phi((\phi\psi)v_1,v_2)=\psi(v_2,v_1)$. 
  It is easy to check that $\tilde{G}/G\simeq \BZ/ 2\BZ$. The twist $\delta:G\to G$ defined by $D$ is $\delta (g)=(g^\intercal)^{-1}$. The corresponding twist of $W$ is given by the conjugation of the longest element $w_0$.

  Define $\widetilde{\CP_1}(n+1)$ to be the set of pairs $(\lambda,\epsilon)$, where:
  \begin{itemize}
      \item   (i) $\lambda=(\lambda_1,\ldots,\lambda_m)\in \CP_1(n+1)$;
    \item   (ii) $\epsilon:\{2k+1\mid k\in \BZ_{\geq 0}, m_{\lambda}(2k+1)\neq 0\} \to \{0,1\}$ satisfying that $\epsilon(2k+1)=1$ if $m_{\lambda}(2k+1)$ is odd.
  \end{itemize}
  Let $\phi\in D$ be any unipotent element. By \cite[\S 2.7]{Sp}, the conjugacy class of $\phi$ corresponds to some $(\lambda,\epsilon_{\lambda})\in \widetilde{\CP_1}(n+1)$. Specifically, we have $u=\phi^2$ is unipotent in $G$ and $\lambda$ is the Jordan form of $u$. As for $\epsilon_{\lambda}$, we have $\epsilon_{\lambda}(2k+1)=0$ if $m_{\lambda}(2k+1)\neq 0$ is even and $\phi(v,(u-\id)^{2k}v)=0$ for any $v\in \ker(u-\id)^{2k-1}$. Otherwise, we set $\epsilon_{\lambda}(2k+1)=1$.
  
  \item Type $\text{B}_n, \text{C}_n, \text{D}_n$ and $D=G$: In this case, Jordan form cannot uniquely determine the unipotent orbit. Let $u$ be a unipotent element in $G$. Recall in \cite[\S 3]{Hes}, every indecomposable block of $u$ is of the form $W(k)$ or $V(2k)$. Here $W(k)$, which has Jordan form $(k,k)$, corresponds to a regular unipotent element in a subgroup $\SL_k$ of $G$ and $V(2k)$, which has Jordan form $(2k)$, corresponds to a regular unipotent element in some $\text{Sp}_{2k}$ which lies in an orthogonal group $\text{O}_{2k}$. Moreover, the number of $V(2k)$ blocks is no larger than $2$ for any $k$ since $V(2k)^3\simeq W(2k)+V(2k)$ by \cite[Lemma 3.6]{Hes}.
      
  Following \cite[\S 2.6]{Sp}, define $\widetilde{\mathcal{P}_{-1}}(2n)$ as the set of pairs $(\lambda,\epsilon)$, where: 
  \begin{itemize}
      \item   (i) $\lambda=(\lambda_1, \ldots, \lambda_m) \in \mathcal{P}_{-1}(2n)$;
      \item     (ii) $\epsilon: \{2k \mid k\in \BZ_{>0}, m_{\lambda}(2k)\neq 0\}\to \{0,1\}$ satisfying that $\epsilon(2k)=1$ if $m_{\lambda}(2k)$ is odd. 
  \end{itemize}
  Now let $\CO$ be any unipotent orbit in $\Sp_{2n}$ or $\text{O}_{2n}$, its Jordan form gives a partition $\lambda$ in $\mathcal{P}_{-1}(2n)$. For any even number $2k>0$, set $\epsilon_{\lambda}(2k)=1$ if there are $V(2k)$ blocks and $\epsilon_{\lambda}(2k)=0$ otherwise. This gives a bijection between the unipotent orbits in $\Sp_{2n}$ or $\text{O}_{2n}$ and $\widetilde{\mathcal{P}_{-1}}(2n)$. Since there is a bijection between unipotent orbits in $\Sp_{2n}$ and $\SO_{2n+1}$, the unipotent orbits in type $\text{B}_n$ and $\text{C}_n$ are both parametrized by $\widetilde{\mathcal{P}_{-1}}(2n)$.
  
  For type $\text{D}_n$, we know the unipotent $G$-orbits in $\SO_{2n}$ are exactly the unipotent $\tilde{G}$-orbits of $\text{O}_{2n}$ contained in $\SO_{2n}$, which are parametrized by $\{(\lambda,\epsilon) \in \widetilde{\mathcal{P}_{-1}}(2n) \mid \lambda=(\lambda_1, \ldots, \lambda_m) \text{ with } m \text{ even}\}$, except that $\CO_{\lambda,\epsilon}$ splits into two $G$-orbits if $\lambda_i$ and $m_{\lambda}(i)$ are even for all $i$ and $\epsilon=0$. However, in this paper we denote these two orbits as the same for convenience since they have the same dimension.

\item Type $\text{D}_n$ and $D\neq G$: In this case, by above we have the unipotent $G$-orbits in $D$ are parametrized by $\{(\lambda,\epsilon) \in \widetilde{\mathcal{P}_{-1}}(2n) \mid \lambda=(\lambda_1, \ldots, \lambda_m) \text{ with } m \text{ odd}\}$. 
\end{itemize}

Based on the parametrization above, we have an explicit description of Lusztig's map $\Phi$ for classical groups. First recall the conjugacy classes of $\tilde{W}$. We know $\tilde{W}=W$ in type $\text{B}$ and $\text{C}$. It is also well-known that $W(\text{B}_n)\simeq W(\text{C}_n)$ and $W(\text{D}_n)$ is a subgroup of $W(\text{C}_n)$ of index 2.
   
\begin{definition} 
   A bipartition of $n$ is a pair $(\lambda, \mu)$ where $\lambda=(\lambda_1,\ldots, \lambda_a)$ and $\mu=(\mu_1,\ldots,\mu_b)$ are two partitions with $\sum\limits_i \lambda_i+\sum\limits_j \mu_j=n$.
\end{definition}
Let $\mathcal{BP}(n)$ be the set of bipartitions of $n$. The conjugacy classes of $W(\text{C}_n)$, $W(\text{B}_n)$ and $\tilde{W}(\text{D}_n)$ are parametrized by $\mathcal{BP}(n)$ based on its corresponding signed cycle types. We let $\lambda$ be corresponding to all negative cycles and $\mu$ be corresponding to all positive cycles. If $\lambda$ has even number of parts, then the corresponding conjugacy class of $\tilde{W}(\text{D}_n)$ lies in $W(\text{D}_n)$. Moreover, if $\lambda$ is empty and $\mu_i$ is even for all $i$, then there are two conjugacy classes corresponding to this bipartition. (See \cite[\S 7]{Ca} for details on signed cycle types and classification). 

Given a partition $\lambda=(\lambda_1,\ldots,\lambda_a)$, we define $\psi_{\alpha}:\{1,2,\ldots,a\}\to 
\{1,0,-1\}$ as follows. We formally set $\lambda_0=\lambda_{a+1}=0$. If $i$ is odd and $\lambda_{i-1}\neq \lambda_i$, then set $\psi(i)=1$. If $i$ is even and $\lambda_{i+1}\neq \lambda_i$, then set $\psi(i)=-1$. For all remaining cases, set $\psi(i)=0$. We have the following explicit description of $\Phi:[\tilde{W}]\to [\tilde{G}_u]$ where $[\tilde{G}_u]$ is the set of all unipotent $G$-conjugacy classes in $\tilde{G}$. The description  is computed based on \cite[\S 2]{Lu1} and \cite[\S 3, \S 5]{Lu21}.
\begin{itemize}
   \item Type $\text{A}_n$: The conjugacy classes of $W$ are parametrized by $\mathcal{P}(n+1)$ based on its corresponding cycle types. For any $\lambda\in \mathcal{P}(n+1)$, Lusztig's map $\Phi$ sends $\CC_{\lambda}\in [W]$ to $\CO_{\lambda}$ in all characteristics.

   \item Type ${}^2 \text{A}_n$: The conjugacy classes of $\tilde{W}=W\rtimes \langle \delta \rangle$ can be divided into two part. Now we denote a conjugacy class lying in $W$ as $\CC^W$ and a conjugacy class lying in $W\delta$ as $\CC^{W\delta}$. The conjugacy classes lying in $W$ are again parametrized by $\CP(n+1)$ and the map $\Phi$ again sends $\CC_{\lambda}^W$ to the unipotent $G$-orbit $\CO_{\lambda}^G$ in $[G_u]$ in all characteristics. 
   
   Conjugacy classes lying in $W\delta$ are also parametrized by $\CP(n+1)$. Specifically, the conjugacy class of $\tilde{w}=w\delta$ is parametrized by the cycle type of $ww_0$ where $w_0\in W$ is the longest element. We only need to care about $\Phi$ if $\text{char}(\Bk)=2$ as $[D_u]$ is empty otherwise. Then $\Phi$ will send the conjugacy class $\CC_{\lambda}^{W\delta}$ to the unipotent $G$-orbit $\CO_{(\lambda',\epsilon_{\lambda'})}^{D}$ in $[D_u]$ where $(\lambda',\epsilon_{\lambda'})\in \widetilde{\CP_1}(n+1)$ satisfying
   \begin{equation*}
     m_{\lambda'}(k)=
    \begin{cases}
      m_{\lambda}(k)+2m_{\lambda}(2k) & \text{$k$ odd},\\
      2m_{\lambda}(2k) & \text{$k$ even},
    \end{cases}       
    \end{equation*}
   and
      \begin{equation*}
     \epsilon_{\lambda'}(2k+1)=
    \begin{cases}
      1 & m_{\lambda}(2k+1)\neq 0,\\
      0 & m_{\lambda}(2k+1)=0 \text{ and } m_{\lambda}(4k+2)\neq 0.
    \end{cases}       
    \end{equation*}
   
   \item Type $\text{C}_n$: Let $(\lambda,\mu)\in \mathcal{BP}(n)$ be a bipartition where $\lambda=(\lambda_1,\ldots,\lambda_a)\in \mathcal{P}(n_1)$ and $\mu=(\mu_1,\ldots,\mu_b)\in \mathcal{P}(n_2)$. Define a partition $\tilde{\lambda}=(2\lambda_1,\ldots, 2\lambda_a)$. Define another partition $\tilde{\mu}=(\mu_1,\mu_1,\mu_2,\mu_2,\ldots, \mu_b,\mu_b )$. By putting all the parts of $\tilde{\lambda}$ and $\tilde{\mu}$ together and rearranging them, we then obtain a partition $\nu \in \mathcal{P}_{-1}(2n)$.
   
   If $\text{char}(\Bk)\neq 2$, then Lusztig's map $\Phi$ sends $\CC_{(\lambda,\mu)}$ to $\CO_{\nu}$.

   If $\text{char}(\Bk)=2$, then Lusztig's map $\Phi$ sends $\CC_{(\lambda,\mu)}$ to $\CO_{(\nu,\epsilon_{\nu})}$. Here $\epsilon_{\nu}(2k)=1$ if $m_{\lambda}(k)\neq 0$ and $\epsilon_{\nu}(2k)=0$ otherwise.
   
   \item Type $\text{B}_n$: If $\text{char}(\Bk)= 2$, then $\Phi$ for type $\text{B}_n$ is exactly the same as $\Phi$ for type $\text{C}_n$ under the bijection of unipotent classes in type $\text{B}_n$ and $\text{C}_n$. 
   
   If $\text{char}(\Bk)\neq 2$, still let $(\lambda,\mu)\in \mathcal{BP}(n)$ be as above. Define a partition $\tilde{\lambda}=(2\lambda_1+\psi_{\lambda}(1),\ldots, 2\lambda_a+\psi_{\lambda}(a) )$ if $a$ is odd or $\tilde{\lambda}=(2\lambda_1+\psi_{\lambda}(1),\ldots, 2\lambda_a+\psi_{\lambda}(a),1 )$ if $a$ is even. Define another partition $\tilde{\mu}=(\mu_1,\mu_1,\mu_2,\mu_2,\ldots, \mu_b,\mu_b )$. One can check that $\tilde{\lambda}\in \mathcal{P}_{1}(2n_1+1)$ and $\tilde{\lambda}\in \mathcal{P}_{1}(2n_2)$. By putting all the parts of $\tilde{\lambda}$ and $\tilde{\mu}$ together and rearrange them, we then obtain a partition $\nu \in \mathcal{P}_{1}(2n+1)$. Lusztig's map $\Phi$ sends $\CC_{(\lambda,\mu)}$ to $\CO_{\nu}$.

   \item Type $\text{D}_n$: If $\text{char}(\Bk)= 2$, then $\Phi$ for type $\text{D}_n$ is just a restriction $\Phi$ for type $\text{C}_n$, except for the case when $\CC_{(\lambda,\mu)}\in [W(\text{D}_n)]$ splits into $(\CC_{(\lambda,\mu)})_{\Romannum{1}}$ and $(\CC_{(\lambda,\mu)})_{\Romannum{2}}$ and $\CO_{(\nu,\epsilon_{\nu})}\in [G_u]$ also splits into $(\CO_{(\nu,\epsilon_{\nu})})_{\Romannum{1}}=\Phi((\CC_{(\lambda,\mu)})_{\Romannum{1}})$ and $(\CO_{(\nu,\epsilon_{\nu})})_{\Romannum{2}}=\Phi((\CC_{(\lambda,\mu)})_{\Romannum{2}})$.
   
   If $\text{char}(\Bk)\neq 2$, Let $(\lambda,\mu)\in \mathcal{BP}(n)$ be a bipartition where $\lambda=(\lambda_1,\ldots,\lambda_a)\in \mathcal{P}(n_1)$ and $\mu=(\mu_1,\ldots,\mu_b)\in \mathcal{P}(n_2)$ with $a$ even. Define a partition $\tilde{\lambda}=(2\lambda_1+\psi_{\lambda}(1),\ldots, 2\lambda_a+\psi_{\lambda}(a),1 )$. Define another partition $\tilde{\mu}=(\mu_1,\mu_1,\mu_2,\mu_2,\ldots, \mu_b,\mu_b )$. By putting all the parts of $\tilde{\lambda}$ and $\tilde{\mu}$ together and rearrangeing them, we then obtain a partition $\nu \in \mathcal{P}_{1}(2n)$. Lusztig's map $\Phi$ sends $\CC_{(\lambda,\mu)}$ to $\CO_{\nu}$, except for the case when $\CC_{(\lambda,\mu)}$ splits into $(\CC_{(\lambda,\mu)})_{\Romannum{1}}$ and $(\CC_{(\lambda,\mu)})_{\Romannum{2}}$ and $\CO_{\nu}$ also splits into $(\CO_{\nu})_{\Romannum{1}}=\Phi((\CC_{(\lambda,\mu)})_{\Romannum{1}})$ and $(\CO_{\nu})_{\Romannum{2}}=\Phi((\CC_{(\lambda,\mu)})_{\Romannum{2}})$;
   \item Type $\text{D}_n$ with $D\neq G$: We again only need to care about $\Phi$ when $\text{char}(\Bk)=2$ as $[D_u]$ is empty otherwise. In this case, the map $\Phi$ is just a restriction of type $\text{C}_n$ which sends $\CC_{(\lambda,\mu)}\in [W\delta]\subset [\tilde{W}]$ to $\CO_{(\nu,\epsilon_{\nu})}$. We have $\CO_{(\nu,\epsilon_{\nu})}\subset D$ since $\lambda$ will have an odd number of parts in this case.
   \item 
For exceptional cases, one can check \cite[\S 6]{AHN} for elliptic case. As for non-elliptic case, one may follow the steps in \cite[\S 6]{Mi}.
\end{itemize}

\subsection{Strata of reductive groups}
In this subsection we assume $G$ is defined over a field of characteristic $0$. Let $\mathscr{P}$ be the set of prime numbers. Set $G^{(0)}=G$ and $G^{(r)}$ be a connected reductive group over an algebraically closed field of characteristic $r$ of the same type as $G$ for any $r\in \mathscr{P}$. For $r\in \{0\}\cup \mathscr{P}$, Let $[G^{(r)}_u]$ be the set of unipotent orbits of $G^{(r)}$. Let $\text{Irr}(W)$ be the set of isomorphism classes of irreducible representations of $W$ over $\BQ$ and $\mathcal{S}_2(W)\subset \text{Irr}(W)$ be the subset of isomorphism classes of all 2-special representations (see \cite[\S 1.1]{Lu4} for 2-special representations).

In \cite[\S2]{Lu4}, Lusztig defined a map from $G$ to $\mathcal{S}_2(W)$ and the fibers of this map are called the \textit{strata} of $G$. Let $\Str (G)$ be the set of strata of $G$. Lusztig proved that one can identify each stratum $\Sigma \in \Str(G)$ with a set $\{\CO_i^{(r_i)} \mid \CO_i^{(r_i)}\in  [G^{(r_i)}_u], r_i\in \{0\}\cup \mathscr{P}\}$ of unipotent orbits in all characteristics. Moreover, for a given stratum, all $\CO_i^{(r_i)}$ have the same dimension, independent of $r_i$.

In \cite[\S4]{Lu4}, Lusztig defined a surjective map $'\Phi$ from $[W]$ to $\mathcal{S}_2(W)$ and proved that this is compatible with Lusztig's map $\Phi^{(r)}$ in any characteristic for any $r\in \{0\}\cup \mathscr{P}$. In other words, for any $r,r'\in \{0\}\cup \mathscr{P}$, the unipotent orbits $\Phi^{(r)}(\CC)$ and $\Phi^{(r')}(\CC)$ are both in the stratum corresponding to $'\Phi(\CC)$. Therefore, the map $'\Phi$ is actually the mixed characteristic version of Lusztig's map.

In \cite[\S 6]{Lu21}, the strata is extended to the disconnected case and we denote it as $\Str(\tilde{G}):=\Str(G)\sqcup \Str(D)$ if $D\neq G$. The map $'\Phi$ is also extended to the disconnected case. However, it is only a map to unipotent $G$-orbits when restricted to $r=2$.

\subsection{Transversal slices}
In this subsection we recall the previous work on transversal slices. Fix a Borel subgroup $B$ of $G$ and let $B^-$ be its opposite Borel subgroup. Let $U,U^-$ be the unipotent radicals of $B,B^-$ respectively and $T=B\cap B^-$ be a maximal torus of $G$ with Weyl group $W$. Let $R$ be a root system of $W$ and $R^+\subset R$ be the set of positive roots. For any subset $R'$ of $R$, set $(R')^+=R'\cap R^+$.

Let $w\in W$ be a Coxeter element (a product of all simple reflections) and $\dot{w}$ be a representative of $w$. Set $U^{w}=U\cap \dot{w}U^- \dot{w}^{-1}$. Steinberg proved that $U^w\dot{w}$ is a transversal slice of the regular unipotent orbit in $G$. Furthermore, Steinberg stated the following results.
\begin{itemize}
    \item (1) The conjugation action of $U$ on $U\dot{w}U$ has trivial isotropy groups;
    \item (2) The intersection of $U^{w} \dot{w}$ with any $U$-orbit on $U\dot{w}U$ is a singleton;
    \item (3) The set of $U$-orbits on $U\dot{w}U$ is isomorphic to $\BA^{l(w)}$.
\end{itemize}
However, Steinberg did not give a proof for these. Now let $w$ be a minimal length element in any elliptic conjugacy class $\CO\in [W]$, Lusztig proved (1) is true in \cite[\S 5]{Lu1}. Moreover, if $G$ is of classical type, Lusztig proved (3) is also true in \cite[Theorem 0.4]{Lu3}. Later in the paper of He and Lusztig, they proved that (2) and (3) are true in \cite[Theorem 3.6]{HL} for any $G$ and even in twisted case. Actually $S(w):=U^{w} \dot{w}$ is a transversal slice of certain unipotent orbit of $G$.
\begin{remark} \label{2.R}
   Different choices of the representative $\dot{w}$ will give different slices. However, for convenience we will abuse the notation and denote them all as $S(w)$. 
\end{remark}

\begin{theorem}\cite[Theorem 3.6]{HL} \label{2.4}
    Let $w$ be a minimal length element in some elliptic conjugacy class $\CC \in [W]$. There is an isomorphism of varieties
        \begin{equation*}
         \Xi_{\tilde{b}}:U\times S(w)\to S(w)U
    \end{equation*}
    given by conjugation $(u,g)\mapsto ugu^{-1}$.
    Moreover, the slice $S(w)$ intersects the unipotent orbit $\Phi(\CC)$ transversally.
\end{theorem}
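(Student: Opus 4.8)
\emph{Strategy and reduction.} The plan is to reduce to the case where $w$ is a \emph{good} minimal length element, and then to prove separately that $\Xi_{\tilde b}$ is an isomorphism and that $S(w)$ meets $\Phi(\CC)$ transversally. For the reduction: by He--Nie (and Geck--Pfeiffer when $\delta=\id$), any two minimal length elements of the elliptic class $\CC$ are linked by a chain of cyclic shifts $w\rightsquigarrow sws$ with $l(sws)=l(w)$, and $\CC$ contains a good element. One checks (a short case analysis depending on whether $l(sw)=l(w)\pm 1$, together with a careful choice of representatives) that the slices $S(w)$ and $S(sws)$ are related by an isomorphism of varieties intertwining the two conjugation actions, so both assertions are invariant along the chain; hence we may assume $w$ is good, so that by Theorem~\ref{2.1} some power of $\underline{\tilde w}$ equals a product $\underline{w_0}^{a_0}\cdots\underline{w_{l-1}}^{a_{l-1}}$ of positive powers of longest elements of a descending chain of parabolic subgroups whose top term is $W$ itself.

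\emph{Step 1: $\Xi_{\tilde b}$ is an isomorphism.} Since $U=U^{w}\cdot(U\cap\dot wU\dot w^{-1})$ and $\dot w^{-1}(U\cap\dot wU\dot w^{-1})\dot w\subseteq U$, one has $S(w)U=U^{w}\dot wU=U\dot wU$, a single orbit of $U\times U$ which is smooth, irreducible and of dimension $\dim U+l(w)=\dim(U\times S(w))$. It therefore suffices to show $\Xi_{\tilde b}$ is bijective on $\Bk$-points and has bijective differential at each point, for then $\Xi_{\tilde b}$ is \'etale, and an \'etale morphism onto an irreducible variety that is bijective on closed points is an isomorphism. Bijectivity amounts to: (i) the conjugation action of $U$ on $U\dot wU$ has trivial isotropy groups, and (ii) every $U$-orbit on $U\dot wU$ meets $S(w)$ in exactly one point. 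Assertion (i) is Lusztig's theorem (\cite[\S 5]{Lu1}), which uses precisely that $w$ is elliptic, i.e.\ that $1$ is not an eigenvalue of $w$ on $V$. Assertion (ii) is the technical heart: here I would generalise Steinberg's ``sweeping'' argument for Coxeter elements, inducting along the good factorisation of the reduction step --- one pushes the root subgroups of $U$ through $\dot w$ one ``level'' at a time, each level being the unipotent radical governed by the next longest element $w_j$ of a parabolic, where the computation reduces to the classical Coxeter/longest-element case. Finally, identifying $T_gG$ with $\fkg$ by right translation, at $g\in S(w)$ the differential $d\Xi_{\tilde b}$ is the map $\Lie U\oplus\Lie U^{w}\to T_g(S(w)U)$, $(X,Y)\mapsto(1-\Ad g)X+Y$, whose bijectivity follows from ellipticity together with the infinitesimal form of (i).

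\emph{Step 2: transversality.} The isomorphism of Step 1 yields at once that the conjugation map $\mu\colon G\times S(w)\to G$, $(h,g)\mapsto hgh^{-1}$, is smooth: $S(w)U$ is open in $G$, and $\mu\circ p=\beta\circ(\mathrm{id}_G\times\Xi_{\tilde b})$, where $p\colon G\times U\times S(w)\to G\times S(w)$, $p(h,u,g)=(hu,g)$, is a smooth surjection (a $U$-torsor) and $\beta\colon G\times S(w)U\to G$ is the restriction to the open set $G\times S(w)U$ of the global conjugation $G\times G\to G$, which is a submersion; so smoothness of $\mu$ descends along $p$. Two inputs from Lusztig's theory now finish the argument: the basic orbit $\Phi(\CC)$ satisfies $\codim_G\Phi(\CC)=l(w)=\dim S(w)$ (the defining property of the elliptic-to-basic correspondence), and $S(w)\cap\Phi(\CC)\neq\emptyset$ --- a unipotent element of $U^{w}\dot w$ lying in $\Phi(\CC)$ is essentially built into the construction of $\Phi$ and may be read off from the classification recalled in \S 2.2. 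For $g\in S(w)\cap\Phi(\CC)$, smoothness of $\mu$ gives surjectivity of $d\mu_{(1,g)}$, which says exactly $T_gS(w)+T_g\Phi(\CC)=T_gG$; with the dimension identity this forces $T_gS(w)\cap T_g\Phi(\CC)=0$, so $S(w)$ is a transversal slice of $\Phi(\CC)$. (The same argument in fact shows $S(w)$ meets every $G$-conjugacy class it meets transversally.)

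\emph{Main obstacle.} The crux is Step 1(ii): extending Steinberg's explicit sweeping from Coxeter elements to arbitrary minimal length elements of elliptic classes, uniformly in $\charac(\Bk)$ and in the twisted setting. The reduction to good elements and the product factorisation of Theorem~\ref{2.1} are exactly the tools that make this feasible, but setting up the induction carefully is where the work lies. A secondary dependence is on the two facts from Lusztig's theory invoked in Step 2 --- the dimension identity $l(w)=\codim_G\Phi(\CC)$ and the nonemptiness $S(w)\cap\Phi(\CC)\neq\emptyset$ --- which rest on the fine structure of the map $\Phi$.
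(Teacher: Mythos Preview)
First, note that the paper does not prove Theorem~\ref{2.4}: it is quoted from \cite{HL}. The paper's own contribution is the generalization in Theorem~B, proved in \S4, and it is against that argument (specialized to the elliptic case) that your proposal should be compared.

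\textbf{Cross-section isomorphism.} Your outline---reduce to a good element and exploit the factorization $\underline{w}^d=\underline{w_0}^{d_0}\cdots\underline{w_{l-1}}^{d_{l-1}}$---is the paper's strategy, and your ``sweeping'' in Step~1(ii) is made precise in Proposition~\ref{4.12A}: pass to the $d$-fold diagonal $[h,\ldots,h]\in\tilde U(\tilde b^d)$, map to $\tilde U(w_*,d_*)$ via the good form, and use $U^{w_0}=U$ to absorb the stray $U$-factor. Where you diverge is in upgrading bijectivity to an isomorphism of varieties. The paper writes down an explicit inverse morphism $(\Xi_{\tilde b})'$ out of this same machinery; both directions are visibly regular, so the isomorphism holds in all characteristics with no tangent-space analysis. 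Your \'etale route needs injectivity of $d\Xi_{\tilde b}$ everywhere, and ``ellipticity together with the infinitesimal form of (i)'' does not supply it: infinitesimal trivial isotropy says only that $(1-\Ad g)|_{\Lie U}$ is injective, whereas you must exclude $(1-\Ad g)X\in\Lie U^{w}$ for nonzero $X\in\Lie U$; and in positive characteristic the group-theoretic (i) does not automatically yield its Lie-algebra version.

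\textbf{Transversality.} Here there is a genuine error. You assert that $S(w)U$ is open in $G$, so that $\beta$ inherits submersivity from global conjugation $G\times G\to G$. But $S(w)U=U\dot wU$ has dimension $\dim U+l(w)<\dim G$; it is locally closed, never open. Without openness, smoothness of $\beta\colon G\times S(w)U\to G$ amounts to $(1-\Ad x)\fkg+\Lie U+\Ad(x)\Lie U=\fkg$ for all $x\in S(w)U$, which is of the same nature as the transversality you are after, so the descent argument does not get off the ground. The paper (Proposition~\ref{4.18a}) proceeds instead by establishing a second cross-section isomorphism with $U^-$ in place of $U$ (Corollary~\ref{4.10}, obtained from the first via the anti-involution $\iota$ swapping $U_\alpha$ and $U_{-\alpha}$). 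Differentiating the two isomorphisms at $(1,g)$ gives $\fkn,\fkn^-\subset(1-\Ad g)\fkg+\Lie U^{w}$; the remaining piece $\fkt$ is handled by noting that $(1-\Ad g)|_{\fkt}$ equals $(1-w)|_{\fkt}$ modulo $\fkn+\fkn^-$, and $1-w$ is invertible on $\fkt$ by ellipticity.
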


\section{Good position representatives in braid monoids}

In this section, we generalize the definition of \text{good position} elements and construct the braid representatives in $B^+(\tilde{W})$.

\subsection{Good position braid representatives}
Keep the notations in \S 2.1. For any element $\tilde{w} \in \tilde{W}$, define
\begin{equation*}
    \Gamma_{\tilde{w}}=\{\theta\in [0,\infty) \mid e^{i\theta} \text{ is an eigenvalue of } \tilde{w} \text{ on } V_{\BC}\}.
\end{equation*}
For any $\theta \in \Gamma_{\tilde{w}}$, define $V_{\tilde{w}}^{\theta}$ and $(V_{\BC})_{\tilde{w}}^{\theta}$ as in \S 2.1. Still focus on the position of Weyl chambers with respect to $(\tilde{w},\underline{\Theta})$ and $\underline{\Theta}$ is a sequence in $[0,\infty)$ in this case. 
\begin{definition}\label{3.1}
    A pair $(\tilde{w},\underline{\Theta})$ as above is called a \textit{good position pair} if the fundamental Weyl chamber $C_0$ is in good position with respect to it.
\end{definition}

Now we give the complete form of our first main result Proposition \ref{A}.

\begin{proposition}\label{a}
   Let $(\tilde{w},\underline{\Theta})$ be a good position pair where $\underline{\Theta}$ is increasing and admissible in $\Gamma_{\tilde{w}}$ with irredundant subsequence $r(\underline{\Theta})=(\theta_{i_1},\theta_{i_2},\ldots,\theta_{i_l})$. Write $\tilde{w}= w\sigma$ with $\sigma\in \langle \delta \rangle$ and $w\in W$. Then there exists a braid $b(\tilde{w},\underline{\Theta})= b(w,\underline{\Theta})\sigma\in B^+(\tilde{W})$ such that $b(\tilde{w},\underline{\Theta})$ projects to $\tilde{w}$ under the natural projection and
\begin{equation*}
   b(\tilde{w},\underline{\Theta})^d= \underline{w_0}^\frac{d\theta_{i_1}}{\pi} \underline{w_1}^\frac{d(\theta_{i_2}-\theta_{i_1})}{\pi} \cdots \underline{w_{l-1}}^\frac{d(\theta_{i_l}-\theta_{i_{l-1}})}{\pi}\sigma^d.
\end{equation*}   
Here $d\in \BN$ such that $d\frac{\theta_{i_j}}{2\pi}\in \BZ$, and $w_j$ is the longest element in $W_{F_{i_j}}$ for $0\leq j \leq l$. Furthermore, if $d$ is even, then 
\begin{equation*}
   b(\tilde{w},\underline{\Theta})^{\frac{d}{2}}=\underline{w_0}^\frac{d\theta_{i_1}}{2\pi} \underline{w_1}^\frac{d(\theta_{i_2}-\theta_{i_1})}{2\pi} \cdots \underline{w_{l-1}}^\frac{d(\theta_{i_l}-\theta_{i_{l-1}})}{2\pi}\sigma^{\frac{d}{2}}.
\end{equation*}   
\end{proposition}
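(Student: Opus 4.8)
The proposition is, word for word, the analogue of He--Nie's Theorem~\ref{2.1} with $\Gamma_{\tilde w}^{[0,\pi]}$ replaced by $\Gamma_{\tilde w}\subseteq[0,\infty)$. The plan is to revisit the geometric mechanism behind Theorem~\ref{2.1}---in which the braid in question is produced as a positive braid in $B^+(W)$ carried by a ``spiralling'' path $\rho$ in the complement $V_{\BC}^{\reg}$ of the complexified hyperplane arrangement---and to observe that the bound $\theta\le\pi$ there is used \emph{only} to identify this braid with $\underline{\tilde w}=j_{\tilde W}(\tilde w)$ itself. Without the bound, the same path is permitted to wind further around the eigenplanes of $\tilde w$, and its positive lift is a (generally longer) braid; this is our $b(\tilde w,\underline\Theta)$. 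I would proceed in four steps: (i) reduce to $\underline\Theta=r(\underline\Theta)$; (ii) construct $b(\tilde w,\underline\Theta)$ and verify $\pi_{\tilde W}(b(\tilde w,\underline\Theta))=\tilde w$ and the factorization $b(\tilde w,\underline\Theta)=b(w,\underline\Theta)\sigma$; (iii) compute $b(\tilde w,\underline\Theta)^d$; (iv) deduce the $d$ even refinement. Steps (ii)--(iv) may equally be organized as an induction on the length $l$ of $r(\underline\Theta)$, with $\underline\Theta\subseteq\Gamma_{\tilde w}^{[0,\pi]}$ (where $b(\tilde w,\underline\Theta)=\underline{\tilde w}$ by Theorem~\ref{2.1}) as base case and the inductive step peeling off the top step, from $\theta_{i_{l-1}}$ to $\theta_{i_l}$, inside the parabolic $W_{F_{i_{l-1}}}$.

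For step (i): replacing $\underline\Theta$ by $r(\underline\Theta)=(\theta_{i_1},\dots,\theta_{i_l})$ changes neither the flag of parabolic subgroups $W=W_{F_{i_0}}\supsetneq W_{F_{i_1}}\supsetneq\cdots\supsetneq W_{F_{i_l}}=1$ nor, by the remark recorded just after the definition of $r(\underline\Theta)$, the flag $\fkH_{F_{i_0}}\supsetneq\cdots\supsetneq\fkH_{F_{i_l}}$; since $\fkH_{F_j}=\fkH_{F_{i_k}}$ for $i_k\le j<i_{k+1}$ and good position is exactly the condition $\overline{C_0}\cap F_i^{\reg}\neq\emptyset$ for all $i$, one checks that $C_0$ is in good position with respect to $(\tilde w,\underline\Theta)$ iff with respect to $(\tilde w,r(\underline\Theta))$, and both claimed identities only involve $r(\underline\Theta)$. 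So assume $\underline\Theta=(\theta_1,\dots,\theta_l)$ with $0=:\theta_0<\theta_1<\cdots<\theta_l$ and write $w_j$ for the longest element of $W_{F_j}$. For step (ii), following He--Nie: admissibility together with good position let one choose a regular point $x\in\overline{C_0}$ lying in $F_l$, and then, recursively down the flag, on each interval $[\theta_{j-1},\theta_j]$ a rotation, in the directions orthogonal to $F_{j-1}$, by the residual angle $\theta_j-\theta_{j-1}$, crossing only hyperplanes of $\fkH\setminus\fkH_{F_{j-1}}$ and always co-oriented with $\partial C_0$. Concatenating these over $[0,\theta_l]$ produces a path $\rho$ from a point of $C_0$ to a point of $w(C_0)$---the cumulative angles $\theta_j$ being chosen precisely so that the composed rotation carries $C_0$ to $w(C_0)$---and since every wall-crossing is co-oriented, its class $b(w,\underline\Theta)\in B^+(W)$ is positive and projects to $w$; set $b(\tilde w,\underline\Theta):=b(w,\underline\Theta)\sigma\in B^+(\tilde W)$, which projects to $w\sigma=\tilde w$.

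Step (iii) is the heart of the matter. The power $b(\tilde w,\underline\Theta)^d$ is represented by traversing $\rho$ a total of $d$ times, i.e.\ by the $d$-fold spiral over $[0,d\theta_l]$; the hypothesis $d\theta_j/(2\pi)\in\BZ$ (valid for every $\theta_j$, since $\theta_0=0$ and the rest are the $\theta_{i_j}$) forces the total rotation $d(\theta_j-\theta_{j-1})$ of the ``$W_{F_{j-1}}$-phase'' to be an \emph{even} multiple of $\pi$, so that this phase closes up and contributes exactly $\underline{w_{j-1}}^{\,d(\theta_j-\theta_{j-1})/\pi}$---here one uses the standard fact that a half-turn of the reflection arrangement of $W_{F_{j-1}}$ realizes the lift $\underline{w_{j-1}}$ of its longest element, so that $\underline{w_{j-1}}^{\,2}$ is the full twist, central in $B^+(W_{F_{j-1}})$. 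Reading the phases in order of increasing $j$, and collecting the diagram-automorphism contributions (which fix each $w_j$ up to the $\sigma$-permutation of the subspaces $F_j$) into a trailing $\sigma^d$, gives $b(\tilde w,\underline\Theta)^d=\underline{w_0}^{\,d\theta_1/\pi}\underline{w_1}^{\,d(\theta_2-\theta_1)/\pi}\cdots\underline{w_{l-1}}^{\,d(\theta_l-\theta_{l-1})/\pi}\sigma^d$, once one checks that the factors may be put in this order: each $\underline{w_{j-1}}^{\,d(\theta_j-\theta_{j-1})/\pi}$ is a power of the full twist of $B^+(W_{F_{j-1}})$, hence central there, while every later factor $\underline{w_j}^{\bullet},\dots,\underline{w_{l-1}}^{\bullet}$ lies in $B^+(W_{F_j})\subseteq B^+(W_{F_{j-1}})$ by nestedness, so the full twists slide to the front in the displayed order. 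This is precisely the reorganization used in the proof of Theorem~\ref{2.1}; the only new feature is that $d(\theta_j-\theta_{j-1})/\pi$ may now exceed $d$, which changes nothing in the argument. The main obstacle is therefore not conceptual but careful bookkeeping: making the recursive construction of $\rho$ in step (ii) rigorous, and verifying in step (iii) that the extra windings of $\rho$ create no spurious wall-crossings and keep the lift positive.

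For step (iv): when $d$ is even, traverse $\rho$ only $d/2$ times; since $d\theta_j/(2\pi)\in\BZ$ also gives $d(\theta_j-\theta_{j-1})/(2\pi)\in\BZ$, each $W_{F_{j-1}}$-phase now consists of an integral number $d(\theta_j-\theta_{j-1})/(2\pi)$ of half-turns and still closes up, contributing $\underline{w_{j-1}}^{\,d(\theta_j-\theta_{j-1})/(2\pi)}$; the same sliding argument---now pushing the full-twist part of each factor to the front and leaving at most one residual $\underline{w_{j-1}}$ in place---yields $b(\tilde w,\underline\Theta)^{d/2}=\underline{w_0}^{\,d\theta_1/(2\pi)}\cdots\underline{w_{l-1}}^{\,d(\theta_l-\theta_{l-1})/(2\pi)}\sigma^{d/2}$. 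As a consistency check one should confirm that when $\underline\Theta\subseteq\Gamma_{\tilde w}^{[0,\pi]}$ the spiral $\rho$ winds at most a half-turn about each eigenplane, so that $b(\tilde w,\underline\Theta)=\underline{\tilde w}$ and both displayed formulas collapse to those of Theorem~\ref{2.1}.
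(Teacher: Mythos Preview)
Your reduction in step~(i) to the irredundant subsequence is correct and matches the paper's Lemma~\ref{3.4}. Beyond that, your approach diverges from the paper's, and the geometric argument as sketched has a real gap.

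The paper proceeds algebraically by induction on $|W|$, peeling off the \emph{bottom} angle $\theta_{i_1}$ rather than the top. Writing $\theta_{i_1}=2p\pi/q$ in lowest terms (for $\theta_{i_1}\in(0,2\pi)$), it uses the decomposition $\tilde w=u_1\tilde u$ from \cite[Proposition~2.2]{HN} with $u_1\in W_{F_{i_1}}$ and $\tilde u$ a minimal double-coset representative normalizing $W_{F_{i_1}}$, then applies the inductive hypothesis to $(u_1\delta_{\tilde u},\underline\Theta_1)$ inside $\tilde W_{F_{i_1}}$. The new ingredient beyond Theorem~\ref{2.1} is an auxiliary element $\tilde u'=\tilde u^t$ with $pt\equiv1\pmod q$: since $V_{\tilde w}^{\theta_{i_1}}=V_{\tilde u^t}^{2\pi/q}$ with $2\pi/q\in(0,\pi)$, \cite[Lemma~5.2]{HN} still applies and gives $(\underline{\tilde u'})^q=\underline{w_1w_0}\cdot\underline{w_0w_1}\cdot\sigma^{tq}$. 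One then sets $\tilde b=b(u_1,\underline\Theta_1)\cdot(\underline{\tilde u'})^p\sigma^{-sq}$, and the $d$-th power splits cleanly because the right-hand factor acts on $B^+(W_{F_{i_1}})$ as the twist $\delta_{\tilde u}$; angles $\theta_{i_1}\ge2\pi$ are handled afterwards by inserting extra copies of $\underline{w_1w_0}\cdot\underline{w_0w_1}$. No path in $V_{\BC}^{\reg}$ is ever written down. Your alternative top-down induction (peeling off $\theta_{i_l}$ inside $W_{F_{i_{l-1}}}$) runs into the problem that the truncated sequence $(\theta_{i_1},\dots,\theta_{i_{l-1}})$ is no longer admissible for $\tilde w$ in $W$, whereas the paper's bottom-up step passes to the smaller group $W_{F_{i_1}}$ where admissibility is preserved.

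The gap in your direct geometric argument lies in step~(iii). First, there is an internal inconsistency: you say phase~$j$ crosses only hyperplanes in $\fkH\setminus\fkH_{F_{j-1}}$, yet then identify its total contribution with $\underline{w_{j-1}}^{\,d(\theta_j-\theta_{j-1})/\pi}\in B^+(W_{F_{j-1}})$, which requires crossing only hyperplanes \emph{in} $\fkH_{F_{j-1}}$. Second, even with that corrected, the reordering is circular: a single phase-$j$ contribution $\beta_j$ is not itself a power of $\underline{w_{j-1}}$ (the would-be exponent $(\theta_j-\theta_{j-1})/\pi$ need not be an integer), so you cannot yet invoke centrality of the full twist $\underline{w_{j-1}}^2$ to commute $\beta_j$ past the later phases; but exactly such commutations are needed to gather the $d$ scattered copies of $\beta_j$ into a single block $\underline{w_{j-1}}^{\,d(\theta_j-\theta_{j-1})/\pi}$ in the first place. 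What makes the paper's argument work is that the factorization $\tilde b=(\text{piece in }B^+(W_{F_{i_1}}))\cdot(\text{piece normalizing }W_{F_{i_1}})$ is built in from the outset, so no post-hoc reordering inside a single copy of $\tilde b$ is ever required.
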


Based on above result, we have the following definition.

\begin{definition}\label{3.2}
   Let $(\tilde{w},\underline{\Theta})$ be a good position pair. The element $b(\tilde{w},\underline{\Theta})$ in the proposition is called a \textit{good position braid representative} of $(\tilde{w},\underline{\Theta})$. In particular, if $\underline{\Theta}$ is the increasing complete sequence in $(0,\pi]\cup \{2\pi\}$ for $\tilde{w}$, then $b(\tilde{w},\underline{\Theta})$ is called a \textit{good position braid representative} of $\tilde{w}$, denoted as $b(\tilde{w})$.
\end{definition}

\begin{remark} 
If we replace the increasing complete sequence in $(0,\pi] \cup \{2\pi\}$ by any increasing admissible sequence $\underline{\Theta}$ in $\Gamma_{\tilde{w}}$ such that $(\tilde{w},\underline{\Theta})$ is a good position pair, we can obtain many different braid representatives.
\end{remark}

By \cite[Lemma 5.1]{HN}, for any $W$-conjugacy class $\tilde{\CC} \in [\tilde{W}]$ and any sequence $\underline{\Theta}$ in $\Gamma_{\tilde{w}}^{[0,\pi]}$, there exists $\tilde{w} \in \tilde{\CC}$ such that $(\tilde{w},\underline{\Theta})$ is a good position pair. Since the proof of this result actually does not require anything on $\underline{\Theta}$, it holds for any $\underline{\Theta}$ in $\Gamma_{\tilde{w}}$. Now for any $\tilde{\CC} \in [\tilde{W}]$, it admits a good position braid representative $b(\tilde{w})$ of some $\tilde{w}\in \tilde{\CC}$, we call $b(\tilde{w})$ a \textit{good position braid representative} of $\tilde{\CC}$. This element $\tilde{w}$ here is not unique in general. Moreover, it is not even unique up to inverse in general. However, all good position braid representatives of a conjugacy class will have the same length. Let $l$ be the length function of $\tilde{W}$. Extend it to a length function of $B^+(\tilde{W})$ naturally and we still denote it as $l$. 

\begin{lemma}\label{3.3}
   Keep the notations in proposition \ref{a}. We have
   \begin{equation*}
       l(b(\tilde{w},\underline{\Theta}))=  \sum\limits_{j=1}^l \frac{\theta_{i_j}}{\pi} \lvert  \fkH_{F_{i_{j-1}}}-\fkH_{F_{i_{j}}}   \rvert.
   \end{equation*}
   Moreover, the right hand side will be the same if we replace $\tilde{w}$ by any other $\tilde{w}'\in \tilde{\CC}$, even when $(\tilde{w}',\underline{\Theta})$ is not a good position pair.
\end{lemma}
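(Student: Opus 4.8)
The plan is to read off the length from the power identity of Proposition~\ref{a}, and then to prove the invariance under conjugacy by rewriting the right‑hand side as a sum indexed by the hyperplanes in $\fkH$. First I would use that the length $l$ on $B^+(\tilde W)$ is a monoid homomorphism to $\BN$ with $l(\underline{\tilde v})=l(\tilde v)$ for $\tilde v\in\tilde W$, so that $l(\sigma^d)=0$ for $\sigma\in\langle\delta\rangle$. Applying $l$ to
\[
   b(\tilde{w},\underline{\Theta})^d= \underline{w_0}^{\frac{d\theta_{i_1}}{\pi}} \underline{w_1}^{\frac{d(\theta_{i_2}-\theta_{i_1})}{\pi}} \cdots \underline{w_{l-1}}^{\frac{d(\theta_{i_l}-\theta_{i_{l-1}})}{\pi}}\sigma^d
\]
and dividing by $d$ gives, with $\theta_{i_0}:=0$,
\[
   l\bigl(b(\tilde{w},\underline{\Theta})\bigr)=\frac1\pi\sum_{j=1}^{l}(\theta_{i_j}-\theta_{i_{j-1}})\,l(w_{j-1}).
\]
The next point is the identification $l(w_{j-1})=\lvert\fkH_{F_{i_{j-1}}}\rvert$: since $C_0$ is in good position there is a point $x\in\overline{C_0}\cap F_{i_{j-1}}^{\reg}$, the hyperplanes of $\fkH$ through $x$ are exactly those containing $F_{i_{j-1}}$, hence $W_{F_{i_{j-1}}}$ is the stabilizer of $x$ in $W$ --- a standard parabolic subgroup --- and its longest element $w_{j-1}$ has length equal to the number of reflections it contains, namely $\lvert\fkH_{F_{i_{j-1}}}\rvert$.

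Because $\underline{\Theta}$ is admissible, $W_{F_m}$ is trivial, so $\fkH_{F_{i_l}}=\emptyset$; together with the inclusions $\fkH_{F_{i_j}}\subseteq\fkH_{F_{i_{j-1}}}$ a summation by parts turns the above into
\[
   l\bigl(b(\tilde{w},\underline{\Theta})\bigr)=\frac1\pi\sum_{j=1}^{l}\theta_{i_j}\bigl(\lvert\fkH_{F_{i_{j-1}}}\rvert-\lvert\fkH_{F_{i_j}}\rvert\bigr)=\sum_{j=1}^{l}\frac{\theta_{i_j}}{\pi}\,\bigl\lvert\fkH_{F_{i_{j-1}}}-\fkH_{F_{i_j}}\bigr\rvert,
\]
which is the claimed formula. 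For the ``moreover'' assertion I would show that this right‑hand side --- an expression involving only $\underline{\Theta}$, the real eigenspaces $V_{\tilde w}^{\theta_k}$ (through the filtration $F_\bullet$), and $\fkH$ --- can be rewritten as
\[
   \frac1\pi\sum_{H\in\fkH}\theta_{n_{\tilde w}(H)},\qquad n_{\tilde w}(H):=\min\{\,k:V_{\tilde w}^{\theta_k}\not\subseteq H\,\}.
\]
This comes from undoing the summation by parts, spreading the sum over all indices $1,\dots,m$ (on which $\fkH_{F_\bullet}$ is constant between consecutive irredundant indices), and interchanging the order of summation; one uses that $F_m$ contains a regular point, so $n_{\tilde w}(H)$ is defined, and that $n_{\tilde w}(H)$ is itself an irredundant index, so that the hyperplane‑indexed sum literally matches the irredundant‑subsequence sum. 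Now a $W$‑conjugation $\tilde w'=g\tilde w g^{-1}$ has $\Gamma_{\tilde w'}=\Gamma_{\tilde w}$, keeps $\underline{\Theta}$ admissible, sends $V_{\tilde w}^{\theta_k}$ to $gV_{\tilde w}^{\theta_k}$, and permutes $\fkH$; hence $n_{\tilde w'}(H)=n_{\tilde w}(g^{-1}H)$ and $\sum_H\theta_{n_{\tilde w'}(H)}=\sum_H\theta_{n_{\tilde w}(H)}$. Since every element of $\tilde{\CC}$ is $W$‑conjugate to $\tilde w$, the right‑hand side is the same for each of them, whether or not $(\tilde w',\underline{\Theta})$ is a good position pair.

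The routine parts --- additivity of $l$, the summations by parts, the Fubini‑type interchange --- are mechanical. The step that needs genuine care is the reformulation as $\frac1\pi\sum_{H\in\fkH}\theta_{n_{\tilde w}(H)}$: the filtration $F_\bullet$ and its irredundant subsequence both depend on $\tilde w$, so one must check that $\fkH_{F_\bullet}$ drops off precisely at the index where $V_{\tilde w}^{\theta_\bullet}$ first leaves a given hyperplane $H$ --- equivalently that $n_{\tilde w}(H)\in\{i_1,\dots,i_l\}$ --- so that the two sums coincide. This is the crux, and once it is in place the conjugacy‑invariance is transparent.
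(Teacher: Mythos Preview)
Your proof is correct. The length formula is derived just as the paper intends when it says ``direct from Proposition~\ref{a}'': take lengths on both sides of the power identity, use $l(w_{j-1})=\lvert\fkH_{F_{i_{j-1}}}\rvert$, and Abel-sum.

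For the ``moreover'' part you take a slightly different route from the paper. The paper argues directly: writing $\tilde w'=u\tilde w u^{-1}$, one has $V_{\tilde w'}^{\theta}=u(V_{\tilde w}^{\theta})$, hence $F_i'=u(F_i)$ and the irredundant subsequence is unchanged; then $(\alpha,F_i)=0\Leftrightarrow(u\alpha,F_i')=0$ gives a bijection $\fkH_{F_{i_j}}\leftrightarrow\fkH_{F'_{i_j}}$ and the cardinalities match term by term. Your approach first rewrites the right-hand side as $\frac{1}{\pi}\sum_{H\in\fkH}\theta_{n_{\tilde w}(H)}$ and then checks that this hyperplane-indexed sum is conjugation-invariant. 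Both arguments rest on the same fact --- that $W$-conjugation transports the filtration $F_\bullet$ and permutes $\fkH$ compatibly --- but the paper's version is shorter since it avoids the intermediate reformulation, while your version makes the invariance visible as a single sum over a $W$-stable index set and spares you the separate check that $r(\underline{\Theta})$ is unchanged under conjugation.
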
   
\begin{proof}
    The equation is direct from proposition \ref{a}. It suffices to prove the moreover part. First we notice that applying a twist will not change the right hand side. Let $\tilde{w}'=u\tilde{w}u^{-1}$ for some $u\in W$ and write $\underline{\Theta}=(\theta_1,\ldots,\theta_k)$. For any $1\leq i\leq k$, let $F_i'=\sum\limits_{j=1}^i V_{\tilde{w}'}^{\theta_j}$.  Let $r(\underline{\Theta})'$ be the irredundant subsequence corresponding to $(\tilde{w}',\underline{\Theta})$. For any $\theta$ in $\Gamma_{\tilde{w}}=\Gamma_{\tilde{w}'}$ and $v\in V_{\tilde{w}}^{\theta}$, we have $\tilde{w}'(u(v))+\tilde{w}'^{-1}(u(v))=u\tilde{w}(v)+u\tilde{w}^{-1}(v)=2\cos \theta u(v)$. Then $V_{\tilde{w}'}^{\theta}=u (V_{\tilde{w}}^{\theta})$. This tells that $F_i'=u(F_i)$ and $r(\underline{\Theta})'=r(\underline{\Theta})$. Thus $(\alpha,F_i)=0$ is equivalent to $(u\alpha, F_i')=0$ for any root $\alpha\in R$. Therefore, we have $\lvert  \fkH_{F_{i_{j-1}}}-\fkH_{F_{i_{j}}}   \rvert=\lvert  \fkH_{F_{i_{j-1}}'}-\fkH_{F_{i_{j}}'}   \rvert$ and the right hand side are the same.
\end{proof}

\subsection{Proof of proposition \ref{a}} We follow the idea of the proof of \cite[Theorem 5.3]{HN}. We prove by induction on $\left| W \right|$. Assume that this theorem holds for any $\tilde{W'}$ with $\left| W' \right|<\left| W \right|$. We first reduce $\underline{\Theta}$ to its irredundant subsequence.

\begin{lemma}\label{3.4}
Suppose that $(\tilde{w},\underline{\Theta})$ is a good position pair, then $(\tilde{w},r(\underline{\Theta}))$ is also a good position pair. 
\end{lemma}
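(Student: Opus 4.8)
The plan is to unwind the definitions and show that the "good position" condition for the pair $(\tilde{w},\underline{\Theta})$ — namely that $\overline{C_0}\cap F_i^{\reg}\neq\emptyset$ for all $i$ — already implies the corresponding condition for the irredundant subsequence $r(\underline{\Theta})=(\theta_{i_1},\dots,\theta_{i_l})$, whose partial-sum flag is $0=F_0\subset F_{i_1}\subset F_{i_2}\subset\cdots\subset F_{i_l}$. Since the flag attached to $r(\underline{\Theta})$ is literally a subsequence of the flag attached to $\underline{\Theta}$ (the $F_{i_j}$ are among the $F_i$'s), the required intersections $\overline{C_0}\cap F_{i_j}^{\reg}$ are a subset of the intersections already assumed nonempty. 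So the only real point to check is that the notion of \emph{regular} point used for $F_{i_j}$ in the subsequence agrees with the one used for $F_{i_j}$ in the original sequence; but regularity of a point $x\in V'$ was defined intrinsically in terms of $V'$ (namely $x\notin H$ for any $H\in\fkH-\fkH_{V'}$), so $F_{i_j}^{\reg}$ is the same subset of $F_{i_j}$ regardless of which ambient sequence we view it in. Hence the implication is immediate from the definitions.

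Concretely, the steps are: (i) recall that by definition $(\tilde{w},\underline{\Theta})$ good position means $\overline{C_0}\cap F_i^{\reg}\neq\emptyset$ for every $i$ in the range of $\underline{\Theta}$; (ii) recall that $r(\underline{\Theta})$ consists of exactly those $\theta_{i_j}$ at which the subgroup chain $W_{F_i}$ strictly drops, so the associated filtration $0=F_0\subset F_{i_1}\subset\cdots\subset F_{i_l}$ consists of members of the filtration of $\underline{\Theta}$; (iii) observe that regularity is defined pointwise relative to the subspace in question and is insensitive to the surrounding sequence, so $\overline{C_0}\cap F_{i_j}^{\reg}\neq\emptyset$ for each $j$; (iv) conclude that $(\tilde{w},r(\underline{\Theta}))$ satisfies the good position condition. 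One should also note, for completeness, that $r(\underline{\Theta})$ is still increasing (it is a subsequence of an increasing sequence) and still admissible, since the top space $F_{i_l}$ of the irredundant flag equals the top space $F_m$ of the original flag — indeed $W_{F_{i_l}}=W_{F_m}$ by construction of $r(\underline{\Theta})$, and admissibility is the condition $F_m\cap V^{\reg}\neq\emptyset$, equivalently $W_{F_m}$ trivial — so admissibility transfers verbatim.

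I do not anticipate a genuine obstacle here; the lemma is essentially a bookkeeping observation that passing to the irredundant subsequence only \emph{removes} constraints from the good-position condition while preserving the top of the flag. The one place to be a little careful is to make sure the equality "$F_{i_l}=F_m$ as subspaces" (not merely $W_{F_{i_l}}=W_{F_m}$) is what one needs for the admissibility clause; this holds because the definition of $r(\underline{\Theta})$ forces $W_{F_{i_l}}=W_{F_m}$, and since $\underline{\Theta}$ is admissible this common group is trivial, so both $F_{i_l}$ and $F_m$ meet $V^{\reg}$. If one instead wanted the stronger statement that the two flags have the same \emph{set} of distinct subspaces, that is exactly the defining property of $r(\underline{\Theta})$, recorded already in the remark that $\fkH_{F_{i_j}}=\cdots=\fkH_{F_{i_{j+1}-1}}\neq\fkH_{F_{i_{j+1}}}$. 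So the proof will be short: quote the definitions, note the flag inclusion, invoke the intrinsic nature of regularity, and check increasing/admissible are inherited.
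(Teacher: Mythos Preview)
Your proposal contains a genuine gap: you have misidentified the filtration attached to $r(\underline{\Theta})$. Given $r(\underline{\Theta})=(\theta_{i_1},\dots,\theta_{i_l})$, the associated filtration is by definition
\[
F'_k \;=\; \sum_{j=1}^{k} V_{\tilde w}^{\theta_{i_j}},
\]
which is in general a \emph{proper} subspace of $F_{i_k}=\sum_{j=1}^{i_k} V_{\tilde w}^{\theta_j}$, since the eigenspaces $V_{\tilde w}^{\theta_j}$ corresponding to the dropped indices $j\notin\{i_1,\dots,i_l\}$ are no longer included. It is true that $\fkH_{F'_k}=\fkH_{F_{i_k}}$ (this follows from the defining property $W_{F_{i_j}}=W_{F_{i_j+1}}=\cdots=W_{F_{i_{j+1}-1}}$ of the irredundant subsequence), so the notion of regularity agrees; but the known regular point in $\overline{C_0}\cap F_{i_k}^{\reg}$ need not lie in the smaller subspace $F'_k$ at all. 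So step~(ii) of your argument, claiming that the new flag ``consists of members of the filtration of $\underline{\Theta}$'', is false as stated, and step~(iii) does not furnish the required point.

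The paper fills this gap with a short inductive perturbation: assuming one has $v=v_1+\cdots+v_k\in\overline{C_0}\cap (F'_k)^{\reg}$ with $v_j\in V_{\tilde w}^{\theta_{i_j}}$, one takes any $u\in\overline{C_0}\cap F_{i_{k+1}}^{\reg}$ (which exists by the original good-position hypothesis), projects it onto $V_{\tilde w}^{\theta_{i_{k+1}}}$ to get $v_{k+1}$, and sets $v'=v+\epsilon v_{k+1}$ for small $\epsilon>0$. For roots $\alpha$ with $H_\alpha\not\supset F'_k$ one has $(\alpha,v')>0$ by choosing $\epsilon$ small; for roots with $H_\alpha\supset F'_k$ but $H_\alpha\not\supset F'_{k+1}$ one gets $(\alpha,v')=\epsilon(\alpha,v_{k+1})>0$ since $(\alpha,u)>0$ and $\alpha$ annihilates the other eigen-components of $u$. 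This produces the needed regular point of $F'_{k+1}$ in $\overline{C_0}$. Your ``bookkeeping'' framing misses exactly this construction.
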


\begin{proof}
It suffices to show that $\overline{C_0}\cap (\sum\limits_{j=1}^k V_{\tilde{w}}^{\theta_{i_j}})^{\reg}$ is nonempty for all $1\leq k\leq l$. We prove by induction. This is clearly true when $k=1$. 

Suppose that the statement is true for $k$, then there exists $v=v_1+\cdots +v_k\in \overline{C_0}\cap (\sum\limits_{j=1}^k V_{\tilde{w}}^{\theta_{i_j}})^{\reg}$ where $v_j \in V_{\tilde{w}}^{\theta_{i_j}}$. Take $u\in \overline{C_0}\cap F_{i_{k+1}}^{\reg}$ and let $v_{k+1}$ be its projection onto $V_{\tilde{w}}^{\theta_{i_{k+1}}}$. We set $v'=v+\epsilon v_{k+1}\in \sum\limits_{j=1}^{k+1} V_{\tilde{w}}^{\theta_{i_j}}$ where $\epsilon>0$ is sufficiently small. For any positive root $\alpha$ such that $H_{\alpha} \not\supset \sum\limits_{j=1}^{k} V_{\tilde{w}}^{\theta_{i_j}}$, we have $(\alpha,v')>0$ since $\epsilon$ is sufficiently small. For any positive root $\alpha$ such that $H_{\alpha} \supset \sum\limits_{j=1}^{k} V_{\tilde{w}}^{\theta_{i_j}}$ and $H_{\alpha} \not\supset \sum\limits_{j=1}^{k+1} V_{\tilde{w}}^{\theta_{i_j}}$, we have $(\alpha,v')=\epsilon(\alpha,v_{k+1})>0$. Therefore the statement is true for $k+1$. 
\end{proof}

Since $(\tilde{w},\underline{\Theta})$ is a good position pair, then by definition $\overline{C_0}\cap F_{i_1}^{\reg}$ is nonempty. By \cite[Proposition 2.2]{HN}, we have $\tilde{w}=u_1\tilde{u} $ where $u_1 \in W_{F_{i_1}}$ and $\tilde{u}\in  W\sigma$ is a minimal double coset representative in $W_{I(F_{i_1})} \backslash \tilde{W} / W_{I(F_{i_1})}$ preserving the simple reflections in $I(F_{i_1})$. Therefore, the conjugation action of $\tilde{u}$ on $W_{F_{i_1}}$ is actually a twist and we denote it by $\delta_{\tilde{u}}$.

Under our identification, let $V'\subset V$ be the subspace generated by $\alpha\in R$ with $H_\alpha\supset F_{i_1}$. Then $V'$ is a real reflection representation of $\tilde{W}_{F_{i_1}}=W_{F_{i_1}}\rtimes \langle \delta_{\tilde{u}} \rangle$ and $C_0\cap V'$ is its fundamental Weyl chamber. Set $\underline{\Theta}_1=(\theta_{i_2},\ldots,\theta_{i_l})$. Since $C_0$ is in good position with respect to $(\tilde{w},r(\underline{\Theta}))$ by lemma \ref{3.4}, then $C_0\cap V'$ is in good position with respect to $( u_1\delta_{\tilde{u}},\underline{\Theta}_1)$. 

Now focus on $\tilde{W}_{F_{i_1}}$, we have $\underline{\Theta}_1$ is increasing and admissible with $r(\underline{\Theta}_1)=\underline{\Theta}_1$ by our proof of lemma \ref{3.4}. By induction hypothesis, there exists an element $b( u_1,\underline{\Theta}_1)$ in $B^+(\tilde{W}_{F_{i_1}})\subset B^+(\tilde{W})$ satisfying that $\pi_{\tilde{W}}(b(u_1,\underline{\Theta}_1))=u_1$ and
\begin{equation*}
    (b( u_1,\underline{\Theta}_1)\delta_{\tilde{u}} )^d=\underline{w_1}^{\frac{d\theta_{i_2}}{\pi}}\underline{w_2}^{\frac{d(\theta_{i_3}-\theta_{i_2})}{\pi}}\cdots \underline{w_{l-1}}^{\frac{d(\theta_{i_l}-\theta_{i_{l-1}})}{\pi}}(\delta_{\tilde{u}})^{d}.
\end{equation*}

First assume that $\theta_{i_1}=0$ or $\frac{2p\pi}{q}\in (0,2\pi)$ for some $p,q\in \BN_{>0}$ with $(p,q)=1$. Clearly $q\vert d$ and $q>1$. If $\theta_{i_1}=0$ then we choose $\tilde{u}'=\tilde{u}$ which is equal to $\sigma$ by \cite[Lemma 5.2]{HN}. If $\theta_{i_1}>0$, let $t,s\in \BN$ be such that $pt=sq+1$. Let $v_0\in \overline{C_0}$ be a regular point of $V_{\tilde{w}}^{\theta_{i_1}}$. Since $\tilde{u}^q$ fixes $v_0$ and the connected component of $V-\bigcup\limits_{H\in \fkH_{F_{i_1}}}H$ containing $C_0$, we have $\tilde{u}^q=\sigma^q$. Consider the element $\tilde{u}':= \tilde{u}^t  \in \tilde{W}$. For any $v\in V_{\tilde{w}}^{\theta_{i_1}}$ and any simple reflection $s_{\alpha}\in W_{F_{i_1}}$, we have $s_{\alpha}(v)=v$ since $H_{\alpha}\supset V_{\tilde{w}}^{\theta_{i_1}}$. Notice that $W_{F_{i_1}}<W$ is a standard parabolic subgroup by \cite[\S 2.1]{HN}, then any element in $W_{F_{i_1}}$ fixes $v$. This tells that $V_{\tilde{w}}^{\theta_{i_1}}=V_{\tilde{u}}^{\theta_{i_1}}=V_{\tilde{u}^t}^{\frac{2\pi}{q}}$. Clearly $\frac{2\pi}{q}\in (0,\pi)$. By \cite[Lemma 5.2]{HN}, we have 
\begin{equation*}
    (\underline{\tilde{u}'})^q=(\underline{w_1w_0} \cdot \underline{w_0w_1})\sigma^{tq}.
\end{equation*}
Consider $\tilde{b}=b(u_1,\underline{\Theta}_1)(\underline{\tilde{u}'})^p\sigma^{-sq}  \in B^+(\tilde{W})$. One can see that $(\underline{\tilde{u}'})^p\sigma^{-sq} $ acts on $W_{F_{i_1}}$ as $\tilde{u}^{pt}\sigma^{-sq}=\tilde{u}$, which is exactly $\delta_{\tilde{u}}$. Therefore, by induction hypothesis we have
\begin{eqnarray*}   
   \tilde{b}^d&=&b(u_1,\underline{\Theta}_1)\delta_{\tilde{u}}(b(u_1,\underline{\Theta}_1))\cdots \delta_{\tilde{u}}^{d-1}(b(u_1,\underline{\Theta}_1))((\underline{\tilde{u}'})^p\sigma^{-sq})^d\\   
   &=&  \underline{w_1}^{\frac{d\theta_{i_2}}{\pi}}\underline{w_2}^{\frac{d(\theta_{i_3}-\theta_{i_2})}{\pi}}\cdots \underline{w_{l-1}}^{\frac{d(\theta_{i_l}-\theta_{i_{l-1}})}{\pi}} ((\underline{\tilde{u}'})^p\sigma^{-sq})^d  \\
     &=& \underline{w_1}^{\frac{d\theta_{i_2}}{\pi}}\underline{w_2}^{\frac{d(\theta_{i_3}-\theta_{i_2})}{\pi}}\cdots \underline{w_{l-1}}^{\frac{d(\theta_{i_l}-\theta_{i_{l-1}})}{\pi}} (\underline{\tilde{u}'})^{pd} \sigma^{-sqd} \\
      &=& (\underline{w_1w_0} \cdot \underline{w_ow_1})^{\frac{d\theta_{i_1}}{2\pi}} \underline{w_1}^{\frac{d\theta_{i_2}}{\pi}}\underline{w_2}^{\frac{d(\theta_{i_3}-\theta_{i_2})}{\pi}}\cdots \underline{w_{l-1}}^{\frac{d(\theta_{i_l}-\theta_{i_{l-1}})}{\pi}} \sigma^{d} \\
   &=& \underline{w_0}^{\frac{d\theta_{i_1}}{\pi}}  \underline{w_1}^{\frac{d(\theta_{i_2}-\theta_{i_1})}{\pi}}  \cdots \underline{w_{l-1}}^{\frac{d(\theta_{i_l}-\theta_{i_{l-1}})}{\pi}}\sigma^d.
\end{eqnarray*}
Here the second equation is because $\sigma^{-sq}=\tilde{u}^{-sq}$. Meanwhile, one can check that $\pi_{\tilde{W}}(\tilde{b})=u_1 \tilde{u}^{pt}\sigma^{-sq}=u_1\tilde{u}=\tilde{w}$.

Finally let $\theta_{i_1}$ be arbitrary. There exists $k\in \BN$ such that $\theta_{i_1}'=\theta_{i_1}-2k\pi\in [0,2\pi)$. By previous discussion, there exists $b(u_1,\underline{\Theta}_1)\cdot \tilde{b}'\in B^+(\tilde{W})$ such that $\pi_{\tilde{W}}(\tilde{b}')=\tilde{u}$ and 
\begin{equation*}
    (b(u_1,\underline{\Theta}_1)\cdot \tilde{b}')^d=\underline{w_0}^{\frac{d\theta_{i_1}}{\pi}}  \underline{w_1}^{\frac{d(\theta_{i_2}-\theta_{i_1})}{\pi}}  \cdots \underline{w_{l-1}}^{\frac{d(\theta_{i_l}-\theta_{i_{l-1}})}{\pi}}\sigma^d .
\end{equation*}
Now we consider $\tilde{b}:=b(u_1,\underline{\Theta}_1) \cdot (\underline{w_1w_0} \cdot \underline{w_0w_1})^{k}\cdot\tilde{b}'$. Clearly we have $\pi_{\tilde{W}}(\tilde{b})=\tilde{w}$.
Notice that $\underline{w_1w_0} \cdot \underline{w_0w_1}$ commutes with $W_{F_{i_1}}$, then we have
\begin{eqnarray*}   
   \tilde{b}^d&=&  \underline{w_1}^{\frac{d\theta_{i_2}}{\pi}}\underline{w_2}^{\frac{d(\theta_{i_3}-\theta_{i_2})}{\pi}}\cdots \underline{w_{l-1}}^{\frac{d(\theta_{i_l}-\theta_{i_{l-1}})}{\pi}}( (\underline{w_1w_0} \cdot \underline{w_0w_1})^{k}\cdot \tilde{b}')^d   \\
     &=& \underline{w_1}^{\frac{d\theta_{i_2}}{\pi}}\underline{w_2}^{\frac{d(\theta_{i_3}-\theta_{i_2})}{\pi}}\cdots \underline{w_{l-1}}^{\frac{d(\theta_{i_l}-\theta_{i_{l-1}})}{\pi}} (\underline{w_1w_0} \cdot \underline{w_0w_1})^{kd} (\tilde{b}')^{d}\\
      &=& (\underline{w_1w_0} \cdot \underline{w_0w_1})^{\frac{d\theta_{i_1}}{2\pi}} \underline{w_1}^{\frac{d\theta_{i_2}}{\pi}}\underline{w_2}^{\frac{d(\theta_{i_3}-\theta_{i_2})}{\pi}}\cdots \underline{w_{l-1}}^{\frac{d(\theta_{i_l}-\theta_{i_{l-1}})}{\pi}} \sigma^{d} \\
   &=& \underline{w_0}^{\frac{d\theta_{i_1}}{\pi}}  \underline{w_1}^{\frac{d(\theta_{i_2}-\theta_{i_1})}{\pi}}  \cdots \underline{w_{l-1}}^{\frac{d(\theta_{i_l}-\theta_{i_{l-1}})}{\pi}}\sigma^d.
\end{eqnarray*}
Here the second equation is because of our choice of $\tilde{b}'$ in previous discussion.

One can prove the furthermore part in the same way.
\qed

\subsection{Relation with Lusztig's map and the strata of reductive groups}

Recall in \cite[Proposition 5.4]{HN} and \cite[Corollary 5.5]{HN}, it is proved that every good position element $\tilde{w}$ of an elliptic conjugacy class $\tilde{\CC}\in [\tilde{W}]$ is a minimal length element in $\tilde{\CC}$ and $\underline{\tilde{w}}^d\in \underline{w_0}^2 B^+(\tilde{W})$. However, for non-elliptic conjugacy classes, minimal length elements do not satisfy the second condition anymore. Therefore, we consider good position braid representatives for non-elliptic conjugacy classes instead. In this subsection we will see these representatives are related to the Lusztig's map $\Phi$.

Let $G$ be the identity component of an affine algebraic group $\tilde{G}$ over an algebraically closed field $\Bk$ and assume $G$ is reductive. Fix a connected component $D$ of $\tilde{G}$ whose image in $\tilde{G}/G$ is unipotent. Fix a Borel subgroup $B$ of $G$ with $T\subset B$ a maximal torus. Let $W$ be the Weyl group of $G$ and $\tilde{W}=W\rtimes \langle \delta \rangle$ where the twist $\delta:W \to W$ is defined by $D$. 

Let $[W\delta]$ be the set of $W$-conjugacy classes of $\tilde{W}$ contained in $W\delta$ and $\Phi:[W\delta]\to [D_u]$ be the corresponding Lusztig's map.  Recall that $\Phi$ is actually surjective and Lusztig defined a canonical section $\Psi:[D_u]\to [W\delta]$ in \cite[Theorem 0.2]{Lu2} and \cite[Theorem 1.16]{Lu21} of $\Phi$. The image $\CC:=\Psi(\CO)$ is called the ``most elliptic'' conjugacy class in $\Phi^{-1}(\CO)$. Here ${\CC}$ is the unique conjugacy class in $[W\delta]$ such that $\dim T^{\tilde{w}} <\dim T^{\tilde{w}'}$ for any $\tilde{w}\in \CC$ and $\tilde{w}'\in \CC'$, where $\CC'$ is any other conjugacy class in $\Phi^{-1}(\CO)$ (see \cite[Proposition 2.2]{CC}). We will use this criterion for our calculation of $\Psi$ in \S 3.4. For convenience, we set $\dim T^{\CC}:=\dim T^{\tilde{w}}$ for some (or any) $\tilde{w} \in \CC$. One can see that $\dim T^{\CC} = 0$ if $\CC$ is elliptic.

By lemma \ref{3.3}, all good position braid representatives of a conjugacy class $\CC$ have the same length, we denote it by $l_{\good}(\CC)$. We claim the following result which relates $l_{\good}(\CC)$ of some $\CC=\Psi(\CO)$ to the codimension of $\CO$.

\begin{proposition}\label{3.5}
   Let $\CO$ be a unipotent orbit in $D$ and $\CC=\Psi(\CO)$. Then we have 
\begin{equation*}
        \codim_{\tilde{G}} \CO=l_{\good}(\CC)+\dim T^{\CC}.
\end{equation*}   
\end{proposition}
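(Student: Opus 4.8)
The plan is to make both sides of the identity explicit in terms of the class $\CC$ and then to verify it type by type; the elliptic case ($\dim T^{\CC}=0$) is already contained in Theorem~\ref{2.4}, so the real content lies in the non-elliptic classes, where the ``extra length'' of a good position braid representative must account exactly for $\dim T^{\CC}$ in the codimension.

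First I would fix a good position element $\tilde w\in\CC$, so that $\dim T^{\CC}=\dim V^{\tilde w}$, and rewrite Lemma~\ref{3.3} (by Abel summation) as
\[
   l_{\good}(\CC)=\sum_{j=0}^{l-1}\frac{\theta_{i_{j+1}}-\theta_{i_j}}{\pi}\,\bigl\lvert R^{+}\cap F_{i_j}^{\perp}\bigr\rvert,\qquad\theta_{i_0}:=0,
\]
where $F_{\bullet}$ is the filtration of real eigenspaces of $\tilde w$ attached to the complete increasing sequence in $(0,\pi]\cup\{2\pi\}$, and $R^{+}\cap F_{i_j}^{\perp}$ is the positive system of the parabolic subsystem of roots orthogonal to $F_{i_j}$. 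On the other side $\codim_{\tilde G}\CO=\dim\tilde G-\dim\CO=\dim Z_G(\tilde g)$ for $\tilde g\in\CO$. The theory of strata (\S 2.3, after \cite[\S 2]{Lu4}) lets one transport the verification across characteristics --- $\dim\CO$ is constant along a stratum, and $\CC=\Psi(\CO)$ is determined combinatorially --- so one may work in whichever characteristic is convenient for a given type, e.g.\ $\charac(\Bk)=0$ in the connected case and $\charac(\Bk)=2$ (resp.\ $3$) in the genuinely disconnected ones.

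Next I would identify $\CC=\Psi(\CO)$ using the criterion of \cite[Proposition~2.2]{CC} --- $\CC$ is the class in $\Phi^{-1}(\CO)$ on which $\dim T^{\tilde w}$ is minimal --- together with the explicit description of $\Phi$ recorded in \S 2.2. For classical groups $\dim T^{\CC_{(\lambda,\mu)}}$ equals the number of parts of $\mu$ (the number of positive cycles), so minimizing it forces the odd parts of the Jordan type of $\CO$ into $\mu$ and the suitably halved even parts into $\lambda$; this pins down $\CC$ in each classical type, with the usual care for the $\epsilon$-data in characteristic $2$ and the split conjugacy classes and unipotent orbits in type $\text{D}$, and similarly for types $\text{A}$ and ${}^{2}\text{A}$. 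With $\CC$ in hand, a good position representative is a (signed) permutation of the prescribed cycle type: a positive $k$-cycle contributes the angles $2\pi j/k$ (the value $j=0$ replaced by $2\pi$), and a negative $k$-cycle the angles $(2j+1)\pi/k$, so the eigenspaces $V_{\tilde w}^{\theta}$ are explicit coordinate subspaces. Computing, level by level, which roots are orthogonal to each $F_{i_j}$ makes every $\lvert R^{+}\cap F_{i_j}^{\perp}\rvert$ an explicit function of $\lambda$ and $\mu$; substituting into the displayed formula, adding $\dim V^{\tilde w}$, and comparing with the classical formula for $\dim Z_G(\tilde g)$ in terms of the Jordan type and $\epsilon$-data of $\CO$ gives the identity in all classical types. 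For the exceptional groups --- and the $\text{D}_4$-triality case --- both quantities are finite tables, which can be extracted from \cite{Lu1,Lu2,Lu21,AHN,Mi} (or computed by machine) and compared directly.

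The main obstacle is this last step in the classical types: one must follow the eigenspace filtration $0=F_0\subset F_1\subset\cdots$ of a signed permutation finely enough to evaluate all the counts $\lvert R^{+}\cap F_{i_j}^{\perp}\rvert$, and then recognize that the resulting sum, once $\dim V^{\tilde w}$ is added, is precisely the centralizer-dimension formula attached to the Jordan type produced by $\Phi$. The delicate points are the bookkeeping of the $2\pi$ (fixed-space) contribution in the irredundant subsequence, and, in type $\text{D}$, the coincidences between split classes and split orbits; the exceptional cases are routine but lengthy.
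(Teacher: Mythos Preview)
Your proposal is correct and follows the same overall strategy as the paper: reduce to almost simple factors, use strata to work in a convenient characteristic, identify $\Psi(\CO)$ via the most-elliptic criterion and the explicit description of $\Phi$ in \S 2.2, verify the classical types combinatorially, and handle the exceptional types by computer.

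The one organizational difference worth flagging is in the classical verification. For types ${}^{2}\text{A}$ and $\text{B}/\text{C}/\text{D}$ the paper does not compute the whole eigenspace filtration and all $\lvert R^{+}\cap F_{i_{j}}^{\perp}\rvert$ at once, as you propose; instead it argues by \emph{induction on rank}. One strips off all the longest signed cycles of $\tilde w$, passes to a standard subgroup $G_{1}$ of the same type and smaller rank (with the corresponding $\CO'$, $\CC'$), and checks only that the \emph{differences} $\codim_{\tilde G}\CO-\codim_{\tilde G_{1}}\CO'$ and $l_{\good}(\CC)-l_{\good}(\CC')+(\dim T^{\CC}-\dim T_{1}^{\CC'})$ match, splitting into three sub-cases according to the parity and $\epsilon$-value of the largest Jordan block. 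This is cleaner than tracking the entire filtration: at each inductive step only the first layer $\fkH\setminus\fkH_{F_{i_{1}}}$ (and in one sub-case the next) has to be counted, and the role of the $\epsilon$-data is isolated. Untwisted type $\text{A}$ the paper does directly, as you suggest.
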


Using the strata of (disconnected) reductive groups, we can actually rephrase proposition \ref{3.5}. Since all $G$-conjugacy classes corresponding to a given stratum $\Sigma\in \Str(\tilde{G})$ have the same dimension independent of characteristics, we can set $\codim_{\tilde{G}} \Sigma=\codim_{{\tilde{G}}^{(r)}} \CO^{(r)}$ for any (or all) $ \CO^{(r)}$ corresponding to $\Sigma$. We then have the following equivalent version of proposition \ref{3.5}.

\begin{proposition}\label{3.6}
    Let $\Sigma$ be a stratum of $\tilde{G}$. Suppose that $\CC$ is the most elliptic conjugacy class in $('\Phi)^{-1}(\Sigma)$, then we have 
\begin{equation*}
        \codim_{\tilde{G}} \Sigma=l_{\good}(\CC)+\dim T^{\CC}.
\end{equation*}   
\end{proposition}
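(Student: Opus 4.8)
The plan is to deduce Proposition \ref{3.6} from Proposition \ref{3.5} by passing to a well‑chosen characteristic and invoking the compatibility of the mixed‑characteristic map $'\Phi$ with the maps $\Phi^{(r)}$ recalled in \S 2.3. The first thing I would record is that the right‑hand side of the asserted identity depends only on $\CC\in[W\delta]$ and not on the base field: by Lemma \ref{3.3} the number $l_{\good}(\CC)$ equals $\sum_j \frac{\theta_{i_j}}{\pi}\lvert\fkH_{F_{i_{j-1}}}-\fkH_{F_{i_j}}\rvert$, which is computed inside the reflection representation $V$ of $W$, while $\dim T^{\CC}=\dim V^{\tilde{w}}$ for $\tilde{w}\in\CC$ is likewise intrinsic to the $\tilde{W}$‑action on $V$. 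Hence it suffices to prove $\codim_{\tilde{G}^{(r)}}\CO=l_{\good}(\CC)+\dim T^{\CC}$ for a single convenient $r\in\{0\}\cup\mathscr{P}$ and a single unipotent orbit $\CO$ attached to $\Sigma$ in characteristic $r$.

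Next I would make the reduction explicit. Given $\Sigma\in\Str(\tilde{G})$, choose $r$ so that $\Sigma$ is realized by a unipotent orbit in $\tilde{G}^{(r)}$ — take $r=0$ when $\Sigma\in\Str(G)$, and $r=2$ (the characteristic in which $D\neq G$) when $\Sigma\in\Str(D)$ — and set $\CO:=\Phi^{(r)}(\CC)$, a unipotent $G^{(r)}$‑orbit lying in the component $D^{(r)}$ since $\Phi$ maps $[W\delta]$ into $[D_u]$. By the compatibility recalled in \S 2.3, $\CO$ lies in the stratum $\Sigma$, so $(\Phi^{(r)})^{-1}(\CO)\subseteq('\Phi)^{-1}(\Sigma)$; as $\CC$ minimizes $\dim T^{\tilde{w}}$ over the larger set and belongs to the smaller one, it also minimizes $\dim T^{\tilde{w}}$ over $(\Phi^{(r)})^{-1}(\CO)$, and by uniqueness of the most elliptic class (\cite[Theorem 0.2]{Lu2}, \cite[Theorem 1.16]{Lu21}, \cite[Proposition 2.2]{CC}) this forces $\CC=\Psi(\CO)$. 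Applying Proposition \ref{3.5} to $\tilde{G}^{(r)}$ and $\CO$ then gives $\codim_{\tilde{G}^{(r)}}\CO=l_{\good}(\CC)+\dim T^{\CC}$, and since all unipotent orbits attached to $\Sigma$ have the same dimension (\cite[\S 2]{Lu4}, \cite[\S 6]{Lu21}) we have $\codim_{\tilde{G}^{(r)}}\CO=\codim_{\tilde{G}}\Sigma$ by definition; combining the two identities finishes the argument.

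I do not expect a genuine obstacle here: the substance is already contained in Proposition \ref{3.5} and in the cited properties of strata, and what remains is bookkeeping — verifying that the chosen $\CO$ indeed lands in $\Sigma$ (immediate from Lusztig's compatibility statement) and that $\CC$ is the most elliptic class of $\Phi^{-1}(\CO)$ (the set‑inclusion argument above). The only mild care needed is to model the strata in $\Str(D)$ via their characteristic‑$2$ representatives, there being no characteristic‑$0$ one when $D\neq G$, and — as throughout the paper — to set aside the exceptional $\mathrm{D}_4$, $\charac(\Bk)=3$ situation in accordance with \cite{Lu21}.
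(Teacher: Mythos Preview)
Your proposal is correct and follows essentially the same route as the paper: the paper declares Proposition \ref{3.6} to be an ``equivalent version'' of Proposition \ref{3.5} and then proves them simultaneously in \S 3.4 by a case-by-case verification (classical types in characteristic $2$, exceptional types by computer). What you have done is make explicit the reduction from \ref{3.6} to \ref{3.5} --- passing to a suitable characteristic, using the compatibility of $'\Phi$ with $\Phi^{(r)}$, and checking that the most elliptic class for $('\Phi)^{-1}(\Sigma)$ is still the most elliptic one for $(\Phi^{(r)})^{-1}(\CO)$ --- which the paper leaves implicit in the word ``equivalent.''
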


\subsection{Proof of proposition \ref{3.5} and \ref{3.6}} 
We start from the reduction procedure. Since all parts of this equation are the same up to isogeny, we only need to prove the statement when $G$ is of the form $T' \times \prod\limits_{i} G_i$ where $T'$ is a torus and all $G_i$ are almost simple. Moreover, one may assume $T'$ is trivial since this does not change any side of the equation. Now suppose that $G=\prod\limits_{i} G_i$ and $W=\prod\limits_{i} W_i$. Since both sides of the equation are just the sum of its value on each almost simple factors, we only need to show the statement when $G$ is almost simple. As for the connected component $D$, we may assume that $\tilde{G}/G$ is cyclic.

Now let $G$ be an almost simple algebraic group of classical types and $\tilde{G}/G$ be cyclic. By \cite[\S 2.5]{Lu4} and the fact that $[D_u]$ is empty if $\character (\Bk)\neq 2$, it suffices to prove proposition \ref{3.5} when $\character (\Bk)=2$. We keep the notations in \S 2.2 and then prove this type by type. The formulas we will use to calculate $\codim_{\tilde{G}} \CO$ are based on \cite[\S3-\S6]{LS} and \cite[\S \RNum{2}.6]{Sp} (See also \cite{Hes}). 

\begin{itemize}
   \item Type $\text{A}_n$ and $D=G$ (nontwisted): In this case the unipotent $G$-orbits in $G$ are parametrized by partitions of $n+1$. For any $\lambda=(\lambda_1, \ldots,\lambda_m) \in \CP(n+1)$, we have $\CC_{\lambda}= \Psi(\CO_{\lambda})$. We may rewrite $\lambda$ as $\lambda=(r_1^{m_1},r_2^{m_2},\ldots, r_l^{m_l})$. Here $r_1>\cdots >r_l$ with $m_{\lambda}(r_j)=m_j$ and $m_{\lambda}(r)=0$ for any other integer $r$. 
   
   First we compute $\codim_{\tilde{G}}\CO_{\lambda}$. By \cite[Theorem 3.1]{LS}, we have 
   \begin{equation*}
      \codim_{\tilde{G}}\CO_{\lambda}=\sum\limits_{j=1}^l m_jr_j(m_j+2\sum\limits_{k=1}^{j-1}m_k)-1.
    \end{equation*}   
   Since this case is nontwisted, we can choose $\tilde{w}=w$ as follows. Identify $W$ with the permutation group of $\{1,2,\ldots,n+1\}$. For any $1\leq j\leq l$, let $\{\sum\limits_{k=1}^{j-1}m_kr_k+1,\ldots,\sum\limits_{k=1}^{j-1}m_kr_k+r_j\},\ldots,\{\sum\limits_{k=1}^{j-1}m_kr_k+ (m_j-1)r_j+1,\ldots,\sum\limits_{k=1}^j m_kr_k\}$ be all $r_j$-cycles of $w$. By lemma \ref{3.3}, we may use this $w$ to compute $l_{\good}(\CC_{\lambda})$.
   
   The corresponding irredundant sequence $r(\underline{\Theta})$ will then be $(\frac{2\pi}{r_1},\ldots,\frac{2\pi}{r_{l-1}})$ if $r_l=m_l=1$ and $(\frac{2\pi}{r_1},\ldots,\frac{2\pi}{r_l})$ otherwise. By lemma \ref{3.3}, we have 
   \begin{equation*}
   l_{\good}(\CC_{\lambda})=2 \sum\limits_{j=1}^l \frac{1}{r_j} \lvert  \fkH_{F_{m_{j-1}}}-\fkH_{F_{m_{j}}}   \rvert,
   \end{equation*}
   where we formally set $m_0=0$. By our choice of $w$, we have $W_{F_{m_j}}$ is a finite Coxeter group of type $\text{A}$ with rank $n-\sum\limits_{k=1}^j m_kr_k$, then
   \begin{eqnarray*}
      \lvert  \fkH_{F_{m_{j-1}}}-\fkH_{F_{m_{j}}}   \rvert &=& \frac{1}{2}((n-\sum\limits_{k=1}^{j-1} m_kr_k)(n-\sum\limits_{k=1}^{j-1} m_kr_k +1)\\
      &-& (n-\sum\limits_{k=1}^j m_kr_k)(n-\sum\limits_{k=1}^j m_kr_k+1)) \\
      &=& \frac{1}{2} (2m_j r_j n-2m_jr_j\sum\limits_{k=1}^{j-1} m_kr_k -(m_jr_j)^2 +m_jr_j).
   \end{eqnarray*}
   Therefore, we have 
   \begin{equation*}
      l_{\good}(\CC_{\lambda})= \sum\limits_{j=1}^l (2m_j  n-2m_j\sum\limits_{k=1}^{j-1} m_kr_k -m_j^2r_j +m_j).
      \end{equation*}
   Finally, noticing that $\dim T^{\CC_{\lambda}}=m-1=\sum\limits_{j=1}^l m_j -1$, we have 
   \begin{eqnarray*}
      l_{\good}(\CC_{\lambda})+\dim T^{\CC_{\lambda}} &=& \sum\limits_{j=1}^l (2m_j n-2m_j\sum\limits_{k=1}^{j-1} m_kr_k -m_j^2r_j +m_j)+(\sum\limits_{j=1}^l m_j -1) \\
      &=& \sum\limits_{j=1}^l (2m_j \sum\limits_{k=j+1}^l m_kr_k +m_j^2r_j )-1 \\
      &=& \sum\limits_{k=1}^l 2m_k \sum\limits_{j=1}^{k-1} m_jr_j +\sum\limits_{j=1}^l m_j^2r_j -1 \\
      &=& \codim_{\tilde{G}}\CO_{\lambda}.
   \end{eqnarray*}
Thus proposition \ref{3.5} holds for type $\text{A}_n$.

   \item Type $\text{A}_n$ and $D\neq G$ (Type ${}^2\text{A}_n$, twisted): In this case, we have $\tilde{G}=G\sqcup D$ and the unipotent $G$-orbits in $D$ are parametrized by $\widetilde{\CP_1}(n+1)$. The twisted Weyl group $\tilde{W}$ is equal to $W\sqcup W\delta$. Let $\CO_{(\nu,\epsilon_{\nu})}^D$ be a unipotent $G$-orbit in $D$. Let $\lambda\in \CP(n+1)$ be such that $\CC^{W\delta}_{\lambda}=\Psi(\CO_{(\nu,\epsilon_{\nu})}^D)$. The partition $\lambda$ is uniquely determined as follows:
\newline
(i) If $k$ is even, then $m_{\lambda}(2k)=\frac{1}{2}m_{\nu}(k)$;
\newline
(ii) If $k$ is odd and $\epsilon_{\nu}(k)=1$, then $m_{\lambda}(k)=m_{\nu}(k)$ and $m_{\lambda}(2k)=0$;
\newline
(iii) If $m$ is odd and $\epsilon_{\nu}(k)=0$, then $m_{\lambda}(k)=0$ and $m_{\lambda}(2k)=\frac{1}{2}m_{\nu}(k)$;
\newline
(iv) If $k$ is odd and $m_{\nu}(k)=0$, then $m_{\lambda}(k)=m_{\lambda}(2k)=0$.

   For calculation, we give another parametrization of conjugacy classes in $[W\delta]$. Recall \S 2.2, the partition $\lambda$ corresponding to the conjugacy class $\CC_{\lambda}^{W\delta}$ is the cycle type of $ww_0$ for any $\tilde{w}=w\delta\in \CC_{\lambda}^{W\delta}$, under the natural identification of $W$ with the permutation group of $\{1,2,\ldots,n+1\}$. One can actually extend the identification to $\tilde{W}$ with a subgroup of the permutation group of $\{\pm 1,\pm 2,\ldots,\pm (n+1)\}$ by viewing $\delta$ as $\delta:\pm i\mapsto \mp (n+2-i)$. Thus the conjugacy classes of $[\tilde{W}]$ are parametrized by signed cycles as mentioned in \S 2.2. 
   
   Specifically, for $\lambda$, we first associate it to a bipartition $(\lambda_{\text{odd}},\lambda_{\text{even}})\in \mathcal{BP}(n+1)$. Let $\lambda_{\text{odd}}=(\lambda_1^{\text{o}},\ldots,\lambda_a^{\text{o}})$ be the subsequence of $\lambda$ consisting of all odd parts and $\lambda_{\text{even}}=(\lambda_1^{\text{e}},\ldots,\lambda_b^{\text{e}})$ be the subsequence of $\lambda$ consisting of all even parts. Here $\lambda_{\text{odd}}$ (resp. $\lambda_{\text{even}}$) corresponds to all negative (resp. positive) cycles of $\CC_{\lambda}^{W\delta}$ in its signed cycle type. We then obtain a partition $\mu$ by combining $(2\lambda_1^{\text{o}},\ldots,2\lambda_a^{\text{o}})$ and $(\lambda_1^{\text{e}},\lambda_1^{\text{e}},\ldots,\lambda_b^{\text{e}},\lambda_b^{\text{e}})$ and rearranging them. Write $\mu=(\mu_1,\ldots,\mu_m)$. By our construction of $\mu$, all parts of $\mu$ are even. One can check that $\nu=(\frac{\mu_1}{2},\ldots,\frac{\mu_m}{2})$.
   
   We proceed by induction on the rank $n$. Assume the statement is true when rank of $G$ is less than $n$. We have $\dim T^{\CC_{\lambda}^{W\delta}}=b$, which is the number of positive cycles. Let $\{e_j-e_{j+1}\mid j\in I\}$ be the natural basis of $V$ such that $\tilde{w}(e_j)=e_{\tilde{w}(j)}$ under our identification above. Here we set $e_{-j}=-e_j$. Choose the simple roots as usual to be $\{e_1-e_2,\ldots,e_n-e_{n+1}\}$ and set $I'=\{1,2,\ldots,n+1\}$. Let $r(\underline{\Theta})=(\theta_{i_1},\ldots,\theta_{i_l})$ be the irredundant sequence corresponding to $\CC_{\lambda}^{W\delta}$.
   We divide the proof into two different cases based on $\mu_1$ and $\epsilon_{\nu}$.

   (1) We first consider the case when $\frac{\mu_1}{2}$ is even. Then the longest signed cycle of $\CC_{\lambda}^{W\delta}$ is a positive $\mu_1$-cycle. Set $m_1=m_{\mu}(\mu_1)$. We have $m_1$ is also equal to $2m_{\lambda}(\mu_1)$ and there are $\frac{m_1}{2}$ positive $\mu_1$-cycles. Therefore, we have $\theta_{i_1}=\frac{2\pi}{\mu_1}$.

   By lemma \ref{3.3}, choose an arbitrary $\tilde{w}\in \CC_{\lambda}^{W\delta}$. Let $I_1'\subset I'$ be corresponding to all positive $\mu_1$-cycles of $\tilde{w}$. Let 
   \begin{center}
   $j_1 \xrightarrow{\tilde{w}} j_2\xrightarrow{\tilde{w}} \cdots \xrightarrow{\tilde{w}} j_{\mu_1}, -j_1 \xrightarrow{\tilde{w}} -j_2\xrightarrow{\tilde{w}} \cdots \xrightarrow{\tilde{w}} -j_{\mu_1}$    
   \end{center}
   be a positive $\mu_1$-cycle of $\tilde{w}$ where $j_1,\ldots,j_{\mu_1}\in \pm I_1'$. By our identification of $\delta$, assume that $j_1\in I_1'$, we have $(-1)^{k+1}j_k\in I_1'$ for all $k$.  One can then check that $\sum\limits_{k=1}^{\mu_1-1}\sin \frac{2k\pi}{\mu_1}e_{j_k}$ and $\sum\limits_{k=1}^{\mu_1-1}\sin \frac{2k\pi}{\mu_1}e_{j_{k+1}}$ both lie in $ V_{\tilde{w}}^{\frac{2\pi}{\mu_1}}\setminus \{0\}$. This is because $\sum\limits_{k=1}^{\mu_1-1}(-1)^k\sin \frac{2k\pi}{\mu_1}=0$ as $\frac{\mu_1}{2}$ is even. We also have similar results for all other positive $\mu_1$-cycles of $\tilde{w}$ and thus gives a basis for $V_{\tilde{w}}^{\frac{2\pi}{\mu_1}}$. 
   
   We have $\fkH_{F_{i_1}}=\{H_{\alpha}\mid (\alpha,e_i)=0,\forall i\in I_1'\}$. Then $W_{F_{i_1}}<W$ is the standard parabolic subgroup of the same type as $W$ with rank $n-\frac{m_1\mu_1}{2}\geq 1$ if $W_{F_{i_1}}$ is non-trivial. If $W_{F_{i_1}}$ is trivial, one can easily check our statement is true.

   Let $G_1$ be almost simple of the same type as $G$ corresponding to Weyl group $W_1:=W_{F_{i_1}}$, which means its rank is $n-\frac{m_1\mu_1}{2}$. Let $\tilde{G}_1$ be the disconnected affine algebraic group with $G_1$ as its identity component and $\delta_1$ be the restriction of $\delta$ and $D_1$ be the corresponding component of $\tilde{G}_1$. Let $T_1$ be a maximal torus of $G_1$. Set $\tilde{W}_1=W_1\rtimes \langle \delta_1 \rangle$. Let $\Phi_1:[\tilde{W}_1]\to [(\tilde{G}_1)_u]$ be the Lusztig's map.
   
   Let $\nu'=(\nu_{m_1+1},\ldots, \nu_m)=(\frac{\mu_{m_1+1}}{2},\ldots,\frac{\mu_m}{2})\in \CP_1(n-\frac{m_1\mu_1}{2}+1)$ and $\epsilon_{\nu'}$ be the restriction of $\epsilon_{\nu}$. Let $\lambda'_{\text{even}}$ be the subpartition of $\lambda_{\text{even}}$ obtained by deleting the first $\frac{m_1}{2}$ terms and $\lambda'_{\text{odd}}$ be exactly $\lambda_{\text{odd}}$. Then we get $\lambda'\in \CP(n-\frac{m_1\mu_1}{2}+1)$ by combining $\lambda'_{\text{even}}$ and $\lambda'_{\text{odd}}$ together. It is clear that $\CC_{\lambda'}^{W_1\delta_1}$ is most elliptic in $\Phi_1^{-1}(\CO_{(\nu',\epsilon_{\nu'})}^{D_1})$. By induction hypothesis, we have
   \begin{equation*}
\codim_{\tilde{G}_1}\CO_{(\nu',\epsilon_{\nu'})}^{D_1}=l_{\text{good}}(\CC_{\lambda'}^{W_1\delta_1})+\dim T_1^{\CC_{\lambda'}^{W_1\delta_1}},
   \end{equation*}
   and $\dim T_1^{\CC_{\lambda'}^{W_1\delta_1}}=b-\frac{m_1}{2}$. Therefore, it suffices to show
   \begin{equation*}
       \codim_{\tilde{G}}\CO_{(\nu,\epsilon_{\nu})}^D-\codim_{\tilde{G}_1}\CO_{(\nu',\epsilon_{\nu'})}^{D_1}=l_{\text{good}}(\CC_{\lambda}^{W\delta})-l_{\text{good}}(\CC_{\lambda'}^{W_1\delta_1})+\frac{m_1}{2}.
   \end{equation*}
By our discussion above on $V_{\tilde{w}}^{\frac{2\pi}{\mu_1}}$ and lemma \ref{3.3}, we have
\begin{equation*}
    l_{\text{good}}(\CC_{\lambda}^{W\delta})-l_{\text{good}}(\CC_{\lambda'}^{W_1\delta_1})=m_1n-\frac{m_1}{2}(\frac{m_1\mu_1}{2}-1).
\end{equation*}
Then we need $\codim_{\tilde{G}}\CO_{(\nu,\epsilon_{\nu})}^D-\codim_{\tilde{G}_1}\CO_{(\nu',\epsilon_{\nu'})}^{D_1}=m_1n-\frac{m_1^2\mu_1}{4}+m_1$.

Let $\nu^*$ and $\nu'^*$ be the dual partition of $\nu$ and $\nu'$ respectively. It is clear that $\nu_i^*-\nu_i'^*=m_1$ for any $i$ such that $\nu_i^*>0$. We also notice $\nu_{i+1}^*-\nu_i^*=\nu_{i+1}'^*-\nu_i'^*$ for all $i$. Then by \cite[\S \RNum{2}.6.3]{Sp}, we have 
\begin{eqnarray*}
    &&\codim_{\tilde{G}}\CO_{(\nu,\epsilon_{\nu})}^D-\codim_{\tilde{G}_1}\CO_{(\nu',\epsilon_{\nu'})}^{D_1}\\&=&\frac{1}{2}(\sum\limits_{i}((\nu_i^*)^2-(\nu_i'^*)^2))-\sum\limits_{i \text{ odd}}(\nu_i^*-\nu_i'^*)+\frac{m_1\mu_1}{4}\\
    &=& \frac{1}{2}m_1\sum_{i}(\nu_i^*+\nu_i'^*)-m_1[\frac{\nu_1}{2}]+\frac{m_1\mu_1}{4}\\
    &=& \frac{1}{2}m_1(n+1+n-\frac{m_1\mu_1}{2}+1)-m_1\frac{\mu_1}{4}+\frac{m_1\mu_1}{4}\\
    &=& m_1n-\frac{m_1^2\mu_1}{4}+m_1.
\end{eqnarray*}
Thus the statement is true in this case.

(2) Then consider the case when $\frac{\mu_1}{2}$ is odd and $\epsilon_{\nu}(\frac{\mu_1}{2})=1$. Then the longest signed cycle of $\CC_{\lambda}^{W\delta}$ must be a negative $\frac{\mu_1}{2}$-cycle. Moreover, there is no positive $\mu_1$-cycle since $\CC_{\lambda}^{W\delta}=\Psi(\CO_{(\nu,\epsilon_{\nu})})$. Set $m_1=m_{\mu}(\mu_1)$. Then there are $m_1$ negative $\frac{\mu_1}{2}$-cycles. We again have $\theta_{i_1}=\frac{2\pi}{\mu_1}$.  If $\mu_1=2$, then $\nu=\lambda=(1,\ldots,1)\in \CP(n+1)$, $\epsilon_{\nu}(1)=1$ and $r(\underline{\Theta})=(\frac{2\pi}{\mu_1})$. Thus $l_{\text{good}}(\CC_{\lambda}^{W\delta})=\frac{n(n+1)}{2}$ and $\dim T^{\CC_{\lambda}^{W\delta}}=0$. By \cite[\S \RNum{2}.6.3]{Sp}, we have $\codim_{\tilde{G}}\CO^D_{(\nu,\epsilon_{\nu})}=\frac{(n+1)^2}{2}-(n+1)+\frac{n+1}{2}=\frac{n(n+1)}{2}$ and the statement is true.

Now assume $\mu_1>2$. Choose an arbitrary $\tilde{w}\in \CC_{\lambda}^{W\delta}$. Let $I_1'\subset I'$ be corresponding to all negative $\frac{\mu_1}{2}$-cycles of $\tilde{w}$. Let
\begin{center}
    $j_1 \xrightarrow{\tilde{w}} j_2\xrightarrow{\tilde{w}} \cdots \xrightarrow{\tilde{w}} j_{\frac{\mu_1}{2}} \xrightarrow{\tilde{w}} -j_1\xrightarrow{\tilde{w}} \cdots \xrightarrow{\tilde{w}} -j_{\frac{\mu_1}{2}}$
\end{center}
 be a negative $\frac{\mu_1}{2}$-cycle of $\tilde{w}$ where $j_1,\ldots,j_{\frac{\mu_1}{2}}\in \pm I_1'$. Again one can check that $\sum\limits_{k=1}^{\frac{\mu_1}{2}-1}\sin \frac{2k\pi}{\mu_1}e_{j_k}$ and $\sum\limits_{k=1}^{\frac{\mu_1}{2}-1}\sin \frac{2k\pi}{\mu_1}e_{j_{k+1}}$ both lie in $V_{\tilde{w}}^{\frac{2\pi}{\mu_1}}\setminus \{0\}$ if $\mu_1>2$.

As above case, we have $\fkH_{F_{i_1}}=\{H_{\alpha}\mid (\alpha,e_i)=0,\forall i\in I_1'\}$. Then $W_{F_{i_1}}<W$ is the standard parabolic subgroup of the same type as $W$ with rank $n-\frac{m_1\mu_1}{2}$. We still only need to consider the case when $W_{F_{i_1}}$ is nontrivial.

As previous case, we get $\tilde{G}_1, G_1, D_1, T_1, W_1, \delta_1, \tilde{W}_1$ and $\Phi_1$ in the same way. Similarly define $(\nu',\epsilon_{\nu'})$. Let $\lambda'_{\text{odd}}$ be the subpartition of $\lambda_{\text{odd}}$ obtained by deleting the first $m_1$ terms and $\lambda'_{\text{even}}$ be exactly $\lambda_{\text{even}}$. Then we get $\lambda'\in \CP(n-\frac{m_1\mu_1}{2}+1)$ by combining $\lambda'_{\text{odd}}$ and $\lambda'_{\text{even}}$ together. It is clear that $\CC_{\lambda'}^{W_1\delta_1}$ is most elliptic in $\Phi_1^{-1}(\CO_{(\nu',\epsilon_{\nu'})}^{D_1})$. By induction hypothesis, we have
\begin{equation*}
    \codim_{\tilde{G}_1}\CO_{(\nu',\epsilon_{\nu'})}^{D_1}=l_{\text{good}}(\CC_{\lambda'}^{W_1\delta_1})+\dim T_1^{\CC_{\lambda'}^{W_1\delta_1}},
\end{equation*}
and $\dim T_1^{\CC_{\lambda'}^{W_1\delta_1}}=b$. Then it suffices to show
   \begin{equation*}
       \codim_{\tilde{G}}\CO_{(\nu,\epsilon_{\nu})}^D-\codim_{\tilde{G}_1}\CO_{(\nu',\epsilon_{\nu'})}^{D_1}=l_{\text{good}}(\CC_{\lambda}^{W\delta})-l_{\text{good}}(\CC_{\lambda'}^{W_1\delta_1}).
   \end{equation*}
As previous case, we again have
\begin{equation*}
    l_{\text{good}}(\CC_{\lambda}^{W\delta})-l_{\text{good}}(\CC_{\lambda'}^{W_1\delta_1})=m_1n-\frac{m_1}{2}(\frac{m_1\mu_1}{2}-1).
\end{equation*}

Still let $\nu^*$ and $\nu'^*$ be the dual partition of $\nu$ and $\nu'$ respectively. By \cite[\S \RNum{2}.6.3]{Sp}, since $\frac{\mu_1}{2}=\nu_1$ is odd, we have
\begin{eqnarray*}
        \codim_{\tilde{G}}\CO_{(\nu,\epsilon_{\nu})}^D-\codim_{\tilde{G}_1}\CO_{(\nu',\epsilon_{\nu'})}^{D_1}&=&\frac{1}{2}m_1\sum_{i}(\nu_i^*+\nu_i'^*)-m_1[\frac{\nu_1}{2}]+\frac{m_1\mu_1}{4}\\
    &=& m_1n-\frac{m_1^2\mu_1}{4}+\frac{m_1}{2}.
\end{eqnarray*}
Thus the statement is true in this case.

(3) Finally consider the case when $\frac{\mu_1}{2}$ is odd and $\epsilon_{\nu}(\frac{\mu_1}{2})=0$. Then the longest signed cycle of $\CC_{\lambda}^{W\delta}$ must be a positive $\mu_1$-cycle and there is no negative $\frac{\mu_1}{2}$-cycle. Set $m_1=m_{\mu}(\mu_1)$. Then $m_1=2m_{\lambda}(\mu_1)$ and there are $\frac{m_1}{2}$ positive $\mu_1$-cycles. Then we have $\theta_{i_1}=\frac{2\pi}{\mu_1}$. If $\mu_1=2$, then $\nu=(1,\ldots,1)\in \CP(n+1)$ and $\epsilon_{\nu}(1)=0$. We also have $\lambda=(2,\ldots,2)\in \CP(n+1)$ and $\delta\in \CC_{\lambda}^{W\delta}$. Thus $\dim T^{\CC_{\lambda}^{W\delta}}=\frac{n+1}{2}$. By \cite[\S \RNum{2}.6.3]{Sp}, we have $\codim_{\tilde{G}}\CO_{(\nu,\epsilon_{\nu})}^D=\frac{(n+1)^2}{2}-(n+1)+\frac{n+1}{2}+(n+1)=\frac{(n+1)(n+2)}{2}$. By applying the formular in lemma \ref{3.3} to $\delta$, we get $\lvert \fkH_{F_{i_1}}\rvert=m_1=\frac{n+1}{2}$. Then $l_{\text{good}}(\CC_{\lambda}^{W\delta})=\frac{n(n+1)-(n+1)}{2}+(n+1)=\frac{(n+1)^2}{2}$ and our statement is true.

Now assume $\mu_1>2$. Choose an arbitrary $\tilde{w}\in \CC_{\lambda}^{W\delta}$. Let $I_1'\subset I'$ be corresponding to all positive $\mu_1$-cycles of $\tilde{w}$. Let 
\begin{center}
    $\{j_{s,1} \xrightarrow{\tilde{w}} j_{s,2}\xrightarrow{\tilde{w}} \cdots \xrightarrow{\tilde{w}} j_{s,\mu_1}$, $-j_{s,1} \xrightarrow{\tilde{w}} -j_{s,2}\xrightarrow{\tilde{w}} \cdots \xrightarrow{\tilde{w}} -j_{s,\mu_1}\mid 1\leq s\leq \frac{m_1}{2}\}$
\end{center}
 be all positive $\mu_1$-cycle of $\tilde{w}$ where $j_{s,1},\ldots,j_{s,\mu_1}\in \pm I_1'$. Again one can check that $\sum\limits_{k=1}^{\mu_1-1} \sin \frac{2k\pi}{\mu_1}e_{j_{s,k}}$ and $\sum\limits_{k=1}^{\mu_1-1} \sin \frac{2k\pi}{\mu_1}e_{j_{s,k+1}}$ for $1\leq s\leq \frac{m_1}{2}$ all lie in $V_{\tilde{w}}^{\frac{2\pi}{\mu_1}}\setminus \{0\}$ if $\mu_1>2$. 

Unlike above two cases, in this case we have $\fkH_{F_{i_1}}$ is actually the disjoint union of $\{H_{\alpha} \mid (\alpha,e_i)=0,\forall i\in I_1'\}$ and $\{H_{\alpha} \mid \alpha=e_{\lvert j_{s,k} \rvert}-e_{\lvert j_{s,\frac{\mu_1}{2}+k} \rvert}, 1\leq s\leq \frac{m_1}{2}, 1\leq k\leq \frac{\mu_1}{2}\}$. Moreover, one may check that $\sum\limits_{k=1}^{\mu_1-1} \sin \frac{4k\pi}{\mu_1}e_{j_{s,k}}$ and $\sum\limits_{k=1}^{\mu_1-1} \sin \frac{4k\pi}{\mu_1}e_{j_{s,k+1}}$ for $1\leq s\leq \frac{m_1}{2}$ all lie in $V_{\tilde{w}}^{\frac{4\pi}{\mu_1}}\setminus \{0\}$ and $\{H_{\alpha} \mid \alpha=e_{\lvert j_{s,k} \rvert}-e_{\lvert j_{s,\frac{\mu_1}{2}+k} \rvert}, 1\leq s\leq \frac{m_1}{2}, 1\leq k\leq \frac{\mu_1}{2}\}$ does not contain the subspace spanned by these vectors.

Then $W_{F_{i_1}}<W$ is the product of $W_1$ and $W'$. Here $W_1$ is generated by the reflections corresponding to $\{H_{\alpha} \mid (\alpha,e_i)=0,\forall i\in I_1'\}$ and $W'$ is generated by the reflections corresponding to $\{e_{\lvert j_{s,k} \rvert}-e_{\lvert j_{s,\frac{\mu_1}{2}+k} \rvert}\mid 1\leq s\leq \frac{m_1}{2}, 1\leq k \leq \frac{\mu_1}{2}\}$. One can check that $W_1$ is of same type as $W$ with rank $n-{\frac{m_1\mu_1}{2}}$ and $W'$ is of type $\text{A}_1\times \cdots \times \text{A}_1$ with $\frac{m_1\mu_1}{4}$ copies of $\text{A}_1$. By above discussion on $V_{\tilde{w}}^{\frac{4\pi}{\mu_1}}\setminus \{0\}$, the root hyperplanes corresponding to $W'$ actually lie in $\fkH_{\theta_{i_j}}\setminus \fkH_{\theta_{i_{j+1}}}$ for $\theta_{i_j}=\frac{4\pi}{\mu_1}$.

Let $G_1$ be almost simple of same type as $G$
 corresponding to Weyl group $W_1$, which again make its rank $n-\frac{m_1\mu_1}{2}$. Then define $\tilde{G}_1,D_1,T_1,\delta_1,\tilde{W}_1$ and $\Phi_1$ as before. Similarly define $(\nu',\epsilon_{\nu'})$ and $\lambda'$ as in the first case. It is clear that $\CC_{\lambda'}^{W_1\delta_1}$ is most elliptic in $\Phi_1^{-1}(\CO^{D_1}_{(\nu',\epsilon_{\nu'})})$. By induction hypothesis, we have
\begin{equation*}
    \codim_{\tilde{G}_1}\CO_{(\nu',\epsilon_{\nu'})}^{D_1}=l_{\text{good}}(\CC_{\lambda'}^{W_1\delta_1})+\dim T_1^{\CC_{\lambda'}^{W_1\delta_1}},
\end{equation*}
and $\dim T_1^{\CC_{\lambda'}^{W_1\delta_1}}=b-\frac{m_1}{2}$. By our above discussion on $\fkH_{F_{i_1}}$ and $W'$, let $\fkH_1$ be the set of root hyperplanes corresponding to $W_1$, we have
\begin{eqnarray*}
        l_{\text{good}}(\CC_{\lambda}^{W\delta})-l_{\text{good}}(\CC_{\lambda'}^{W_1\delta_1})&=&\frac{2}{\mu_1}\lvert \fkH-\fkH_{F_{i_1}} \rvert+\frac{4}{\mu_1} \lvert \fkH_{F_{i_1}}-\fkH_1 \rvert\\
        &=& m_1n-\frac{m_1^2\mu_1}{4}+m_1.
\end{eqnarray*}

Still let $\nu^*$ and $\nu'^*$ be the dual partition of $\nu$ and $\nu'$ respectively. Let $\nu_{max}^*=\nu_i^*$ where $i$ is the largest number such that $\nu_i^*>0$. By \cite[\S \RNum{2}.6.3]{Sp}, we have
\begin{eqnarray*}
        \codim_{\tilde{G}}\CO_{(\nu,\epsilon_{\nu})}^D-\codim_{\tilde{G}_1}\CO_{(\nu',\epsilon_{\nu'})}^{D_1}&=&\frac{1}{2}m_1\sum_{i}(\nu_i^*+\nu_i'^*)-m_1[\frac{\nu_1}{2}]+\frac{m_1\mu_1}{4}+\nu_{max}^*\\
    &=& m_1n-\frac{m_1^2\mu_1}{4}+\frac{m_1}{2}+m_1.
\end{eqnarray*}
Thus the statement is true in this case.

By above three cases and induction, proposition \ref{3.5} holds for type ${}^2\text{A}_n$.
 
   \item Type $\text{B}_n,\text{C}_n,\text{D}_n$: In these cases, we label the simple roots as follows. 
   
   \begin{center}
      $\text{B}_n,\text{C}_n$:
      \begin{tikzpicture}[baseline=0]
      \node[] at (0,0.1) {$\circ$};
      \node[below] at (0,-0.09) {$n$};
      \draw[-] (0.08,0.11) to (0.93,0.11);
      \node[] at (1,0.1) {$\circ$};
      \node[below] at (1,0) {$n-1$};
      \draw[dashed] (1.08,0.11) to (2.92,0.11);
      \node[] at (2.96,0.1) {$\circ$};
      \node[below] at (2.96,0) {$2$};
      \draw[-] (3.02,0.14) to (3.89,0.14);
      \draw[-] (3.02,0.08) to (3.89,0.08);
      \node[] at (3.95,0.1) {$\circ$};
      \node[below] at (3.95,0) {$1$};
      \end{tikzpicture}
   \end{center}

   \begin{center}
      $\text{D}_n$: 
      \begin{tikzpicture}[baseline=0]
      \node[] at (0,0.1) {$\circ$};
      \node[below] at (0,-0.09) {$n$};
      \draw[-] (0.08,0.11) to (0.93,0.11);
      \node[] at (1,0.1) {$\circ$};
      \node[below] at (1,0) {$n-1$};
      \draw[dashed] (1.08,0.11) to (2.93,0.11);
      \node[] at (2.96,0.1) {$\circ$};
      \draw[-] (3.03,0.13) to (3.77,0.55);
      \node[] at (3.84,0.58) {$\circ$};
      \node[right] at (3.85,0.58) {$2$};
      \draw[-] (3.03,0.07) to (3.77,-0.35);
      \node[] at (3.84,-0.38) {$\circ$};
      \node[right] at (3.85,-0.38) {$1$};
      \end{tikzpicture}
 \end{center}
   
   First consider in type $\text{D}_n$, a unipotent $G$-orbit $\CO_{(\nu,\epsilon_{\nu})}$ splits into $(\CO_{(\nu,\epsilon_{\nu})})_{\Romannum{1}}$ and $(\CO_{(\nu,\epsilon_{\nu})})_{\Romannum{2}}$ if $\nu_j$ and $m_{\nu}(j)$ are all even and $\epsilon_{\nu}=0$. By \S2.2, we have $\Phi^{-1}((\CO_{(\nu,\epsilon_{\nu})})_{\Romannum{1}})$ and $\Phi^{-1}((\CO_{(\nu,\epsilon_{\nu})})_{\Romannum{2}})$ are the two conjugacy classes corresponding the same bipartition $(\lambda,\mu)$ where $\lambda$ is the zero partition and $\mu\in \CP(n)$ with $m_{\mu}(k)=\frac{1}{2} m_{\nu}(k)$ for all $k$. Since their corresponding good position braid representatives have the same length, one only need to prove our statement for one of these two classes. 
   
   We proceed by induction. Assume the statement is true if the rank of $G$ is less than $n$. Recall $\tilde{G}=G$ in type $\text{B}_n$ and type $\text{C}_n$, the twisted case is only considered in type $\text{D}_n$. We denote the two components of $\tilde{G}$ in type $\text{D}_n$ as $G$ and $D\neq G$. Let $(\nu,\epsilon_{\nu}) \in \widetilde{\mathcal{P}_{-1}}(2n)$ and $\CO_{(\nu,\epsilon_{\nu})}$ be the corresponding unipotent $G$-orbit in $G$ or $D$. In particular, for type $\text{D}_n$ we have $\CO_{(\nu,\epsilon_{\nu})}\in G$ if $\nu$ has even number of parts and $\CO_{(\nu,\epsilon_{\nu})}\in D$ if $\nu$ has odd number of parts. 
   
   Let $(\lambda,\mu)\in \mathcal{BP}(2n)$ such that $\CC_{(\lambda,\mu)}=\Psi(\CO_{(\nu,\epsilon_{\nu})})$.  Write $\nu=(\nu_1,\ldots,\nu_l)\in \CP_{-1}(2n)$, $\lambda=(\lambda_1,\ldots,\lambda_a)$ and $\mu=(\mu_1,\ldots,\mu_b)$. We have $\dim T^{\CC_{(\lambda,\mu)}}=b$ which is the number of positive cycles of $\CC_{(\lambda,\mu)}$. Let $\{e_j \mid j\in I\}$ be the natural basis of $V$ such that $\tilde{w}(e_j)=e_{\tilde{w}(j)}$ for all $\tilde{w}\in \tilde{W}$. Here we set $e_{-j}=-e_j$. Choose the simple roots to be $\{e_n-e_{n-1},\ldots, e_2-e_1,e_1\}$ in type $\text{B}_n$ and $\{e_n-e_{n-1},\ldots, e_2-e_1,2e_1\}$ in type $\text{C}_n$ and $\{e_n-e_{n-1},\ldots, e_2-e_1,e_2+e_1\}$ in type $\text{D}_n$. One notice that the twist $\delta$ in type $\text{D}_n$ satisfies $\delta(e_1)=e_{-1}$. Let $r(\underline{\Theta})$ be the irredundant sequence corresponding to  $\CC_{(\lambda,\mu)}$. We divide the proof into three different cases based on the size of the longest cycle of  $\CC_{(\lambda,\mu)}$. The ideas are the same for these cases. 
   
   (1) First consider the case when $\nu_1$ is odd. Then the longest signed cycle of $\CC_{(\lambda,\mu)}$ is a positive $\nu_1$-cycles. Then $m_{\nu}(\nu_1)=m_1$ is even and $m_{\mu}(\nu_1)=\frac{m_1}{2}$. There are $\frac{m_1}{2}$ positive $\nu_1$-cycles. Therefore,  we have $\theta_{i_1}=\frac{2\pi}{\nu_1}$. 

 Again choose an arbitrary $\tilde{w}\in \CC_{(\lambda,\mu)}$. Let $I_1\subset I$ be corresponding to all positive $\nu_1$-cycles of $\tilde{w}$. Let 
 \begin{center}
     $j_1 \xrightarrow{\tilde{w}} j_2\xrightarrow{\tilde{w}} \cdots \xrightarrow{\tilde{w}} j_{\nu_1}, -j_1 \xrightarrow{\tilde{w}} -j_2\xrightarrow{\tilde{w}} \cdots \xrightarrow{\tilde{w}} -j_{\nu_1}$ 
 \end{center}
 be a positive $\nu_1$-cycle where $j_1,\ldots,j_{\nu_1}\in \pm I_1$. Then again one can check that $\sum\limits_{k=1}^{\nu_1-1} \sin \frac{2k\pi}{\nu_1} e_{j_k},\sum\limits_{k=1}^{\nu_1-1} \sin \frac{2k\pi}{\nu_1} e_{j_{k+1}} \in V_{\tilde{w}}^{\frac{2\pi}{\nu_1}}\setminus\{0\}$ if $\nu_1>1$ and $e_{j_1}\in V_{\tilde{w}}^{2\pi}\setminus\{0\}$ if $\nu_1=1$. As before, we have $\fkH_{F_{i_1}}=\{H_{\alpha} \mid (\alpha, e_i)=0, \forall i \in I_1 \}$. Then $W_{F_{i_1}}<W$ is the standard parabolic subgroup of the same type as $W$ with rank $n-\frac{m_1\nu_1}{2}$.

   Let $G_1$ be almost simple of same type as $G$ corresponding to Weyl group $W_1:=W_{F_{i_1}}$, which means its rank is $n-\frac{m_1\nu_1}{2}$. Let $\tilde{G}_1$ be the affine algebraic group (disconnected in type $\text{D}$) with $G_1$ as its identity component. Let $\delta_1$ be the restriction of $\delta$ and $D_1$ be the corresponding component of $\tilde{G}_1$. Clearly $D_1=G_1$ and $\delta_1$ is trivial in type $\text{B}, \text{C}$. Let $T_1$ be a maximal torus of $G_1$. Set $\tilde{W}_1=W_1\rtimes \langle \delta_1 \rangle$. Let $\Phi_1:[\tilde{W}_1]\to [(\tilde{G}_1)_{u}]$ be the Lusztig's map. 
   
   Let $\nu'=(\nu_{m_1+1},\ldots,\nu_l)\in \CP_{-1}(2n)$ and $\epsilon_{\nu'}$ be the restriction of $\epsilon_{\nu}$. Set $\mu'=(\mu_{\frac{m_1}{2}+1},\ldots,\mu_b)$. It is clear that $\CC_{(\lambda,\mu')}$ is
    most elliptic in $\Phi_1^{-1}(\CO_{(\nu',\epsilon_{\nu'})})$. By induction hypothesis, we have
   \begin{equation*}
       \codim_{\tilde{G}_1}\CO_{(\nu',\epsilon_{\nu'})}   =l_{\good}(\CC_{(\lambda,\mu')})+\dim T_1^{\CC_{(\lambda,\mu')}},
   \end{equation*}
   and $\dim T_1^{\CC_{(\lambda,\mu')}}=b-\frac{m_1}{2}$. Therefore, it suffices to show
   \begin{equation*}
       \codim_{\tilde{G}}\CO_{(\nu,\epsilon)}- \codim_{\tilde{G}_1}\CO_{(\nu',\epsilon)}=l_{\good}(\CC_{(\lambda,\mu)})-l_{\good}(\CC_{(\lambda,\mu')})+\frac{m_1}{2}.
   \end{equation*}

   By lemma \ref{3.3}, we have 
   \begin{equation*}
     l_{\good}(\CC_{(\lambda,\mu)})-l_{\good}(\CC_{(\lambda,\mu')}) =
    \begin{cases}
      2m_1n-\frac{m_1^2\nu_1}{2} & \text{type $\text{B}_n,\text{C}_n$},\\
      2m_1n-\frac{m_1^2\nu_1}{2}-m_1 & \text{type $\text{D}_n$}.
    \end{cases}       
    \end{equation*}

   Meanwhile, by \cite[Theorem 4.4]{Hes} we have
    \begin{eqnarray*}
        \codim_{\tilde{G}}\CO_{(\nu,\epsilon)}- \codim_{\tilde{G}_1}\CO_{(\nu',\epsilon)}&=&\sum\limits_{i=1}^{m_1} i\nu_i+m_1\sum\limits_{i=m_1+1}^l \nu_i-\sum\limits_{i=1}^{m_1} \chi(\nu_i)\\
             &=& 2m_1n-\frac{m_1^2\nu_1}{2}+\frac{m_1\nu_1}{2}-\sum\limits_{i=1}^{m_1} \chi(\nu_i).
    \end{eqnarray*}

Here $\chi:\BN \to \BZ$ is defined in \cite[\S 3.5]{Hes} as $\chi(s) = \max\{0,\min\{n-s+t,t\}\}$ in rank $n$, where
\begin{equation*}
    t=
    \begin{cases}
      \frac{s}{2} & \text{if $G$ is of type $\text{B}, \text{C}$ and $\CO$ has a $V(s)$ block },\\
      \frac{s+2}{2} & \text{if $G$ is of type $\text{D}$ and $\CO$ has a $V(s)$ block}, \\
      [\frac{s-1}{2}] & \text{if $G$ is of type $\text{B}, \text{C}$ and $\CO$ has no $V(s)$ blocks}, \\
      [\frac{s+1}{2}] & \text{if $G$ is of type $\text{D}$ and $\CO$ has no $V(s)$ blocks}.
    \end{cases}
\end{equation*}

In our case, we have
\begin{equation*}
    \sum\limits_{i=1}^{m_1} \chi(\nu_i)=
    \begin{cases}
      \frac{m_1(\nu_1-1)}{2} & \text{if $G$ is of type $\text{B}, \text{C}$},\\
      \frac{m_1(\nu_1+1)}{2} & \text{if $G$ is of type $\text{D}$} .
    \end{cases}
\end{equation*}

By above equations, the statement is true if $\nu_1$ is odd.

(2) Then we consider the case when $\nu_1$ is even and $\epsilon(\nu_1)=1$. Then $\CO_{(\nu,\epsilon_{\nu})}$ has a $V(\nu_1)$ block and the longest signed cycle is a negative $\frac{\nu_1}{2}$-cycle. Since $\CC_{(\lambda,\mu)}=\Psi(\CO_{(\nu,\epsilon_{\nu})})$, there is no positive $\nu_1$-cycle. Set $m_{\nu}(\nu_1)=m_1$. We then have $m_{\lambda}(\frac{\nu_1}{2})=m_1$ and $\theta_{i_1}=\frac{2\pi}{\nu_1}$.

Choose an arbitrary $\tilde{w}\in \CC_{(\lambda,\mu)}$. Let $I_1\subset I$ be corresponding to all negative $\frac{\nu_1}{2}$-cycles of $\tilde{w}$. Let 
\begin{center}
    $j_1 \xrightarrow{\tilde{w}} j_2\xrightarrow{\tilde{w}} \cdots \xrightarrow{\tilde{w}} j_{\frac{\nu_1}{2}}\xrightarrow{\tilde{w}} -j_1 \xrightarrow{\tilde{w}} \cdots \xrightarrow{\tilde{w}} -j_{\frac{\nu_1}{2}}$
\end{center}
 be a negative $\frac{\nu_1}{2}$-cycle where $j_1,\ldots,j_{\frac{\nu_1}{2}}\in \pm I_1$. One can check that $e_{j_1}\in V_w^\pi\setminus \{0\}$ if $\nu_1=2$ and $\sum\limits_{k=1}^{\frac{\nu_1}{2}-1} \sin \frac{2k\pi}{\nu_1} e_{j_k}$, $\sum\limits_{k=1}^{\frac{\nu_1}{2}-1} \sin \frac{2k\pi}{\nu_1} e_{j_{k+1}}\in V_{w}^{\frac{2\pi}{\nu_1}}\setminus \{0\}$ if $\nu_1>2$. As before, we have $\fkH_{F_{i_1}}=\{H_{\alpha} \mid (\alpha,e_i)=0, \forall i\in I_1  \}$. Then $W_{F_{i_1}}<W$ is the standard parabolic subgroup of the same type as $W$ with rank $n-\frac{m_1\nu_1}{2}$.

Define $\tilde{G}_1, G_1, D_1, T_1, W_1, \delta_1, \tilde{W}_1$ and $\Phi_1$ in the same way as previous case. Similarly define $(\nu',\epsilon_{\nu'})$. Set $\lambda'=(\lambda_{m+1},\ldots,\lambda_a)$. We have $\CC_{(\lambda',\mu)}$ is most elliptic in $\Phi_1^{-1}(\CO_{(\nu',\epsilon_{\nu'})})$. By induction hypothesis, we have
\begin{equation*}
    \codim_{\tilde{G}_1}\CO_{(\nu',\epsilon_{\nu'})}   =l_{\good}(\CC_{(\lambda',\mu)})+\dim T_1^{\CC_{(\lambda',\mu)}},
\end{equation*}
and $\dim T_1^{\CC_{(\lambda,\mu')}}=\dim T ^{\CC_{(\lambda,\mu)}}$. Therefore, it suffices to show
   \begin{equation*}
       \codim_{\tilde{G}}\CO_{(\nu,\epsilon_{\nu})}- \codim_{\tilde{G}_1}\CO_{(\nu',\epsilon_{\nu'})}=l_{\good}(\CC_{(\lambda,\mu)})-l_{\good}(\CC_{(\lambda',\mu)}).
   \end{equation*}
   
   By lemma \ref{3.3}, we have 
   \begin{equation*}
     l_{\good}(\CC_{(\lambda,\mu)})-l_{\good}(\CC_{(\lambda,\mu')}) =2m_1n-\frac{m_1^2\nu_1}{2}  . 
    \end{equation*}

    By \cite[Theorem 4.4]{Hes} and the description of $\chi$ above, we have 
    \begin{eqnarray*}
     \codim_{\tilde{G}}\CO_{(\nu,\epsilon_{\nu})}- \codim_{\tilde{G}_1}\CO_{(\nu',\epsilon_{\nu'})}&=& \sum\limits_{i=1}^{m_1} i\nu_i+m_1\sum\limits_{i=m_1+1}^l \nu_i-\sum\limits_{i=1}^{m_1} \chi(\nu_i)\\
     &=& 2m_1n-\frac{m_1^2\nu_1}{2}+\frac{m_1\mu_1}{2}-\sum\limits_{i=1}^{m_1} \chi(\nu_i)\\
     &=&       2m_1n-\frac{m_1^2\nu_1}{2}.
    \end{eqnarray*}
    Thus the statement is true in this case.

    (3) Finally consider the case when $\nu_1$ is even and $\epsilon(\nu_1)=0$. The longest signed cycle is a positive $\nu_1$-cycle and there are no negative $\frac{\nu_1}{2}$-cycles. Set $m_{\nu}(\nu_1)=m_1$. Then $m_{\mu}(\nu_1)=\frac{m_1}{2}$ and there are $\frac{m_1}{2}$ positive $\nu_1$-cycles. Then we have $\theta_{i_1}=\frac{2\pi}{\nu_1}$.

   Choose an arbitrary $\tilde{w}\in \CC_{(\lambda,\mu)}$. Let 
   \begin{center}
          $\{j_{s,1}\xrightarrow{\tilde{w}} j_{s,2}\xrightarrow{\tilde{w}} \cdots \xrightarrow{\tilde{w}} j_{s,\nu_1} , -j_{s,1}\xrightarrow{\tilde{w}}  \cdots \xrightarrow{\tilde{w}} -j_{s,\nu_1}\mid 1 \leq s \leq \frac{m_1}{2} \}$
   \end{center}
 be all positive $\nu_1$-cycles of $\tilde{w}$. Again $\sum\limits_{k=1}^{\nu_1-1} \sin \frac{2k\pi}{\nu_1} e_{j_{s,k}}, \sum\limits_{k=1}^{\nu_1-1} \sin \frac{2k\pi}{\nu_1} e_{j_{s,k+1}}$ for $1\leq s\leq \frac{m_1}{2}$ all lie in $ V_{\tilde{w}}^{\frac{2\pi}{\nu_1}}$ and$\sum\limits_{k=1}^{\nu_1-1} \sin \frac{4k\pi}{\nu_1}e_{j_{s,k}}$ and  $\sum\limits_{k=1}^{\nu_1-1} \sin \frac{4k\pi}{\nu_1}e_{j_{s,k+1}}$ for $1\leq s\leq \frac{m_1}{2}$ all lie in $V_{\tilde{w}}^{\frac{4\pi}{\nu_1}}$ if $\nu_1>2$. If $\nu_1=2$, then $\{e_{j_{s,1}}-e_{j_{s,2}}\mid 1\leq s\leq \frac{m_1}{2}\}$ is a basis of $V_{\tilde{w}}^{\pi}$ and $\{e_{j_{s,1}}+e_{j_{s,2}}\mid 1\leq s\leq \frac{m_1}{2}\}$ is contained in $V_{\tilde{w}}^{2\pi}$. 
 
 Therefore, we have $W_{F_{i_1}}<W$ is the product of $W_1$ and $W'$. Here $W_1$ is of the same type as $W$ with rank $n-\frac{m_1\nu_1}{2}$ and $W'$ is generated by the reflections corresponding to $\{e_{j_{s,k}}+e_{j_{s,k+r}} \mid 1\leq s \leq \frac{m_1}{2}, 1\leq k \leq \frac{\nu_1}{2}\}$. 

    Let $G_1$ be almost simple of same type as $G$ corresponding to Weyl group $W_1$. Then define $\tilde{G}_1, D_1, T_1, \delta_1, \tilde{W}_1$ and $\Phi_1$ as before. Similarly define $(\nu',\epsilon_{\nu'})$ and $\mu'$ as in the first case. Then $\CC_{(\lambda,\mu')}$ is most elliptic in $\Phi_1^{-1}(\CO_{(\nu',\epsilon_{\nu'})})$. By induction hypothesis, we have
    \begin{equation*}
      \codim_{\tilde{G}_1}\CO_{(\nu',\epsilon_{\nu'})}   =l_{\good}(\CC_{(\lambda,\mu')})+\dim T_1^{\CC_{(\lambda,\mu')}},
   \end{equation*}
   and $\dim T_1^{\CC_{(\lambda,\mu')}}=b-\frac{m_1}{2}$.
    
   Let $\fkH_1$ be the set of root hyperplanes corresponding to $W_1$. By our discussion above, we have 
   \begin{eqnarray*}
      l_{\good}(\CC_{(\lambda,\mu)})-l_{\good}(\CC_{(\lambda,\mu')}) &=& \frac{2}{\nu_1} \lvert \fkH-\fkH_{F_{i_1}} \rvert + \frac{4}{\nu_1} \lvert \fkH_{F_{i_1}}-\fkH_1 \rvert \\
      &=&
     \begin{cases}
       2m_1n-\frac{m_1^2\nu_1}{2}+\frac{m_1}{2} & \text{type $\text{B}_n,\text{C}_n$},\\
       2m_1n-\frac{m_1^2\nu_1}{2}-\frac{m_1}{2} & \text{type $\text{D}_n$}.
     \end{cases}       
     \end{eqnarray*}
     By \cite[Theorem 4.4]{Hes} and the description of $\chi$ above, we have 
     \begin{eqnarray*}
      \codim_{\tilde{G}}\CO_{(\nu,\epsilon_{\nu})}- \codim_{\tilde{G}_1}\CO_{(\nu',\epsilon_{\nu'})}&=& \sum\limits_{i=1}^{m_1} i\nu_i+m_1\sum\limits_{i=m_1+1}^l \nu_i-\sum\limits_{i=1}^{m_1} \chi(\nu_i)\\
      &=& 2m_1n-\frac{m_1^2\nu_1}{2}+\frac{m_1\nu_1}{2}-\sum\limits_{i=1}^{m_1} \chi(\nu_i)\\
      &=& \begin{cases}
       2m_1n-\frac{m_1^2\nu_1}{2}+m_1 & \text{type $\text{B}_n,\text{C}_n$},\\
       2m_1n-\frac{m_1^2\nu_1}{2} & \text{type $\text{D}_n$}.
     \end{cases}
     \end{eqnarray*}
 Thus the statement is true in this case.

By above three cases and induction, proposition \ref{3.5} holds for type $\text{B}_n, \text{C}_n, \text{D}_n$.

\item Combining all discussion above, proposition \ref{3.5} (or proposition \ref{3.6}) is proved for classical groups. As for exceptional groups, we verify this theorem by computer using GAP-CHEVIE\cite{Mi}. The main idea is as follows.
\begin{itemize}
\item 1. Let $W$ be the Weyl group of $G$ and $p$ be the characteristic of $\Bk$.
    \item 2. Starting from any unipotent orbit $\CO$, we can apply 
    \begin{center}
        \texttt{gap> FormatTeX(UnipotentClasses(W,p));}
    \end{center}
    The result will be a table of the information of all unipotent orbits of $G$ in characteristic $p$ case. In the table one can find the dimension of the corresponding Springer fiber $\CB_u:=\{B'\in \CB\mid u\in B'\}$. Here $\CB$ is the variety of Borels.
    \item 3. Following the steps in \cite[\S 6]{Mi}, one can define the function for Lusztig's map $\Phi$. Then we can apply 
    \begin{center}
        \texttt{gap> LusztigMap(W,p);}
    \end{center}
    Get the preimage of the unipotent orbit $\CO$ as a list of conjugacy classes of $W$.
    \item 4. We apply 
    \begin{center}
        \texttt{gap> ChevieClassInfo(W);} \\
        \texttt{gap> ReflectionEigenvalues(W);}
    \end{center}
    The first function gives a table of all conjugacy classes of $W$. The second function gives a list of eigenvalues of all conjugacy classes of $W$. Using this two functions, we can compute the dimension of $T^{\CC}$ for any $\CC\in [W]$. This also help us identify the ``most elliptic'' conjugacy class $\Psi(\CO)$ in $\Phi^{-1}(\CO)\subset [W]$.
    \item 5. Finally, we just need to compute the length of good position braid representatives of $\Psi(\CO)$. We defined a new function to compute this length. The algorithm is pretty simple. For a given conjugacy class of $W$, one can first find all its eigenvalues using the function in step 4. Then given any root and its corresponding root hyperplane, we can figure out its role in lemma \ref{3.3} by checking its orthogonality with above eigenspaces. Finally, we get the length by lemma \ref{3.3}. Combining all steps above we can verify the proposition for exceptional types.
\end{itemize}
We refer to \cite{Dcode} for the code with step-by-step explanation.
\qed
\end{itemize}

\subsection{Indecomposably-good position braid representatives}
 Recall that in \S 2.1, the real eigenspaces $V_{\tilde{w}}^{\theta}$ are not indecomposable in general. An indecomposable real eigenspace $V'$ is a subspace of $V_{\tilde{w}}^{\theta}$ such that $V'$ is closed under the action of $\tilde{w}$ and any proper subspace of $V'$ does not have such a property. It is not hard to see that $\dim V'=1$ if $\cos \theta = \pm 1$ and $\dim V' = 2$ otherwise. In this subsection, we use a sequence of indecomposable real eigenspaces to replace $\underline{\Theta}$. 

Let $\tilde{w}\in \tilde{W}$ and $\CV=(V_1,\ldots, V_k)$ be a sequence of distinct indecomposable real eigenspaces of $\tilde{w}$ in $V$. Define $F_i=\sum\limits_{j=1}^i V_i$ for $1\leq i\leq k$ and $F_0=0$. The definition of being admissible and complete for $\CV$ is the same as in \S 2.1. For any Weyl chamber $C$, we say $C$ is $\textit{in good position}$ with respect to $(\tilde{w},{\CV})$ if $\overline{C}\cap F_i^{\reg}\neq \emptyset$ for all $i$. We call $(\tilde{w},{\CV})$ an \textit{indecomposably-good position pair} if $C$ is the fundamental chamber $C_0$.

For any $i$, by definition $V_{i}$ lies in some real eigenspace $V_{\tilde{w}}^\theta$ of $\tilde{w}$. We know the choice of $\theta$ is not unique. Let $\underline{\Theta}_{\CV}=(\theta_1,\ldots,\theta_k)$ be a sequence such that $V_i\subset V_{\tilde{w}}^{\theta_i}$. As in \S 2.1, one can again define the irredundant subsequnce $r({\CV})=(V_{i_1},\ldots,V_{i_l})$ of ${\CV}$. Set $\underline{\Theta}_{r(\CV)}=(\theta_{i_1},\ldots,\theta_{i_l})$. We call $(\tilde{w},{\CV},\underline{\Theta}_{\CV})$ an \textit{indecomposably-good position triple} if the fundamental chamber $C_0$ is in good position with respect to $(\tilde{w},{\CV})$, or equivalently $(\tilde{w},r({\CV}))$. Note that here $\theta_i$ and $\theta_{i'}$ might be the same. Now if we have $\theta_1\leq \theta_2\leq \cdots\leq \theta_l$, then we say $\underline{\Theta}_{\CV}$ is \textit{weakly-increasing}. One should also note that we can have infinitely many choices of $\underline{\Theta}_{\CV}$ for a given ${\CV}$. Therefore, we have infinitely many choices of indecomposably-good position triples for a given indecomposably-good position pair.

We have following analogy to proposition \ref{a}.

\begin{proposition}\label{3.7}
    Let $(\tilde{w},{\CV},\underline{\Theta}_{\CV})$ be an indecomposably-good position triple where ${\CV}$ is admissible with $r({\CV})=(V_{i_1},\ldots,V_{i_l})$ and $\underline{\Theta}_{r(\CV)}=(\theta_{i_1},\ldots,\theta_{i_l})$ is weakly-increasing. Suppose that $\tilde{w}= w\sigma$ with $\sigma\in \langle \delta \rangle$ and $w\in W$. Then there exists an element $b(\tilde{w},{\CV},\underline{\Theta}_{\CV})= b(w,{\CV},\underline{\Theta}_{\CV})\sigma \in B^+(\tilde{W})$ satisfying that $\pi_{\tilde{W}}(b(\tilde{w},{\CV},\underline{\Theta}_{\CV}))=\tilde{w}$ and
\begin{equation*}
   b(\tilde{w},{\CV},\underline{\Theta}_{\CV})^d=\underline{w_0}^\frac{d\theta_{i_1}}{\pi} \underline{w_1}^\frac{d(\theta_{i_2}-\theta_{i_1})}{\pi} \cdots \underline{w_{l-1}}^\frac{d(\theta_{i_l}-\theta_{i_{l-1}})}{\pi}\sigma^d .
\end{equation*}   
Here $d\in \BN$ such that $d\frac{\theta_{i_j}}{2\pi}\in \BZ$, and $w_j$ is the longest element in $W_{F_{i_j}}$ for $0\leq j \leq l$. Furthermore, if $d$ is even, then 
\begin{equation*}
   b(\tilde{w},{\CV},\underline{\Theta}_{\CV})^{\frac{d}{2}}= \underline{w_0}^\frac{d\theta_{i_1}}{2\pi} \underline{w_1}^\frac{d(\theta_{i_2}-\theta_{i_1})}{2\pi} \cdots \underline{w_{l-1}}^\frac{d(\theta_{i_l}-\theta_{i_{l-1}})}{2\pi}\sigma^{\frac{d}{2}}.
\end{equation*}       
\end{proposition}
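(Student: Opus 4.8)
The plan is to prove Proposition~\ref{3.7} by running, step for step, the induction on $|W|$ used in the proof of Proposition~\ref{a}, pointing out only the places where replacing an angle sequence $\underline{\Theta}$ by a sequence $\CV$ of indecomposable real eigenspaces (together with a compatible choice of angles $\underline{\Theta}_{\CV}$) forces a change. As there, one first reduces to the case $\CV=r(\CV)$: the proof of Lemma~\ref{3.4} transcribes verbatim, since given $v=v_1+\dots+v_k\in\overline{C_0}\cap(\sum_{j\le k}V_{i_j})^{\reg}$ and a regular point $u\in\overline{C_0}\cap F_{i_{k+1}}^{\reg}$ with projection $v_{k+1}$ onto $V_{i_{k+1}}$, the vector $v+\varepsilon v_{k+1}$ is regular in $\sum_{j\le k+1}V_{i_j}$ and stays in $\overline{C_0}$ for $\varepsilon>0$ small; hence $(\tilde w,r(\CV))$ is again an indecomposably-good position pair, $r(\CV)$ is admissible, and it equals its own irredundant subsequence. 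The one genuine enlargement over Proposition~\ref{a} is that $\underline{\Theta}_{r(\CV)}$ need only be weakly increasing, but this is harmless: if $\theta_{i_j}=\theta_{i_{j-1}}$ then the exponent $d(\theta_{i_j}-\theta_{i_{j-1}})/\pi$ equals $0$, so the factor $\underline{w_{j-1}}^{0}=1$ drops out of the asserted identity, even though $W_{F_{i_{j-1}}}\supsetneq W_{F_{i_j}}$; such equality of consecutive angles really does occur (for example when $W$ is of type $\mathrm{A}_1\times\mathrm{A}_1$, $\tilde w=s_1s_2$, and $\CV$ is the pair of root lines).

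Next I would peel off $V_{i_1}$. Since $\overline{C_0}\cap F_{i_1}^{\reg}\ne\emptyset$, \cite[Proposition~2.2]{HN} gives $\tilde w=u_1\tilde u$ with $u_1\in W_{F_{i_1}}$ and $\tilde u\in W\sigma$ a minimal double coset representative in $W_{I(F_{i_1})}\backslash\tilde W/W_{I(F_{i_1})}$ preserving the simple reflections in $I(F_{i_1})$, and $\tilde u$ induces a twist $\delta_{\tilde u}$ of $W_{F_{i_1}}$. The crucial new remark is that $V_{i_1}$ is again an indecomposable real eigenspace of $\tilde u$, with the \emph{same} chosen angle $\theta_{i_1}$: indeed $V_{i_1}$ is $\tilde w$-stable by indecomposability and $u_1\in W_{F_{i_1}}=W_{V_{i_1}}$ fixes $V_{i_1}$ pointwise, so $\tilde u=u_1^{-1}\tilde w$ agrees with $\tilde w$ on $V_{i_1}$, whence $V_{i_1}$ is $\tilde u$-stable and has no proper $\tilde u$-stable subspace. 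Passing to $V'=\mathrm{span}\{\alpha\in R:\alpha\perp F_{i_1}\}$, a reflection representation of $\tilde W_{F_{i_1}}=W_{F_{i_1}}\rtimes\langle\delta_{\tilde u}\rangle$ with fundamental chamber $C_0\cap V'$, the orthogonal projections of $V_{i_2},\dots,V_{i_l}$ onto $V'$ form, by the good-position property transported as in the proof of Lemma~\ref{3.4}, an admissible weakly-increasing indecomposably-good position triple for $u_1\delta_{\tilde u}$ carrying the angle sequence $(\theta_{i_2},\dots,\theta_{i_l})$. By the induction hypothesis there is $b(u_1,\cdot)\in B^+(\tilde W_{F_{i_1}})\subset B^+(\tilde W)$ projecting to $u_1$ with $(b(u_1,\cdot)\delta_{\tilde u})^{d}=\underline{w_1}^{d\theta_{i_2}/\pi}\cdots\underline{w_{l-1}}^{d(\theta_{i_l}-\theta_{i_{l-1}})/\pi}(\delta_{\tilde u})^{d}$. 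From here the construction of $b(\tilde w,\CV,\underline{\Theta}_{\CV})$ and the computation of its powers are word for word those in the proof of Proposition~\ref{a}: first for $\theta_{i_1}=0$ (where $\tilde u=\sigma$) or $\theta_{i_1}=2p\pi/q\in(0,2\pi)$ one sets $\tilde u'=\tilde u^{t}$ with $pt=sq+1$, invokes \cite[Lemma~5.2]{HN} to obtain $(\underline{\tilde{u}'})^{q}=(\underline{w_1w_0}\cdot\underline{w_0w_1})\sigma^{tq}$, and forms $\tilde b=b(u_1,\cdot)(\underline{\tilde{u}'})^{p}\sigma^{-sq}$; then for arbitrary $\theta_{i_1}$ one subtracts the appropriate multiple $2k\pi$ and multiplies by $(\underline{w_1w_0}\cdot\underline{w_0w_1})^{k}$, which commutes with $W_{F_{i_1}}$. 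The ``furthermore'' (very good) identity comes out of the same computation with $d/2$ in place of $d$.

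The step I expect to be the real obstacle is the descent just used: checking that the projections of $V_{i_2},\dots,V_{i_l}$ onto $V'$ are indecomposable real eigenspaces of $u_1\delta_{\tilde u}$ on $V'$ carrying exactly the prescribed angles, that the resulting sequence remains irredundant and in good position with respect to $C_0\cap V'$, and that it is still weakly increasing. When the $V_{i_j}$ lie in pairwise distinct real eigenspaces of $\tilde w$ they are mutually orthogonal --- being eigenspaces of the symmetric operator $\tilde w+\tilde w^{-1}$ for distinct eigenvalues --- and hence orthogonal to $F_{i_1}$, so the projection is innocuous; the delicate case is when several $V_{i_j}$ lie inside one real eigenspace $V_{\tilde w}^{\theta}$, where orthogonality to $F_{i_1}$ can fail and the projections must be analyzed directly, verifying in particular that irredundancy and the good-position condition are preserved. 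This is exactly where the angle version \cite[Theorem~5.3]{HN} and Proposition~\ref{a} get the sequence $(\theta_{i_2},\dots,\theta_{i_l})$ for $V'$ essentially for free, the relevant eigenspace inside $V'$ there being $V_{\tilde w}^{\theta_{i_j}}\cap V'$; tracking a single indecomposable summand through the same identification is the bookkeeping that has to be carried out. Everything else --- the reduction to $\CV=r(\CV)$, the two gluing cases, the very good refinement, and the identity $\pi_{\tilde W}(b(\tilde w,\CV,\underline{\Theta}_{\CV}))=\tilde w$ --- is routine once that point is settled.
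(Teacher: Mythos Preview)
Your approach matches the paper's: rerun the induction of \S3.2. The paper's own proof is far terser---it simply observes that \cite[Lemma~2.2, Lemma~5.2]{HN} continue to hold when the full real eigenspace $V_{\tilde w}^{\theta}$ is replaced by any $\tilde w$-stable subspace $K\subset V_{\tilde w}^{\theta}$ such that $\overline{C_0}$ contains a regular point of $K$ and $C_0,\tilde w(C_0)$ lie in the same component of $V\setminus\bigcup_{H\in\fkH_K}H$, takes $K=V_{i_1}$, and concludes ``by induction''. That extension is precisely what you need, and should make explicit, at the moment you ``invoke \cite[Lemma~5.2]{HN}'': the literal lemma would produce the longest element of $W_{V_{\tilde u'}^{2\pi/q}}$, whereas the identity you write down requires the longest element of $W_{V_{i_1}}=W_{F_{i_1}}$, and these can differ when $V_{i_1}\subsetneq V_{\tilde w}^{\theta_{i_1}}$. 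Conversely, the descent bookkeeping you single out as the real obstacle---showing that the projected $V_{i_j}$ onto $V'$ again form an indecomposably-good position triple with the correct stabilizers $W_{F_{i_j}}$---is not addressed in the paper at all, so your discussion there already goes beyond what the paper supplies.
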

\begin{proof}
    The proof of this theorem is almost exactly the same as the proof of proposition \ref{a}. One just have to notice that \cite[Lemma 2.2, Lemma 5.2]{HN} also work, if we replace $V_{\tilde{w}}^{\theta}$ by any $\tilde{w}$-stable subspace $K\subset V_{\tilde{w}}^{\theta}$ satisfying that $\overline{C_0}$ contains a regular point of $K$ and $C_0,\tilde{w}(C_0)$ are in the same connected component of $V-\bigcup\limits_{H\in \fkH_{K}}H$. Since the indecomposable real eigenspace $V_{i_1}$ in our sequence $r(\CV)$ satisfies this condition, by induction the statement is true.
\end{proof}

\begin{definition}\label{3.8}
    The braid element $b(\tilde{w},{\CV},\underline{\Theta}_{\CV})$ in the above theorem is called an \textit{indecomposably-good position braid representative} of the triple $(\tilde{w},{\CV},\underline{\Theta}_{\CV})$. 
\end{definition}

Again we directly have the similar formula for the length of these braid representatives as lemma \ref{3.3}.
\begin{corollary}\label{3.9}
    Keep the notations in proposition \ref{3.7}. We have
    \begin{equation*}
        l(b(\tilde{w},{\CV},\underline{\Theta}_{\CV}))=  \sum\limits_{j=1}^l \frac{\theta_{i_j}}{\pi} \lvert  \fkH_{F_{i_{j-1}}}-\fkH_{F_{i_{j}}}   \rvert.
    \end{equation*}
\end{corollary}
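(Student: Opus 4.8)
The plan is to read off the formula directly from the $d$-th power identity in Proposition \ref{3.7}, exactly as Lemma \ref{3.3} is deduced from Proposition \ref{a}. The first step is to apply the length function to that identity. Since $l$ on $B^+(\tilde{W})$ is additive (every defining relation of the braid monoid is homogeneous in word length, and each generator has length $1$), and since $l(\sigma)=0$ for $\sigma\in\langle\delta\rangle$, applying $l$ to
\[
   b(\tilde{w},{\CV},\underline{\Theta}_{\CV})^d=\underline{w_0}^{\frac{d\theta_{i_1}}{\pi}} \underline{w_1}^{\frac{d(\theta_{i_2}-\theta_{i_1})}{\pi}} \cdots \underline{w_{l-1}}^{\frac{d(\theta_{i_l}-\theta_{i_{l-1}})}{\pi}}\sigma^d
\]
and dividing by $d$ yields
\[
   l(b(\tilde{w},{\CV},\underline{\Theta}_{\CV})) = \sum_{j=1}^{l}\frac{\theta_{i_j}-\theta_{i_{j-1}}}{\pi}\, l(w_{j-1}),
\]
with the convention $\theta_{i_0}=0$. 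Here $l(w_{j-1})=\lvert\fkH_{F_{i_{j-1}}}\rvert$, because $w_{j-1}$ is the longest element of the standard parabolic subgroup $W_{F_{i_{j-1}}}$, whose length equals the number of reflection hyperplanes containing $F_{i_{j-1}}$. Note that $d$ drops out, so the right-hand side is automatically independent of the choice of $d$.

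The second step is a summation-by-parts rearrangement. The filtration $F_0\subset F_1\subset\cdots$ gives a nested chain $\fkH_{F_{i_0}}\supseteq\fkH_{F_{i_1}}\supseteq\cdots\supseteq\fkH_{F_{i_l}}$, and admissibility of $\CV$ makes $W_{F_{i_l}}$ trivial, hence $\fkH_{F_{i_l}}=\emptyset$; therefore $\lvert\fkH_{F_{i_{j-1}}}\rvert=\sum_{k=j}^{l}\lvert\fkH_{F_{i_{k-1}}}-\fkH_{F_{i_k}}\rvert$. Substituting this into the previous display, interchanging the two summations, and telescoping the resulting inner sum via $\sum_{j=1}^{k}(\theta_{i_j}-\theta_{i_{j-1}})=\theta_{i_k}$, I would obtain
\[
   l(b(\tilde{w},{\CV},\underline{\Theta}_{\CV})) = \sum_{k=1}^{l}\frac{\theta_{i_k}}{\pi}\,\lvert\fkH_{F_{i_{k-1}}}-\fkH_{F_{i_k}}\rvert,
\]
which is the asserted formula.

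I do not expect a genuine obstacle: all the substance lies in Proposition \ref{3.7}, and what remains is precisely the elementary Abel-summation identity already implicit in the proof of Lemma \ref{3.3}. The only points worth checking carefully are that the exponents $d(\theta_{i_j}-\theta_{i_{j-1}})/\pi$ appearing in Proposition \ref{3.7} are honest nonnegative integers — they are even by the choice of $d$ and nonnegative because $\underline{\Theta}_{r(\CV)}$ is weakly increasing — so that the power identity is meaningful, and that $l$ is genuinely additive (not merely subadditive) on $B^+(\tilde{W})$, which is standard since length on a positive braid monoid descends from word length.
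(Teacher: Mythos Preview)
Your proposal is correct and matches the paper's approach: the paper simply states that the formula is obtained ``directly'' from Proposition~\ref{3.7} in the same way Lemma~\ref{3.3} follows from Proposition~\ref{a}, and what you have written is precisely that computation spelled out in full (additivity of length on $B^+(\tilde W)$, $l(w_{j-1})=\lvert\fkH_{F_{i_{j-1}}}\rvert$, and Abel summation using $\fkH_{F_{i_l}}=\emptyset$ from admissibility).
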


The following result compare these new braid representatives with good position braid representatives defined in \S 3.1. We shall see good position braid representatives are all indecomposably-good.

\begin{proposition}\label{3.10}
     Let $(\tilde{w},\underline{\Theta})$ be a good position pair. Then there exists a sequence ${\CV}$ of indecomposable real eigenspaces of $\tilde{w}$ with $\underline{\Theta}_{r(\CV)}=r(\underline{\Theta})$ such that $(\tilde{w},{\CV},\underline{\Theta}_{\CV})$ is an indecomposably-good position triple. Furthermore, if $b(\tilde{w},\underline{\Theta})$ be a good position braid representative of some $(\tilde{w},\underline{\Theta})$, then $b(\tilde{w},\underline{\Theta})$ is an indecomposably-good position braid representative of $(\tilde{w},{\CV},\underline{\Theta}_{\CV})$.
\end{proposition}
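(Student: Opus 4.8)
The plan is to build $\CV$ by an induction that mirrors the proof of Lemma~\ref{3.4}, and then to read off the equality of braids from the formulas of Propositions~\ref{a} and~\ref{3.7}. By Lemma~\ref{3.4} the pair $(\tilde w,r(\underline\Theta))$ is again a good position pair, and $r(r(\underline\Theta))=r(\underline\Theta)$, so I may assume from the start that $\underline\Theta=(\theta_1,\dots,\theta_l)=r(\underline\Theta)$ is increasing, admissible and irredundant. Write $F_j=\bigoplus_{k\le j}V_{\tilde w}^{\theta_k}$ and $P_j=W_{F_j}$, so $W=P_0\supsetneq P_1\supsetneq\cdots\supsetneq P_l=\{1\}$. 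I will construct indecomposable real eigenspaces $V_j\subseteq V_{\tilde w}^{\theta_j}$ together with points $v^{(j)}$ such that, setting $F_j^{\CV}:=V_1\oplus\cdots\oplus V_j$, one has $v^{(j)}\in\overline{C_0}\cap(F_j^{\CV})^{\reg}$ and $W_{F_j^{\CV}}=P_j$ for every $j$. Granting this, $\CV:=(V_1,\dots,V_l)$ with labels $\underline\Theta_{\CV}:=(\theta_1,\dots,\theta_l)$ works: each step strictly shrinks the stabilizer, so $r(\CV)=\CV$ and $\underline\Theta_{r(\CV)}=\underline\Theta=r(\underline\Theta)$; the $v^{(j)}$ witness that $C_0$ is in good position with respect to $(\tilde w,\CV)$; $W_{F_l^\CV}=P_l=\{1\}$ makes $\CV$ admissible; and $\underline\Theta_{\CV}$ is strictly, hence weakly, increasing, so $(\tilde w,\CV,\underline\Theta_{\CV})$ is an indecomposably-good position triple.

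For the inductive step, suppose $V_1,\dots,V_{j-1}$ and $v^{(j-1)}$ have been found. Since $(\tilde w,\underline\Theta)$ is a good position pair, pick $u\in\overline{C_0}\cap F_j^{\reg}$ and write $u=u'+u_j$ with $u'\in F_{j-1}$ and $u_j\in V_{\tilde w}^{\theta_j}$; regularity of $u$ forces $u_j\ne 0$, since a regular point of $F_j$ cannot lie in the smaller $F_{j-1}$ ($\fkH_{F_{j-1}}\supsetneq\fkH_{F_j}$ by irredundancy). Let $V_j$ be the $\tilde w$-invariant span of the $\tilde w$-orbit of $u_j$; this is an indecomposable real eigenspace inside $V_{\tilde w}^{\theta_j}$, of dimension $1$ if $\cos\theta_j=\pm 1$ and $2$ otherwise. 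Put $v^{(j)}=v^{(j-1)}+\epsilon u_j$ for $\epsilon>0$ small. As in Lemma~\ref{3.4}, split the positive roots $\alpha$ according to whether $H_\alpha$ contains $F_{j-1}^{\CV}$. By the inductive identity $W_{F_{j-1}^{\CV}}=P_{j-1}=W_{F_{j-1}}$ we have $\fkH_{F_{j-1}^{\CV}}=\fkH_{F_{j-1}}$, so $H_\alpha\supseteq F_{j-1}^{\CV}$ forces $(\alpha,u')=0$ and $(\alpha,u)=(\alpha,u_j)$; and if moreover $H_\alpha\not\supseteq V_j$ then $H_\alpha\not\supseteq F_j$ (as $V_j\subseteq F_j$), whence $(\alpha,u)\ne 0$. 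The usual small-$\epsilon$ estimates then give $v^{(j)}\in\overline{C_0}$ and that $v^{(j)}$ is regular in $F_j^{\CV}$.

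The crux is the identity $W_{F_j^{\CV}}=P_j$. The inclusion $\supseteq$ is immediate from $F_j^{\CV}\subseteq F_j$. For $\subseteq$, let $g$ fix $F_j^{\CV}$ pointwise. Then $g\in W_{F_{j-1}^{\CV}}=W_{F_{j-1}}$, so $g$ fixes $F_{j-1}\ni u'$; and $g$ fixes $V_j\ni u_j$. Hence $g$ fixes $u=u'+u_j$. Since $u$ is a regular point of $F_j$, it is fixed by exactly the reflections fixing all of $F_j$, so its pointwise stabilizer is $W_{F_j}=P_j$; thus $g\in P_j$. The base case $j=1$ is the same argument with $F_0^{\CV}=0$, $u'=0$, $u=u_j$ a regular point of $V_{\tilde w}^{\theta_1}$ in $\overline{C_0}$, and $P_0=W$.

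For the final assertion, both $b(\tilde w,\underline\Theta)$ and $b(\tilde w,\CV,\underline\Theta_{\CV})$ project to $\tilde w$, and by Propositions~\ref{a} and~\ref{3.7}, for a common $d$ with $d\theta_j/2\pi\in\BZ$ their $d$-th powers are
\[
\underline{w_0}^{\frac{d\theta_1}{\pi}}\,\underline{w_1}^{\frac{d(\theta_2-\theta_1)}{\pi}}\cdots\underline{w_{l-1}}^{\frac{d(\theta_l-\theta_{l-1})}{\pi}}\,\sigma^d ,
\]
where $w_j$ is the longest element of $W_{F_j}$ in the first case and of $W_{F_j^{\CV}}$ in the second, and the angles come from $\underline\Theta$ resp. $\underline\Theta_{r(\CV)}$. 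Since $\underline\Theta_{r(\CV)}=\underline\Theta$ and $W_{F_j^{\CV}}=P_j=W_{F_j}$, these two $d$-th powers coincide; as $B^+(\tilde W)$ embeds into a group in which $d$-th roots are unique — equivalently, because $b(\tilde w,\underline\Theta)$ now satisfies the defining conclusion of Proposition~\ref{3.7} for the triple $(\tilde w,\CV,\underline\Theta_{\CV})$ — we get $b(\tilde w,\underline\Theta)=b(\tilde w,\CV,\underline\Theta_{\CV})$, which is the claim. The only genuinely delicate point is the stabilizer identity $W_{F_j^{\CV}}=P_j$; the rest is the perturbation bookkeeping of Lemma~\ref{3.4} together with the observation that a single indecomposable real eigenspace, taken as the $\tilde w$-orbit span of the $V_{\tilde w}^{\theta_j}$-component of a regular point of $F_j$, already realizes each jump of the parabolic chain.
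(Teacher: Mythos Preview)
Your proof is correct and follows essentially the same construction as the paper: both take $V_j$ to be the $\tilde w$-span of the $V_{\tilde w}^{\theta_{i_j}}$-component of a regular point of $F_{i_j}$ lying in $\overline{C_0}$, and both establish $\fkH_{F_j^{\CV}}=\fkH_{F_{i_j}}$ (you via the stabilizer argument $W_{F_j^{\CV}}=P_j$, the paper more directly from the observation $(v_j,\alpha)>0$ for $H_\alpha\in\fkH_{F_{i_{j-1}}}\setminus\fkH_{F_{i_j}}$). Your detour through uniqueness of $d$-th roots in the furthermore part is unnecessary: as you yourself note, $b(\tilde w,\underline\Theta)$ already satisfies the conclusion of Proposition~\ref{3.7} for the triple, and that is exactly what ``indecomposably-good position braid representative'' means (Proposition~\ref{3.7} is an existence statement, not a uniqueness one); this is what the paper's ``direct by definition'' refers to.
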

\begin{proof}
    Let $\underline{\Theta}=(\theta_1,\ldots,\theta_k)$ and $F_i:=\sum\limits_{j=1}^i V_{\tilde{w}}^{\theta_j}$ for $1\leq i\leq k$. Let $r(\underline{\Theta})=(\theta_{i_1},\ldots,\theta_{i_l})$. By our assumption and lemma \ref{3.4}, we have $(\tilde{w},r(\underline{\Theta}))$ is a good position pair. Then for any $1\leq j\leq l$, there exists a regular point $v_j'$ of $ \sum\limits_{r=1}^j V_{\tilde{w}}^{\theta_{i_r}}$ lying in $ \overline{C_0}$. It is also a regular point of $F_{i_j}$. Let $v_j$ be its projection to $V_{\tilde{w}}^{\theta_{i_j}}$. Let $\alpha$ be any root such that $H_\alpha\in \fkH_{F_{i_{j-1}}}\setminus \fkH_{F_{i_j}}$. Then we must have $(v_j,\alpha)>0$. Now we choose $V_j=\BR v_j+\BR \tilde{w}(v_j)$. Clearly $V_j\subset V_{\tilde{w}}^{\theta_{i_j}}$ is an indecomposable real eigenspace.

    Let ${\CV}=(V_1,\ldots,V_l)$. Clearly we can choose $\underline{\Theta}_{\CV}=\underline{\Theta}_{r(\CV)}=r(\underline{\Theta})$. Write $F_{{\CV},j}=\sum\limits_{r=1}^j  V_r$. Then for any $1\leq j\leq l$, there exists $\sum\limits_{r=1}^j \lambda_rv_r\in F_{{\CV},j}$ lying in $F_{i_j}^{\reg}\cap \overline{C_0}$ and $\fkH_{F_{i_j}}=\fkH_{F_{{\CV},j}}$ by our choice of $v_j$ above. Therefore, we have $(\tilde{w},{\CV},\underline{\Theta}_{\CV})$ is an indecomposably good position triple. 
    
    Furthermore part is direct by definition.
\end{proof}
Based on above results, we see indecomposably-good position braid representative is a more general form. We will naturally relate them to affine Springer fibers in \S 5.

We end this section by the following result on the existence of indecomposably-good position triples (or pairs since $\underline{\Theta}$ is chosen manually).
\begin{lemma}\label{3.11}
    Let $\CV=(V_1,\ldots,V_l)$ be a sequence of indecomposable real eigenspaces of $\tilde{w}$. Suppose that $W_{F_l}$ is trivial where $F_l:=\sum\limits_{i=1}^l V_i$, then there is an admissible (or complete) sequence $\CV'$ of indecomposable real eigenspaces of $\tilde{w}$ whose irredundant subsequence is a subsequence of $\CV$. 
\end{lemma}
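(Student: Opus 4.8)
The plan is to obtain $\CV'$ by \emph{appending} further indecomposable real eigenspaces of $\tilde{w}$ to $\CV$ until the partial sums exhaust $V$. The key point that makes this work is that once the stabilizer $W_{F_l}$ has become trivial, no eigenspace added afterwards can shrink it further, so it can never create a new ``jump'' in the chain of parabolic subgroups attached to the filtration; hence the irredundant subsequence is unaffected by the extension, and it lies in $\CV$ simply because by definition the irredundant subsequence is selected from $\CV$ itself.

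First I would record the structure of the $\tilde{w}$-action on $V$. Since $\tilde{W}$ is finite, $\tilde{w}$ has finite order and so acts semisimply on $V$; moreover $\tilde{w}+\tilde{w}^{-1}$ is self-adjoint for the invariant form $(,)$, so its eigenspaces --- which are exactly the real eigenspaces $V_{\tilde{w}}^{\theta}$, $\theta\in\Gamma_{\tilde{w}}^{[0,\pi]}$ --- are mutually orthogonal and add up to $V$. Each $V_{\tilde{w}}^{\theta}$ is a semisimple $\langle\tilde{w}\rangle$-module whose irreducible summands are precisely the indecomposable real eigenspaces it contains (of dimension $1$ or $2$). As $F_l=\sum_i V_i$ is $\tilde{w}$-stable, its orthogonal complement $C:=F_l^{\perp}$ is $\tilde{w}$-stable with $V=F_l\oplus C$, and I would decompose $C=V_{l+1}\oplus\cdots\oplus V_{l+m}$ into $\tilde{w}$-irreducible subspaces; each $V_{l+j}$ is then an indecomposable real eigenspace, and $V_{l+j}\cap F_l=0$ forces $V_1,\dots,V_l,V_{l+1},\dots,V_{l+m}$ to be pairwise distinct.

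I would then set $\CV':=(V_1,\dots,V_l,V_{l+1},\dots,V_{l+m})$ and, writing $F'_i$ for its partial sums, note $F'_{l+m}=F_l\oplus C=V$, so $\CV'$ is complete; for the ``admissible'' alternative one may simply take $\CV':=\CV$, since $W_{F_l}$ trivial is equivalent to $F_l\cap V^{\reg}\neq\emptyset$. It remains to check $r(\CV')=r(\CV)$: for $i\le l$ we have $F'_i=F_i$, hence $W_{F'_i}=W_{F_i}$, whereas for $i\ge l$ we have $F'_i\supseteq F_l$, hence $W_{F'_i}\subseteq W_{F_l}=\{1\}$; thus the strict inclusions in $W_{F'_0}\supseteq\cdots\supseteq W_{F'_{l+m}}$ are exactly those in $W_{F_0}\supseteq\cdots\supseteq W_{F_l}$, all occurring among the first $l$ indices, so $r(\CV')=r(\CV)$, a subsequence of $\CV$.

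I do not expect a genuine obstacle here. The only point deserving care is the claim that extending $\CV$ past the index where $W_{F_l}$ has become trivial leaves the irredundant subsequence unchanged: this is the sole place the hypothesis $W_{F_l}=\{1\}$ enters, through the inclusion $W_{F'_i}\subseteq W_{F_l}$ for $i\ge l$. One should also be slightly careful to ensure the appended eigenspaces are distinct from one another and from the original $V_i$, which is exactly what the direct sum decomposition $V=F_l\oplus C$ provides.
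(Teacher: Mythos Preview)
Your proof is correct and follows essentially the same approach as the paper: extend $\CV$ to a complete sequence by appending indecomposable real eigenspaces after $V_l$, and observe that since $W_{F_l}$ is already trivial the irredundant subsequence is unchanged. The paper states this in two sentences without the details you supply about the orthogonal complement decomposition and the verification that $r(\CV')=r(\CV)$, but the argument is the same.
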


\begin{proof}
    It is clear that one can also complete $\CV$ to a complete sequence $\CV'$ by adding indecomposable real eigenspaces after $V_l$. The statement on irredundant subsequence is direct by definition and the assumption that $W_{F_l}$ is trivial.
\end{proof}

\section{Transversal slices from good representatives}
In this section, let $G$ be a connected reductive algebraic group over an algebraically closed field $\Bk$. We consider the following two cases (see \cite[\S 0.2]{Lu3}).
\begin{itemize}
     \item case \RNum{1}: $G$ is the identity component of a disconnected reductive group $\tilde{G}$ with a fixed connected component $D$ (no requirements on $\text{char}(\Bk)$);
    \item case \RNum{2}: $\Bk=\overline{\BF_q}$ for some finite field $\overline{\BF_q}$ and $\tilde{G}=G$ with Frobenius map $F:G\to G$.
\end{itemize}
Now fix a Borel subgroup $B\subset G$ which is $F$-stable in case \RNum{2} and let $T\subset B$ be a maximal torus of $G$. In case \RNum{1}, let $\tilde{g}_D\in D$ be such that $\tilde{g}_D T\tilde{g}_D^{-1}=T$ and $\tilde{g}_D B\tilde{g}_D^{-1}=B$. Identify $N_G(T)/T$ with $W$.

In the first case, the conjugation of component $D$ defines a twist $\delta_D:W\to W$. This induces $\delta_D:G\to G$ which send the 1-parameter root subgroup $U_{\alpha}$ to $U_{\delta_D(\alpha)}$. In the second case, the Frobenius map $F$ also define a twist $\delta_F:W\to W$ which send the 1-parameter root subgroup $U_{\alpha}$ to $U_{\delta_F(\alpha)}$. We assume that $\tilde{g}_D x\tilde{g}_D^{-1}=\delta_D(x)$ in case \RNum{1} and $F(x)=\delta_F\chi(x)$ in case \RNum{2} where $\chi(x)=x^q$ for all $x\in \Bk$.

The above two cases both lead to some twisted Weyl groups. For convenience, we will denote the twists $\delta_D$ and $\delta_F$ of $W$ both by $\delta$ and the resulting twisted Weyl groups by $\tilde{W}$. We also denote the corresponding automorphism $\delta_D$ and $F$ of $G$ both by $\delta_G$. We will generalize the construction of transversal slices in \cite{HL} using good position braid representatives of non-elliptic $W$-conjugacy classes in $[\tilde{W}]$. The construction of our slices is related to the structures of Lusztig varieties and Deligne-Lusztig varieties attached to these braid elements in my other paper (The proofs in this paper do not rely on that paper). 

\begin{remark}
    We will construct slices in both cases while they may not be transversal in case \RNum{2}. However, these slices still have some nice properties in case \RNum{2}.
\end{remark}

\subsection{Slices for braids}
We start from constructing slices. For any $\tilde{b}=b\delta\in B^+(\tilde{W})$, recall the following canonical decomposition of $\tilde{b}$.
\begin{definition}\cite{DDGKM}
    Let $\tilde{b}=b\delta\in B^+(\tilde{W})$ be any twisted braid. There is a unique \textit{right Deligne-Garside normal form} of $\tilde{b}$
    \begin{equation*}
        \tilde{b}=b_m\cdots b_1\delta.
    \end{equation*}
    Here $b_i$ is the largest right common divisor of $b_m\cdots b_i$ and $\underline{w_0}$ where $w_0$ is the longest element in $W$.
\end{definition}
Now we write the left Deligne-Garside normal form of $\tilde{b}$ as $\tilde{b}=\underline{w_m}\cdots \underline{w_1}\delta$ where $w_i\in W$. (Here $w_i$ are not the same as in proposition \ref{a}). For any $w\in W$, define its inversion set as $\Inv(w)=\{\alpha\in R^+ \mid w(\alpha)\in -R^+\}$. For any $R'\subset R$, define $U_{R'}$ as the subgroup of $G$ generated by root subgroups corresponding to the roots of $R'$ (This may not be a subgroup of $U$ unless $R'$ satisfies some conditions). One can check $U_{\Inv(w^{-1})}=U\cap \dot{w}U^-\dot{w}^{-1}=U^w$ is a subgrouop of $U$. We can naturally define a collection of subsets of roots as follows. 
\begin{itemize}
    \item $R_m:=R^+\cap w_m(R^-)=\Inv(w_m^{-1})$;
    \item $R_{m-1}:=w_m(R^+) \cap (w_mw_{m-1})(R^-)=w_m(\Inv(w_{m-1}^{-1}))$;
    \item $R_{m-2}:=(w_{m}w_{m-1})(\Inv(w_{m-2}^{-1}))$;
    \item \dots
    \item $R_1:=(w_m\cdots w_2)(\Inv(w_1^{-1}))$.
\end{itemize}
For any $1\leq i\leq m$, by above we have $U_{R_i}=\dot{w}_m\cdots \dot{w}_{i+1} U^{w_i}\dot{w}_{i+1}^{-1}\cdots \dot{w}_m^{-1}<G$. By \cite[\S 2.7]{HL} we have $U_{R_i}$ is isomorphic to affine space $\BA ^{l(w_i)}$ and is actually the product of corresponding root subgroups in some certain orders. 

Let $\CC$ be any $W$-conjugacy class  in $[\tilde{W}]$. Consider $\tilde{b}=b(\tilde{w})$ is a good position braid representative of $\CC$ where $\tilde{w}=w\delta$. By our proof in \S 3.2, one can deduce that the right Deligne-Garside normal form of $\tilde{b}$ is $\tilde{b}=b_2b_1\delta=\underline{w'}\cdot \underline{w'w}\delta$. (We formally write $\tilde{b}=\underline{\id}\cdot\underline{w}\delta$ when there is only one term in the Deligne-Garside normal form of $\tilde{b}$). Here $w'$ is the longest element in the standard parabolic subgroup $W'$ of $W$ corresponding to $R^{\tilde{w}}$, which is the set of fixed roots under the action of $\tilde{w}$. The expression $w'w$ is reduced, which means $l(w'w)=l(w')+l(w)$. In this case, we have $R_1, R_2$ are defined where $R_2=\Inv(w')=(R^{\tilde{w}})^+$ and $R_1=(R^{\tilde{w}})^- \sqcup \Inv(w^{-1})$. Since $\delta(R^+)=R^+$, one can see that $\Inv(w^{-1})=\Inv(\tilde{w}^{-1})$. Recall in definition \ref{3.2}, the above braid element $b(\tilde{w})$ is a good position braid representative of $(\tilde{w},\underline{\Theta})$ where $\underline{\Theta}$ is the increasing complete sequence in $(0,\pi]\cup \{2\pi\}$ for $\tilde{w}$. 

More generally, if $2\pi\in \underline{\Theta}$, we consider the case when $\tilde{b}$ is a good position braid representative of $(\tilde{w},\underline{\Theta}')$. Here $\underline{\Theta}'$ is obtained by replacing the $2\pi$ in $\underline{\Theta}$ with $2n\pi$ for any $n\in \BZ_{>1}$. One can notice that $\underline{\Theta}'$ is actually the increasing complete sequence in $(0,\pi]\cup\{2n\pi\}$ for $\tilde{w}$. In this case, as before we have the left Deligne-Garside normal form of $\tilde{b}$ is $\tilde{b}=(\underline{w'})^{2n-1}\cdot \underline{w'w}\delta$. Therefore, we have $R_1=(R^{\tilde{w}})^-\sqcup \Inv(w^{-1})$ and $R_2=R_4=\cdots=R_{2n}=(R^{\tilde{w}})^+$ and $R_3=R_5=\cdots=R_{2n-1}=(R^{\tilde{w}})^-$. Now we define the slices for these braid elements.

\begin{definition}\label{4.2}
    Given a good position braid representative $\tilde{b}=b\delta\in B^+(\tilde{W})$ of a pair $(\tilde{w},\underline{\Theta})$ where $\tilde{w}=w\delta$ and $\underline{\Theta}$ is the increasing complete sequence in $(0,\pi]\cup\{2n\pi\}$ for $\tilde{w}$. Define the \textit{braid slice} $S_{Br}^D(\tilde{b})$ in $D\subset \tilde{G}$ associated to $\tilde{b}=(\underline{w'})^{2n-1}\cdot \underline{w'w}\delta$ as
    \begin{equation*}
        S_{Br}^D(\tilde{b}):=T^{\tilde{w}}U_{R_{2n}}U_{R_{2n-1}}\cdots U_{R_1}\dot{w}\tilde{g}_D.
    \end{equation*}
    Here $\dot{w}$ is a representative of $w$, $T^{\tilde{w}}$ is the subtorus of $T$ consisting of the fixed points under the action of $\tilde{w}$. The element $\tilde{g}_D\in D$ is determined as before if $D\neq G$ and is set to be identity if $D=G$. Denote $U_{R_{2n}}U_{R_{2n-1}}\cdots U_{R_1}$ by $U^{\tilde{b}}$.

    We also define the $G$-\textit{braid slice} $S_{Br}(\tilde{b})$ in $G\subset \tilde{G}$ associated to $\tilde{b}$ as
    \begin{equation*}
        S_{Br}(\tilde{b}):=T^{\tilde{w}}U^{\tilde{b}}\dot{w}.
    \end{equation*}
\end{definition}

\begin{remark}\label{4.4A}
    (1) The slice $S_{Br}^D(\tilde{b})$ is equal to $S_{Br}(\tilde{b})$ if $D=G$, e.g. in case \RNum{2}.

    (2) The ``unipotent part'' $U^{\tilde{b}}$ here does not lie in $G_u$ in general, we just write it as $U^{\tilde{b}}$ to emphasize it is an analogy of $U^w=U\cap \dot{w}U^- \dot{w}^{-1}$. For example, if $\tilde{b}=\underline{w_0}^2$, we have $U^{\tilde{b}}=UU^-$ which is clearly not unipotent.
\end{remark}

\begin{definition}\label{Enl}
    Keep the notations in definition \ref{4.2} and assume that $\tilde{b}$ is actually the good position braid representative $b(\tilde{w})$ of $\tilde{w}$. Define the \textit{enlarged slice} of $\tilde{b}$ as 
    \begin{equation*}
        \widehat{S_{Br}^D}(\tilde{b}):=L_{\tilde{w}}U^w\dot{w}\tilde{g}_D,
    \end{equation*} 
    where $L_{\tilde{w}}$ is the (possibly disconnected) reductive group generated by $T^{\tilde{w}}$ and the root subgroups corresponding to $R^{\tilde{w}}$. We similarly define the \textit{enlarged $G$-slice}
    \begin{equation*}
        \widehat{S_{Br}}(\tilde{b}):=L_{\tilde{w}}U^w\dot{w}.
    \end{equation*} 
\end{definition}
For any given $\tilde{w}\in \CC$ which admits a good position braid representative $b(\tilde{w})$, the above braid slices $S_{Br}^D(\tilde{b})$ (resp. $S_{Br}(\tilde{b})$) in \ref{4.2} are all open dense in $\widehat{S_{Br}^D}(b(\tilde{w}))$ (resp. $\widehat{S_{Br}}(b(\tilde{w}))$).

\begin{example}\label{4.3} (Both are nontwisted case)
Keep the labeling in \S 3.4.

(1) Let $G$ be of type $A_2$, $D=G$ and $\CC$ be the conjugacy class corresponding to cycle type $(2,1)$ (non-elliptic). Then $b=\underline{s_1s_2s_1}=\underline{w_0}$ is a good position braid representative of $\CC$. However, it is not of minimal length in $\CC$. In this case, the braid slice of $b$ and the enlarged slice of $b$ are both $S_{Br}^D(b)=S_{Br}(b)=T^{w_0}U^{w_0}\dot{w_0}$. 
\begin{equation*}
    \{ \begin{pmatrix}
         0 & 0 & -t\\
         0 & t^{-2} & a_3\\
         t & a_1 & a_2
        \end{pmatrix}\mid t
        \in \Bk^*,a_1,a_2,a_3\in \Bk\}.
    \end{equation*}
Indeed, we have $S_{Br}(\tilde{b})=\widehat{S_{Br}}(\tilde{b})$ whenever $R^{\tilde{w}}$ is trivial.

(2) Let $G$ be of type $C_2$, $D=G$ and $\CC$ be the conjugacy class corresponding to a positive $2$-cycle. (This is again non-elliptic). Then one may choose $b= \underline{s_2}\cdot\underline{s_1s_2s_1s_2}=\underline{s_2}\cdot \underline{w_0}$ as a good position braid representative of $\CC$. Thus $w=\pi_W(b)=s_1s_2s_1$. In this case, we have $U^b=U^{s_2}U_{s_2(R^+)}$ and thus the braid slice of $b$ is $S_{Br}^D(b)=S_{Br}(b)=T^{w}U^{s_2}U_{s_2(R^+)}\dot{w}$. It is easy to notice that $U^b$ is not unipotent as $U_{e_2-e_1}U_{e_1-e_2}\subset U^b$. The enlarged slice of $b$ is $\widehat{S_{Br}}(b)=L_{w}U^w\dot{w}$ where $L_w$ is generated by $U_{e_2-e_1}$, $U_{e_1-e_2}$ and $T^w$. From this example, one can see that in the enlarged slice, the part $L_{w}U^w$ is a subgroup of $G$. This may admit some better geometric properties than braid slices.
\end{example}

From now on we always assume that $\tilde{b}$ is a braid element in definition \ref{4.2}. We first prove the following lemma on convexity of subsets of $R$. 

\begin{lemma} \label{4.5}
The subsets $(R^{\tilde{w}})^+\sqcup \Inv(w^{-1})$ and $(R^{\tilde{w}})^-\sqcup \Inv(w^{-1})$ are convex in $R$.
\end{lemma}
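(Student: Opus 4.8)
Recall that a subset $R'\subset R$ is called convex if whenever $\alpha,\beta\in R'$ and $\alpha+\beta\in R$, then $\alpha+\beta\in R'$; equivalently, $R'$ is the intersection of $R$ with a convex cone in $V$. The cleanest route is therefore to realize each of the two sets as $R\cap \mathcal{K}$ for an explicit convex cone $\mathcal{K}$. I will treat $(R^{\tilde{w}})^+\sqcup \Inv(w^{-1})$ first; the argument for $(R^{\tilde{w}})^-\sqcup \Inv(w^{-1})$ is entirely parallel (indeed it will follow by applying $w$, or by a sign change, once the first case is set up).

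\textbf{Step 1: choose the right functionals.} Fix a regular point $v_0$ in the interior of the dominant chamber $\overline{C_0}$, so that $(\alpha,v_0)>0$ for all $\alpha\in R^+$. Since $b=b(\tilde w)$ is a good position braid representative, the proof in \S3.2 shows $w$ fixes pointwise a regular point $v_1$ of $V_{\tilde w}^{\pi}$-complementary data — more precisely, $w$ (equivalently $\tilde w$, since $\delta$ preserves $\overline{C_0}$) has the property that $R^{\tilde w}$ is the standard parabolic root subsystem $W'$-roots and $l(w'w)=l(w')+l(w)$ with $w'$ the longest element of $W'$. The key point I will extract is: one can pick $v\in \overline{C_0}$ with $(\alpha,v)=0$ exactly for $\alpha\in R^{\tilde w}$ and $(\alpha,v)>0$ for all other positive roots. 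Then I claim
\[
(R^{\tilde w})^+\sqcup \Inv(w^{-1}) \;=\; \{\alpha\in R : (\alpha,v)\ge 0 \text{ and } (\alpha, v - \epsilon\, w(v_0)) > 0\ \text{or}\ \alpha\in(R^{\tilde w})^+\},
\]
for $\epsilon>0$ small; but rather than fuss with such a two-inequality description, the conceptually correct statement is that $\Inv(w^{-1}) = R^+\cap w(R^-) = \{\alpha\in R^+ : (w^{-1}\alpha, v_0)<0\}$, so $\Inv(w^{-1})$ is cut out inside $R^+$ by the single linear inequality $(\alpha, w(v_0))<0$. The disjointness $(R^{\tilde w})^+\cap \Inv(w^{-1})=\emptyset$ holds because $\tilde w$ fixes $R^{\tilde w}$ setwise and $w$ permutes $(R^{\tilde w})^+$ among themselves (as $w'w$ reduced forces $w$ to send $(R^{\tilde{w}})^+$ into $R^+$), so no root of $(R^{\tilde w})^+$ lies in $\Inv(w^{-1})$.

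\textbf{Step 2: assemble the cone.} Consider the linear functional $f(\alpha) = (\alpha, v) - \lambda\,(\alpha, w(v_0))$ on $V$, where $\lambda>0$ is chosen large enough that for every $\alpha\in R$ with $(\alpha,v)\ne 0$ the sign of $f(\alpha)$ is governed by the $(\alpha,v)$ term, while for $\alpha\in R^{\tilde w}$ (where $(\alpha,v)=0$) the sign is $-\lambda(\alpha,w(v_0))$. On $R^{\tilde w}$ one has $w(v_0)=v_0$ up to the $W'$-action... — here I must be slightly careful: what is true is that $w$ restricted to $R^{\tilde w}$ is an honest element, and $(R^{\tilde w})^+$ consists of the roots $\alpha\in R^{\tilde w}$ with... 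I will instead use the functional $f(\alpha)=N(\alpha,v)+(\alpha,v_0)$ with $N$ large: for $\alpha\notin R^{\tilde w}$ the sign of $f(\alpha)$ equals the sign of $(\alpha,v)$, i.e.\ $f>0$ on $R^+\setminus(R^{\tilde w})^+$ and $f<0$ on $R^-\setminus(R^{\tilde w})^-$; for $\alpha\in R^{\tilde w}$ we get $f(\alpha)=(\alpha,v_0)$, which is $>0$ on $(R^{\tilde w})^+$ and $<0$ on $(R^{\tilde w})^-$. Hence $\{f>0\}\cap R = R^+$, which is the wrong set. The fix: replace $v_0$ in the correction term by $-w(v_0)$, giving $g(\alpha)=N(\alpha,v)-(\alpha,w(v_0))$. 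For $\alpha\notin R^{\tilde w}$, $\sgn g(\alpha)=\sgn(\alpha,v)$ as before. For $\alpha\in R^{\tilde w}$: $g(\alpha)=-(\alpha,w(v_0))=-(w^{-1}\alpha,v_0)$, and since $w$ stabilizes $(R^{\tilde w})^+$, $w^{-1}\alpha$ has the same sign class as $\alpha$, so $g(\alpha)>0$ iff $\alpha\in(R^{\tilde w})^-$. Thus $\{g\ge 0\}\cap R$ would be $\big(R^+\setminus(R^{\tilde w})^+\big)\sqcup(R^{\tilde w})^-$. Comparing with $(R^{\tilde w})^-\sqcup\Inv(w^{-1})$: we have $\Inv(w^{-1})\subset R^+$, $(R^{\tilde w})^+\cap\Inv(w^{-1})=\emptyset$, so $(R^{\tilde w})^-\sqcup\Inv(w^{-1})$ need not equal $\{g\ge0\}\cap R$ unless $\Inv(w^{-1})=R^+\setminus(R^{\tilde w})^+$, which fails in general. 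So a single functional is not enough; the correct statement is that these sets are \emph{intersections of $R$ with the intersection of two half-spaces}, hence convex as intersections of convex cones. Concretely: $(R^{\tilde w})^-\sqcup\Inv(w^{-1}) = \big(R\cap\{(\,\cdot\,,w(v_0))\le 0\}\big)\cap\big(R\cap\{N(\,\cdot\,,v)-(\,\cdot\,,w(v_0))\ge 0 \text{ for suitable }N\}\big)$ — I will verify this membership-by-membership using that $\alpha\in\Inv(w^{-1})\iff\alpha\in R^+,\,(\alpha,w(v_0))<0$, and $\alpha\in(R^{\tilde w})^-\iff(\alpha,v)=0,\,(\alpha,w(v_0))>0$ (using $w$-stability of $(R^{\tilde w})^+$), and that points of $(R^{\tilde w})^-$ do satisfy $(\alpha,w(v_0))\le 0$? — no, they satisfy $(\alpha,w(v_0))>0$.

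\textbf{Reassessment and the actual main obstacle.} The sign bookkeeping above shows that the truly clean formulation is: $(R^{\tilde w})^+\sqcup\Inv(w^{-1})$ is convex because it equals $R\cap\mathcal{C}$ where $\mathcal{C}$ is the cone spanned by $(R^{\tilde w})^+$ together with $\Inv(w^{-1})$, and one must check this spanned cone meets $R$ in exactly that set and no more. Equivalently, I will use the standard fact (cf.\ the closure-order / convexity lemmas in \cite{HL}, \S2.7, which are the source of the $U_{R_i}$ being affine spaces): a subset $R'\subset R$ is convex iff both $R'$ and $R\setminus(R'\cup -R')$ are closed, or iff $R'$ arises as $\Inv$ of some element of the Bruhat/parabolic structure relative to a standard parabolic. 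Here the point is that $R^{\tilde w}=R_{W'}$ is a parabolic subsystem and $w\in W$ satisfies $l(w_0'w)=l(w_0')+l(w)$ where $w_0'$ is the longest element of $W'$; this is exactly the condition extracted in \S3.2/\S4.1. Under this reducedness condition, $\Inv(w')=(R^{\tilde w})^+$ and $\Inv(w'w)=\Inv(w')\sqcup w'(\Inv(w))=(R^{\tilde w})^+\sqcup w'(\Inv(w))$, so $(R^{\tilde w})^+\sqcup\Inv(w^{-1})$ is the inversion set of a genuine Weyl group element (namely $(w'w)^{-1}$ up to the identification $\Inv(u^{-1})=-w_0... )$ — I will pin down which element, but inversion sets of Weyl group elements are always convex (they are the positive roots sent to negative by that element, cut out by a chamber wall chamber comparison), which finishes that case. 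For $(R^{\tilde w})^-\sqcup\Inv(w^{-1})$: note $(R^{\tilde w})^-\sqcup\Inv(w^{-1}) = \big((R^{\tilde w})^+\sqcup\Inv(w^{-1})\big)\;\triangle\;R^{\tilde w}$ in a way that one checks is $\Inv(u)\setminus(R^{\tilde w})^+\ \cup\ (R^{\tilde w})^-$; more directly, $(R^{\tilde w})^-\sqcup\Inv(w^{-1})=w\big(R^-\cup(\text{something})\big)\cap\dots$. I expect the main obstacle to be precisely this: identifying $(R^{\tilde w})^-\sqcup\Inv(w^{-1})$ with the set of roots made negative by some explicit element (likely $\tilde w$ itself composed with $w_0'$, i.e.\ $w'w$ acting, giving $(R^{\tilde w})^-\sqcup\Inv(w^{-1})=\{\alpha\in R : w'w(\alpha)\in -R^+\}$ or its negative-of-complement variant), and then invoking convexity of such sets. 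I will carry this out by the functional argument: pick $v\in\overline{C_0}$ vanishing exactly on $R^{\tilde w}$, pick a generic regular $v_0$, and show $(R^{\tilde w})^\pm\sqcup\Inv(w^{-1}) = R\cap\{N(\,\cdot\,,v)\pm(\,\cdot\,,v_0')>0\}$ for an appropriate regular $v_0'$ in the $w$-image of the dominant chamber and $N\gg0$, the two inequalities being compatible because $\Inv(w^{-1})$ already sits inside the half-space $\{(\,\cdot\,,v)\ge 0\}=R^+$-side; convexity is then immediate since each set is $R$ intersected with an open half-space-type cone. The verification of the membership equalities — handling the boundary roots in $R^{\tilde w}$ and checking the $w$-stability of $(R^{\tilde{w}})^+$ forces the correct signs — is the only real work, and it is short.
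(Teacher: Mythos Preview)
Your proposal is not a proof but a sequence of abandoned attempts, and none of them is brought to completion. More importantly, you are working much harder than necessary: you correctly identify the key vector $v\in\overline{C_0}$ with $(\alpha,v)=0$ for $\alpha\in R^{\tilde w}$ and $(\alpha,v)>0$ for $\alpha\in R^+\setminus R^{\tilde w}$, but then try to use it to cut out the entire set by one or two half-spaces, run into sign trouble, and switch to an inversion-set identification that you never finish (and which, for $(R^{\tilde w})^-\sqcup\Inv(w^{-1})$, cannot work verbatim since that set is not contained in $R^+$).

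The paper's argument is a two-line direct verification and uses exactly your vector $v$. Since $(R^{\tilde w})^+$, $(R^{\tilde w})^-$, and $\Inv(w^{-1})$ are each obviously convex, the only case to check is a sum $m\alpha+n\beta\in R$ with $\alpha\in R^{\tilde w}$ and $\beta\in\Inv(w^{-1})=\Inv(\tilde w^{-1})$. Then $(v,m\alpha+n\beta)=n(v,\beta)>0$ forces $m\alpha+n\beta\in R^+\setminus R^{\tilde w}$; and $(v,\tilde w^{-1}(m\alpha+n\beta))=(v,m\alpha)+n(v,\tilde w^{-1}\beta)=n(v,\tilde w^{-1}\beta)<0$ forces $\tilde w^{-1}(m\alpha+n\beta)\in R^-$, i.e.\ $m\alpha+n\beta\in\Inv(w^{-1})$. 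That handles both sets at once. The moral: you had the right functional from the start---just pair it with $\tilde w^{-1}$ applied to the root, rather than trying to build a global half-space description.
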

\begin{proof}
    As before, let $V$ be the real reflection representation of $W$. Since $\tilde{b}=b(\tilde{w})$ is a good position braid representative of $\CC$, then there exists $v\in V$ such that $(v,\alpha)>0$ for any $\alpha \in R^+\setminus R^{\tilde{w}}$ and $(v,\alpha)=0$ for any $\alpha\in R^{\tilde{w}}$.

    Let $\alpha\in R^{\tilde{w}}=R^{\tilde{w}^{-1}}$ and $\beta\in \Inv(w^{-1})=\Inv({\tilde{w}}^{-1})$. Suppose that $m\alpha+n\beta\in R$ for some $m,n\in \BZ_{>0}$, then we have $(v,m\alpha+n\beta)=(v,n\beta)>0$. This tells that $m\alpha+n\beta\in R^+\setminus R^{\tilde{w}}$. Meanwhile, we have $(v,{\tilde{w}}^{-1}(m\alpha+n\beta))=(v,m\alpha+n{\tilde{w}}^{-1}(\beta))=n(v,\tilde{w}^{-1}(\beta))<0$. Then we have $\tilde{w}^{-1}(m\alpha+n\beta)\in R^-$ and $m\alpha+n\beta\in \Inv(\tilde{w}^{-1})=\Inv(w^{-1})$. Since $(R^{\tilde{w}})^+$, $(R^{\tilde{w}})^-$ and $\Inv(w^{-1})$ are clearly convex, we have $(R^{\tilde{w}})^+\sqcup \Inv(w^{-1})$ and $(R^{\tilde{w}})^-\sqcup \Inv(w^{-1})$ are convex.
\end{proof}

Based on the proof of above lemma and Chevalley commutator formula, we have $U_{(R^{\tilde{w}})^-}U^w=U^wU_{(R^{\tilde{w}})^-}$. specifically, one can see that $U_{\alpha}U^w=U^wU_{\alpha}$ for any $\alpha\in (R^{\tilde{w}})^-$. Then due to the fact that $U_{(R^{\tilde{w}})^-}$ is the product of the root subgroups corresponding to the roots in $(R^{\tilde{w}})^-$ in certain orders by \cite[\S 2.7]{HL}, we get the equation. Now since $U_{R_1}=\dot{w}'U^{w'w}(\dot{w}')^{-1}$, by \cite[\S 2.7]{HL} again we have $U_{R_1}=U_{(R^{\tilde{w}})^-}U^w$ and every element in $U_{R_1}$ can be uniquely written as a product of an element in $U_{(R^{\tilde{w}})^-}$ and an element in $U^w$ as $w'w$ is a reduced expression.

Next we show there is a natural way to relate the $G$-braid slice $S_{Br}(\tilde{b})$ to the original slices in \S 2.4. First consider the case when $w'$ is not the identity.
\begin{lemma}\label{4.4}
    Let $\tilde{b}=(\underline{w'})^{2n-1}\cdot \underline{w'w}\delta$ be a good position braid representative as above. For certain choices of representatives in $G$, there is a natural surjective map from $ T^{\tilde{w}} \times S(w')\times \cdots \times S(w') \times S(w'w)$ with $(2n-1)$-copies of $S(w')$ to the $G$-braid slice $S_{Br}(\tilde{b})$ given by multiplication. In particular, this map is bijective if $n=1$.
\end{lemma}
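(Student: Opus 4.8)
Throughout, recall from \S 2.4 that $S(v)=U^{v}\dot{v}$ with $U^{v}=U\cap\dot{v}U^{-}\dot{v}^{-1}=U_{\Inv(v^{-1})}$, and from Definition \ref{4.2} that $S_{Br}(\tilde{b})=T^{\tilde{w}}U^{\tilde{b}}\dot{w}$ with $U^{\tilde{b}}=U_{R_{2n}}\cdots U_{R_{1}}$. The plan is to deduce everything from one elementary ``sliding'' identity in $G$ together with a single well-chosen Weyl-group representative. First I would record that for \emph{any} braid $\tilde{\beta}=\underline{v_{m}}\cdots\underline{v_{1}}\delta\in B^{+}(\tilde{W})$ in its left Deligne--Garside normal form and \emph{any} representatives $\dot{v}_{i}\in N_{G}(T)$ one has
\[
  (U^{v_{m}}\dot{v}_{m})(U^{v_{m-1}}\dot{v}_{m-1})\cdots(U^{v_{1}}\dot{v}_{1})=U_{R_{m}}U_{R_{m-1}}\cdots U_{R_{1}}\cdot(\dot{v}_{m}\cdots\dot{v}_{1}),
\]
where $R_{i}=(v_{m}\cdots v_{i+1})(\Inv(v_{i}^{-1}))$ as in \S 4.1. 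This is a one-line induction: slide each $\dot{v}_{i}$ to the right, using that $U^{v_{i}}$ is the product of the root subgroups indexed by $\Inv(v_{i}^{-1})$, that $U^{v_{m}}=U_{R_{m}}$, and that $\dot{v}_{m}\cdots\dot{v}_{i+1}\,U^{v_{i}}\,\dot{v}_{i+1}^{-1}\cdots\dot{v}_{m}^{-1}=U_{R_{i}}$.

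\emph{Surjectivity.} Applying this to $\tilde{b}=(\underline{w'})^{2n-1}\underline{w'w}\delta$, i.e.\ to $v_{m}=\cdots=v_{2}=w'$ and $v_{1}=w'w$, gives
\[
  (U^{w'}\dot{w}')^{2n-1}\,(U^{w'w}\dot{(w'w)})=U^{\tilde{b}}\cdot\big((\dot{w}')^{2n-1}\dot{(w'w)}\big).
\]
Now fix representatives $\dot{w}'$ of $w'$ and $\dot{w}$ of $w$, and take the representative of $w'w$ used in $S(w'w)$ to be $\dot{(w'w)}:=(\dot{w}')^{1-2n}\dot{w}$; this lies in $N_{G}(T)$ and, since $1-2n$ is odd, maps to $(w')^{1-2n}w=w'w$ in $W$, so by Remark \ref{2.R} it is a legitimate choice for the slice $S(w'w)$. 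With it $(\dot{w}')^{2n-1}\dot{(w'w)}=\dot{w}$, so the right-hand side is $U^{\tilde{b}}\dot{w}$; left-multiplying by $T^{\tilde{w}}$ shows that the image of the multiplication map from $T^{\tilde{w}}\times S(w')\times\cdots\times S(w')\times S(w'w)$ (with $2n-1$ copies of $S(w')$) is exactly $T^{\tilde{w}}U^{\tilde{b}}\dot{w}=S_{Br}(\tilde{b})$, which is the asserted surjectivity.

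\emph{Bijectivity for $n=1$.} Here the map sends $(t,u\dot{w}',v\dot{(w'w)})$ to $t\,u\,(\dot{w}'v(\dot{w}')^{-1})\,\dot{w}$, with $u\in U^{w'}=U_{(R^{\tilde{w}})^{+}}=U_{R_{2}}$ and $\dot{w}'(\cdot)(\dot{w}')^{-1}$ a bijection $U^{w'w}\to U_{R_{1}}$; so it is a bijection on the two unipotent factors followed by $(t,x,y)\mapsto txy\dot{w}$ on $T^{\tilde{w}}\times U_{R_{2}}\times U_{R_{1}}$. That multiplication $U_{R_{2}}\times U_{R_{1}}\to U^{\tilde{b}}$ is bijective I would get from the good-position vector $v_{0}$ of the proof of Lemma \ref{4.5} (so $(v_{0},\alpha)>0$ on $R^{+}\setminus R^{\tilde{w}}$ and $(v_{0},\alpha)=0$ on $R^{\tilde{w}}$), which lets one check that every root $i\alpha+j\beta$ with $\alpha\in R_{2}$, $\beta\in R_{1}$, $i,j\ge 1$, again lies in $R_{1}\cup R_{2}$; then the Chevalley commutator formula, the unique factorisation $U_{R_{1}}=U_{(R^{\tilde{w}})^{-}}U^{w}$ recorded just before Lemma \ref{4.4}, and $U_{R_{i}}\cong\BA^{l(w_{i})}$ from \cite[\S 2.7]{HL} finish it.

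\emph{Expected main obstacle.} The delicate point is $U^{\tilde{b}}\cap T^{\tilde{w}}=\{1\}$ (needed to conclude $(t,x,y)\mapsto txy$ is injective): $U^{\tilde{b}}$ mixes positive and negative roots -- in fact $R_{1}\cup R_{2}\supseteq R^{\tilde{w}}$, a $\pm$-closed subsystem -- so one cannot simply invoke unipotency. The plan there is to factor $U^{\tilde{b}}=U_{(R^{\tilde{w}})^{+}}\,U_{(R^{\tilde{w}})^{-}}\,U^{w}$, recognise $U_{(R^{\tilde{w}})^{+}}U_{(R^{\tilde{w}})^{-}}$ as the opposite big cell of the reductive group $L_{\tilde{w}}$ (so its product map is injective and meets $T$ trivially), and then use the Bruhat decomposition of $G$ together with $\Inv(w^{-1})\cap R^{\tilde{w}}=\emptyset$ -- which follows from $\delta(R^{+})=R^{+}$ and $\tilde{w}^{-1}\alpha=\delta^{-1}(w^{-1}\alpha)$ -- to exclude a nontrivial torus element from the full triple product. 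Granting this, the $n=1$ map is bijective; for $n>1$ the surjectivity paragraph already delivers everything claimed.
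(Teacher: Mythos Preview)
Your proof is correct and follows essentially the same strategy as the paper: both slide the Weyl-group representatives to the right to obtain $U^{\tilde{b}}$ times a single representative of $w$, and for $n=1$ both reduce bijectivity to the unique triple decomposition $U_{(R^{\tilde{w}})^+}\times U_{(R^{\tilde{w}})^-}\times U^{w}$ coming from the Levi/big-cell structure.

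The one genuine difference is the choice of representatives. The paper selects $\dot{w}'$ with $(\dot{w}')^{2}\in T^{\tilde{w}}$ and uses $\dot{(w'w)}=\dot{w}'\dot{w}$, so that the accumulated factor $(\dot{w}')^{2n}$ is absorbed into $T^{\tilde{w}}$; you instead set $\dot{(w'w)}=(\dot{w}')^{1-2n}\dot{w}$ so that the product of representatives is exactly $\dot{w}$. Your choice is more elementary---it avoids having to justify $(\dot{w}')^{2}\in T^{\tilde{w}}$---but makes the representative of $w'w$ depend on $n$. The paper's choice is the one that is reused throughout \S 4.2 (in $\Upsilon_{\tilde{b}}$, $\tilde{\tau}_{\tilde{w}}^{r}$, and Lemma~\ref{4.8}), so it integrates better with the downstream arguments. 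Your handling of the ``obstacle'' $T^{\tilde{w}}\cap U^{\tilde{b}}=\{1\}$ via the parabolic $P=L\cdot U_{R^{+}\setminus R^{\tilde{w}}}$ and its big cell is in fact more explicit than the paper's, which simply cites the three-factor uniqueness from \cite[\S 2.3]{HL} and leaves the torus part implicit.
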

\begin{proof}
    Since $w'$ is the longest element corresponding to $R^{\tilde{w}}$, one can choose a representative $\dot{w}'$ such that $(\dot{w}')^2\in T^{\tilde{w}}$. Keeping this choice, any element of $ T^{\tilde{w}} \times S(w')\times \cdots \times S(w') \times S(w'w)$ can be identified as $(t,u_1\dot{w}',\ldots,u_{2n-1}\dot{w}',u_{2n}\dot{w}'\dot{w})$ with $t\in T^{\tilde{w}}$, $u_{2n}\in U^{w'w}$ and $u_i\in U^{w'}$ for $1\leq i\leq 2n-1$. Then we have
    \begin{eqnarray*}
        tu_1\dot{w}'\cdots u_{2n-1}\dot{w}'u_{2n}\dot{w}'\dot{w}&=&tu_1(\dot{w}'u_2(\dot{w}')^{-1})\cdots((\dot{w}')^{2n-1}u_{2n}(\dot{w}')^{1-2n})(\dot{w}')^{2n}\dot{w}\\
        &=&tu_1'u_2'\cdots u_{2n}'(\dot{w}')^{2n}\dot{w}\\
        &=&t(\dot{w}')^{2n}u_1''u_2''\cdots u_{2n}''\dot{w},
    \end{eqnarray*}
    where $u_i'=(\dot{w}')^{i-1}u_i(\dot{w}')^{1-i}$ and $u_i''=(\dot{w}')^{-2n}u_i'(\dot{w}')^{2n}$ lie in $U_{R_{2n+1-i}}$ for all $i$ by our choice of $\dot{w}'$. This gives the natural multiplication map. Since any element in $S_{Br}(\tilde{b})$ can be written in the form as the right hand side, we have this map is surjective.

    Now if $n=1$, we claim that any element in $S_{Br}(\tilde{b})$ has a unique decomposition into the form $tu_1''u_2''\dot{w}$ with $t\in T^{\tilde{w}}$, $u_1''\in U_{R_2}=U^{w'}$ and $u_2''\in U_{R_1}=\dot{w}'U^{w'w}(\dot{w}')^{-1}$. By our discussion above and \cite[\S 2.3]{HL} we know $U_{R_2}U_{R_1}=U_{(R^{\tilde{w}})^+}U_{(R^{\tilde{w}})^-}U^w$ and any element in $U_{(R^{\tilde{w}})^+}U_{(R^{\tilde{w}})^-}U^w$ can be written uniquely as a product of elements in $U_{(R^{\tilde{w}})^+}$, $U_{(R^{\tilde{w}})^-}$ and $U^w$. This proves the claim and thus the bijectivity in this case.
\end{proof}

In above lemma we allow $w'=\id$, in which case $S_{Br}(\tilde{b})=T^{\tilde{w}}U^w\dot{w}$ and there is a natural bijective map from $T^{\tilde{w}}\times S(\id)\times S(w)$ to $S_{Br}(\tilde{b})$.

\subsection{Cross-section property}
In this subsection we care about the cross-section property. Recall that an important version of cross-section property of Steinberg's slice and He-Lusztig's slices is about the $U$-orbits on $U\dot{w}U=S(w)U$. We will generalize this for our braid slices and enlarged slices. We consider the action of $U$ on $S_{Br}^D(\tilde{b})$. However, we have to modify this action. 

Let $\tilde{b}=(\underline{w'})^{2n-1}\cdot \underline{w'w}\delta$ be a good position braid representative in definition \ref{4.2}. We first need to fix the choice of representatives for all $w\in W$. In this subsection the choice of representatives will be the same as in \cite[\S 2.4]{HL}. From there we immediately know that these representatives satisfy the braid relations and is compatible with the action of $\delta$. Moreover, we have $(\dot{w}')^2\in T^{\tilde{w}}$ based on this choice. Indeed, one notice that $\lambda^\vee=\tilde{w}(\lambda^\vee)$ for any coweight $\lambda^\vee$ corresponding to $R^{\tilde{w}}$. One also notice that 
\begin{equation*}
    \langle \alpha, \lambda^\vee \rangle=\langle \tilde{w}(\alpha),\tilde{w}(\lambda^\vee) \rangle=\langle \alpha, \tilde{w}(\lambda^\vee)\rangle
\end{equation*}
for any $\alpha\in R^{\tilde{w}}$. By \cite[\S 2.4.(b)]{HL}, we have $(\dot{w}')^2\in T^{\tilde{w}}$.

We first consider slices in case \RNum{1}. Recall above we have $U_{R_1}=U_{(R^{\tilde{w}})^-}U^w=U^wU_{(R^{\tilde{w}})^-}$. Similarly, one can see that $U=U_{R^+\setminus R^{\tilde{w}}}U_{(R^{\tilde{w}})^+}=U_{(R^{\tilde{w}})^+}U_{R^+\setminus R^{\tilde{w}}}$ and $U_{R^+\setminus R^{\tilde{w}}}U_{(R^{\tilde{w}})^-}=U_{(R^{\tilde{w}})^-}U_{R^+\setminus R^{\tilde{w}}}$. Note that if $w'=\id$, then $U_{R_1}=U^w$ and $U_{R_i}$ are all trivial for $i>1$. Therefore, we have
\begin{eqnarray*}
    U_{R^+\setminus R^{\tilde{w}}}S_{Br}^D(\tilde{b})U_{R^+\setminus R^{\tilde{w}}}&=&U_{R^+\setminus R^{\tilde{w}}} U^{\tilde{b}}\dot{w}dU_{R^+\setminus R^{\tilde{w}}}\\
    &=& U_{R^+\setminus R^{\tilde{w}}}T^{\tilde{w}}U_{(R^{\tilde{w}})^+}U_{(R^{\tilde{w}})^-}\cdots U_{(R^{\tilde{w}})^+}U_{(R^{\tilde{w}})^-} U^w\dot{w}dU_{R^+\setminus R^{\tilde{w}}}\\
    &=& T^{\tilde{w}}U_{(R^{\tilde{w}})^+}U_{(R^{\tilde{w}})^-}\cdots U_{(R^{\tilde{w}})^+}U_{(R^{\tilde{w}})^-}U_{R^+\setminus R^{\tilde{w}}}U^w\dot{w}dU_{R^+\setminus R^{\tilde{w}}} \\
    &=& U^{\tilde{b}}\dot{w}d U_{R^+\setminus R^{\tilde{w}}}=S_{Br}^D(\tilde{b})U_{R^+\setminus R^{\tilde{w}}}.
\end{eqnarray*}
Therefore, we would consider the $U_{R^+\setminus R^{\tilde{w}}}$-orbits on $S_{Br}^D(\tilde{b})U_{R^+\setminus R^{\tilde{w}}}$ in case \RNum{1}. 

However, in case \RNum{2}, we need to modify above two equations. In general, we consider the twisted conjugation for the $G$-slice $S_{Br}(\tilde{b})$. Similar as above, we have
\begin{eqnarray*}
   U_{R^+\setminus R^{\tilde{w}}}S_{Br}(\tilde{b})U_{\delta(R^+\setminus R^{\tilde{w}})}&=&U_{R^+\setminus R^{\tilde{w}}}U^{\tilde{b}}\dot{w}U_{\delta(R^+\setminus R^{\tilde{w}})}\\
&=&T^{\tilde{w}}U_{(R^{\tilde{w}})^+}U_{(R^{\tilde{w}})^-}\cdots U_{(R^{\tilde{w}})^+}U_{(R^{\tilde{w}})^-}U_{R^+\setminus R^{\tilde{w}}}U^w\dot{w}U_{\delta(R^+\setminus R^{\tilde{w}})}\\
    &=&U^{\tilde{b}}\dot{w}U_{\delta(R^+\setminus R^{\tilde{w}})}=S_{Br}(\tilde{b})U_{\delta(R^+\setminus R^{\tilde{w}})}.
\end{eqnarray*}
We first show any element in $S_{Br}(\tilde{b})U_{\delta(R^+\setminus R^{\tilde{w}})}$ has a unique expression.

\begin{lemma}\label{4.6}
    There is a natural bijection
    \begin{equation*}
         S_{Br}(\tilde{b}) \times U_{R^+\setminus R^{\tilde{w}}} \xlongrightarrow{\sim}  S_{Br}(\tilde{b})U_{\delta(R^+\setminus R^{\tilde{w}})}
    \end{equation*}
     given by $(g\dot{w},u)\mapsto g\dot{w}\delta_G(u)$.
\end{lemma}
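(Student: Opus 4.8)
The plan is to treat surjectivity as formal and to reduce injectivity to the uniqueness of a factorization of root-subgroup products over two disjoint sets of roots. Throughout, $\tilde b=(\underline{w'})^{2n-1}\cdot\underline{w'w}\delta$ is as in Definition \ref{4.2}, so $S_{Br}(\tilde b)=T^{\tilde w}U^{\tilde b}\dot w$ with $U^{\tilde b}=U_{R_{2n}}\cdots U_{R_1}$, $R_1=(R^{\tilde w})^-\sqcup\Inv(w^{-1})$, and $R_{2i}=(R^{\tilde w})^+$, $R_{2i+1}=(R^{\tilde w})^-$.

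Surjectivity is immediate. Any element of $S_{Br}(\tilde b)U_{\delta(R^+\setminus R^{\tilde w})}$ has the form $sv$ with $s\in S_{Br}(\tilde b)$ and $v\in U_{\delta(R^+\setminus R^{\tilde w})}$; since $\delta_G$ is an automorphism of $G$ carrying $U_\alpha$ to $U_{\delta(\alpha)}$, it restricts to a bijection $U_{R^+\setminus R^{\tilde w}}\to U_{\delta(R^+\setminus R^{\tilde w})}$, so $v=\delta_G(u)$ for a unique $u$, and $sv$ is the image of $(s,u)$. The same observation shows that for injectivity it suffices to recover the $S_{Br}(\tilde b)$-coordinate $s=g\dot w$.

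For injectivity I would start from $g_1\dot w\,\delta_G(u_1)=g_2\dot w\,\delta_G(u_2)$ with $g_i\in T^{\tilde w}U^{\tilde b}$ and $u_i\in U_{R^+\setminus R^{\tilde w}}$, rewrite it as $g_2^{-1}g_1=\dot w\,\delta_G(u_2u_1^{-1})\,\dot w^{-1}$, and locate both sides. On the right, using $\delta(R^+)=R^+$ one gets $\dot w\,\delta_G(U_{R^+\setminus R^{\tilde w}})\,\dot w^{-1}=U_{w\delta(R^+\setminus R^{\tilde w})}=U_{\tilde w(R^+\setminus R^{\tilde w})}$. On the left, I would use that $R^{\tilde w}\sqcup\Inv(w^{-1})$ is convex — an easy consequence of Lemma \ref{4.5}, the subsystem property of $R^{\tilde w}$, and the computation in the proof of Lemma \ref{4.5} that $m\alpha+n\beta\in\Inv(w^{-1})$ whenever $\alpha\in R^{\tilde w}$, $\beta\in\Inv(w^{-1})$, $m\alpha+n\beta\in R$ — so that $L_{\tilde w}$ normalizes $U^w=U_{\Inv(w^{-1})}$, the set $L_{\tilde w}U^w$ is a subgroup (a semidirect product), and, since $U_{(R^{\tilde w})^\pm}\subset L_{\tilde w}$ and $U_{R_1}=U_{(R^{\tilde w})^-}U^w$, one has $T^{\tilde w}U^{\tilde b}\subset L_{\tilde w}U^w=\widehat{S_{Br}}(\tilde b)\dot w^{-1}$. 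Hence $g_2^{-1}g_1$ lies in $L_{\tilde w}U^w$, a group generated by $T^{\tilde w}$ and the root subgroups $U_\gamma$ with $\gamma\in R^{\tilde w}\cup\Inv(w^{-1})$.

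The crux — and the step I expect to be the main obstacle — is a combinatorial disjointness followed by a structural uniqueness statement. First I would verify that $\tilde w(R^+\setminus R^{\tilde w})$ is disjoint from $R^{\tilde w}\cup\Inv(w^{-1})$: since $R^{\tilde w}$ is $\tilde w$-stable and $\delta(R^+)=R^+$, one has $\tilde w(R^+\setminus R^{\tilde w})=w(R^+)\setminus R^{\tilde w}$, which is disjoint from $R^{\tilde w}$ by construction and from $\Inv(w^{-1})=R^+\cap w(R^-)$ because $w(R^+)\cap w(R^-)=\emptyset$. Then I would conclude that $L_{\tilde w}U^w\cap U_{\tilde w(R^+\setminus R^{\tilde w})}=\{e\}$ from the uniqueness of products of root-subgroup elements over disjoint (closed) subsets of $R$ — precisely the bookkeeping of \cite[\S 2.3, \S 2.7]{HL}, where each $U_{R'}$ is realized as an honest ordered product of its root subgroups — noting that $T^{\tilde w}$ contributes nothing lying in $U_{\tilde w(R^+\setminus R^{\tilde w})}$. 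This forces $g_2^{-1}g_1=e$, i.e.\ $g_1=g_2$, and then $\delta_G(u_1)=\delta_G(u_2)$ together with the injectivity of $\delta_G$ gives $u_1=u_2$. Finally I should remark that the argument is uniform in $n$, the only change for $n>1$ being that $U^{\tilde b}$ acquires further factors $U_{(R^{\tilde w})^\pm}$, all of which still lie in $L_{\tilde w}$, so that $T^{\tilde w}U^{\tilde b}\subset L_{\tilde w}U^w$ as before.
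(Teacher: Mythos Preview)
Your proof is correct and follows essentially the same approach as the paper: both reduce injectivity to the equality $g_2^{-1}g_1=\dot w\,\delta_G(u_2u_1^{-1})\,\dot w^{-1}\in U_{\tilde w(R^+\setminus R^{\tilde w})}$, observe that $T^{\tilde w}U^{\tilde b}\subset L_{\tilde w}U^w$, and conclude by disjointness of the underlying root sets. Your version is in fact more explicit than the paper's on two points---you justify that $L_{\tilde w}U^w$ is a group via the convexity computation from Lemma~\ref{4.5}, and you spell out the disjointness $\tilde w(R^+\setminus R^{\tilde w})\cap(R^{\tilde w}\cup\Inv(w^{-1}))=\emptyset$ directly rather than first computing $\tilde w(R^+\setminus R^{\tilde w})=(R^+\setminus(R^{\tilde w}\sqcup\Inv(\tilde w^{-1})))\sqcup(-\Inv(\tilde w^{-1}))$ as the paper does---but the substance is the same.
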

\begin{proof}
    Clearly this is surjective. Suppose that $g_1 \dot{w} \delta_G(u_1)=g_2 \dot{w} \delta_G(u_2)$, then we have $g_1^{-1}g_2=\dot{w} \delta_G(u_1u_2^{-1})\dot{w}^{-1} \in U_{\tilde{w}(R^+\setminus R^{\tilde{w}})}$. Again by \cite[\S 2.7]{HL}, We know $U_{\tilde{w}(R^+\setminus R^{\tilde{w}})}$ is actually the product of root subgroups corresponding to $\tilde{w}(R^+\setminus R^{\tilde{w}})$. This is equal to $(R^+\setminus (R^{\tilde{w}}\sqcup \Inv(\tilde{w}^{-1})))\sqcup (-\Inv(\tilde{w}^{-1}))$. These root subgroups do not appear in $L_{\tilde{w}}U^w$ by our definition above and the fact that $\Inv(w^{-1})=\Inv(\tilde{w}^{-1})$. Since $g_1,g_2\in U^{\tilde{b}}\subset L_{\tilde{w}}U^w$, we must have $g_1=g_2$ and $u_1=u_2$.
\end{proof}

Then we consider the action of $U_{R^+\setminus R^{\tilde{w}}}$. Following the idea in \cite[\S 2, \S 3]{HL}, for any $r\in \BZ_{>0}$, we set
\begin{gather*}
    U(\tilde{b}^r)=S_{Br}(\tilde{b})\times \delta_G(S_{Br}(\tilde{b})) \cdots \times \delta_G^{r-1}(S_{Br}(\tilde{b})) ,\\
    \dot{U}(\tilde{b}^r)= S_{Br}(\tilde{b})U_{\delta(R^+/R^{\tilde{w}})}\times \delta_G(S_{Br}(\tilde{b})U_{\delta(R^+/R^{\tilde{w}})}) \cdots \times \delta_G^{r-1}(S_{Br}(\tilde{b})U_{\delta(R^+/R^{\tilde{w}})}),
\end{gather*}
By above discussion, we know that 
\begin{eqnarray*}
    \delta_G^i(S_{Br}(\tilde{b})U_{\delta(R^+/R^{\tilde{w}})})&=&\delta_G^i(U_{R^+\setminus R^{\tilde{w}}})\delta_G^i(S_{Br}(\tilde{b}))\delta_G^i(U_{\delta(R^+/R^{\tilde{w}})})\\
    &=&U_{\delta^i(R^+\setminus R^{\tilde{w}})}\delta_G^i(S_{Br}(\tilde{b}))U_{\delta^{i+1}(R^+\setminus R^{\tilde{w}})}.
\end{eqnarray*}

Therefore, there is an action of $(U_{R^+\setminus R^{\tilde{w}}})^{r-1}$ on $\dot{U}(\tilde{b}^r)$ given by
\begin{equation*}
    (u_1,u_2,\ldots,u_{r-1})\cdot (g_1,g_2,\ldots,g_r)=(g_1\delta_G(u_1)^{-1},\delta_G(u_1)g_2\delta_G^2(u_2)^{-1},\ldots , \delta_G^{r-1}(u_{r-1})g_r).
\end{equation*}
Let $\tilde{U}(\tilde{b}^r)$ be the set of orbits of above action and $\kappa_{\tilde{b}}^r:\dot{U}(\tilde{b}^r) \to \tilde{U}(\tilde{b}^r)$ be the natural projection map. Denote $\kappa_{\tilde{b}}^r(g_1,g_2,\ldots,g_r)$ as $[g_1,g_2\ldots,g_r]$. One can check there is a natural $U_{R^+\setminus R^{\tilde{w}}}\times U_{\delta^r(R^+\setminus R^{\tilde{w}})}$-action on $\tilde{U}(\tilde{b}^r)$ given by
\begin{equation*}
    (u,u')\cdot [g_1,g_2,\ldots,g_r]=[ug_1,g_2,\ldots,g_r (u')^{-1}].
\end{equation*}

Meanwhile, recall $\tilde{b}=(\underline{w'})^{2n-1}\cdot \underline{w'w}\delta$, we set
\begin{equation*}
U(\text{DG}(\tilde{b}),T^{\tilde{w}}):=T^{\tilde{w}}U_{R^+\setminus R^{\tilde{w}}}\times S(w')U_{R^+\setminus R^{\tilde{w}}}\times \cdots \times  S(w')U_{R^+\setminus R^{\tilde{w}}} \times S(w'w)U_{\delta(R^+\setminus R^{\tilde{w}})},    
\end{equation*}
where there are $(2n-1)$-copies of $S(w')U_{R^+\setminus R^{\tilde{w}}}$ on the left hand side. Based on our choice of representatives above, recall there is a natural surjective map in lemma \ref{4.4}, we can then define
\begin{equation*}
    \Upsilon_{\tilde{b}}:U(\text{DG}(\tilde{b}),T^{\tilde{w}}) 
    \to S_{Br}(\tilde{b})U_{\delta(R^+\setminus R^{\tilde{w}})}
\end{equation*}
as follows. Let $(tu_0,g_1u_1,\ldots, g_{2n}u_{2n})$ be any element of left hand side. Here $t\in T^{\tilde{w}}$, $g_i\in S(w')$ for $1\leq i\leq 2n-1$, $g_{2n}\in S(w'w)$, $u_i\in U_{R^+\setminus R^{\tilde{w}}}$ for $0\leq i\leq 2n-1$ and $u_{2n}\in U_{\delta(R^+\setminus R^{\tilde{w}})}$. As before, by \cite[\S 2.7]{HL} and Chevalley commutator formula one can show that
\begin{eqnarray*}
    U_{R^+\setminus R^{\tilde{w}}}S(w'w)U_{\delta(R^+\setminus R^{\tilde{w}})}&=& U_{R^+\setminus R^{\tilde{w}}} U_{\Inv((w'w)^{-1})}\dot{w}'\dot{w}U_{\delta(R^+\setminus R^{\tilde{w}})}\\
    &=& U_{R^+\setminus R^{\tilde{w}}}\dot{w}'^{-1}U_{(R^{\tilde{w}})^-}U_{\Inv(w^{-1})}\dot{w}'\dot{w}'\dot{w}U_{\delta(R^+\setminus R^{\tilde{w}})}\\
    &=&\dot{w}'^{-1}U_{(R^{\tilde{w}})^-}U_{\Inv(w^{-1})}U_{R^+\setminus R^{\tilde{w}}}\dot{w}'\dot{w}'\dot{w}U_{\delta(R^+\setminus R^{\tilde{w}})}\\
    &=& \dot{w}'^{-1}U_{(R^{\tilde{w}})^-}U_{\Inv(w^{-1})}\dot{w}'\dot{w}' U_{R^+\setminus R^{\tilde{w}}}\dot{w}U_{\delta(R^+\setminus R^{\tilde{w}})} \\
    &=& S(w'w)U_{\delta(R^+\setminus R^{\tilde{w}})}.
\end{eqnarray*}
Similarly, one can show that 
\begin{equation*}
    U_{R^+\setminus R^{\tilde{w}}}S(w') U_{R^+\setminus R^{\tilde{w}}}=U_{(R^{\tilde{w}})^+}U_{R^+\setminus R^{\tilde{w}}}\dot{w}' U_{R^+\setminus R^{\tilde{w}}}=U_{(R^{\tilde{w}})^+}\dot{w}' U_{R^+\setminus R^{\tilde{w}}}=S(w')U_{R^+\setminus R^{\tilde{w}}}.
\end{equation*}
Therefore, we have 
\begin{eqnarray*}
    tu_0g_1u_1\cdots g_{2n}u_{2n}&=&t(u_0g_1)(u_1g_2)\cdots (u_{2n-1}g_{2n})u_{2n}\\
    &=&t(g_1'u_0')(g_2'u_1')\cdots (g_{2n}u_{2n-1}')u_{2n}\\
    &=& \cdots \\
    &=& tgu,
\end{eqnarray*}
where $g\in S(w')\times\cdots \times S(w')\times S(w'w)$ and $u\in U_{\delta(R^+\setminus R^{\tilde{w}})}$. Let $g'$ be the image of $tg$ under the map in lemma \ref{4.4}. Then the image of $(tu_0,g_1u_1,\ldots, g_{2n}u_{2n})$ under $\Upsilon_{\tilde{b}}$ is $g'u\in S_{Br}(\tilde{b})U_{\delta(R^+\setminus R^{\tilde{w}})}$.

Next we define $U(\text{DG}(\tilde{b}),T^{\tilde{w}},r)$ as
\begin{equation*}
    U(\text{DG}(\tilde{b}),T^{\tilde{w}}) \times \delta_G(U(\text{DG}(\tilde{b}),T^{\tilde{w}})) \times \cdots \times \delta_G^{r-1}(U(\text{DG}(\tilde{b}),T^{\tilde{w}})).
\end{equation*}
Here $\delta_G^i(U(\text{DG}(\tilde{b}),T^{\tilde{w}}))$ is defined as 
\begin{equation*}
    \delta_G^i(T^{\tilde{w}}U_{R^+\setminus R^{\tilde{w}}})\times \delta_G^i(S(w')U_{R^+\setminus R^{\tilde{w}}})\times\cdots \times \delta_G^i(S(w')U_{R^+\setminus R^{\tilde{w}}})\times \delta_G^i(S(w'w)U_{\delta(R^+\setminus R^{\tilde{w}})})
\end{equation*}
Recall that $U_{R^+\setminus R^{\tilde{w}}}S(w'w)U_{\delta(R^+\setminus R^{\tilde{w}})}=S(w'w)U_{\delta(R^+\setminus R^{\tilde{w}})}$ and $U_{R^+\setminus R^{\tilde{w}}}S(w') U_{R^+\setminus R^{\tilde{w}}}=S(w')U_{R^+\setminus R^{\tilde{w}}}$. It is also clear that $U_{R^+\setminus R^{\tilde{w}}}T^{\tilde{w}} U_{R^+\setminus R^{\tilde{w}}}=T^{\tilde{w}}U_{R^+\setminus R^{\tilde{w}}}$. Therefore, one can define a $(U_{R^+\setminus R^{\tilde{w}}})^{(2n+1)r-1}$-action on $U(\text{DG}(\tilde{b}),T^{\tilde{w}},r)$ as follows. 

Let $(u_1,\ldots,u_{(2n+1)r-1})\in (U_{R^+\setminus R^{\tilde{w}}})^{(2n+1)r-1}$ and set $u_0=u_{(2n+1)r}$ to be trivial. Let $(g_1^{(1)},\ldots, g_1^{(2n+1)},\delta_G(g_2^{(1)}),\ldots,\delta_G(g_2^{(2n+1)}),\ldots,\delta_G^{r-1}(g_r^{(1)}),\ldots,\delta_G^{r-1}(g_r^{(2n+1)}))$ be an arbitrary element in $U(\text{DG}(\tilde{b}),T^{\tilde{w}},r)$. Here $g_i^{(1)}\in T^{\tilde{w}}U_{R^+\setminus R^{\tilde{w}}}$, $g_i^{(2n+1)}\in S(w'w)U_{\delta(R^+\setminus R^{\tilde{w}})}$ and $g_i^{(j)}\in S(w')U_{R^+\setminus R^{\tilde{w}}}$ for $2\leq j\leq 2n$ and all $i$. It will then be taken to $(g_1'^{(1)},\ldots, g_1'^{(2n+1)},\delta_G(g_2'^{(1)}),\ldots,\delta_G(g_2'^{(2n+1)}),\ldots,\delta_G^{r-1}(g_r'^{(1)}),\ldots,\delta_G^{r-1}(g_r'^{(2n+1)}))$ with
\begin{eqnarray*}
    \delta_G^{i-1}(g_i'^{(1)})&=&\delta_G^{i-1}(u_{(2n+1)(i-1)}g_i^{(1)}u_{(2n+1)i-2n}^{-1})\in \delta_G^{i-1}(T^{\tilde{w}}U_{R^+\setminus R^{\tilde{w}}}),\\
    \delta_G^{i-1}(g_i'^{(2)})&=&\delta_G^{i-1}(u_{(2n+1)i-2n}g_i^{(2)}u_{(2n+1)i-2n+1}^{-1})\in \delta_G^{i-1}(S(w')U_{R^+\setminus R^{\tilde{w}}}),\\
    &\cdots& \\
    \delta_G^{i-1}(g_i'^{(2n)})&=&\delta_G^{i-1}(u_{(2n+1)i-2}g_i^{(2n)}u_{(2n+1)i-1}^{-1})\in \delta_G^{i-1}(S(w')U_{R^+\setminus R^{\tilde{w}}}),\\
    \delta_G^{i-1}(g_i'^{(2n+1)})&=&\delta_G^{i-1}(u_{(2n+1)i-1}g_i^{(2n+1)}\delta_G(u_{(2n+1)i})^{-1})\in \delta_G^{i-1}(S(w'w)U_{\delta(R^+\setminus R^{\tilde{w}})}),
\end{eqnarray*}
for $1\leq i\leq r$ under the action of $(u_1,u_2,\ldots,u_{(2n+1)r-1})$. Denote the orbit set of this action as $\tilde{U}(\text{DG}(\tilde{b}),T^{\tilde{w}},r)$. There is again a similar action of $U_{R^+\setminus R^{\tilde{w}}}\times U_{\delta^r(R^+\setminus R^{\tilde{w}})}$ on $\tilde{U}(\text{DG}(\tilde{b}),T^{\tilde{w}},r)$ as on $\tilde{U}(\tilde{b}^r)$.

We first have the following lemma.
\begin{lemma}\label{4.7}
    The map $\Upsilon_{\tilde{b}}$ induces a natural surjection
    \begin{equation*}
        \tilde{\Upsilon}_{\tilde{b}}^r: \tilde{U}(\text{DG}(\tilde{b}),T^{\tilde{w}},r) \longrightarrow \tilde{U}(\tilde{b}^r).
    \end{equation*}
    This map is compatible with $U_{R^+\setminus R^{\tilde{w}}}\times U_{\delta^r(R^+\setminus R^{\tilde{w}})}$-actions on both sides.
\end{lemma}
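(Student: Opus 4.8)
The plan is to build $\tilde{\Upsilon}_{\tilde{b}}^r$ from the component maps $\Upsilon_{\tilde{b}}$ applied $\delta_G$-twistedly in each slot, check that it descends to the orbit sets, and then verify compatibility with the residual $U_{R^+\setminus R^{\tilde{w}}}\times U_{\delta^r(R^+\setminus R^{\tilde{w}})}$-action.

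First I would define the map before passing to quotients. Recall that $U(\text{DG}(\tilde{b}),T^{\tilde{w}},r)$ is the product of the $\delta_G^{i-1}$-twists of $U(\text{DG}(\tilde{b}),T^{\tilde{w}})$, and each factor maps to $\delta_G^{i-1}(S_{Br}(\tilde{b})U_{\delta(R^+\setminus R^{\tilde{w}})})$ via $\delta_G^{i-1}\circ\Upsilon_{\tilde{b}}\circ\delta_G^{1-i}$. Assembling these slot-wise gives a map $U(\text{DG}(\tilde{b}),T^{\tilde{w}},r)\to \dot{U}(\tilde{b}^r)$ which is surjective since each $\Upsilon_{\tilde{b}}$ is surjective (established in the paragraph just before Lemma \ref{4.6} together with Lemma \ref{4.4}). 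The key computational point, already worked out in the construction of $\Upsilon_{\tilde{b}}$, is that $U_{R^+\setminus R^{\tilde{w}}}$ can be ``pushed through'' each slice factor: $U_{R^+\setminus R^{\tilde{w}}}S(w')U_{R^+\setminus R^{\tilde{w}}}=S(w')U_{R^+\setminus R^{\tilde{w}}}$ and $U_{R^+\setminus R^{\tilde{w}}}S(w'w)U_{\delta(R^+\setminus R^{\tilde{w}})}=S(w'w)U_{\delta(R^+\setminus R^{\tilde{w}})}$ and $U_{R^+\setminus R^{\tilde{w}}}T^{\tilde{w}}U_{R^+\setminus R^{\tilde{w}}}=T^{\tilde{w}}U_{R^+\setminus R^{\tilde{w}}}$. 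These identities are exactly what makes both the $(2n+1)r-1$-fold action on the left and the $(r-1)$-fold action on the right well-defined, and they should be cited rather than re-derived.

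Next I would check that the slot-wise map intertwines the two group actions, so it descends to orbit sets. The action on $U(\text{DG}(\tilde{b}),T^{\tilde{w}},r)$ by $(u_1,\dots,u_{(2n+1)r-1})$ inserts $u_j$'s between consecutive Deligne--Garside factors, with the $u$'s at the ``seams'' between the blocks of size $2n+1$ being the ones that survive under $\Upsilon_{\tilde{b}}$ — more precisely, when one multiplies out a full block $g_i^{(1)}\cdots g_i^{(2n+1)}$ through $\Upsilon_{\tilde{b}}$, the interior $u$'s get absorbed (by the push-through identities above) while the boundary $u$'s, namely $u_{(2n+1)(i-1)}$ on the left and $\delta_G(u_{(2n+1)i})$ on the right, remain. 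Tracking this shows that the image of the orbit of $(u_1,\dots,u_{(2n+1)r-1})$ is the orbit of $(u_{2n+1},u_{2(2n+1)},\dots,u_{(r-1)(2n+1)})$ under the $(r-1)$-fold action defining $\tilde{U}(\tilde{b}^r)$. Hence $\Upsilon_{\tilde{b}}$-slotwise carries $U(\text{DG}(\tilde{b}),T^{\tilde{w}},r)\to\dot{U}(\tilde{b}^r)$ equivariantly for the projection $(U_{R^+\setminus R^{\tilde{w}}})^{(2n+1)r-1}\twoheadrightarrow(U_{R^+\setminus R^{\tilde{w}}})^{r-1}$ onto the boundary coordinates, and therefore induces $\tilde{\Upsilon}_{\tilde{b}}^r$ on quotients; surjectivity is inherited. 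Compatibility with the outer $U_{R^+\setminus R^{\tilde{w}}}\times U_{\delta^r(R^+\setminus R^{\tilde{w}})}$-action is immediate since that action only touches the first and last coordinates, which $\Upsilon_{\tilde{b}}$ respects by construction (the first factor $T^{\tilde{w}}U_{R^+\setminus R^{\tilde{w}}}$ maps to $S_{Br}(\tilde{b})\cdot$(unipotent) with the left $U_{R^+\setminus R^{\tilde{w}}}$ preserved, and the last to $S(w'w)U_{\delta(R^+\setminus R^{\tilde{w}})}$ with the right $U_{\delta(R^+\setminus R^{\tilde{w}})}$ preserved).

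The main obstacle I anticipate is bookkeeping: correctly matching the index ranges of the $(2n+1)r-1$ parameters $u_j$ against the $r-1$ parameters of the target action, and confirming that the interior $u_j$'s really are absorbed with no leftover factor appearing at a block seam. This is where a sign/index error is most likely, so I would do the $r=1$ case first (where $\tilde{U}(\tilde{b})$ is a single copy and the ``$r-1=0$''-fold action is trivial, so the claim reduces to $\Upsilon_{\tilde{b}}$ inducing a surjection on the appropriate $U_{R^+\setminus R^{\tilde{w}}}$-quotients — essentially the content already assembled before Lemma \ref{4.6}), then handle general $r$ by the same slot-wise pattern, invoking $\delta_G$-equivariance of all the structures. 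Everything else reduces to the commutator/convexity identities of Lemma \ref{4.5} and \cite[\S 2.7]{HL}, which are available, so no genuinely new input is needed beyond careful indexing.
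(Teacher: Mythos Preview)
Your approach is correct and essentially dual to the paper's, though organized differently. Where you define the map slot-wise on the unquotiented product $U(\mathrm{DG}(\tilde b),T^{\tilde w},r)\to\dot U(\tilde b^r)$ and then argue equivariance for the projection $(U_{R^+\setminus R^{\tilde w}})^{(2n+1)r-1}\twoheadrightarrow(U_{R^+\setminus R^{\tilde w}})^{r-1}$ so that it descends, the paper instead first establishes that every orbit in $\tilde U(\tilde b^r)$ and in $\tilde U(\mathrm{DG}(\tilde b),T^{\tilde w},r)$ admits a \emph{unique} normal-form representative with all the unipotent tail pushed to the far right (using Lemma~\ref{4.6} and the push-through identities), and then defines $\tilde\Upsilon^r_{\tilde b}$ directly on those representatives via the multiplication map of Lemma~\ref{4.4}. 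Well-definedness is then automatic from uniqueness, surjectivity is immediate from Lemma~\ref{4.4}, and compatibility with the outer action is checked by an explicit computation comparing normal forms before and after acting by $(u,u')$, which is where Lemma~\ref{4.6} is invoked again. Your telescoping-plus-equivariance argument packages the same content more structurally and sidesteps the repeated re-normalization; the paper's normal-form approach has the complementary advantage that the explicit formula it produces is reused verbatim in the subsequent proofs of Lemma~\ref{4.8} and Proposition~\ref{4.12A}, so if you adopt your route you should still record that formula as a consequence.
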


\begin{proof}
    First notice that given any $\{u_i\in U_{\delta(R^+\setminus R^{\tilde{w}})}\}_{1\leq i\leq r}$ and $\{x_i\in S_{Br}(\tilde{b})\}_{1\leq i\leq r}$, there is a unique expression $[x_1u_1,\delta_G(x_2u_2),\ldots,\delta_G^{r-1}(x_ru_r)]=[g_1,\ldots,g_ru]$ in $\tilde{U}(\tilde{b}^r)$ with $u\in \delta_G^{r-1}(U_{\delta(R^+\setminus R^{\tilde{w}})})=U_{\delta^r(R^+\setminus R^{\tilde{w}})}$ and $g_i\in \delta_G^{i-1}(S_{Br}(\tilde{b}))$ for all $1\leq i\leq r$. Indeed, since $ \delta_G^i(S_{Br}(\tilde{b})U_{\delta(R^+/R^{\tilde{w}})})=U_{\delta^i(R^+\setminus R^{\tilde{w}})}\delta_G^i(S_{Br}(\tilde{b}))U_{\delta^{i+1}(R^+\setminus R^{\tilde{w}})}$, we have 
    \begin{eqnarray*}
        [x_1u_1,\delta_G(x_2u_2),\ldots,\delta_G^{r-1}(x_ru_r)]&=&[x_1,u_1\delta_G(x_2u_2),\ldots,\delta_G^{r-1}(x_ru_r)]\\
        &=&[x_1,\delta_G(x_2')\delta_G(u_2'),\ldots,\delta_G^{r-1}(x_ru_r)]\\
        &=&\cdots \\
        &=&[x_1,\delta_G(x_2'),\ldots,\delta_G^{r-2}(x_{r-1}'),\delta_G^{r-1}(x_r')\delta_G^{r-1}(u_r')],
    \end{eqnarray*}
    where $u_r'\in U_{\delta(R^+\setminus R^{\tilde{w}})}$ and $x_i'\in S_{Br}(\tilde{b})$ for all $i$. Moreover, by lemma \ref{4.6}, above expression on the right hand side is unique.

    Similarly, given any element in $\tilde{U}(\text{DG}(\tilde{b}),T^{\tilde{w}},r)$, there is a unique expression of it as $[t_1,g_1^{(1)},\ldots,g_1^{(2n)},\delta_G(t_2),\ldots,\delta_G(g_2^{(2n)}),\ldots,\delta_G^{r-1}(t_r),\ldots,\delta_G^{r-1}(g_r^{(2n)})u']$ with $t_i\in T^{\tilde{w}}$, $u'\in U_{\delta^r(R^+\setminus R^{\tilde{w}})}$, $g_i^{(2n)}\in S(w'w)$ and $g_i^{(j)}\in S(w')$ for $1\leq j\leq 2n-1$.
    
    Now for $\tilde{\Upsilon}_{\tilde{b}}^r$, consider any $[t_1,g_1^{(1)},\ldots,g_1^{(2n)},\ldots,\delta_G^{r-1}(t_r),\ldots,\delta_G^{r-1}(g_r^{(2n)})u']$ as above. For any $i$, we have $t_i\prod\limits_{j=1}^{2n} g_i^{(j)}\in S_{Br}(\tilde{b})$ as given by lemma \ref{4.4}. These naturally define
    \begin{eqnarray*}
        &\tilde{\Upsilon}_{\tilde{b}}^r([t_1,g_1^{(1)},\ldots,g_1^{(2n)},\ldots,\delta_G^{r-1}(t_r),\ldots,\delta_G^{r-1}(g_r^{(2n)})u'])&\\
        &=[t_1\prod\limits_{j=1}^{2n} g_1^{(j)},\ldots,\delta_G^{r-2}(t_{r-1}\prod\limits_{j=1}^{2n} g_{r-1}^{(j)}),\delta_G^{r-1}(t_r\prod\limits_{j=1}^{2n} g_r^{(j)})u']&.
    \end{eqnarray*}
    This map is well-defined and surjective due to lemma \ref{4.4} and the unique expression of elements in $\tilde{U}(\tilde{b}^r)$ mentioned above. 
    
    It remains to show $\tilde{\Upsilon}_{\tilde{b}}^r$ is compatible with $U_{R^+\setminus R^{\tilde{w}}}\times U_{\delta^r(R^+\setminus R^{\tilde{w}})}$-actions. Let $u\in U_{R^+\setminus R^{\tilde{w}}}$ and $u'\in U_{\delta^r(R^+\setminus R^{\tilde{w}})}$, then 
    \begin{eqnarray*}
        (u,u')[g_1,\delta_G(g_2),\ldots,\delta_G^{r-1}(g_r)u_r]&=&[ug_1,\ldots,\delta_G^{r-1}(g_r)u_r(u')^{-1}]\\
        &=&[g_1'u_1',\ldots,\delta_G^{r-1}(g_r)u_r(u')^{-1}]\\
        &=&[g_1',u_1'\delta_G(g_2),\ldots,\delta_G^{r-1}(g_r)u_r(u')^{-1}]\\\
        &=&\cdots \\
        &=&[g_1',\delta_G(g_2'),\ldots,\delta_G^{r-1}(g_r')u_r'u_r(u')^{-1}].
    \end{eqnarray*}
    Here $u_0':=u$, $u_{i-1}'\delta_G^{i-1}(g_i)=\delta_G^{i-1}(g_i')u_i'$ with $u_i' \in U_{\delta^i(R^+\setminus R^{\tilde{w}})}$ and $g_i'\in S_{Br}(\tilde{b})$ determined by applying $\delta_G^{i-1}$ to both sides of lemma \ref{4.6}. Similarly, we have
    \begin{eqnarray*}
        &(u,u')[t_1,g_1^{(1)},\ldots,g_1^{(2n)},\ldots,\delta_G^{r-1}(t_r),\ldots,\delta_G^{r-1}(g_r^{(2n)})u_r]&\\
        & =[t_1u_1^{(0)},g_1^{(1)},\ldots,g_1^{(2n)},\ldots, \delta_G^{r-1}(t_r),\ldots,\delta_G^{r-1}(g_r^{(2n)})u_r(u')^{-1}]&\\
        &=
        [t_1,g_1'^{(1)}u_1^{(1)},\ldots,g_1^{(2n)},\ldots,\delta_G^{r-1}(t_r),\ldots,\delta_G^{r-1}(g_r^{(2n)})u_r(u')^{-1}]&\\
        & \cdots& \\
        &=[t_1,g_1'^{(1)},\ldots,g_1'^{(2n)},\ldots,\delta_G^{r-1}(t_r),\ldots,\delta_G^{r-1}(g_r'^{(2n)})u_r^{(2n)}u_r(u')^{-1}].&
    \end{eqnarray*}
    We know $g_1'u_1'=ug_1=ut_1\prod\limits_{j=1}^{2n} g_1^{(j)}=t_1\prod\limits_{j=1}^{2n} g_1'^{(j)}u_1^{(2n)}$. By lemma \ref{4.6}, we have $u_1'=u_1^{(2n)}$ and $g_1'=t_1\prod\limits_{j=1}^{2n} g_1'^{(j)}$. By passing it to $g_r'u_r'$, it is clear that 
    \begin{eqnarray*}
        &\tilde{\Upsilon}_{\tilde{b}}^r ([t_1,g_1'^{(1)},\ldots,g_1'^{(2n)},\ldots,\delta_G^{r-1}(t_r),\ldots,\delta_G^{r-1}(g_r'^{(2n)})u_r^{(2n)}u_r(u')^{-1}])&\\
        &=[g_1',\delta_G(g_2'),\ldots,\delta_G^{r-1}(g_r')u_r'u_r(u')^{-1}]&
    \end{eqnarray*}
    and our statement is proved.
\end{proof}

Based on above lemma, one can relate the twisted $U_{R^+\setminus R^{\tilde{w}}}$-orbits on $S_{Br}(\tilde{b})U_{\delta(R^+\setminus R^{\tilde{w}})}$ to the $(U_{R^+\setminus R^{\tilde{w}}})^{(2n+1)r-1}$-orbit space $\tilde{U}(\text{DG}(\tilde{b}),T^{\tilde{w}},r)$. Nevertheless, we can further combine these torus parts together and move it to one end.

We set $U(\text{DG}(\tilde{b}))=S(w')U_{R^+\setminus R^{\tilde{w}}}\times \cdots \times S(w')U_{R^+\setminus R^{\tilde{w}}}\times 
 S(w'w)U_{\delta(R^+\setminus R^{\tilde{w}})}$ where there are again $(2n-1)$-copies of $S(w')U_{R^+\setminus R^{\tilde{w}}}$. Then define 
 \begin{equation*}
     U(\text{DG}(\tilde{b}),r)=U(\text{DG}(\tilde{b}))\times \delta_G(U(\text{DG}(\tilde{b})))\times \cdots \times \delta_G^{r-1}(U(\text{DG}(\tilde{b})))
 \end{equation*}
  and
  \begin{equation*}
      \dot{U}(\text{DG}(\tilde{b}),r)=U(\text{DG}(\tilde{b}),r)\times \delta_G^r(T^{\tilde{w}}U_{R^+\setminus R^{\tilde{w}}}).
  \end{equation*}
 Then one can again define an action of $(U_{R^+\setminus R^{\tilde{w}}})^{2nr}$ on $\dot{U}(\text{DG}(\tilde{b}),r)$ as follows. Let $(g_1^{(1)},\ldots,g_1^{(2n)},\ldots,\delta_G^{r-1}(g_r^{(1)}),\ldots,\delta_G^{r-1}(g_r^{(2n)}),h_{r+1})$ be any element in $\dot{U}(\text{DG}(\tilde{b}),r)$ with $h_{r+1}\in \delta_G^r(T^{\tilde{w}}U_{R^+\setminus R^{\tilde{w}}})$, $g_i^{(2n)}\in S(w'w)U_{\delta(R^+\setminus R^{\tilde{w}})}$ and $g_i^{(j)}\in S(w')U_{\delta(R^+\setminus R^{\tilde{w}})}$ for $1\leq j\leq 2n-1$. Let $(u_1,u_2,\ldots,u_{2nr})\in (U_{R^+\setminus R^{\tilde{w}}})^{2nr}$ be arbitrary and formally set $u_0$ to be trivial. Then it takes $(g_1^{(1)},\ldots,g_1^{(2n)},\ldots,\delta_G^{r-1}(g_r^{(1)}),\ldots,\delta_G^{r-1}(g_r^{(2n)}),h_{r+1})$ to $(g_1'^{(1)},\ldots,g_1'^{(2n)},\ldots,\delta_G^{r-1}(g_r'^{(1)}),\ldots,\delta_G^{r-1}(g_r'^{(2n)}),h_{r+1}')$ where 
\begin{eqnarray*}
    \delta_G^{i-1}(g_i'^{(1)})&=&\delta_G^{i-1}(u_{2ni-2n}g_i^{(1)}u_{2ni-2n+1}^{-1}), \\
    \delta_G^{i-1}(g_i'^{(2)})&=&\delta_G^{i-1}(u_{2ni-2n+1}g_i^{(2)}u_{2ni-2n+2}^{-1}),\\
    \cdots \\
    \delta_G^{i-1}(g_i'^{(2n)})&=&\delta_G^{i-1}(u_{2ni-1}g_i^{(2n)}\delta_G(u_{2ni})^{-1}),\\
    h_{r+1}'&=&\delta_G^r(u_{2nr})h_{r+1},
\end{eqnarray*}
for $1\leq i\leq r$. Denote the orbit space of this action as $\tilde{U}(\text{DG}(\tilde{b}),r)$. There is also an $U_{R^+\setminus R^{\tilde{w}}}\times U_{\delta^r(R^+\setminus R^{\tilde{w}})}$-action on $\tilde{U}(\text{DG}(\tilde{b}),r)$ similar as before.

Recall the proof of lemma \ref{4.7}, any element in $\tilde{U}(\text{DG}(\tilde{b}),T^{\tilde{w}},r)$ admits a unique expression as $[t_1,g_1^{(1)},\ldots,g_1^{(2n)},\ldots,\delta_G^{r-1}(t_r),\ldots,\delta_G^{r-1}(g_r^{(2n)})u]$ with some $t_i\in T^{\tilde{w}}$, $u\in U_{\delta^r(R^+\setminus R^{\tilde{w}})}$, $g_i^{(2n)}\in S(w'w)$ and $g_i^{(j)}\in S(w')$ for $1\leq j\leq 2n-1$ and all $i$. Similarly, any element in $\tilde{U}(\text{DG}(\tilde{b}),r)$ also admit a unique expression 
\begin{equation*}
    [g_1^{(1)},\ldots,g_1^{(2n)},\ldots,\delta_G^{r-1}(g_r^{(1)}),\ldots,\delta_G^{r-1}(g_r^{(2n)}),\delta_G^r(t)u]
\end{equation*}
 with some $t\in T^{\tilde{w}}$, $u\in U_{\delta^r(R^+\setminus R^{\tilde{w}})}$, $g_i^{(2n)}\in S(w'w)$ and $g_i^{(j)}\in S(w')$ for $1\leq j\leq 2n-1$ and all $i$.

First assume that $r=2$. Let $[t_1,g_1^{(1)},\ldots,g_1^{(2n)},\delta_G(t_2),\delta_G(g_2^{(1)}),\ldots,\delta_G(g_2^{(2n)})u]$ be any element in $\tilde{U}(\text{DG}(\tilde{b}),T^{\tilde{w}},2)$ as above. Since $t_i\in T^{\tilde{w}}$, we have $t_i\dot{w}=\dot{w}\delta_G(t_i)$ and $t_i(\dot{w}')^2=(\dot{w}')^2t_i$. Then $t_1\prod\limits_{j=1}^{2n}g_1^{(j)}=\prod\limits_{j=1}^{2n}g_1'^{(j)}\delta_G(t_1)$ for some $g_1'^{(2n)}\in S(w'w)$ and $g_1'^{(j)}\in S(w')$ for $1\leq j\leq 2n-1$. Similarly, we have $\delta_G(t_1t_2\prod\limits_{j=1}^{2n}g_2^{(j)})=\delta_G(\prod\limits_{j=1}^{2n}g_2'^{(j)})\delta_G^2(t_1t_2)$ for some $g_2'^{(2n)}\in S(w'w)$ and $g_2'^{(j)}\in S(w')$ for $1\leq j\leq 2n-1$.
\begin{lemma}\label{4.8}
    There is a natural surjection 
    \begin{equation*}
        \tilde{\tau}_{\tilde{w}}^2:\tilde{U}(\text{DG}(\tilde{b}),T^{\tilde{w}},2)\longrightarrow \tilde{U}(\text{DG}(\tilde{b}),2)
    \end{equation*}
     which sends $[t_1,g_1^{(1)},\ldots,g_1^{(2n)},\delta_G(t_2),\delta_G(g_2^{(1)}),\ldots,\delta_G(g_2^{(2n)})u]\in \tilde{U}(\text{DG}(\tilde{b}),T^{\tilde{w}},2)$ to $[g_1'^{(1)},\ldots,g_1'^{(2n)},\delta_G(g_2'^{(1)}),\ldots,\delta_G(g_2'^{(2n)}),\delta_G^2(t_1t_2)u]$ from above equations. This map is compatible with $U_{R^+\setminus R^{\tilde{w}}}\times U_{\delta^2(R^+\setminus R^{\tilde{w}})}$-actions on both sides.
\end{lemma}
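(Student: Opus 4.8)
The plan is to reduce Lemma \ref{4.8} to the two computations already carried out in the main text — the commutation relation $U_{R^+\setminus R^{\tilde w}}S(w')U_{R^+\setminus R^{\tilde w}}=S(w')U_{R^+\setminus R^{\tilde w}}$ and its $S(w'w)$-analogue — together with the elementary fact that $T^{\tilde w}$ is centralized by $(\dot w')^2$ and satisfies $t\dot w=\dot w\delta_G(t)$. Concretely, I would first check that $\tilde\tau_{\tilde w}^2$ is well-defined on representatives: given $[t_1,g_1^{(1)},\dots,g_1^{(2n)},\delta_G(t_2),\delta_G(g_2^{(1)}),\dots,\delta_G(g_2^{(2n)})u]$, the displayed equations preceding the lemma produce $g_1'^{(j)}$ and $g_2'^{(j)}$ by pushing $t_1$ past $\prod_j g_1^{(j)}$ (using $t_1(\dot w')^2=(\dot w')^2 t_1$ inside each $S(w')$-factor and $t_1\dot w=\dot w\delta_G(t_1)$ at the last factor), and then pushing $\delta_G(t_1t_2)$ past $\delta_G(\prod_j g_2^{(j)})$ in the same way. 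Since conjugation by $t_i\in T^{\tilde w}$ preserves each $U_{R^+\setminus R^{\tilde w}}$, $S(w')$ and $S(w'w)$, the output genuinely lands in $U(\text{DG}(\tilde b),2)$.

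Next I would verify that the map descends to orbit sets, i.e. that moving a $t_i$ through commutes with the $(U_{R^+\setminus R^{\tilde w}})^{(2n+1)r-1}$-action on the source and the $(U_{R^+\setminus R^{\tilde w}})^{2nr}$-action on the target. Here the key point is that $t_i$ normalizes $U_{R^+\setminus R^{\tilde w}}$, so conjugating an action parameter $u_k$ by $t_i$ again yields an element of $U_{R^+\setminus R^{\tilde w}}$; tracking the indices, each $(2n+1)$-block of action parameters in the source (which contains one ``torus slot'' contribution $u_{(2n+1)(i-1)}$) matches a $2n$-block in the target after the torus slot has been absorbed. I would also record the uniqueness-of-expression statements from the proof of Lemma \ref{4.7} (adapted verbatim: every element of $\tilde U(\text{DG}(\tilde b),r)$ has a unique normal form $[g_1^{(1)},\dots,\delta_G^{r-1}(g_r^{(2n)}),\delta_G^r(t)u]$), which is what guarantees the map is single-valued on orbits rather than merely a relation.

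Surjectivity is then immediate: given any element of $\tilde U(\text{DG}(\tilde b),2)$ in normal form, reinserting $t=t_1t_2$ with, say, $t_1=t$, $t_2=\id$ and running the displayed identities backwards produces a preimage. Compatibility with the residual $U_{R^+\setminus R^{\tilde w}}\times U_{\delta^2(R^+\setminus R^{\tilde w})}$-action on both sides follows by the same bookkeeping as in Lemma \ref{4.7}: one conjugates the left and right ``free'' copies of $U_{R^+\setminus R^{\tilde w}}$ through, using that these copies normalize everything in sight and that the torus part commutes past $\delta_G^2(T^{\tilde w}U_{R^+\setminus R^{\tilde w}})$.

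The main obstacle is purely combinatorial rather than conceptual: keeping the indexing of the action parameters straight when the ``torus slot'' is deleted, so that the $(2n+1)r-1$ parameters acting on the source correspond correctly to the $2nr$ parameters acting on the target (with one parameter per step absorbed into the moving torus element and eventually deposited in the final $\delta_G^r(T^{\tilde w}U_{R^+\setminus R^{\tilde w}})$ factor). I would handle this by writing out the $r=2$ case fully with explicit subscripts — as the statement already restricts to $r=2$ — and noting that all the group-theoretic inputs (Chevalley commutator formula via \cite[\S 2.7]{HL}, $(\dot w')^2\in T^{\tilde w}$, $t\dot w=\dot w\delta_G(t)$) have already been established above. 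Everything else is a routine transport of the argument of Lemma \ref{4.7}.
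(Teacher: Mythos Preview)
Your proposal is correct and follows essentially the same approach as the paper: both rely on the pre-lemma computations (pushing $t_i\in T^{\tilde w}$ through the $S(w')$- and $S(w'w)$-factors via $(\dot w')^2\in T^{\tilde w}$ and $t\dot w=\dot w\delta_G(t)$), the uniqueness of normal-form representatives for elements of the orbit spaces, and the observation that compatibility with the $U_{R^+\setminus R^{\tilde w}}\times U_{\delta^2(R^+\setminus R^{\tilde w})}$-action is verified by the same bookkeeping as in Lemma~\ref{4.7}. The paper's proof is in fact terser than yours---it simply invokes these ingredients as ``clearly well-defined and surjective'' and ``checked in the same way''---so your expanded account of the descent and surjectivity steps is if anything more explicit than what appears there.
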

\begin{proof}
Due to the uniqueness of the above expression in both sides and the computations above, we have $\tilde{\tau}_{\tilde{w}}^2$ is clearly well-defined and surjective. It remains to show the map is compatible with $U_{R^+\setminus R^{\tilde{w}}}\times U_{\delta^2(R^+\setminus R^{\tilde{w}})}$-actions. This can be checked in the same way as in the proof of lemma \ref{4.7}.    
\end{proof}

By repeating this procedure, we get a natural surjective map 
\begin{equation*}
    \tilde{\tau}_{\tilde{w}}^r:\tilde{U}(\text{DG}(\tilde{b}),T^{\tilde{w}},r)\to \tilde{U}(\text{DG}(\tilde{b}),r),
\end{equation*}
which is compatible with $U_{R^+\setminus R^{\tilde{w}}}\times U_{\delta^r(R^+\setminus R^{\tilde{w}})}$-actions.

Recall our choice of $\tilde{b}=(\underline{w'})^{2n-1}\cdot \underline{w'w}\delta$ is such that $\tilde{b}$ is a good position braid representative of some pair $(\tilde{w},\underline{\Theta})$. Choose $d$ such that $\delta^d=\id_W$ and $d$ satisfies the condition in proposition \ref{a}. Therefore, we have 
\begin{eqnarray*}
    \tilde{b}^d&=&((\underline{w'})^n\cdot \underline{w'w}\delta)^d=(\underline{w'})^n\cdot \underline{w'w}\cdot \delta((\underline{w'})^n\cdot \underline{w'w})\cdots \delta^{d-1}((\underline{w'})^n\cdot \underline{w'w})\cdot \delta^d\\
    &=&\underline{w_0}^{d_0}\underline{w_1}^{d_1} \cdots \underline{w_{l-1}}^{d_{l-1}}.
\end{eqnarray*}
Here all powers are positive integers determined by $\tilde{b}$ and $w_i$ is the longest element in a nontrivial standard parabolic subgroup $W_i<W$ for all $i$. (Here we set $W_0=W$). Moreover, we have a filtration $W_0>W_1>\cdots>W_{l-1}\neq \{\id\}$ and this leads to another filtration $R^+=\Inv(w_0^{-1})\supset \Inv(w_1^{-1})\supset\cdots\supset \Inv(w_{l-1}^{-1})\supset (R^{\tilde{w}})^+$. Moreover, we have $\Inv(w_{l-1}^{-1})=(R^{\tilde{w}})^+$ if and only if $w'\neq \id$ if and only if $R^{\tilde{w}}\neq \emptyset$.

Denote the product $S(w')\times \cdots \times S(w')\times S(w'w)$ with $(2n-1)$-copies of $S(w')$ by $S(\text{DG}(\tilde{b}))$ and the product of $d_i$ copies of $S(w_i)$ by $S(w_i,d_i)$. Define
\begin{equation*}
    S(\text{DG}(\tilde{b}),d)=S(\text{DG}(\tilde{b}))\times \delta_G(S(\text{DG}(\tilde{b})))\times \cdots \times \delta_G^{d-1}(S(\text{DG}(\tilde{b}))).
\end{equation*} 
Notice that by our choice of representatives, we have
\begin{eqnarray*}
    \delta_G^i(S(\text{DG}(\tilde{b})))&=&\delta_G^i(S(w'))\times \cdots \times \delta_G^i(S(w'))\times  \delta_G^i(S(w'w))\\
    &=&S(\delta^i(w'))\times \cdots \times S(\delta^i(w')) \times S(\delta^i(w'w)).
\end{eqnarray*}
By \cite[\S 2.9]{HL} and the good form of $\tilde{b}^d$, there is a bijection 
\begin{equation*}
    \mathscr{G}_{\tilde{b}} :S(\text{DG}(\tilde{b}),d)\xlongrightarrow{\sim} S(w_0,d_0)\times \cdots \times S(w_{l-1},d_{l-1}).
\end{equation*}
For any $S(w_i)$, since $W_i$ is standard parabolic and $\Inv(w_i^{-1})\supset (R^{\tilde{w}})^+$, then we have $U_{R^+\setminus R^{\tilde{w}}}S(w_i)U_{R^+\setminus R^{\tilde{w}}}=U_{R^+\setminus R^{\tilde{w}}}U^{w_i}\dot{w}_iU_{R^+\setminus R^{\tilde{w}}}=U^{w_i}U_{R^+\setminus \Inv(w_i^{-1}) }\dot{w}_iU_{R^+\setminus R^{\tilde{w}}}=S(w_i)U_{R^+\setminus R^{\tilde{w}}}$. The second equation is due to \cite[\S 2.7]{HL}. Denote the product of $d_i$ copies of $S(w_i)U_{R^+\setminus R^{\tilde{w}}}$ as $\dot{U}(w_i,d_i)$. 

Therefore, one can define a natural $(U_{R^+\setminus R^{\tilde{w}}})^{d_0+\cdots+d_{l-1}}$-action on $  \dot{U}(w_0,d_0)\times \cdots \times \dot{U}(w_{l-1},d_{l-1})\times T^{\tilde{w}}U_{R^+\setminus R^{\tilde{w}}}$ as follows. Let $(u_1,u_2,\ldots,u_{d_0+\cdots+d_{l-1}})\in (U_{R^+\setminus R^{\tilde{w}}})^{d_0+\cdots+d_{l-1}}$. It takes $(g_1,g_2,\ldots,g_{d_0+\cdots+d_{l-1}},h)$ to $(g_1u_1^{-1},u_1g_2u_2^{-1},\ldots,u_{d_0+\cdots+d_{l-1}}h)$. Denote the orbit space of this action as $\tilde{U}(\tilde{w}_*,d_*)$. We define an $U_{R^+\setminus R^{\tilde{w}}}\times U_{R^+\setminus R^{\tilde{w}}}$-action on $\tilde{U}(\tilde{w}_*,d_*)$ as
\begin{equation*}
    (u,u')[g_1,g_2,\ldots,g_{d_0+\cdots+d_{l-1}},h]=[ug_1,g_2,\ldots,g_{d_0+\cdots+d_{l-1}},h\delta_G^d(u')^{-1}]
\end{equation*}

Since $U_{R^+\setminus R^{\tilde{w}}}S(w_i)U_{R^+\setminus R^{\tilde{w}}}=S(w_i)U_{R^+\setminus R^{\tilde{w}}}$, we have any element in $\tilde{U}(\tilde{w}_*,d_*)$ admits a unique expression as $[x_{0,1},\ldots,x_{0,d_0},\ldots,x_{l-1,d_{l-1}},tu]$ where $u\in U_{R^+\setminus R^w}$, $t\in T^{\tilde{w}}$ and $x_{i,j}\in S(w_i)$ for $0\leq i\leq l-1$ and $1\leq j\leq d_i$.

Let $[g_1^{(1)},\ldots,g_1^{(2n)},\ldots,\delta_G^{d-1}(g_d^{(1)}),\ldots,\delta_G^{d-1}(g_d^{(2n)}),\delta_G^d(t)u]$ be an arbitrary element in $\tilde{U}(\text{DG}(\tilde{b}),d)$ where $\delta_G^d(t)\in \delta_G^d(T^{\tilde{w}})= T^{\tilde{w}}$, $u\in U_{\delta^d(R^+\setminus R^{\tilde{w}})}=U_{R^+\setminus R^{\tilde{w}}}$, $g_i^{(2n)}\in S(w'w)$ and $g_i^{(j)}\in S(w')$ for $1\leq j\leq 2n-1$ by our choice of $d$. Let $(x_{0,1},\ldots,x_{0,d_0},\ldots,x_{l-1,d_{l-1}})$ be the image of $(g_1^{(1)},\ldots,g_1^{(2n)},\ldots,\delta_G^{d-1}(g_d^{(1)}),\ldots,\delta_G^{d-1}(g_d^{(2n)}))$ under $\mathscr{G}_b$ where $x_{i,j}\in S(w_i)$ for $0\leq i\leq l-1$ and $1\leq j\leq d_i$. Then we can define a map
\begin{equation*}
    \tilde{\mathscr{G}}_{\tilde{b}}:\tilde{U}(\text{DG}(\tilde{b}),d)\to \tilde{U}(w_*,d_*),
\end{equation*}
which sends $[g_1^{(1)},\ldots,g_1^{(2n)},\ldots,\delta_G^{d-1}(g_d^{(1)}),\ldots,\delta_G^{d-1}(g_d^{(2n)}),\delta_G^d(t)u]\in \tilde{U}(\text{DG}(\tilde{b}),d)$ to $[x_{0,1},\ldots,x_{0,d_0},\ldots,x_{l-1,d_{l-1}},\delta_G^d(t)u]$ as above.
\begin{lemma}
 The map $\tilde{\mathscr{G}}_{\tilde{b}}$ is bijective and is compatible with $U_{R^+\setminus R^{\tilde{w}}}\times U_{R^+\setminus R^{\tilde{w}}}$-actions on both sides.
\end{lemma}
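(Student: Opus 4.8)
The plan is to read off both assertions from the unique normal forms for the two orbit spaces recorded just above, using that $\tilde{\mathscr{G}}_{\tilde{b}}$ was defined on normal-form representatives. Recall that every element of $\tilde{U}(\text{DG}(\tilde{b}),d)$ has a unique expression
\begin{equation*}
   [g_1^{(1)},\ldots,g_1^{(2n)},\ldots,\delta_G^{d-1}(g_d^{(1)}),\ldots,\delta_G^{d-1}(g_d^{(2n)}),\delta_G^d(t)u]
\end{equation*}
with $t\in T^{\tilde{w}}$, $u\in U_{R^+\setminus R^{\tilde{w}}}$, $g_i^{(2n)}\in S(w'w)$ and $g_i^{(j)}\in S(w')$ for $1\le j\le 2n-1$, so that as a set $\tilde{U}(\text{DG}(\tilde{b}),d)\cong S(\text{DG}(\tilde{b}),d)\times T^{\tilde{w}}\times U_{R^+\setminus R^{\tilde{w}}}$; likewise every element of $\tilde{U}(\tilde{w}_*,d_*)$ has a unique normal form $[x_{0,1},\ldots,x_{l-1,d_{l-1}},tu]$, giving $\tilde{U}(\tilde{w}_*,d_*)\cong \big(S(w_0,d_0)\times\cdots\times S(w_{l-1},d_{l-1})\big)\times T^{\tilde{w}}\times U_{R^+\setminus R^{\tilde{w}}}$.

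For bijectivity, note that in these coordinates $\tilde{\mathscr{G}}_{\tilde{b}}$ is just $\mathscr{G}_{\tilde{b}}\times\operatorname{id}$: it applies $\mathscr{G}_{\tilde{b}}$ to the slice-string part $(g_1^{(1)},\ldots,\delta_G^{d-1}(g_d^{(2n)}))\in S(\text{DG}(\tilde{b}),d)$ and carries the tail $\delta_G^d(t)u$ over verbatim, and the output $(\mathscr{G}_{\tilde{b}}(\text{string}),t,u)$ is manifestly a legal normal form in $\tilde{U}(\tilde{w}_*,d_*)$. Since $\mathscr{G}_{\tilde{b}}$ is a bijection by \cite[\S 2.9]{HL} together with the good form of $\tilde{b}^d$, the map $\tilde{\mathscr{G}}_{\tilde{b}}$ is well defined and bijective, with inverse $\mathscr{G}_{\tilde{b}}^{-1}\times\operatorname{id}$.

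For compatibility with the $U_{R^+\setminus R^{\tilde{w}}}\times U_{R^+\setminus R^{\tilde{w}}}$-actions, fix $u,u'\in U_{R^+\setminus R^{\tilde{w}}}$. Acting by $(u,u')$ multiplies the leftmost slot on the left by $u$ and the tail on the right by $\delta_G^d(u')^{-1}$; since $d$ was chosen with $\delta^d=\operatorname{id}_W$ and $U_{R^+\setminus R^{\tilde{w}}}$ is the unipotent radical of the standard parabolic attached to $R^{\tilde{w}}$ (hence a group), the factor $\delta_G^d(u')^{-1}$ is absorbed into the $U_{R^+\setminus R^{\tilde{w}}}$-part of the tail, and $u$ is pushed rightward to restore the normal form. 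On both the source and the target this pushing-through is governed by the absorption identities established above, namely $U_{R^+\setminus R^{\tilde{w}}}S(w')U_{R^+\setminus R^{\tilde{w}}}=S(w')U_{R^+\setminus R^{\tilde{w}}}$, $U_{R^+\setminus R^{\tilde{w}}}S(w'w)U_{\delta(R^+\setminus R^{\tilde{w}})}=S(w'w)U_{\delta(R^+\setminus R^{\tilde{w}})}$, $U_{R^+\setminus R^{\tilde{w}}}S(w_i)U_{R^+\setminus R^{\tilde{w}}}=S(w_i)U_{R^+\setminus R^{\tilde{w}}}$, and $U_{R^+\setminus R^{\tilde{w}}}T^{\tilde{w}}U_{R^+\setminus R^{\tilde{w}}}=T^{\tilde{w}}U_{R^+\setminus R^{\tilde{w}}}$. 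Since $\mathscr{G}_{\tilde{b}}$ is itself assembled from exactly these moves (this is the content of \cite[\S 2.9]{HL}), it intertwines the renormalization on $S(\text{DG}(\tilde{b}),d)$ with the one on $S(w_0,d_0)\times\cdots\times S(w_{l-1},d_{l-1})$; this is the same bookkeeping already performed in the proofs of Lemmas \ref{4.7} and \ref{4.8}, now run blockwise along the Deligne--Garside decomposition, and it gives the claimed compatibility.

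The only point demanding real care is the last step: a priori $\mathscr{G}_{\tilde{b}}$ is an abstract bijection coming from the fact that a product of good-position slices depends only on the underlying braid, so one must verify it genuinely respects the $U_{R^+\setminus R^{\tilde{w}}}$-absorption structure and not merely the underlying sets. I would handle this by lifting $\mathscr{G}_{\tilde{b}}$ to a bijection between the decorated products $\dot{U}(\text{DG}(\tilde{b}),d)$ and $\dot{U}(w_0,d_0)\times\cdots\times\dot{U}(w_{l-1},d_{l-1})$ that is equivariant for the internal $U_{R^+\setminus R^{\tilde{w}}}$-actions --- obtained by carrying out the \cite[\S 2.9]{HL} merging one $U_{R^+\setminus R^{\tilde{w}}}$-junction at a time --- and then descending to orbit spaces; everything else then reduces to formal manipulation with the unique normal forms already in place.
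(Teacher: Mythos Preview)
Your proposal is correct and follows essentially the same approach as the paper: bijectivity comes from the unique normal-form expressions on both sides together with the bijectivity of $\mathscr{G}_{\tilde{b}}$, and compatibility with the $U_{R^+\setminus R^{\tilde{w}}}\times U_{R^+\setminus R^{\tilde{w}}}$-actions is deduced from \cite[\S 2.9]{HL} and the absorption identities $U_{R^+\setminus R^{\tilde{w}}}S(w_i)U_{R^+\setminus R^{\tilde{w}}}=S(w_i)U_{R^+\setminus R^{\tilde{w}}}$. Your final paragraph, on lifting $\mathscr{G}_{\tilde{b}}$ to the decorated products one junction at a time, makes explicit a point the paper leaves implicit in its one-line appeal to \cite[\S 2.9]{HL}, but the underlying argument is the same.
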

\begin{proof}
    This map is well-defined and bijective due to the uniqueness of the above expressions and the bijectivity of $\mathscr{G}_{\tilde{b}}$.

    It remains to show $\tilde{\mathscr{G}}_{\tilde{b}}$ is compatible with $U_{R^+\setminus R^{\tilde{w}}}\times U_{R^+\setminus R^{\tilde{w}}}$-actions. It acts on the left hand side since our choice of $d$ ensures $U_{\delta^d(R^+\setminus R^{\tilde{w}})}=U_{R^+\setminus R^{\tilde{w}}}$. This follows from \cite[\S2.9]{HL} and the equations $U_{R^+\setminus R^{\tilde{w}}}S(w_i)U_{R^+\setminus R^{\tilde{w}}}=S(w_i)U_{R^+\setminus R^{\tilde{w}}}$ for all $i$.
\end{proof}

Based on above, we get a surjective map
\begin{equation*}
    \tilde{\mathscr{G}}_{\tilde{b}} \circ \tilde{\tau}_{\tilde{w}}^d :\tilde{U}(\text{DG}(\tilde{b}),T^{\tilde{w}},d)\to \tilde{U}(\tilde{w}_*,d_*),
\end{equation*}
which is compatible with $U_{R^+\setminus R^{\tilde{w}}}\times U_{R^+\setminus R^{\tilde{w}}}$-actions on both sides.

Finally, we will generalize the cross-sections.

\begin{proposition}\label{4.12A}
Let $\CC\in [\tilde{W}]$ and $\tilde{b}=b\delta$ be a good position braid representative of $(\tilde{w},\underline{\Theta})$ where $\tilde{w}=w\delta \in \CC$ and $\underline{\Theta}\subset \Gamma_{\tilde{w}}$ is the increasing complete sequence in $(0,\pi]\cup \{2n\pi\}$. The twisted conjugation map 
    \begin{equation*}
        \Xi_{\tilde{b}}^{\delta}:U_{R^+\setminus R^{\tilde{w}}}\times S_{Br}(\tilde{b}) \to S_{Br}(\tilde{b})U_{\delta(R^+\setminus R^{\tilde{w}})}
    \end{equation*}
    given by $(u,g)\mapsto ug\delta_G(u)^{-1}$ is a bijection.
\end{proposition}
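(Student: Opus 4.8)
The plan is to reinterpret the asserted bijection as a freeness-and-fundamental-domain statement, pass to the $d$-th power of $\tilde{b}$ — where the Deligne--Garside form is a product of longest elements of parabolics by Proposition~\ref{a} — use the equivariant maps already constructed to transport the question there, and settle it by Steinberg/He--Lusztig-type arguments for longest elements.

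First I would observe that by Lemma~\ref{4.6} the target $S_{Br}(\tilde{b})U_{\delta(R^+\setminus R^{\tilde{w}})}$ is in natural bijection with $S_{Br}(\tilde{b})\times U_{R^+\setminus R^{\tilde{w}}}$, so the domain and codomain of $\Xi_{\tilde{b}}^{\delta}$ are varieties of the same dimension and it suffices to prove $\Xi_{\tilde{b}}^{\delta}$ is injective and surjective. Moreover the displayed computation preceding Lemma~\ref{4.6} shows that twisted $U_{R^+\setminus R^{\tilde{w}}}$-conjugation, $x\mapsto ux\delta_G(u)^{-1}$, maps $S_{Br}(\tilde{b})U_{\delta(R^+\setminus R^{\tilde{w}})}$ to itself; hence bijectivity of $\Xi_{\tilde{b}}^{\delta}$ is equivalent to: this action is free and $S_{Br}(\tilde{b})$ meets each orbit exactly once. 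Injectivity unwinds to the claim that $ug_1=g_2\delta_G(u)$ with $u\in U_{R^+\setminus R^{\tilde{w}}}$ and $g_1,g_2\in S_{Br}(\tilde{b})$ forces $u=\id$ and $g_1=g_2$.

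The core of the argument is to pass to the power $d=\mathrm{ord}(\tilde{w})$. By Proposition~\ref{a}, $\tilde{b}^{d}=\underline{w_0}^{d_0}\cdots\underline{w_{l-1}}^{d_{l-1}}$, a product of positive powers of longest elements of a strictly decreasing chain of standard parabolic subgroups $W=W_0>W_1>\cdots>W_{l-1}$ with $\Inv(w_i^{-1})\supseteq (R^{\tilde{w}})^+$ for every $i$ (and $\Inv(w_{l-1}^{-1})=(R^{\tilde{w}})^+$ exactly when $R^{\tilde{w}}\ne\emptyset$). The $U_{R^+\setminus R^{\tilde{w}}}$-equivariant maps $\tilde{\Upsilon}_{\tilde{b}}^{d}$, $\tilde{\tau}_{\tilde{w}}^{d}$ and $\tilde{\mathscr{G}}_{\tilde{b}}$ already constructed relate the orbit space $\tilde{U}(\tilde{b}^{d})$ to $\tilde{U}(\tilde{w}_*,d_*)$. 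I would upgrade the surjections $\tilde{\Upsilon}_{\tilde{b}}^{d}$ and $\tilde{\tau}_{\tilde{w}}^{d}$ to bijections: the uniqueness-of-normal-form statements proved alongside Lemmas~\ref{4.7} and~\ref{4.8} (each element of the relevant orbit space has a unique standard expression) give injectivity, so one obtains an $U_{R^+\setminus R^{\tilde{w}}}\times U_{R^+\setminus R^{\tilde{w}}}$-equivariant bijection between $\tilde{U}(\tilde{b}^{d})$ and $\tilde{U}(\tilde{w}_*,d_*)$. It then remains to establish the cross-section property on the right-hand side, which is built from the slices $S(w_i)$ for the longest elements of the $W_i$. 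For each such $w_i$ one has $U_{R^+\setminus R^{\tilde{w}}}S(w_i)U_{R^+\setminus R^{\tilde{w}}}=S(w_i)U_{R^+\setminus R^{\tilde{w}}}$ with unique factorization $S(w_i)\cdot U_{R^+\setminus R^{\tilde{w}}}$ (using $\Inv(w_i^{-1})\supseteq(R^{\tilde{w}})^+$, the Chevalley commutator formula, and \cite[\S2.7]{HL}), and the $U_{R^+\setminus R^{\tilde{w}}}$-conjugation on such a factor is free; the freeness reduces, via Lemma~\ref{4.5} and the elliptic-Levi cross-section of Theorem~\ref{2.4}, to Steinberg's original argument for the longest element. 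Assembling the $l$ factors through the iterated $U_{R^+\setminus R^{\tilde{w}}}^{\,*}$-quotient shows the residual $U_{R^+\setminus R^{\tilde{w}}}$-action on $\tilde{U}(\tilde{w}_*,d_*)$ is free with the product of slices as fundamental domain. Transporting this back along the equivariant bijection above, the residual action on $\tilde{U}(\tilde{b}^{d})$ is free with the expected fundamental domain; reading off what this says in terms of the definition of $\tilde{U}(\tilde{b}^{d})$ (tuples $[g_1,\dots,g_d]$ with $g_i\in\delta_G^{i-1}(S_{Br}(\tilde{b})U_{\delta(R^+\setminus R^{\tilde{w}})})$ modulo the $U_{R^+\setminus R^{\tilde{w}}}^{d-1}$-action, using $\delta_G^{d}=\id$) yields precisely that $\Xi_{\tilde{b}}^{\delta}$ is a bijection: an equality $ug_1=g_2\delta_G(u)$ with $g_1,g_2\in S_{Br}(\tilde{b})$, iterated cyclically $d$ times, produces a point of $\dot{U}(\tilde{b}^{d})$ whose $U_{R^+\setminus R^{\tilde{w}}}^{d-1}$-orbit is fixed by the corresponding diagonal element, forcing $u=\id$ and $g_1=g_2$; surjectivity is the dual lifting statement.

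The main obstacle I anticipate is the base case together with the equivariance bookkeeping: verifying that twisted $U_{R^+\setminus R^{\tilde{w}}}$-conjugation on a product of longest-element slices $S(w_i)$ along the fixed chain of parabolics is free with the product slice as fundamental domain, and checking that the four layers of auxiliary orbit spaces $\tilde{U}(\tilde{b}^{d})$, $\tilde{U}(\mathrm{DG}(\tilde{b}),T^{\tilde{w}},d)$, $\tilde{U}(\mathrm{DG}(\tilde{b}),d)$, $\tilde{U}(\tilde{w}_*,d_*)$ and their residual actions are matched compatibly. The convexity Lemma~\ref{4.5}, the identities $U_{R^+\setminus R^{\tilde{w}}}S(w_i)U_{R^+\setminus R^{\tilde{w}}}=S(w_i)U_{R^+\setminus R^{\tilde{w}}}$, and the reduction to Steinberg's and He--Lusztig's cross-section for the longest element are the crux; the rest is careful but routine tracking of unique normal forms.
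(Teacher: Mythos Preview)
Your overall architecture matches the paper's --- pass to $\tilde{b}^d$, use the chain of $U_{R^+\setminus R^{\tilde{w}}}$-equivariant maps to reach $\tilde{U}(\tilde{w}_*,d_*)$, and exploit that the leftmost factor involves the longest element $w_0$ of $W$ --- but there is a real gap in the execution.

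The claim that $\tilde{\Upsilon}_{\tilde{b}}^{d}$ and $\tilde{\tau}_{\tilde{w}}^{d}$ can be upgraded to bijections is incorrect, and the uniqueness-of-normal-form statements do not imply it. For $\tilde{\tau}_{\tilde{w}}^{d}$, the map collapses the $d$ torus factors $t_1,\dots,t_d\in T^{\tilde{w}}$ into the single product $\prod t_i$; for any splitting of this product one can adjust the $g_i^{(j)}$ accordingly and obtain a different preimage in $\tilde{U}(\mathrm{DG}(\tilde{b}),T^{\tilde{w}},d)$, so the fibers are $(T^{\tilde{w}})^{d-1}$-torsors whenever $T^{\tilde{w}}\neq\{1\}$. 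Likewise $\tilde{\Upsilon}_{\tilde{b}}^{d}$ is built from the multiplication map of Lemma~\ref{4.4}, which is only bijective for $n=1$. Uniqueness of the normal form in the \emph{target} says nothing about injectivity of the map.

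The paper does not need these maps to be bijective. Instead it observes that, regardless of the choice of lift $\eta_h$ of $[h,\delta_G(h),\dots,\delta_G^{d-1}(h)]$, the \emph{tail} $u_\xi\in U_{R^+\setminus R^{\tilde{w}}}$ in the normal form of $\tilde{\mathscr{G}}_{\tilde{b}}\circ\tilde{\tau}_{\tilde{w}}^{d}(\eta_h)$ is uniquely determined by $h$; this is checked directly. One then defines the candidate inverse by $(\Xi_{\tilde{b}}^\delta)'(h)=(\delta_G^{-d}(u_\xi)^{-1},g')$ and verifies both composites are the identity. The ``base case'' you describe is also simpler than you suggest: no He--Lusztig cross-section or freeness for each $S(w_i)$ is invoked. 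The only fact used is that the \emph{leftmost} factor is $S(w_0)$ with $U^{w_0}=U$, so $u\cdot x_{0,1}'\in U_{R^+\setminus R^{\tilde{w}}}S(w_0)=S(w_0)$; uniqueness of the normal form then forces $u_\xi=\delta_G^d(u)^{-1}$, which is the whole point. Your proposed freeness/fundamental-domain reformulation can be made to work, but only after you isolate this well-definedness of the tail --- that, not injectivity of the intermediate maps, is the missing idea.
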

\begin{proof}
    Let $h$ be arbitrary in $S_{Br}(\tilde{b})U_{\delta(R^+\setminus R^{\tilde{w}})}$. Consider the element $[h,\cdots,\delta_G^{d-1}(h)]$ in $\tilde{U}(\tilde{b}^d)$. By lemma \ref{4.7}, we can find a preimage $\eta_h$ of it in $\tilde{U}(\text{DG}(\tilde{b}),T^{\tilde{w}},r)$. By applying $\tilde{\mathscr{G}}_{\tilde{b}} \circ \tilde{\tau}_{\tilde{w}}^d$ to $\eta_h$, we get an element in $\tilde{U}(w_*,d_*)$, denoted as $\xi_h$. Recall our discussion above, one can uniquely write $\xi_h$ as $[x_{0,1},\ldots,x_{l-1,d_{l-1}},tu_\xi]$ with $u_\xi \in U_{R^+\setminus R^{\tilde{w}}}$, $t\in T^{\tilde{w}}$ and $(x_{0,1},\ldots,x_{l-1,d_{l-1}})\in S(w_0,d_0)\times \cdots \times S(w_{l-1},d_{l-1})$.

     Consider $\delta_G^{-d}(u_{\xi})h \delta_G^{-d+1}(u_\xi)^{-1}$ in $U_{R^+\setminus R^{\tilde{w}}}S_{Br}(\tilde{b})U_{\delta(R^+\setminus R^{\tilde{w}})}=S_{Br}(\tilde{b})U_{\delta(R^+\setminus R^{\tilde{w}})}$. By lemma \ref{4.6}, there is a unique decomposition $\delta_G^{-d}(u_{\xi})h \delta_G^{-d+1}(u_\xi)^{-1}=g' \delta_G(u')$ with $u'\in U_{R^+\setminus R^{\tilde{w}}}$ and $g'\in S_{Br}(\tilde{b})$. We define a map 
\begin{equation*}
    (\Xi_{\tilde{b}}^\delta)': S_{Br}(\tilde{b})U_{\delta(R^+\setminus R^{\tilde{w}})} \to U_{R^+\setminus R^{\tilde{w}}}\times S_{Br}(\tilde{b})
\end{equation*}
 as $(\Xi_{\tilde{b}}^\delta)'(h)=(\delta_G^{-d}(u_\xi) ^{-1},g')$. We claim that $\Xi_{\tilde{b}}^\delta$ and $(\Xi_{\tilde{b}}^\delta)'$ are inverse to each other. 

We first need to check that $(\Xi_{\tilde{b}}^\delta)'$ is well-defined. In other words, we need to show that $u_{\xi}$ is independent of the choice of $\eta_h$. Let $\eta_h'\neq \eta_h$ be another preimage of $[h,\ldots,\delta_G^{d-1}(h)]$ in $\tilde{U}(\text{DG}(\tilde{b}),T^{\tilde{w}},r)$. Write $\eta_h$ and $\eta_h'$ uniquely as 
\begin{eqnarray*}
    \eta_h&=&[t_1,g_1^{(1)},\ldots,g_1^{(2n)},\ldots,\delta_G^{d-1}(t_d),\delta_G^{d-1}(g_d^{(1)}),\ldots,\delta_G^{d-1}(g_d^{(2n)})u_{\eta}]\\
    \eta_h'&=&[t_1',g_1'^{(1)},\ldots,g_1'^{(2n)},\ldots,\delta_G^{d-1}(t_d'),\delta_G^{d-1}(g_d'^{(1)}),\ldots,\delta_G^{d-1}(g_d'^{(2n)})u_{\eta}'].
\end{eqnarray*}
Here $u_{\eta}\in U_{\delta^d(R^+\setminus R^{\tilde{w}})}=U_{R^+\setminus R^{\tilde{w}}}$, $t_i\in T^{\tilde{w}}$, $g_i^{(2n)}\in S(w'w)$ and $g_i^{(j)}\in S(w')$ for $1\leq j\leq 2n-1$. Similar facts hold for the decomposition of $\eta_h'$. By definition of $\tilde{\Upsilon}_{\tilde{b}}^d$, we have 
\begin{eqnarray*}
    [h,\delta_G(h),\ldots,\delta_G^{d-1}(h)]
    &=&[t_1\prod\limits_{j=1}^{2n}g_1^{(j)},\ldots,\delta_G^{d-1}(t_d\prod\limits_{j=1}^{2n}g_d^{(j)})u_{\eta}]\\
    &=&[t_1'\prod\limits_{j=1}^{2n}g_1'^{(j)},\ldots,\delta_G^{d-1}(t_d'\prod\limits_{j=1}^{2n}g_d'^{(j)})u_{\eta}'].
\end{eqnarray*}
 Now notice that $t_i\prod\limits_{j=1}^{2n}g_i^{(j)}$ and $t_i'\prod\limits_{j=1}^{2n}g_i'^{(j)}$ both lie in $S_{Br}(\tilde{b})$, we have $u_{\eta}=u_{\eta}'$ by the uniqueness of above form. Then by our definition of $\tilde{\mathscr{G}}_{\tilde{b}} \circ \tilde{\tau}_{\tilde{w}}^d$, we have $u_{\xi}=u_\eta$ is independent of the choice of $\eta_h$.

We first verify $(\Xi_{\tilde{b}}^\delta)'\circ \Xi_{\tilde{b}}^\delta=\id$. Let $h=ug\delta_G(u)^{-1}=\Xi_{\tilde{b}}^\delta(u,g)$ be an element in $S_{Br}(\tilde{b})U_{\delta(R^+\setminus R^{\tilde{w}})}$. Then we have 
\begin{eqnarray*}
    [h,\ldots,\delta_G^{d-1}(h)]&=&[ug\delta_G(u)^{-1},\delta_G(u)\delta_G(g)\delta_G^2(u)^{-1},\ldots,\delta_G^{d-1}(u)\delta_G^{d-1}(g)\delta_G^d(u)^{-1}]\\
    &=&[ug,\delta_G(g),\ldots,\delta_G^{d-1}(g)\delta_G^d(u)^{-1}]=(u,\delta_G^d(u))[g,\ldots,\delta_G^{d-1}(g)].
\end{eqnarray*}
Let $\eta_g$ be a preimage of $[g,\ldots,\delta_G^{d-1}(g)]$. By our choice of $d$, the map $\tilde{\Upsilon}_{\tilde{b}}^d$ is compatible with the action of $U_{R^+\setminus R^{\tilde{w}}}\times U_{R^+\setminus R^{\tilde{w}}}$. Then we have $\eta_h:=(u,\delta_G^d(u))\eta_g$ is a preimage of $[h,\ldots,\delta_G^{d-1}(h)]$. By applying $\tilde{\mathscr{G}}_{\tilde{b}} \circ \tilde{\tau}_{\tilde{w}}^d$ to $\eta_g$ and $\eta_h$, we have $\xi_h=(u,\delta_G^d(u))\cdot \xi_g$ since $\tilde{\mathscr{G}}_{\tilde{b}} \circ \tilde{\tau}_{\tilde{w}}^d$ is also compatible with the action of $U_{R^+\setminus R^{\tilde{w}}}\times U_{R^+\setminus R^{\tilde{w}}}$. 

Since $g\in S_{Br}(\tilde{b})$, we have $\xi_g$ can be written as $[x_{0,1}',\ldots,x_{l-1,d_{l-1}}',t']$ where $t'\in T^{\tilde{w}}$ and $(x_{0,1}',\ldots,x_{l-1,d_{l-1}}')\in S(w_0,d_0)\times \cdots \times S(w_{l-1},d_{l-1})$. Now we write $\xi_h=[x_{0,1},\ldots,x_{l-1,d_{l-1}},tu_{\xi}]$, then
\begin{eqnarray*}
    [x_{0,1},\ldots,x_{l-1,d_{l-1}},t]&=&(u,u_{\xi}\delta_G^d(u))[x_{0,1}',\ldots,x_{l-1,d_{l-1}}',t']\\
    &=&[ux_{0,1}',\ldots,x_{l-1,d_{l-1}}',t'\delta_G^d(u)^{-1}u_\xi^{-1}].
\end{eqnarray*}
 Now notice that $ux_{0,1}'\in U_{R^+\setminus R^{\tilde{w}}}S(w_0)=S(w_0)$ since $U^{w_0}=U$. Then $u_{\xi}=\delta_G^d(u)^{-1}$ due to the uniqueness of above expression, which implies that $\delta_G^{-d}(u_{\xi})=u^{-1}$. Therefore, we have $u_\xi h\delta_G(u_\xi)^{-1}=g$ which means $g'=g$ and thus $(\Xi_{\tilde{b}}^\delta)'(h)=(u,g)$. 

Conversely, let $h\in S_{Br}(\tilde{b})U_{\delta(R^+\setminus R^{\tilde{w}})}$ be any element. By definition of $(\Xi_{\tilde{b}}^\delta)'$, it suffices to show that $\delta_G^{-d}(u_{\xi})h \delta_G^{-d+1}(u_\xi)^{-1}$ is in $S_{Br}(\tilde{b})$. If this is true, then $g'=\delta_G^{-d}(u_{\xi})h \delta_G^{-d+1}(u_\xi)^{-1}$ and $\Xi_{\tilde{b}}^\delta (\delta_G^d(u_{\xi})^{-1},g')=h$. By lemma \ref{4.6}, we assume $h=g_0\delta_G(u_0)$ for some $u_0\in U_{R^+\setminus R^{\tilde{w}}}$ and $g_0\in S_{Br}(\tilde{b})$. We have 
\begin{eqnarray*}
    [h,\ldots,\delta_G^{d-1}(h)]&=&[g_0\delta_G(u_0),\ldots,\delta_G^{d-1}(g_0)\delta_G^d(u_0)]\\
    &=&[g_0,\delta_G(u_0g_0),\ldots,\delta_G^{d-2}(u_0g_0),\delta_G^{d-1}(u_0g_0)\delta_G^d(u_0)]\\
    &=&[g_0,\delta_G(g_1)\delta_G^2(u_1),\ldots,\delta_G^{d-2}(g_1)\delta_G^{d-1}(u_1),\delta_G^{d-1}(g_1)\delta_G^d(u_1u_0)]\\
    &=& [g_0,\delta_G(g_1),\delta_G^2(u_1g_1),\ldots,\delta_G^{d-2}(u_1g_1),\delta_G^{d-1}(u_1g_1)\delta_G^d(u_1u_0)]\\
    &=& \cdots \\
    &=&[g_0,\delta_G(g_1),\ldots,\delta_G^{d-2}(g_{d-2}),\delta_G^{d-1}(g_{d-1})\delta_G^d(u_{d-1}\cdots u_1u_0)].
\end{eqnarray*}
Here we have $u_i\in U_{R^+\setminus R^{\tilde{w}}}$ and $g_i\in S_{Br}(\tilde{b})$ satisfying that $u_ig_i=g_{i+1}\delta_G(u_{i+1})$ for $0\leq i \leq d-2$. By our discussion above we have $u_\xi=\delta_G^d(u_{d-1}\cdots u_1u_0)$. Then 
\begin{eqnarray*}
    \delta_G^{-d}(u_{\xi})h \delta_G^{-d+1}(u_\xi)^{-1}&=&u_{d-1}\cdots u_1u_0g_0\delta_G(u_0)\delta_G(u_0)^{-1}\cdots \delta_G(u_{d-1})^{-1}\\
    &=& u_{d-1}\cdots u_1g_1\delta_G(u_1)\delta_G(u_1)^{-1}\cdots \delta_G(u_{d-1})^{-1}\\
    &=& \cdots \\
    &=& u_{d-1}g_{d-1}.
\end{eqnarray*}
 Meanwhile, one notice that 
 \begin{equation*}
   (u_0,1)[g_0,\delta_G(g_1)\ldots,\delta_G^{d-1}(g_{d-1})]=[g_1,\delta_G(g_2),\ldots,\delta_G^{d-2}(g_{d-1}),\delta_G^{d-1}(u_{d-1}g_{d-1})].   
 \end{equation*}
Let $\eta$ be a preimage of $[g_0,\delta_G(g_1)\ldots,\delta_G^{d-1}(g_{d-1})]$ under $\tilde{\Upsilon}_{\tilde{b}}^d$. Then the image of $\eta$ under $\tilde{\mathscr{G}}_{\tilde{b}} \circ \tilde{\tau}_{\tilde{w}}^d$ can be written as $[x_{0,1}'',\ldots,x_{l-1,d_{l-1}}'',t'']$ where $t''\in T^{\tilde{w}}$ and $(x_{0,1}'',\ldots,x_{l-1,d_{l-1}}'')\in S(w_0,d_0)\times \cdots \times S(w_{l-1},d_{l-1})$. 

Let $\eta'$ be a preimage of $[g_1,\delta_G(g_2),\ldots,\delta_G^{d-2}(g_{d-1}),\delta_G^{d-1}(u_{d-1}g_{d-1})]$ under $\tilde{\Upsilon}_{\tilde{b}}^d$. Then the image of $\eta'$ under $\tilde{\mathscr{G}}_{\tilde{b}} \circ \tilde{\tau}_{\tilde{w}}^d$ is $(u_0,1)[x_{0,1}'',\ldots,x_{l-1,d_{l-1}}'',t'']=[u_0x_{0,1}'',\ldots,x_{l-1,d_{l-1}}'',t'']$ and $u_0x_{0,d_0}''\in S(w_0)$ since $U^{w_0}=U$. Therefore, we have $\delta_G^{d-1}(u_{d-1}g_{d-1})\in \delta_G^{d-1}(S_{Br}(\tilde{b}))$. Thus $\delta_G^{-d}(u_{\xi})h \delta_G^{-d+1}(u_\xi)^{-1}=u_{d-1}g_{d-1}\in S_{Br}(\tilde{b})$ and our claim is proved. 

Therefore, we have $\Xi_{\tilde{b}}^\delta$ is a bijection with inverse $(\Xi_{\tilde{b}}^\delta)'$.
\end{proof}

One can then prove the similar result for enlarged $G$-slices.

\begin{corollary}\label{4.14a}
    Keep the notations above and further require that $\tilde{b}=b(\tilde{w})$ is actually a good position braid representative of $\CC$. Then the twisted conjugation map
    \begin{equation*}
        \widehat{\Xi}_{\tilde{b}}^\delta: U_{R^+\setminus R^{\tilde{w}}}\times \widehat{S_{Br}}(\tilde{b})\to  \widehat{S_{Br}}(\tilde{b})U_{\delta(R^+\setminus R^{\tilde{w}})}
    \end{equation*}
    given by $(u,g)\mapsto ug\delta_G(u)^{-1}$ is a bijection.
\end{corollary}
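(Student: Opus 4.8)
The plan is to deduce Corollary~\ref{4.14a} from Proposition~\ref{4.12A}, exploiting that the braid slice $S_{Br}(\tilde b)$ is the ``big cell part'' of the enlarged slice $\widehat{S_{Br}}(\tilde b)=L_{\tilde w}U^w\dot w$. Since $\tilde b=b(\tilde w)$ one has $S_{Br}(\tilde b)=T^{\tilde w}U_{(R^{\tilde w})^+}U_{(R^{\tilde w})^-}U^w\dot w=\Omega_{\tilde w}U^w\dot w$, where $\Omega_{\tilde w}=U_{(R^{\tilde w})^+}T^{\tilde w}U_{(R^{\tilde w})^-}$ is the big cell of $L_{\tilde w}$, open dense in the identity component of $L_{\tilde w}$. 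The first preliminary is to record that $L_{\tilde w}$ normalizes both $U_{R^+\setminus R^{\tilde w}}$ and $U^w=U_{\Inv(w^{-1})}$: by the proof of Lemma~\ref{4.5} there is a vector $v$ pairing to $0$ with every root of $R^{\tilde w}$ and positively with every root of $R^+\setminus R^{\tilde w}\supseteq\Inv(w^{-1})$, so both of these sets are closed and stay in place under adding a root of $R^{\tilde w}$; the Chevalley commutator formula then gives the normalization. With this in hand, the stability computation preceding Proposition~\ref{4.12A} carries over verbatim with $L_{\tilde w}$ in place of $U_{(R^{\tilde w})^+}U_{(R^{\tilde w})^-}$, so $\widehat{\Xi}_{\tilde b}^\delta$ is well defined and has image $\widehat{S_{Br}}(\tilde b)U_{\delta(R^+\setminus R^{\tilde w})}$.

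Injectivity of $\widehat{\Xi}_{\tilde b}^\delta$ would run as in Lemma~\ref{4.6} and the injectivity half of Proposition~\ref{4.12A}. From $u_1 g_1\delta_G(u_1)^{-1}=u_2 g_2\delta_G(u_2)^{-1}$ one writes $u=u_2^{-1}u_1$, $g_i=\ell_i v_i\dot w$ with $\ell_i\in L_{\tilde w}$ and $v_i\in U^w$, and pushes the identity $g_2^{-1}u g_1=\delta_G(u)$ across $\dot w$; using the normalization just established, one is reduced to an element of $U_{\tilde w(R^+\setminus R^{\tilde w})}$, whose root subgroups (for $(R^+\setminus(R^{\tilde w}\sqcup\Inv(\tilde w^{-1})))\sqcup(-\Inv(\tilde w^{-1}))$) do not occur in $L_{\tilde w}U^w$, since $\Inv(w^{-1})=\Inv(\tilde w^{-1})$ and $R^{\tilde w}\cap\Inv(\tilde w^{-1})=\emptyset$. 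Uniqueness of the relevant factorizations then forces $u_1=u_2$ and $g_1=g_2$.

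Surjectivity is the heart of the matter and the expected main obstacle. The idea is to mimic the structure of the proof of Proposition~\ref{4.12A}: given $h\in\widehat{S_{Br}}(\tilde b)U_{\delta(R^+\setminus R^{\tilde w})}$, form the orbit-set element $[h,\delta_G(h),\ldots,\delta_G^{d-1}(h)]$, lift it through the Deligne--Garside bookkeeping, and read off the $U_{R^+\setminus R^{\tilde w}}$-component $u_\xi$, defining $(\widehat{\Xi}_{\tilde b}^\delta)'(h)=(\delta_G^{-d}(u_\xi)^{-1},\,\delta_G^{-d}(u_\xi)h\,\delta_G^{-d+1}(u_\xi)^{-1})$. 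The new ingredient needed is an ``$L_{\tilde w}$-absorption'' step generalizing the maps $\tilde\tau_{\tilde w}^r$ of \S4.2, which there only move the torus factors $T^{\tilde w}$ to one end: one must show the full reductive factors $\ell\in L_{\tilde w}$ can be collected at one end of the orbit-set expression. For a single twisted conjugation this amounts to writing $h=\ell v\dot w\,\delta_G(u_h)$ (an extension of Lemma~\ref{4.6} proved by the same root-subgroup bookkeeping) and then, for $u\in U_{R^+\setminus R^{\tilde w}}$, using the normalization together with the commutation identity $U_{R^+\setminus R^{\tilde w}}\,\dot w\,U_{\delta(R^+\setminus R^{\tilde w})}=U^w\dot w\,U_{\delta(R^+\setminus R^{\tilde w})}$ (which follows from $R^+\setminus R^{\tilde w}=\Inv(\tilde w^{-1})\sqcup P$ with $P=R^+\setminus(R^{\tilde w}\sqcup\Inv(\tilde w^{-1}))$, both closed, and $w^{-1}(P)\subseteq\delta(R^+\setminus R^{\tilde w})$) to bring $u h\delta_G(u)^{-1}$ to the form $\ell v^\sharp\dot w\,\delta_G(u^\sharp)$, which lies in $\widehat{S_{Br}}(\tilde b)$ exactly when $u^\sharp=1$. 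The difficulty is precisely that the dependence of $u^\sharp$ on $u$ remains entangled with the reductive factor $\ell$ (it is not simply $\delta_G^d(u)$ as in the big-cell case), so one cannot invoke Proposition~\ref{4.12A} factor by factor; and because $L_{\tilde w}$ may be \emph{disconnected}, $S_{Br}(\tilde b)$ is dense only in the identity component, so a pure density argument does not close the gap either. I expect the bulk of the work to be in making the $L_{\tilde w}$-absorption precise — using that $L_{\tilde w}$ normalizes $U_{R^+\setminus R^{\tilde w}}$, $U^w$ and $U_{\tilde w(R^+\setminus R^{\tilde w})}$, and that $L_{\tilde w}U^w$ factors uniquely against $U_{\tilde w(R^+\setminus R^{\tilde w})}$ — after which the reduced equation is governed by the bijectivity in Proposition~\ref{4.12A} (applied to the big cell) and Lemma~\ref{4.6}, and the verification that $(\widehat{\Xi}_{\tilde b}^\delta)'$ is a two-sided inverse proceeds exactly as at the end of the proof of Proposition~\ref{4.12A}.
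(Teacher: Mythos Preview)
Your well-definedness and injectivity arguments are fine and match the paper's. The gap is in surjectivity: you correctly identify that your proposed ``$L_{\tilde w}$-absorption'' machinery runs into trouble with the entanglement of $u^\sharp$ and $\ell$ and with the possible disconnectedness of $L_{\tilde w}$, but you do not resolve these obstacles. The paper bypasses them entirely with a much simpler trick that you have overlooked.

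The key observation is that $L_{\tilde w}$ is \emph{generated} by $T^{\tilde w}$, $U_{(R^{\tilde w})^+}$, and $U_{(R^{\tilde w})^-}$, so every $\ell\in L_{\tilde w}$ lies in a finite product $T^{\tilde w}U_{(R^{\tilde w})^+}U_{(R^{\tilde w})^-}\cdots U_{(R^{\tilde w})^+}U_{(R^{\tilde w})^-}$ for some number of factors. But this is exactly $U^{\tilde b'}$ (modulo the $U^w$ tail) for the good position braid representative $\tilde b'=(\underline{w'})^{2n'-1}\cdot\underline{w'w}\delta$ of $(\tilde w,\underline\Theta')$ with $\underline\Theta'$ the increasing complete sequence in $(0,\pi]\cup\{2n'\pi\}$, for $n'$ large enough. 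Thus every $h\in\widehat{S_{Br}}(\tilde b)U_{\delta(R^+\setminus R^{\tilde w})}$ already lies in $S_{Br}(\tilde b')U_{\delta(R^+\setminus R^{\tilde w})}$ for some such $\tilde b'$, and Proposition~\ref{4.12A} applied to $\tilde b'$ produces the required $(u,g)$ with $g\in S_{Br}(\tilde b')\subset\widehat{S_{Br}}(\tilde b)$. One then checks (using Proposition~\ref{4.12A} again, for the larger of two candidate $\tilde b'$) that the resulting inverse is independent of the choice of $\tilde b'$, and the two-sided inverse verification is immediate. This covers all components of $L_{\tilde w}$ at once and requires no new bookkeeping beyond what is already in Proposition~\ref{4.12A}; in effect, $\widehat{S_{Br}}(\tilde b)=\bigcup_{n'\ge 1}S_{Br}(\tilde b_{n'})$ is an exhaustion by braid slices to which the proposition applies directly.
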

\begin{proof}
    First we recall that $\widehat{S_{Br}}(\tilde{b})=S_{Br}(\tilde{b})$ if $R^{\tilde{w}}$ is trivial. We only need to prove for the case when $R^{\tilde{w}}$ is nontrivial. In this case, we have $\tilde{b}=\underline{w'}\cdot \underline{w'w}\delta$ where $w'$ is the longest element in $W'<W$ corresponding to $R^{\tilde{w}}$.
    
    We first need to show that this map is well-defined. In other word, one need to show that $ug\delta_G(u)^{-1}$ always lie in $\widehat{S_{Br}}(\tilde{b})U_{\delta(R^+\setminus R^{\tilde{w}})}$. Recall that $\widehat{S_{Br}}(\tilde{b})=L_{\tilde{w}}U^w\dot{w}$ where $L_{\tilde{w}}$ is generated by $T^{\tilde{w}}$ and the root subgroups corresponding to $R^{\tilde{w}}$, then we have
    \begin{equation*}
        U_{R^+\setminus R^{\tilde{w}}}L_{\tilde{w}}U^w\dot{w}U_{\delta(R^+\setminus R^{\tilde{w}})}=L_{\tilde{w}}U_{R^+\setminus R^{\tilde{w}}}U^w\dot{w}U_{\delta(R^+\setminus R^{\tilde{w}})}=L_{\tilde{w}}U^w\dot{w}U_{\delta(R^+\setminus R^{\tilde{w}})}
    \end{equation*}
    and thus $\widehat{\Xi}_{\tilde{b}}^\delta$ is well-defined. Moreover, one shall notice that the proof of lemma \ref{4.6} generates to $\widehat{S_{Br}}(\tilde{b})$ as our proof there actually investigated root subgroups in $L_{\tilde{w}}U^w$. Thus any element in $\widehat{S_{Br}}(\tilde{b})U_{\delta(R^+\setminus R^{\tilde{w}})}$ can be uniquely written as a product of elements in $\widehat{S_{Br}}(\tilde{b})$ and $U_{\delta(R^+\setminus R^{\tilde{w}})}$.

    Let $h$ be arbitrary in $\widehat{S_{Br}}(\tilde{b})U_{\delta(R^+\setminus R^{\tilde{w}})}$. We write $h=lu_1\dot{w}u_2$ with $l\in L_{\tilde{w}}$, $u_1\in U^w$ and $u_2\in U_{\delta(R^+\setminus R^{\tilde{w}})}$. Now for $l$, by definition of $L_{\tilde{w}}$, we have $l$ must lie in $T^{\tilde{w}}U_{(R^{\tilde{w}})^-}U_{(R^{\tilde{w}})^+}\cdots U_{(R^{\tilde{w}})^-}U_{(R^{\tilde{w}})^+} $ with a finite number of copies of $U_{(R^{\tilde{w}})^-}U_{(R^{\tilde{w}})^+}$. Recall our definition of $G$-braid slices, then by above there exists a good position braid representative $\tilde{b}'$ of some $(\tilde{w},\underline{\Theta'})$ such that $lu_1\dot{w}\in S_{Br}(\tilde{b}')$. Moreover, we assume $\tilde{b}'$ is of minimal length in $B^+(\tilde{W})$ with above property (i.e. the braid with smallest number of parts in its Deligne-Garside form).  
    Then we have $h\in S_{Br}(\tilde{b}')U_{\delta(R^+\setminus R^{\tilde{w}})}$. By our proof of proposition \ref{4.12A}, we then have $(\Xi_{\tilde{b}'}^\delta)'(h)=(u,g)$ for some $u\in U_{R^+\setminus R^{\tilde{w}}}$ and $g\in S_{Br}(\tilde{b}')\subset \widehat{S_{Br}}(\tilde{b})$ satisfying $ug\delta_G(u)^{-1}=h$. 

    Now we define a map 
    \begin{equation*}
        (\widehat{\Xi}_{\tilde{b}}^\delta)': \widehat{S_{Br}}(\tilde{b})U_{\delta(R^+\setminus R^{\tilde{w}})} \to  U_{R^+\setminus R^{\tilde{w}}}\times \widehat{S_{Br}}(\tilde{b}),
    \end{equation*}
where $(\widehat{\Xi}_{\tilde{b}}^\delta)'(h)$ is exactly the pair $(u,g)$ above. We first show this map is well-defined. Let $h$ and $\tilde{b}'$ be as above. Let $\tilde{b}''$ be a good position braid representative of $(\tilde{w},\underline{\Theta''})$ for another sequence $\underline{\Theta''}$ such that $lu_1\dot{w}\in S_{Br}(\tilde{b}'')$. Let $r(\underline{\Theta'})$ and $r(\underline{\Theta''})$ be the irredundant subsequence respectively. By our assumption that $R^{\tilde{w}}$ is nontrivial and the completeness of $\underline{{\Theta'}}$ and $\underline{\Theta''}$, we must have the largest angles in  $r(\underline{\Theta'})$ and $r(\underline{\Theta''})$ are both multiples of $2\pi$. We may then assume that $\tilde{b}'=(\underline{w'})^{2n_1-1}\cdot \underline{w'w}\delta$ and $\tilde{b}''=(\underline{w'})^{2n_2-1}\cdot \underline{w'w}\delta$.
By our assumption on $\tilde{b}'$ we know $n_1<n_2$ and thus $S_{Br}(\tilde{b}')\subset S_{Br}(\tilde{b}'')$. Then by applying proposition \ref{4.12A} to $(\Xi_{\tilde{b}''}^\delta)'$ we must have $(\Xi_{\tilde{b}'}^\delta)'(h)=(\Xi_{\tilde{b}''}^\delta)'(h)$. Therefore, the map $(\widehat{\Xi}_{\tilde{b}}^\delta)'$ is well-defined.

It remains to check that $\widehat{\Xi}_{\tilde{b}}^\delta$ and  
$(\widehat{\Xi}_{\tilde{b}}^\delta)'$ are inverse to each other. By definition it is clear that $\widehat{\Xi}_{\tilde{b}}^\delta\circ (\widehat{\Xi}_{\tilde{b}}^\delta)'$ is identity. As for $(\widehat{\Xi}_{\tilde{b}}^\delta)'\circ \widehat{\Xi}_{\tilde{b}}^\delta(u,g)=(u,g)$, this reduces to proposition \ref{4.12A} by replacing $(\widehat{\Xi}_{\tilde{b}}^\delta)'$ with $(\Xi_{\tilde{b}'}^\delta)'$ where $\tilde{b}'$ is a good position braid representative of some $(\tilde{w},\underline{\Theta'})$ such that $g\in S_{Br}(\tilde{b}')$.
\end{proof}

Now we focus on case \RNum{1}. One can prove the first part of Theorem \ref{B}.

\textbf{Proof of Theorem \ref{B}.(1)} 
It is clear that $\Xi_{\tilde{b}}^\delta$ is a morphism of varieties by definition. As for $(\Xi_{\tilde{b}}^\delta)'$, we first notice that the map from $U_{R^+\setminus R^{\tilde{w}}}S_{Br}(\tilde{b})$ to $U_{R^+\setminus R^{\tilde{w}}}$ which maps $h$ to $u_{\xi}$ is actually a morphism of varieties since it is determined by a series of conjugations by our computations above. Now in case \RNum{1}, we have $u_\xi\mapsto \delta_G^{-d}(u_\xi) ^{-1}$ is also a morphism of varieties since $\delta_G^{-d}$ is given by the conjugation of some element in $D$. Then $(\Xi_{\tilde{b}}^\delta)'$ is a morphism of varieties. Therefore, we have $\Xi_{\tilde{b}}^\delta$ is an isomorphism of varieties.

Now going back to the slice $S_{Br}^D(\tilde{b})=S_{Br}(\tilde{b})\tilde{g}_D$. Elements of $S_{Br}^D(\tilde{b})$ are of the form $g\tilde{g}_D$ for some $g\in S_{Br}(\tilde{b})$. By definition of $\tilde{g}_D$, we have $\Xi_{\tilde{b}}(u,g\tilde{g}_D)=\Xi_{\tilde{b}}^\delta(u,g)\tilde{g}_D$. Similarly, one can construct its inverse $(\Xi_{\tilde{b}})'$ by the composition of multiplication of $\tilde{g}_D^{-1}$ and $(\Xi_{\tilde{b}}^\delta)'$. Therefore, we have $\Xi_{\tilde{b}}$ is also an isomorphism of varieties.
\qed

\begin{remark}\label{4.15a}
   (1) One shall notice that in this section we do not require $\CC\in [\tilde{W}]$ to be the ``most elliptic'' conjugacy class in $\Phi^{-1}(\CO)$ for some unipotent $G$-orbit $\CO$. This implies that the first part of Theorem \ref{B} actually works for any $\CC$.
 
   (2) In case \RNum{2}, the bijective map $\Xi_{\tilde{b}}^\delta$ is not an isomorphism of varieties anymore. This is because the definition of $(\Xi_{\tilde{b}}^\delta)'$ involves the inverse of $\delta_G=F$, which is not an isomorphism of varieties.

   (3) It is direct from above proof that the enlarged slice $\widehat{S_{Br}^D}(\tilde{b})$ also have the above cross section property.

   (4) At the beginning of this subsection we fixed a special form of the representatives $\dot{w}$ for all $w\in W$. However, the above results actually hold for any representative $\dot{w}$ by applying multiplications in $G$.
\end{remark}

\subsection{Transversality} In this section, we always assume that $\CC$ is the ``most elliptic'' $W$-conjugacy class in $\Phi^{-1}(\CO)$ for some $\CO\in [\tilde{G}_u]$ if there is no specific clarification. We prove the second part of Theorem \ref{B}. However, we start by investigating the enlarged $G$-slice $\widehat{S_{Br}}(\tilde{b})$. We start with a similar result as first part of Theorem \ref{B} for $\widehat{S_{Br}}(\tilde{b})$. Note this result does not need the ``most elliptic'' assumption.

\begin{corollary}\label{4.10}
Keep the notations in corollary \ref{4.14a}. The twisted conjugation map 
    \begin{equation*}
         (\widehat{\Xi}_{\tilde{b}}^\delta)^-:U_{-(R^+\setminus R^{\tilde{w}})}\times \widehat{S_{Br}}(\tilde{b})\to U_{-(R^+\setminus R^{\tilde{w}})}\widehat{S_{Br}}(\tilde{b})U_{\delta(-(R^+\setminus R^{\tilde{w}}))}
    \end{equation*}
given by $(u,g)\mapsto ug\delta_G(u)^{-1}$ is a bijection. Moreover, it is an isomorphism of varieties in case \RNum{1}.
\end{corollary}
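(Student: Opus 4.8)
The plan is to deduce Corollary \ref{4.10} from Corollary \ref{4.14a} by a change of ``positivity'': replacing the set of positive roots $R^+$ by its opposite $R^-$, which amounts to conjugating the whole setup by (a representative of) the longest element $w_0$ of $W$, or equivalently replacing the Borel $B$ by its opposite $B^-$. First I would record the obvious algebraic identity behind the statement: since $U_{-(R^+\setminus R^{\tilde w})}$ normalizes nothing in particular but, because $-(R^+\setminus R^{\tilde w})$ and $R^{\tilde w}$ together with $\Inv(\tilde w^{-1})$ fill out the roots appearing in $L_{\tilde w}U^w$ in a controlled way (cf. the convexity Lemma \ref{4.5} and the proof of Lemma \ref{4.6}), the product $U_{-(R^+\setminus R^{\tilde w})}\widehat{S_{Br}}(\tilde b)U_{\delta(-(R^+\setminus R^{\tilde w}))}$ is genuinely stable under the twisted conjugation action of $U_{-(R^+\setminus R^{\tilde w})}$; this well-definedness check is the exact mirror of the one carried out at the start of the proof of Corollary \ref{4.14a}, with $R^+\setminus R^{\tilde w}$ replaced throughout by $-(R^+\setminus R^{\tilde w})$.

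The substantive step is to observe that all the machinery built in \S 4.1--\S 4.2 --- the Deligne--Garside normal form, the orbit spaces $\tilde U(\tilde b^r)$, $\tilde U(\mathrm{DG}(\tilde b),T^{\tilde w},r)$, $\tilde U(\tilde w_*,d_*)$, and the surjections $\tilde\Upsilon_{\tilde b}^r$, $\tilde\tau_{\tilde w}^r$, $\tilde{\mathscr{G}}_{\tilde b}$ --- is insensitive to the choice of Borel: one runs the identical argument with $B^-$, $U^-$ in place of $B$, $U$, with the simple reflections and the braid monoid unchanged (the conjugacy class $\CC$ and its good position braid representative $\tilde b$ are combinatorial data, independent of the chosen Borel). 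Concretely, conjugation by a representative $\dot w_0$ sends $U$ to $U^-$, sends $R^+$ to $R^-$, sends $R^{\tilde w}$ to $R^{w_0\tilde w w_0}$, and intertwines the two twisted conjugation maps; so Corollary \ref{4.14a} applied to the group $G$ with Borel $B^-$ (equivalently, to the conjugate data) yields precisely that
\begin{equation*}
    (\widehat\Xi_{\tilde b}^\delta)^-:U_{-(R^+\setminus R^{\tilde w})}\times \widehat{S_{Br}}(\tilde b)\to U_{-(R^+\setminus R^{\tilde w})}\widehat{S_{Br}}(\tilde b)U_{\delta(-(R^+\setminus R^{\tilde w}))}
\end{equation*}
is a bijection, once one checks that the ``negative'' enlarged slice one obtains this way coincides with $\widehat{S_{Br}}(\tilde b)$ as defined in Definition \ref{Enl} --- which holds because $L_{\tilde w}U^w\dot w$ only involves the roots $R^{\tilde w}$ and $\Inv(w^{-1})=\Inv(\tilde w^{-1})$, and these are exactly the roots left untouched when we swap $R^+\setminus R^{\tilde w}$ for its negative. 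For the ``moreover'' part, in case \RNum{1} the inverse map $((\widehat\Xi_{\tilde b}^\delta)^-)'$ is again a composition of conjugations and an application of $\delta_G^{-d}$, which is realized by conjugation by an element of $D$ and hence a morphism of varieties; so the bijection is an isomorphism of varieties exactly as in the proof of Theorem \ref{B}.(1) and Remark \ref{4.15a}.(2).

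The main obstacle I anticipate is purely bookkeeping rather than conceptual: one must verify carefully that the target set $U_{-(R^+\setminus R^{\tilde w})}\widehat{S_{Br}}(\tilde b)U_{\delta(-(R^+\setminus R^{\tilde w}))}$ admits the unique factorization (the analogue of Lemma \ref{4.6}) so that the inverse map is unambiguously defined, and that the convexity statement of Lemma \ref{4.5} has the right analogue for the negative roots --- namely that $(R^{\tilde w})^{\pm}\sqcup(-\Inv(\tilde w^{-1}))$ is convex, using the same vector $v\in V$ with its sign reversed. Once these mirrored versions of Lemmas \ref{4.5} and \ref{4.6} are in hand, the rest is a formal transcription of the proof of Corollary \ref{4.14a}, so I would present the argument as ``by the same reasoning as in Corollary \ref{4.14a}, replacing $R^+\setminus R^{\tilde w}$ by $-(R^+\setminus R^{\tilde w})$ throughout'' and then spell out only the two lemma-analogues and the variety-morphism claim in case \RNum{1}.
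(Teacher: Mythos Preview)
Your proposal has a genuine gap. The claim that ``the `negative' enlarged slice one obtains this way coincides with $\widehat{S_{Br}}(\tilde b)$'' is false. If you rerun Corollary~\ref{4.14a} with Borel $B^-$ and the same good position element $\tilde w$ (which is legitimate, since $-C_0$ is in good position with respect to $(\tilde w,\underline\Theta)$ whenever $C_0$ is), the resulting enlarged slice is $L_{\tilde w}\,U_{-\Inv(w^{-1})}\,\dot w$, not $L_{\tilde w}\,U_{\Inv(w^{-1})}\,\dot w$. Alternatively, if you literally conjugate the statement of Corollary~\ref{4.14a} by $\dot w_0$, you obtain a cross-section for the conjugated slice $\dot w_0\,\widehat{S_{Br}}(\tilde b)\,\dot w_0^{-1}$, which again is not $\widehat{S_{Br}}(\tilde b)$. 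Either way, the map you produce is not $(\widehat\Xi_{\tilde b}^\delta)^-$ as stated. For instance in $\SL_2$ with $\tilde w=s$, your argument yields bijectivity of $U^-\times U^-\dot s\to U^-\dot s\,U^-$, whereas Corollary~\ref{4.10} asserts bijectivity of $U^-\times U\dot s\to U^-U\dot s\,U^-$.

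The paper's route avoids this by using the anti-automorphism $\iota$ of $G$ that swaps $U_\alpha$ with $U_{-\alpha}$, rather than conjugation. Because $\iota$ reverses the order of multiplication, one can rewrite $\widehat{S_{Br}}(\tilde b)=L_{\tilde w}U_{\Inv(w^{-1})}\dot w=\dot w\cdot\delta_G(L_{\tilde w})U_{-\Inv(w)}$ and then check that $\iota\circ\delta_G^{-1}$ carries $\widehat{S_{Br}}(\tilde b)$ to the enlarged $G$-slice attached to a good position braid representative $\tilde b'$ of the \emph{inverse} element $\tilde w^{-1}=\delta^{-1}(w^{-1})\cdot\delta^{-1}$ (with twist $\delta^{-1}$). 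Corollary~\ref{4.14a} applied to $\tilde b'$ then gives the desired bijection after transporting back via $\delta_G\circ\iota$. The passage to the inverse element is the missing ingredient in your approach: simply negating the roots, whether by $w_0$-conjugation or by changing the Borel, does not preserve the slice, but pairing the sign flip on roots with the order reversal of an anti-automorphism does---at the price of replacing $\tilde w$ by $\tilde w^{-1}$.
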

\begin{proof}
    Recall that there is an anti-automorphism $\iota$ of $G$ which interchanges root subgroups $U_{\alpha}$ and $U_{-\alpha}$. By our discussion in \S4.1, we have
    \begin{eqnarray*}
        \widehat{S_{Br}}(\tilde{b})&=&L_{\tilde{w}}U^w\dot{w}=L_{\tilde{w}}U_{\Inv(w^{-1})}\dot{w}\\
        &=& L_{\tilde{w}}\dot{w}U_{-\Inv(w)}\\
        &=& \dot{w}\cdot \delta_G(L_{\tilde{w}})U_{-\Inv(w)} .
    \end{eqnarray*}
    By applying $\delta_G^{-1}$ to it, we have
    \begin{equation*}
        \delta_G^{-1}(\widehat{S_{Br}}(\tilde{b}))=\delta_G^{-1}(\dot{w})\cdot L_{\tilde{w}}U_{\delta^{-1}(-\Inv(w))}.
    \end{equation*}
    Now consider the image $\iota(\delta_G^{-1}(\widehat{S_{Br}}(\tilde{b})))$, we then have
    \begin{equation*}
        \iota(\delta_G^{-1}(\widehat{S_{Br}}(\tilde{b})))=U_{\Inv((\delta^{-1}(w^{-1}))^{-1})}L_{\tilde{w}}\cdot \iota(\delta_G^{-1}(\dot{w})).
    \end{equation*}
   Notice that $\iota(\delta_G^{-1}(\dot{w}))$ is a representative of $\delta^{-1}(w^{-1})$. Meanwhile, we have $(w\delta)^{-1}=\delta^{-1}(w^{-1})\cdot \delta^{-1}$ and $\tilde{b}':=\underline{w'}\cdot \underline{w'\delta^{-1}(w^{-1})}\delta^{-1}$ is the Deligne-Garside normal form of a good position braid representative of $\delta^{-1}(w^{-1})\cdot \delta^{-1}$. Therefore, by above equation one can check that $\iota(\delta_G^{-1}(\widehat{S_{Br}}(\tilde{b})))$ is the enlarged $G$-slice associated to $\tilde{b}'$. Since the twist part of $\tilde{b}'$ is $\delta^{-1}$, by applying remark \ref{4.15a} we have the twisted conjugation map
    \begin{equation*}
         \Xi_{\tilde{b}'}^{\delta^{-1}}:U_{R^+\setminus R^{\tilde{w}}}\times \iota(\delta_G^{-1}(\widehat{S_{Br}}(\tilde{b})))\to \iota(\delta_G^{-1}(\widehat{S_{Br}}(\tilde{b})))U_{\delta^{-1}(R^+\setminus R^{\tilde{w}})}
    \end{equation*}
    given by $(u,g)\mapsto ug\delta_G^{-1}(u)^{-1}$ is an isomorphism of varieties.   

    Notice that $\iota(\delta_G^{-1}(\widehat{S_{Br}}(\tilde{b})))U_{\delta^{-1}(R^+\setminus R^{\tilde{w}})}=U_{R^+\setminus R^{\tilde{w}}}\iota(\delta_G^{-1}(\widehat{S_{Br}}(\tilde{b})))U_{\delta^{-1}(R^+\setminus R^{\tilde{w}})}$. Thus we have $\delta_G\circ \iota(\iota(\delta_G^{-1}(\widehat{S_{Br}}(\tilde{b})))U_{\delta^{-1}(R^+\setminus R^{\tilde{w}})})=U_{-(R^+\setminus R^{\tilde{w}})}\widehat{S_{Br}}(\tilde{b})U_{\delta(-(R^+\setminus R^{\tilde{w}}))}$. Therefore, by applying $\delta_G\circ \iota$ to $\Xi_{\tilde{b}'}^{\delta^{-1}}$, the corollary is proved.
\end{proof}

From now on we only consider case \RNum{1}. We prove that the enlarged slice $\widehat{S_{Br}^D}(\tilde{b})$ does intersect the unipotent $G$-orbit $\CO$. Let $\CB$ be the variety of Borel subgroups of $G\subset \tilde{G}$. There is a natural diagonal action of $G$ on $\CB\times \CB$ with $W$ as the index set of the orbits. Let $O_w$ be the index set for $w\in W$. The twist $\delta$ defined by $D$ satisfies
\begin{equation*}
    (B',B'')\in O_w \quad \Longrightarrow \quad (gB'g^{-1},gB''g^{-1})\in O_{\delta(w)},
\end{equation*}
for any $g\in D$. Now define
\begin{equation*}
    \fkB_w^D=\{(g,B')\in D\times \CB \mid (B',gB'g^{-1})\in O_w\}.
\end{equation*}
For our purpose, instead of showing that every slice intersects $\CO$, we prove the following lemma. This lemma does not require the ``most elliptic'' assumption.

\begin{lemma}\label{4.15}
    Keep the notations in corollary \ref{4.14a}. There exists a representative $\dot{w}\in G$ such that the corresponding slice $\widehat{S_{Br}^D}(\tilde{b})=L_{\tilde{w}}U^w\dot{w}\tilde{g}_D$ intersects $\CO$.
\end{lemma}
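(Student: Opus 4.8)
The plan is to exhibit a single point of $\widehat{S_{Br}^D}(\tilde{b})$ lying in $\CO$ by choosing the representative $\dot{w}$ suitably and then analysing the Bruhat-type stratification of $D$ along the lines of \cite[\S 3]{HL}. First I would recall the set-up behind Lusztig's map $\Phi$: for a $W$-conjugacy class $\CC\subset W\delta$, the orbit $\CO=\Phi(\CC)$ is characterised through the varieties $\fkB_w^D$ introduced just above, namely $\CO$ is (the image of) the $G$-orbit of minimal dimension meeting $\fkB_{\tilde{w}}^D$ for $\tilde{w}\in\CC$ of minimal length, and this minimal-dimension orbit is exactly the one that meets $U^w\dot{w}\tilde{g}_D$ when $w$ is of minimal length in $\CC$. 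Here $\tilde{b}=b(\tilde{w})$ has projection $\tilde{w}=w\delta$ with $w$ of minimal length in $\CC$ (good position elements are of minimal length, by \cite[\S 5.3]{HN}), so He--Lusztig's Theorem \ref{2.4}-type analysis applies to the piece $U^w\dot{w}\tilde{g}_D$; the new feature is the extra factor $L_{\tilde{w}}$, which must be shown to be harmless.

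The key steps, in order, would be: (i) reduce to the elliptic situation inside a Levi. Because $R^{\tilde{w}}$ is the root system fixed by $\tilde{w}$, the group $L_{\tilde{w}}$ is a (possibly disconnected) reductive subgroup on which $\tilde{w}$ acts, and $w$ lies in the Weyl group of a Levi $M$ with root system $R^{\tilde{w}}$-complement; more precisely, using that $w'w$ is reduced and $w'$ is the longest element of $W'=W_{R^{\tilde{w}}}$, the element $w$ is elliptic in $W/W'$ in the appropriate sense, so $\tilde w$ becomes an \emph{elliptic} class after projecting away the fixed part. (ii) Apply \cite[Theorem 3.6]{HL} (Theorem \ref{2.4} here), or rather its proof, to the minimal length element $w$ in this elliptic setting to get that $U^w\dot{w}\tilde{g}_D$ meets $\CO$: there is $u\in U^w$ and a representative $\dot w$ with $u\dot w\tilde g_D\in\CO$. (iii) Promote this to $\widehat{S_{Br}^D}(\tilde{b})=L_{\tilde{w}}U^w\dot{w}\tilde{g}_D$: since $L_{\tilde{w}}U^w\dot w\tilde g_D\supseteq U^w\dot w\tilde g_D$ (taking the identity component of $L_{\tilde w}$), the intersection with $\CO$ is automatically nonempty once (ii) is established — indeed $1\in L_{\tilde w}$, so the point found in (ii) already lies in $\widehat{S_{Br}^D}(\tilde{b})$. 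The role of the chosen representative $\dot w$ is just to make the explicit element of $\CO$ land in the slice; different choices of $\dot w$ differ by $T$, which one absorbs as in Remark \ref{2.R} and Remark \ref{4.15a}(4).

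The main obstacle I anticipate is step (i)--(ii): making precise the sense in which the good position braid representative $\tilde b$ reduces, on the nose, the problem of meeting $\CO$ to the He--Lusztig elliptic statement. One has to be careful that $w$ being of minimal length in $\CC\in[\tilde W]$ and the Deligne--Garside normal form being $\tilde b=\underline{w'}^{2n-1}\cdot\underline{w'w}\delta$ together force the relevant minimal-length / elliptic hypotheses in a Levi, and that the $\tilde g_D$ twist is compatible with the twist used in \cite[\S 3]{HL}. I would handle this by invoking the description of $\Phi$ on non-elliptic classes via induction from Levi subgroups (\cite[\S 2]{Lu1}, \cite[\S 3,\S 5]{Lu21}) and the fact that $\dim T^{\tilde w}$ equals the rank of the fixed torus, so the ``elliptic part'' of $w$ inside $M$ is genuinely elliptic and Theorem \ref{2.4} applies verbatim there; the remaining bookkeeping (root subgroups, choice of $\dot w$, action of $\delta_G$) is routine given \cite[\S 2.7,\S 2.9]{HL}. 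The transversality and dimension count — matching $\dim S_{Br}^D(\tilde b)$ against $\codim_{\tilde G}\CO$ via Proposition \ref{3.5} — would then be carried out in the subsequent lemmas, once nonemptiness of the intersection is in hand.
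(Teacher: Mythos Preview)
Your step~(ii) rests on the claim that the projection $\tilde{w}=w\delta$ of $b(\tilde{w})$ is of minimal length in $\CC$, citing \cite[\S 5.3]{HN}. That citation does not apply here: the result of He--Nie concerns \emph{good position elements} with respect to the complete increasing sequence in $\Gamma_{\tilde{w}}^{[0,\pi]}$ (i.e., the fixed space $V_{\tilde w}^{0}$ comes \emph{first}), whereas $b(\tilde{w})$ is built from the sequence in $(0,\pi]\cup\{2\pi\}$ (the fixed space comes \emph{last}). These impose genuinely different conditions on $\tilde{w}$, and the paper's own Example~\ref{4.3}(1) shows the projection can fail to be of minimal length: for $\CC=(2,1)$ in type $A_2$ one has $w=w_0$ with $l(w_0)=3$, while $l_{\min}(\CC)=1$. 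So Theorem~\ref{2.4} cannot be invoked for $w$, and your Levi reduction (i) is not sharp enough to repair this --- $W/W'$ is not a Coxeter group, and it is unclear in what precise sense $w$ becomes an elliptic minimal-length element after passing to a Levi.

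The paper takes a different route that bypasses minimality of $w$. One first uses the very definition of $\Phi$ to get $\CO\cap Bu B\tilde g_D\neq\emptyset$ for a \emph{minimal length} element $\tilde u=u\delta\in\CC$, then runs a growing argument along $\delta$-conjugation (via $\fkB_w^D$ and \cite[Theorem~3.1]{HN}) to obtain $\CO\cap Bw B\tilde g_D\neq\emptyset$ for the (possibly longer) good-position $w$. Writing $BwB\tilde g_D = U^w\dot w\tilde g_D U$ for some $\dot w$ and decomposing $U=U_{(R^{\tilde w})^+}U_{R^+\setminus R^{\tilde w}}$ gives a point of $\CO$ in $\widehat{S_{Br}^D}(\tilde b)\,U_{R^+\setminus R^{\tilde w}}$, not in $\widehat{S_{Br}^D}(\tilde b)$ itself; the final step is to apply the cross-section isomorphism $\widehat{\Xi}_{\tilde b}$ (Theorem~\ref{B}(1) for the enlarged slice, Remark~\ref{4.15a}(3)) to conjugate into $\widehat{S_{Br}^D}(\tilde b)$. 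Note that this last use of the cross-section is essential and is absent from your (iii): one does \emph{not} know that $\CO$ meets $U^w\dot w\tilde g_D$ directly, only that it meets the slice times $U_{R^+\setminus R^{\tilde w}}$.
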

\begin{proof}  
We first prove that the intersection of $\CO$ and the Bruhat cell $BwB$ is nonempty. Recall the definition of Lusztig's map $\Phi$ in \cite{Lu1} and \cite{Lu21}, we have $\CO\cap BuB\tilde{g}_D$ is nonempty for any minimal length element $\tilde{u}=u\delta\in \CC$.

Consider any two element $w_1,w_2\in W$ with $w_2=u_0w_1\delta(u_0)^{-1}$ for some $u_0\in W$ such that $l(w_2)\geq l(w_1)$. Suppose that $\CO\cap Bw_1B\tilde{g}_D$ is nonempty, then there exists $(g,B')\in \fkB_{w_1}^D$ with $g\in \CO$. We know $(B',gB'g^{-1})\in O_{w_1}$. There exists $B''\in \CB$ such that $(B'',B')\in O_u$. By property of $\delta$ we have $(gB'g^{-1},gB''g^{-1})\in O_{\delta(u)^{-1}}$ and thus $(B'',gB''g^{-1})\in O_{w_2}$. Then we have $\CO\cap Bw_2B\tilde{g}_D\neq \emptyset$. Now by combining this with \cite[Theorem 3.1]{HN}, we have $\CO\cap BwB\tilde{g}_D$ is nonempty.

Let $h\in \CO\cap BwB\tilde{g}_D$. We know $BwB\tilde{g}_D=U^w w\tilde{g}_DB$. Then we have $h\in U^w\dot{w}\tilde{g}_DU$ for some representative $\dot{w}\in G$. By our definition of the slice and the equation $U^w\dot{w}\tilde{g}_DU=U^w\dot{w}\tilde{g}_DU_{(R^{\tilde{w}})^+}U_{R^+\setminus R^{\tilde{w}}}=U^wU_{(R^{\tilde{w}})^+}\dot{w}\tilde{g}_DU_{R^+\setminus R^{\tilde{w}}}\subset L_{\tilde{w}}U^w\dot{w}\tilde{g}_DU_{R^+\setminus R^{\tilde{w}}}$. Then we have $\CO\cap \widehat{S_{Br}^D}(\tilde{b})U_{R^+\setminus R^{\tilde{w}}}$ is nonempty for some representative $\dot{w}$. Then $\CO\cap \widehat{S_{Br}^D}(\tilde{b})$ is also nonempty by Theorem \ref{B}.(1) and remark \ref{4.15a}.(3).
\end{proof}

Finally, before we prove the second part of Theorem \ref{B}, we first prove the same result for the enlarged slice $\widehat{S_{Br}^D}(\tilde{b})$.

\begin{proposition}\label{4.18a}
    Keep the notations in corollary \ref{4.14a}. The enlarged slice $\widehat{S_{Br}^D}(\tilde{b})$ intersects any $G$-conjugacy class it meets transversally. In particular, it is a transversal slice of $\CO=\Phi(\CC)$.
\end{proposition}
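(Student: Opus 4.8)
The strategy is to verify transversality pointwise and then read off the slice statement from a dimension count. Fix $g\in\widehat{S_{Br}^D}(\tilde b)$ and let $\CK$ be the $G$-conjugacy class through $g$ in $D$; transversality at $g$ means $T_g\CK+T_g\widehat{S_{Br}^D}(\tilde b)=T_gD$. Right-translating by $g^{-1}$ identifies $T_gD$ with $\fkg=\Lie G$; under this identification $T_g\CK$ becomes $(\id-\Ad(g))\fkg$, where $\Ad(g)$ is the twisted adjoint action (it carries the automorphism $\delta_G$, since $g\in D$), and $T_g\widehat{S_{Br}^D}(\tilde b)$ becomes a subspace $\mathcal V_g$. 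Writing $g=lu\dot w\tilde g_D$ with $l\in L_{\tilde w}$ and $u\in U^w$ and differentiating the product presentation $\widehat{S_{Br}^D}(\tilde b)=L_{\tilde w}U^w\dot w\tilde g_D$, one finds $\mathcal V_g=\Lie(L_{\tilde w})\oplus\fku_{\Inv(w^{-1})}$, using that $\Ad(L_{\tilde w})$ preserves $\fku_{\Inv(w^{-1})}=\Lie(U^w)$, which is a consequence of the convexity of $(R^{\tilde w})^{\pm}\sqcup\Inv(w^{-1})$ from Lemma \ref{4.5}. So it remains to prove $(\id-\Ad(g))\fkg+\Lie(L_{\tilde w})+\fku_{\Inv(w^{-1})}=\fkg$.

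Here the two cross-section isomorphisms enter. The isomorphism $\widehat\Xi_{\tilde b}^\delta$ for the enlarged slice (Corollary \ref{4.14a} together with Remark \ref{4.15a}.(3); it is an isomorphism of varieties in case \RNum{1}) has, at the identity of the $U_{R^+\setminus R^{\tilde w}}$-factor, differential with image $(\id-\Ad(g))\fku_{R^+\setminus R^{\tilde w}}+\mathcal V_g$; since the differential is an isomorphism, this sum is direct and equals the target's tangent space $\mathcal V_g+\Ad(g')\fku_{\delta(R^+\setminus R^{\tilde w})}$, where $g'=g\tilde g_D^{-1}=lu\dot w$. The negative cross-section $(\widehat\Xi_{\tilde b}^\delta)^-$ of Corollary \ref{4.10} gives the same statement with $-(R^+\setminus R^{\tilde w})$ in place of $R^+\setminus R^{\tilde w}$. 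Since $\tilde w$ permutes $R$ and fixes $R^{\tilde w}$ pointwise, $\Ad(\dot w)\fku_{\delta(\pm(R^+\setminus R^{\tilde w}))}=\fku_{\tilde w(\pm(R^+\setminus R^{\tilde w}))}$, and the two pieces together make up $\fku_{R\setminus R^{\tilde w}}$; hence $(\id-\Ad(g))\fkg+\mathcal V_g\supseteq\Ad(lu)\fku_{R\setminus R^{\tilde w}}+\Lie(L_{\tilde w})+\fku_{\Inv(w^{-1})}$. Applying $\Ad(l^{-1})$, which fixes both $\Lie(L_{\tilde w})$ and $\fku_{\Inv(w^{-1})}$, we are reduced to showing that $\Ad(u)\fku_{R\setminus R^{\tilde w}}+\Lie(L_{\tilde w})+\fku_{\Inv(w^{-1})}$, augmented by the Cartan directions contained in $(\id-\Ad(g))\fkt$, spans $\fkg$. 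All root spaces are then visibly covered; the only directions that could be missing lie in a complement of $\fkt^{\tilde w}$ in $\fkt$, and these are supplied by the $[\fkg_\beta,\fkg_{-\beta}]$-terms ($\beta\in\Inv(w^{-1})$) appearing in $\Ad(u)\fku_{R\setminus R^{\tilde w}}$, the coroots of $R^{\tilde w}$ inside $\Lie(L_{\tilde w})$, and $(\id-\Ad(g))\fkt$, whose fixed space is exactly $\fkt^{\tilde w}$ because the good position element $\tilde w$ has no nonzero fixed vector on the orthogonal complement of $\fkt^{\tilde w}$.

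Finally, for the last sentence I would combine transversality with the dimension count $\dim\widehat{S_{Br}^D}(\tilde b)=\dim T^{\tilde w}+|R^{\tilde w}|+l(w)$ (from Definition \ref{Enl}): since $\tilde b=\underline{w'}\cdot\underline{w'w}\delta$ has length $2l(w')+l(w)=|R^{\tilde w}|+l(w)=l_{\good}(\CC)$ and $\dim T^{\tilde w}=\dim T^{\CC}$, Proposition \ref{3.5} gives $\dim\widehat{S_{Br}^D}(\tilde b)=\codim_{\tilde G}\CO$; as $\widehat{S_{Br}^D}(\tilde b)$ meets $\CO$ by Lemma \ref{4.15} and meets every conjugacy class it touches transversally, it is a transversal slice of $\CO$. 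I expect the main obstacle to be the last step of the second paragraph: in the twisted, possibly disconnected setting one must track precisely which Cartan directions the orbit and the slice produce and confirm that the coroots from $\Inv(w^{-1})$ and $R^{\tilde w}$, together with $\fkt^{\tilde w}$ and $(\id-\Ad(g))\fkt$, really do exhaust $\fkt$. This is exactly where the good position (hence minimal length) property of $\tilde w$ is essential, and it is the part most sensitive to the conventions for $R^{\tilde w}$, $T^{\tilde w}$ and $\delta$.
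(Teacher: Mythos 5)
Your proposal follows the same overall scheme as the paper's proof — invoke Lusztig's tangent-space criterion, extract $\fkn+\fkn^-$ from the positive and negative cross-section isomorphisms (Corollary~\ref{4.14a} / Remark~\ref{4.15a}.(3) and Corollary~\ref{4.10}), then account separately for the Cartan directions, and finish with Lemma~\ref{4.15} plus the dimension count from Proposition~\ref{3.5}. The first paragraph and the last paragraph are essentially correct and match the paper. The gap, as you yourself suspected, is in the second paragraph.

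Two concrete problems. First, the ``applying $\Ad(l^{-1})$'' step is not legitimate as a reduction: $\Ad(l^{-1})$ fixes $\mathcal V_g=\Lie(L_{\tilde w})\oplus\fku_{\Inv(w^{-1})}$ but does not fix the subspace $(\text{Id}-\Ad(g))\fkg$, and writing $\Ad(l^{-1})(\text{Id}-\Ad(g))=\Ad(l^{-1})-\Ad(l^{-1}g)$ is not of the form $\text{Id}-\Ad(\cdot)$, so the ``augmented by $(\text{Id}-\Ad(g))\fkt$'' clause no longer refers to the original span you want to fill out. Second, the sources you cite for $\fkt_{\tilde w}$ don't supply it: the coroots of $R^{\tilde w}$ lie in $\fkt^{\tilde w}$ (any $\alpha\in R^{\tilde w}$ is $\tilde w$-fixed, hence so is $\alpha^\vee$), not in the complement; and there is no reason the $[\fkg_\beta,\fkg_{-\beta}]$ terms with $\beta\in\Inv(w^{-1})$ span $\fkt_{\tilde w}$. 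The item that actually works is the last one, $(\text{Id}-\Ad(g))\fkt_{\tilde w}$, but your justification (``whose fixed space is exactly $\fkt^{\tilde w}$'') treats $\text{Id}-\Ad(g)$ as if it were an endomorphism of $\fkt$, which it isn't. The correct version — and what the paper does — is: for $X\in\fkt_{\tilde w}$ one has $\Ad(g)X=\Ad(lu)\tilde w(X)=\tilde w(X)+(\text{terms in }\fkn_{\Inv(w^{-1})})$, because $L_{\tilde w}$ fixes $\fkt_{\tilde w}$ pointwise (the roots of $R^{\tilde w}$ are orthogonal to $\fkt_{\tilde w}$) and Lemma~\ref{4.5} keeps the $U^w$-brackets inside $\fkn_{\Inv(w^{-1})}$. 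So $(\text{Id}-\Ad(g))X\equiv(\text{Id}-\tilde w)X\pmod{\fkn_{\Inv(w^{-1})}}$, and $\text{Id}-\tilde w$ is invertible on $\fkt_{\tilde w}$; together with $\fkn_{\Inv(w^{-1})}\subset\mathcal V_g$ this yields $\fkt_{\tilde w}\subset(\text{Id}-\Ad(g))\fkg+\mathcal V_g$. For the root spaces, the cleanest route is also not via $\Ad(lu)\fku_{R\setminus R^{\tilde w}}$: use that $\widehat{S_{Br}^D}(\tilde b)U_{R^+\setminus R^{\tilde w}}=U_{R^+\setminus R^{\tilde w}}\widehat{S_{Br}^D}(\tilde b)U_{R^+\setminus R^{\tilde w}}$, so its tangent space at $g$ (right-translated by $g^{-1}$) contains $\fku_{R^+\setminus R^{\tilde w}}$ outright, and the cross-section isomorphism places that tangent space inside $(\text{Id}-\Ad(g))\fkg+\mathcal V_g$; likewise for the negative side. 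Replacing your second paragraph with these two observations closes the proof.
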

\begin{proof}
     Let $\fkn,\fkn^-$ be the Lie algebras of $U,U^-$ respectively and $\fkt$ be the corresponding Cartan subalgebra. Recall \cite[Lemma 1.1]{Lu0}, it suffices to show that 
    \begin{equation*}
        (\text{Id}-\Ad(g))(\fkg)+(\fkn_{(R^{\tilde{w}})^+}+\fkt^{\tilde{w}} + \fkn_{(R^{\tilde{w}})^-}+\fkn_{\Inv(w^{-1})})=\fkg,
    \end{equation*}
    for any $g\in  \widehat{S_{Br}^D}(\tilde{b})$.
    Here $\fkn_{R'}$ corresponds to $U_{R'}$ for any $R'<R$ and $\fkt^{\tilde{w}}$ is the fixed point subset of $\tilde{w}$ in $\fkt$. 

    We know $U_{-(R^+\setminus R^{\tilde{w}})}\widehat{S_{Br}}(\tilde{b})U_{\delta(-(R^+\setminus R^{\tilde{w}}))}=\widehat{S_{Br}}(\tilde{b})U_{\delta(-(R^+\setminus R^{\tilde{w}}))}$. Similar as the proof of Theorem \ref{B}.(1), corollary \ref{4.10} also implies that the conjugation map
    \begin{equation*}
        (\Xi_{\tilde{b}})^-:U_{-(R^+\setminus R^{\tilde{w}})}\times \widehat{S_{Br}^D}(\tilde{b})\to \widehat{S_{Br}^D}(\tilde{b})U_{-(R^+\setminus R^{\tilde{w}})}
    \end{equation*}
    is an isomorphism of varieties. Therefore, by above isomorphism and remark \ref{4.15a}.(3), we have $\fkn+\fkn^{-}+\fkt^{\tilde{w}}$ is contained in the left hand side of the above equation we need. Then it remains to show that $\fkt_{\tilde{w}}$ is also contained in it, where $\fkt_{\tilde{w}}$ is the orthogonal complement to $\fkt^{\tilde{w}}$. Indeed, for this one just have to notice that 
    \begin{equation*}
            ((\text{Id})-\Ad(g)(\fkt_{\tilde{w}}))\vert_{\fkt_{\tilde{w}}}=(\text{Id}-\Ad(\dot{w}\tilde{g}_{D}))(\fkt_{\tilde{w}})
    \end{equation*}
    since $g\in \widehat{S_{Br}^D}(\tilde{b})$.
    Furthermore, we have $(\text{Id}-\Ad(\dot{w}\tilde{g}_{D}))(\fkt_{\tilde{w}})=\fkt_{\tilde{w}}$ by definition of $\fkt_{\tilde{w}}$. Therefore, by above discussion the enlarged slice $\widehat{S_{Br}^D}(\tilde{b})$ intersects any $G$-conjugacy class it meets transversally.

    Finally, by lemma \ref{4.15} we know there is a representative $\dot{w}$ whose corresponding enlarged slice $\widehat{S_{Br}^D}(\tilde{b})$ intersects $\CO$. Meanwhile, notice that
    \begin{equation*}
        \dim \widehat{S_{Br}^D}(\tilde{b})=l(\tilde{b})+\dim T^{\tilde{w}}=l_{\text{good}}(\CC)+\dim T^{\tilde{w}}
    \end{equation*}
    by our construction. Since $\CC$ is the most elliptic $W$-conjugacy class in $[\tilde{W}]$, by proposition \ref{3.5} we have $\dim \widehat{S_{Br}^D}(\tilde{b})=\codim_{\tilde{G}}\CO$. This proves the slice condition.
\end{proof}

\textbf{Proof of Theorem \ref{B}.(2)}
    Let $\tilde{b}'$ be the good position braid representative of $\tilde{w}$ that gives the enlarged slice $\widehat{S_{Br}^D}(\tilde{b}')$. We then have $S_{Br}^D(\tilde{b})$ is an open dense subvariety of $\widehat{S_{Br}^D}(\tilde{b}')$ given the same choice of representative $\dot{w}$. Similar as lemma \ref{4.15}, one can prove there is a representative $\dot{w}$ whose corresponding slice $S_{Br}^D(\tilde{b})$ intersects $\CO$. The remainings are just direct from proposition \ref{4.18a}.
\qed

\begin{remark}\label{4.11}
   Recall that Lusztig's map $\Phi$ is surjective. The above theorem tells that we can construct transversal slices for any unipotent orbit in $\tilde{G}$.
\end{remark}

\begin{example}
    Recall our second example in \ref{4.3}. In this case, the corresponding slice is a transversal slice of $\Phi^{(2)}(\CC)$ in characteristic 2. However, it does not satisfy the slice condition in any other characteristics due to our dimension check.
\end{example}

\section{Applications to affine Springer fibers}
In this section, we focus on $G$ instead of $\tilde{G}$, which is the connected case. One may assume that the base field $\Bk$ is $\BC$ for convenience. (We only need the characteristic of $\Bk$ to be $0$).

\subsection{Affine Springer fibers and the function $\delta$} We first recall some facts and the paper \cite{Y}. Keep the settings in \S 3.3 for $G,B,T$ and let $\fkg,\fkt$ be the Lie algebras of $G,T$. Let $F=\BC ((\varpi))$ be the field of formal Laurent series and $\BC[[\varpi]] $ be its ring of integers. For any scheme $X$ over $\BC$, let $LX$ be the formal loop space of $X$ and $L^+ X$ be the formal arc space of $X$, i.e., $LX(R):=X(R((\varpi)))$ and $L^+X(R):=X(R[[\varpi]])$ for any $\BC$-algebra $R$. Consider the loop group $LG$ and the positive loop group $L^+G\subset LG$. The affine Grassmannian of $G$ is $\Gr=LG/L^+G$. Let $\mathbf{I}\subset L^+G$ be the Iwahori subgroup corresponding to $B$. The affine flag variety of $G$ is $\Fl=LG/ \mathbf{I}$.

Consider $L\fkg$ and $L^+\fkg$. Let $\gamma\in L\fkg$ be regular semisimple. Kazhdan and Lusztig defined the affine Springer fibers of $\gamma$ in \cite[\S 2]{KL} as follows.
\begin{eqnarray*}
   \Gr_{\gamma} &=& \{g L^+ G\in \Gr \mid \Ad(g^{-1})\gamma \in L^+g\}, \\
   \Fl_{\gamma} &=& \{g I \in \Fl \mid \Ad(g^{-1})\gamma \in \Lie(\mathbf{I})\}.
\end{eqnarray*}   
Here $\Ad(g^{-1})$ means the adjoint action of $g^{-1}$ on $L\fkg$. We know these are closed sub-ind-schemes of $\Gr$ and $\Fl$ respectively. Throughout this paper, we identify them with their underlying reduced ind-scheme.

Following the notion in \cite[\S1.3]{Y}, let $L^{\heart}\fkg$ be the subset of $L\fkg$ consisting of topologically nilpotent regular semisimple elements. In this paper we assume $\gamma \in L^{\heart}\fkg$. The centralizer $G_{\gamma}=\{g\in LG\mid Ad(g^{-1})\gamma=\gamma\}$ is a loop group $LT_{\gamma}$ of some maximal torus $T_{\gamma}$ of $G(F)$. Recall by \cite[\S 1, Lemma 2]{KL}, there is a bijection between the $LG$-conjugacy classes of maximal torus of $G(F)$ and the conjugacy classes of $W$. If $G_{\gamma}$ corresponds to $\CC$ under the bijection, we say $T_{\gamma}$ is of type $\CC$ and $\gamma$ is of type $\CC$. For any $\gamma$ of type $\CC$, there is a formula for the dimension of the affine Springer fibers of $\gamma$ as
\begin{equation*}
    \dim \Fl_{\gamma}=\dim  \Gr_{\gamma} =\frac{1}{2}(\val(\Delta(\gamma))-(\dim \fkt -\dim \fkt^{\CC})),
\end{equation*}
where $\Delta$ is the discriminant function and $\dim \fkt^{\CC}:=\dim \fkt^w$ for any $w\in \CC$. It is conjectured by Kazhdan and Lusztig in \cite[\S 0]{KL} and proved by Bezrukavnikov in \cite[Proposition]{B}. 

Denote the subset of $L^{\heart}\fkg$ consisting of elements of type $\CC$ as $(L^{\heart}\fkg)_{\CC}$. It is natural to consider the dimensions of affine Springer fibers of all $\gamma\in (L^{\heart}\fkg)_{\CC}$. In particular, define the function $\delta$ as $\delta_{\CC}=\min \{ \dim \Gr_{\gamma} \mid \gamma\in (L^{\heart}\fkg)_{\CC} \}$. Following \cite{Y}, an element $\gamma$ of type $\CC$ is called \textit{shallow} of type $\CC$ if $\dim \Gr_{\gamma}=\delta_{\CC}$. The set of shallow elements of type $\CC$ is denoted as $(L^{\heart}\fkg)_{\CC}^{sh}$. Let $LT_{\gamma}$ be any loop torus of type $\CC$ with loop Lie algebra $L\fkt_{\gamma}$, we denote its subset of shallow elements as $(L\fkt_{\gamma})^{sh}$.

\subsection{Root valuation and good position braid representatives}
Recall Yun deduced the following formula for $\delta_{\CC}$ when $\CC$ is elliptic by investigating the root valuation of any shallow element.
\begin{theorem}\cite[Theorem 2.3]{Y}
Let $\CC\in [W]$ be elliptic. We have
\begin{equation*}
   \delta_{\CC}=\frac{l_{\min}(\CC)-r}{2},
\end{equation*}   
where $r$ is the semisimple rank of $G$.
\end{theorem}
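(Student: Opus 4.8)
The plan is to derive the formula from the dimension formula of Kazhdan--Lusztig and Bezrukavnikov recalled above together with a root-valuation analysis of shallow elements, in the spirit of Yun's argument. Since $\CC$ is elliptic, $\fkt^{\CC}$ is the Lie algebra of the connected center, so $\dim\fkt-\dim\fkt^{\CC}=r$ and the dimension formula reads $\dim\Gr_{\gamma}=\tfrac12(\val(\Delta(\gamma))-r)$ for every $\gamma\in(L^{\heart}\fkg)_{\CC}$. Hence the asserted value of $\delta_{\CC}$ is equivalent to the identity
\[
\min\{\,\val(\Delta(\gamma))\mid\gamma\in(L^{\heart}\fkg)_{\CC}\,\}=l_{\min}(\CC).
\]
I would begin by translating the left-hand side into a combinatorial problem on a single loop torus: fixing $w\in\CC$ of order $m$ and conjugating $\gamma$ into $L\fkt_{\gamma}$ (with $T_{\gamma}$ split over the Kummer extension $F(\varpi^{1/m})$), one has $\val(\Delta(\gamma))=\sum_{\alpha\in R}\val(\alpha(\gamma))$, the function $\alpha\mapsto\val(\alpha(\gamma))$ is constant on $\langle w\rangle$-orbits of roots, takes values in $\tfrac1m\BZ$, and is strictly positive on all of $R$ exactly when $\gamma$ is topologically nilpotent.

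Next I would establish the lower bound $\val(\Delta(\gamma))\ge l_{\min}(\CC)$ for every topologically nilpotent regular semisimple $\gamma$ of type $\CC$. The mechanism is orbit-by-orbit: the collections $(\val(\alpha(\gamma)))_{\alpha}$ that actually occur form a constrained family, and on each $\langle w\rangle$-orbit $O$ of roots the least admissible strictly positive slope forces $\sum_{\alpha\in O}\val(\alpha(\gamma))$ to be bounded below by a definite rational number; summing these bounds over all orbits should reproduce $l_{\min}(\CC)$. A convenient bookkeeping device is that, by \cite[\S5.3]{HN}, a minimal length element of an elliptic class is a good position element, so that by Lemma~\ref{3.3}
\[
l_{\min}(\CC)=\sum_{j=1}^{l}\frac{\theta_{i_j}}{\pi}\,\lvert\,\fkH_{F_{i_{j-1}}}-\fkH_{F_{i_j}}\,\rvert ,
\]
a sum organized by the real eigenspaces $V_{w}^{\theta}$ of \S2.1; I would match the root orbits to these eigenspace layers and check that the minimal admissible slopes sum exactly to this expression. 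For the matching upper bound I would exhibit an explicit shallow element: from the eigenvalue datum of a good position representative of $\CC$, build a regular semisimple, homogeneous, topologically nilpotent $\gamma\in L\fkt_{\gamma}$ whose root valuations realize these minimal slopes (a regular-nilpotent construction in Kac coordinates) and compute directly that $\val(\Delta(\gamma))=l_{\min}(\CC)$; by the dimension formula this $\gamma$ is shallow and attains $\delta_{\CC}$.

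The hardest part will be the combinatorial core of the lower bound --- showing that no topologically nilpotent $\gamma$ of type $\CC$ can beat $l_{\min}(\CC)$, i.e.\ that slope mass cannot be redistributed among root orbits to drop the total discriminant valuation below the inversion count of a minimal length element. This is where the classification of root-valuation strata in Yun's work is needed, and where ellipticity enters essentially: in a non-elliptic class one could make a root orbit contribute $0$, which is precisely the phenomenon corrected by the $\dim\fkt-\dim\fkt^{\CC}$ term in the general Proposition~\ref{C}. By contrast, the reduction to a single torus, the input from the dimension formula, and the construction of the optimal shallow element are comparatively routine; and once the identity above is in hand one sees that $l_{\min}(\CC)$ is nothing but the length of a good position braid representative of $\CC$, so the result is the elliptic instance of the braid-theoretic reformulation pursued in this section.
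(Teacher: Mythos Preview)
This theorem is not proved in the paper; it is simply cited from Yun \cite[Theorem~2.3]{Y} and recalled without argument. So there is no ``paper's own proof'' of this particular statement to compare against. What the paper does prove is the generalization to arbitrary conjugacy classes, Proposition~\ref{5.4}, which in the elliptic case recovers Yun's formula since then $r_{\CC}=0$ and $l_{\good}(\CC)=l_{\min}(\CC)$ (good position elements of an elliptic class are of minimal length, see the end of \S2.1).

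Comparing your sketch with the proof of Proposition~\ref{5.4}: you propose a two-sided argument --- a lower bound $\val(\Delta(\gamma))\ge l_{\min}(\CC)$ valid for \emph{every} topologically nilpotent $\gamma$ of type $\CC$, plus an explicit construction of a shallow element achieving equality. The paper takes a shorter route that avoids both halves. It starts from an \emph{arbitrary} shallow $\gamma$ (whose existence it does not re-prove) and computes each $\val(\alpha(\gamma))$ exactly: Lemmas~\ref{5.2}--\ref{5.3} pin the irredundant sequence to angles of the form $2\pi/n_j$, the eigenspace filtration forces $\val(\alpha(\gamma))\ge 1/n_j$ on the $j$-th layer, and Yun's root-valuation criterion for shallowness \cite[Lemma~2.2]{Y} gives the matching upper bound $\val(\alpha(\gamma))\le 1/n_j$. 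Summing over roots and invoking Lemma~\ref{3.3} yields $\val(\Delta(\gamma))=l_{\good}(\CC)$. So the paper never constructs a shallow element and never proves a universal lower bound; it black-boxes both the existence of shallow elements and the criterion characterizing them from \cite{Y}, and simply evaluates. Your outline is closer in spirit to Yun's original argument in \cite{Y}; the paper's proof of Proposition~\ref{5.4} is a repackaging that leans on \cite[Lemma~2.2]{Y} as input rather than reproving the ``hardest part'' you identify.
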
   

This equation no longer holds for non-elliptic conjugacy classes $\CC$. However, we may replace minimal length elements with good position braid representatives . Before we generalize this result, we need several technical lemmas.

Let $\CC$ be any conjugacy class in $[W]$ and $b=b(w)\in B^+(W)$ be a good position braid representative of $\CC$. Let $\underline{\Theta}=(\theta_1,\ldots,\theta_m)$ be the increasing complete sequence in $\Gamma_w\cap ((0,\pi] \cup \{2\pi\})$ corresponding to $w$ and $r(\underline{\Theta})=(\theta_{i_1},\ldots,\theta_{i_l})$ be its irredundant subsequence.

\begin{lemma}\label{5.2}
    Let $\alpha$ be a positive root and $\frac{2a\pi}{n}\in \Gamma_w $ with $a,n$ coprime. Suppose that $(\alpha, V_w^{\frac{2a\pi}{n}})=0$, then for any $b\in \BN$ such that $b,n$ coprime, we have $(\alpha, V_w^{\frac{2b\pi}{n}})=0$.
\end{lemma}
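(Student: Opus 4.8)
The plan is to work in the complexification $V_{\BC}$ and exploit the relation between the real eigenspace $V_w^{\theta}$ and the complex eigenspaces $(V_{\BC})_w^{\theta}$. Recall that $V_w^{\theta}\otimes_{\BR}\BC = (V_{\BC})_w^{\theta}\oplus(V_{\BC})_w^{-\theta}$, so the hypothesis $(\alpha, V_w^{2a\pi/n})=0$ is equivalent to the statement that $\alpha$ (viewed in $V_{\BC}$ via the inner product) is orthogonal to both $(V_{\BC})_w^{2a\pi/n}$ and $(V_{\BC})_w^{-2a\pi/n}$, i.e.\ $\alpha$ annihilates the $e^{\pm 2a\pi i/n}$-eigenspaces of $w$. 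Equivalently, the reflection hyperplane $H_{\alpha}$ contains $(V_{\BC})_w^{\pm 2a\pi i/n}$. First I would reduce to a statement purely about the $w$-action: since $w$ has finite order, $V_{\BC}$ decomposes as a direct sum of $w$-eigenspaces with eigenvalues that are roots of unity, and I want to compare the eigenspaces for two primitive $n$-th roots of unity $\zeta^a$ and $\zeta^b$ (where $\zeta = e^{2\pi i/n}$, and $\gcd(a,n)=\gcd(b,n)=1$).

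The key step is a Galois-theoretic argument. The primitive $n$-th roots of unity $\zeta^a$ and $\zeta^b$ are conjugate under $\Gal(\BQ(\zeta)/\BQ)$; let $\sigma$ be a field automorphism with $\sigma(\zeta^a)=\zeta^b$. Because $w$ acts on the rational (indeed integral, if $W$ is a Weyl group, but in any case rational) reflection representation $V_{\BQ}$, the characteristic polynomial and more importantly the eigenspace structure of $w$ on $V_{\BQ}\otimes\BQ(\zeta)$ is defined over $\BQ(\zeta)$, and $\sigma$ permutes the eigenspaces: $\sigma\big((V_{\BQ(\zeta)})_w^{\zeta^a}\big) = (V_{\BQ(\zeta)})_w^{\zeta^b}$. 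Since $\alpha$ lies in $V_{\BQ}$ (it is a rational vector — a root), it is fixed by $\sigma$, and the bilinear form is rational hence $\sigma$-equivariant. Therefore $(\alpha, v)=0$ for all $v$ in the $\zeta^a$-eigenspace forces, after applying $\sigma$, $(\alpha, \sigma(v)) = \sigma((\alpha,v)) = 0$ for all $v$, i.e.\ $\alpha$ is orthogonal to the $\zeta^b$-eigenspace. The same applies to $\zeta^{-a}$ versus $\zeta^{-b}$ (or one notes these are the complex conjugates of the previous eigenspaces and $\alpha$ is real). Combining, $\alpha$ is orthogonal to $(V_{\BC})_w^{\pm 2b\pi i/n}$, hence to $V_w^{2b\pi/n}$, which is the desired conclusion.

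The main obstacle I anticipate is making precise the claim that the eigenspace decomposition is "defined over $\BQ(\zeta)$" and that Galois acts on eigenspaces as stated — this requires that the reflection representation $V$ of $W$ (or $\tilde{W}$, though here we are in the untwisted case $\tilde{W}=W$) be realizable over $\BQ$, which is standard for Weyl groups and more generally for the rational reflection representations under consideration in this paper, but should be cited or briefly justified. One must also be slightly careful that $2a\pi/n$ and $2b\pi/n$ genuinely lie in $\Gamma_w$ — but this is exactly guaranteed by the fact that $\zeta^b$, being a Galois conjugate of the eigenvalue $\zeta^a$ of a matrix with rational entries, is again an eigenvalue of $w$, so $\Gamma_w$ is automatically closed under this operation and the hypothesis "$\frac{2b\pi}{n}\in\Gamma_w$" in the statement is in fact redundant (or can be derived). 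I would phrase the write-up so that this point is noted. Everything else is a routine unwinding of the definitions of $V_w^{\theta}$ and the orthogonality relation, using that the inner product $(\,,\,)$ and the action of $w$ commute with complex conjugation on $V_{\BC}$.
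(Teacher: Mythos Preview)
Your proposal is correct and takes essentially the same approach as the paper: both arguments exploit that $w$ acts on $V$ by a rational matrix, so the eigenspaces for the primitive $n$-th roots of unity are Galois-conjugate, and since $\alpha$ and the bilinear form are rational the orthogonality transfers. The only difference is packaging---you invoke the Galois automorphism $\sigma$ abstractly, while the paper carries out the same step by writing eigenvectors as $\sum_j f_{v,j}(e^{2\pi i a/n})\alpha_j$ with $f_{v,j}\in\BQ[x]$ and substituting $e^{2\pi i b/n}$ for $e^{2\pi i a/n}$.
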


\begin{proof}
     Consider the complex eigenspaces $(V_{\BC})_{w}^{\frac{2a\pi}{n}}$ and $(V_{\BC})_{w}^{\frac{2b\pi}{n}}$ of $w$. It suffices to show $(\alpha, (V_{\BC})_{w}^{\frac{2b\pi}{n}})=0$ while we know $(\alpha, (V_{\BC})_{w}^{\frac{2a\pi}{n}})=0$.
    
    Let $M_w$ be the matrix of the action of $w$ on $V_{\BC}$. It is clear that $M_w$ is a rational matrix with respect to the basis $\{\alpha_i\}_{i\in I}$. Now consider $M_w-e^{i\frac{2a\pi}{n}} \cdot \text{I}$, the entries of this matrix all lie in $\BQ(e^{i\frac{2a\pi}{n}})$. Now for any $v\in (V_{\BC})_{w}^{\frac{2a\pi}{n}}$, we have $v$ lies in the kernel of $M_w-e^{i\frac{2a\pi}{n}} \cdot \text{I}$. Therefore, there exists a collection of rational polynomials $\{f_{v,j} \in \BQ[x]\}_{j\in I}$ such that $v=\sum\limits_{j\in I} f_{v,j}(e^{i\cdot \frac{2a\pi}{n}}) \alpha_j$. For any $j$, write $w(\alpha_j)=\sum\limits_{k\in I}\lambda_{j,k}\alpha_k$ with $\lambda_{j,k}\in \BQ$. By definition of $v$, we have $\sum\limits_{j\in I} \lambda_{j,k} f_{v,j}(e^{i\cdot \frac{2a\pi}{n}})=e^{i\cdot \frac{2a\pi}{n}} f_{v,k}(e^{i\cdot \frac{2a\pi}{n}})$ for any $k\in I$. These equations are also true if we replace $e^{i\cdot \frac{2a\pi}{n}}$ by $e^{i\cdot \frac{2b\pi}{n}}$ since they are equations of $\BQ$-polynomials. Then $v'=\sum\limits_{j\in I} f_{v,j}(e^{i\cdot \frac{2b\pi}{n}}) \alpha_j$ is in $(V_{\BC})_{w}^{\frac{2b\pi}{n}}$.
    
    Meanwhile, We have $(\alpha,v')=\sum\limits_{j\in I} f_{v,j}(e^{i\cdot \frac{2b\pi}{n}}) (\alpha, \alpha_j)$ and $g_v:=\sum\limits_{i\in I} (\alpha, \alpha_j)f_{v,j}$ is again a rational polynomial in $\BQ[x]$. Since $g_v(e^{i\cdot \frac{2a\pi}{n}})=(\alpha,v)=0$, then $(\alpha,v')=g_v(e^{i\cdot \frac{2b\pi}{n}})=0$. Therefore, we have $(\alpha, V_w^{\frac{2b\pi}{n}})=0$.    
\end{proof}

Based on above, we may determine the irredundant subsequence of a given $\underline{\Theta}$.

\begin{lemma}\label{5.3}
    Suppose that $b=b(w)\in B^+(W)$ is a good position braid representative of some $\CC\in [W]$. Then any $\theta_{i_j}$ in the irredundant subsequence $r(\underline{\Theta})$ satisfies $\frac{2\pi}{\theta_{i_j}}\in \BN$.
\end{lemma}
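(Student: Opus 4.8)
The goal is to show that every angle $\theta_{i_j}$ appearing in the irredundant subsequence $r(\underline{\Theta})$ of the increasing complete sequence $\underline{\Theta}$ in $\Gamma_w\cap((0,\pi]\cup\{2\pi\})$ is of the form $\frac{2\pi}{n}$ for some $n\in\BN$; equivalently $\frac{2\pi}{\theta_{i_j}}\in\BN$. The plan is to argue by contradiction: suppose $\theta_{i_j}=\frac{2a\pi}{n}$ with $a,n$ coprime and $a>1$, and derive that the passage from $F_{i_j-1}$ to $F_{i_j}$ adds no new root hyperplane relations, i.e. $\fkH_{F_{i_j-1}}=\fkH_{F_{i_j}}$ (equivalently $W_{F_{i_j-1}}=W_{F_{i_j}}$), which contradicts $\theta_{i_j}$ being a term of the \emph{irredundant} subsequence.

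The key step is to invoke Lemma \ref{5.2}. Since $\underline{\Theta}$ is increasing and complete in $(0,\pi]\cup\{2\pi\}$ and $a,n$ are coprime with $1<a$, the angle $\frac{2\pi}{n}$ (which lies in $(0,\pi)$ and is strictly smaller than $\frac{2a\pi}{n}$, provided $n\ge 3$; the cases $n=1,2$ force $a\le1$ after reduction so are vacuous) also belongs to $\Gamma_w$ — indeed, $e^{i\theta_{i_j}}$ is an eigenvalue of $w$, its order divides the order of $w$, and the full set of primitive $n$-th roots of unity that are eigenvalues comes with $\frac{2\pi}{n}$ among the corresponding angles since $w$ acts by a rational (in fact integral) matrix. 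So $\frac{2\pi}{n}=\theta_{i_{j'}}$ for some $j'<j$ in the sequence $\underline{\Theta}$. Now for any root $\alpha$ with $H_\alpha\supset F_{i_{j'}}$, in particular $(\alpha,V_w^{\frac{2\pi}{n}})=0$, Lemma \ref{5.2} (applied with $b=a$) gives $(\alpha,V_w^{\frac{2a\pi}{n}})=0$, i.e. $H_\alpha\supset V_w^{\frac{2a\pi}{n}}$. Combined with $H_\alpha\supset F_{i_{j'}}\supset F_{i_j-1}$ this yields $H_\alpha\supset F_{i_j}$. Hence $\fkH_{F_{i_j-1}}\subset\fkH_{F_{i_{j'}}}$... more carefully, one wants to show $\fkH_{F_{i_j-1}}=\fkH_{F_{i_j}}$: the inclusion $\supseteq$ is automatic, and for $\subseteq$ one takes $H_\alpha\in\fkH_{F_{i_j-1}}$, notes $H_\alpha\supseteq F_{i_{j'}}$ so $H_\alpha\in\fkH_{F_{i_{j'}}}$, and then applies Lemma \ref{5.2} to conclude $H_\alpha$ also contains $V_w^{\theta_{i_j}}$, hence contains $F_{i_j}$. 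This contradicts the defining property of the irredundant subsequence, namely $\fkH_{F_{i_j-1}}\neq\fkH_{F_{i_j}}$.

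I expect the main obstacle to be the bookkeeping around \emph{which} smaller angle with the same denominator actually occurs in $\underline{\Theta}$ and why it sits \emph{before} $\theta_{i_j}$ in the increasing order — one must verify that $\frac{2\pi}{n}<\frac{2a\pi}{n}$ lands in the right interval $(0,\pi]$ and is genuinely an eigenvalue angle of $w$, not merely a plausible candidate. The rationality/integrality of the matrix $M_w$ (already used in the proof of Lemma \ref{5.2}) is exactly what guarantees that the Galois conjugates $e^{2\pi i b/n}$ of $e^{2\pi i a/n}$, for all $b$ coprime to $n$, are simultaneously eigenvalues; taking $b$ to be the unique such representative with $\frac{2b\pi}{n}\in(0,\frac{2\pi}{n}]$ — which is $b=1$ — does the job. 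A small edge case to dispatch separately is when the denominator $n$ equals $1$ or $2$ (so $\theta_{i_j}\in\{2\pi,\pi\}$ after reduction), where $\frac{2\pi}{\theta_{i_j}}\in\{1,2\}\subset\BN$ holds trivially and there is nothing to prove; and the case where $\theta_{i_j}$ is itself already $\frac{2\pi}{n}$ (i.e. $a=1$), which is the conclusion. Everything else is the routine translation between "$\alpha$ perpendicular to the real eigenspace" and "$H_\alpha$ contains it," which is immediate from $V_w^\theta\otimes\BC=(V_\BC)_w^\theta\oplus(V_\BC)_w^{-\theta}$.
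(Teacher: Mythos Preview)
Your argument is correct and follows essentially the same route as the paper's proof: both observe that if $\theta_{i_j}=\frac{2a\pi}{n}$ with $a>1$ coprime to $n$, then by rationality of the characteristic polynomial of $w$ every primitive $n$-th root of unity is an eigenvalue, so $\frac{2\pi}{n}$ appears earlier in $\underline{\Theta}$; Lemma~\ref{5.2} then forces $\fkH_{F_{i_j-1}}=\fkH_{F_{i_j}}$, contradicting irredundancy. Your write-up is more explicit about the filtration step and the edge cases $n\in\{1,2\}$, but the content is the same. One small notational slip: when you write ``$\frac{2\pi}{n}=\theta_{i_{j'}}$ for some $j'<j$'' you mean that $\frac{2\pi}{n}$ occurs at some index of the full sequence $\underline{\Theta}$ prior to position $i_j$, not necessarily at an index of the irredundant subsequence.
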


\begin{proof}
    Suppose that $\frac{2m\pi}{n}$ is in the sequence $\underline{\Theta}$ with $m,n$ coprime and $1<m<n$. Then $w$ has an eigenvalue $e^{i\cdot \frac{2m\pi}{n}}$, which is a primitive $n$-th root of unit. From the proof lemma \ref{5.2}, We see every primitive $n$-th root of unit is also an eigenvalue of $w$. In particular, we have $\frac{2\pi}{n}$ is in $\underline{\Theta}$ and is ahead of $\frac{2m\pi}{n}$ since $\underline{\Theta}$ is increasing. By lemma \ref{5.2}, we have $\frac{2m\pi}{n}$ cannot lie in the irredundant sequence since $W_{V_w^{\frac{2m\pi}{n}}}=W_{V_w^{\frac{2\pi}{n}}}$. Therefore, every $\theta$ in $r(\underline{\Theta})$ is of the form $\frac{2\pi}{n}$ for some $n\in \BN$.
\end{proof}

Now we generalize the result of Yun. Let $d$ be the order of $w\in\CC$ and $\omega$ be a primitive $d$-th root of unity. Recall by \cite[\S 4.3]{GKM}, there is a loop torus of type $\CC$ whose corresponding loop Lie algebra $L\fkt_{w}$ can be written as follows.
    \begin{equation*}
        L\fkt_{w}=\sum\limits_{k\in \BZ_{\geq 0}} \varpi^{\frac{k}{d}} \fkt_{\underline{k}}.
    \end{equation*}
    Here $\fkt_{\underline{k}}$ is the $\omega^{\underline{k}} $-eigenspace of $w$ in $\fkt$ where ${\underline{k}}:=k \text{ mod } d$. Recall $\fkt$ is identified with the $\BC$-span of coroots. Given any $n \vert d$, we have $\alpha(\fkt_{\frac{d}{n}})=0 $ is equivalent to $(\alpha,\sum\limits_{\substack{1\leq m\leq n \\(m,n)=1}} V_w^{\frac{2m\pi}{n}})=0$. By above lemmas, this is further equivalent to $(\alpha, V_w^{\frac{2\pi}{n}})=0$.

\begin{proposition}\label{5.4}
    Let $\CC$ be any conjugacy class in $W$. We have
    \begin{equation*}
        \delta_{\CC}=\frac{l_{\good}(\CC)-(r -r_{\CC})}{2},
\end{equation*}   
    where $r$ (resp. $r_{\CC}$) is the semisimple rank of $G$ (resp. maximal split subtorus of $T_\gamma$ for any $\gamma$ of type $\CC$).
\end{proposition}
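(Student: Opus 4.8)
The strategy is to reduce the non-elliptic case to the elliptic case via the Levi-type subgroup attached to the split part of $T_\gamma$, following the structure of the proof of \cite[Theorem 2.3]{Y}, and then to match the resulting length quantity with $l_{\good}(\CC)$ using Lemma \ref{3.3} together with Lemma \ref{5.2} and Lemma \ref{5.3}. Concretely, let $\CC\in[W]$ be arbitrary with a representative $w$ of order $d$, and let $L\fkt_w=\sum_{k\ge 0}\varpi^{k/d}\fkt_{\underline k}$ be the loop torus of type $\CC$ as in \cite[\S4.3]{GKM} recalled just above the statement. The fixed space $\fkt_0=\fkt^w$ contributes the split part, so $r_\CC=\dim\fkt^w$ and the ``genuinely elliptic part'' lives in $\fkt_w:=(\fkt^w)^\perp$. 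First I would argue that a shallow $\gamma\in(L\fkt_w)^{sh}$ can be taken (after conjugation) to lie in the span of the non-split graded pieces, i.e.\ that the split directions do not affect $\dim\Gr_\gamma$; this is exactly the reduction that replaces $G$ by the semisimple-rank-$(r-r_\CC)$ group $G'$ whose Weyl group is $W_{\fkt^w}\backslash$-complement — more precisely, the reflection subgroup $W'$ generated by the roots $\alpha$ with $\alpha|_{\fkt^w}\ne 0$, in which the image of $\CC$ becomes elliptic. Then Yun's theorem applied to $G'$ gives $\delta_\CC=\tfrac12(l_{\min}(\CC')-(r-r_\CC))$ where $\CC'$ is the image elliptic class.

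The second half is purely combinatorial: I must show $l_{\min}(\CC')=l_{\good}(\CC)$. Here $l_{\min}(\CC')$ is the minimal length of the elliptic class $\CC'$ inside $W'$, which by \cite[\S5.3]{HN} (good position elements are minimal length) equals the length of a good position element of $\CC'$ in $W'$. On the other hand $l_{\good}(\CC)$ is computed in $W$ by Lemma \ref{3.3} as $\sum_{j=1}^l \frac{\theta_{i_j}}{\pi}\lvert\fkH_{F_{i_{j-1}}}-\fkH_{F_{i_j}}\rvert$, where by Lemma \ref{5.3} each $\theta_{i_j}=\frac{2\pi}{n_j}$ for an integer $n_j$. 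The point is that a root $\alpha$ contributes to the sum for $\CC$ in $W$ exactly when $\alpha|_{\fkt^w}\ne 0$ (roots fixed by $w$, equivalently vanishing on everything but $\fkt^w$, never leave $\fkH_{F_i}$ since $F_i\subset \fkt_w$ for the complete sequence restricted appropriately), so the hyperplane count in $W$ matches the hyperplane count in $W'$; and the angle $\theta_{i_j}$ at which $\alpha$ ``drops out'' of $\fkH_{F_\bullet}$ is the same whether computed via the real eigenspaces $V_w^\theta$ in $V$ or in the reflection representation of $W'$, because the eigenvalue structure of $w$ on $\fkt_w$ is identical. Lemma \ref{5.2} is what guarantees that only the primitive angles $\frac{2\pi}{n}$ matter and that the associated parabolic $W_{V_w^{2m\pi/n}}$ depends only on $n$, so the irredundant subsequences in $W$ and $W'$ carry the same data.

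A cleaner way to organize the angle-matching is via the graded-torus description: $\alpha(\fkt_{d/n})=0$ iff $(\alpha,V_w^{2\pi/n})=0$ (stated explicitly just above Proposition \ref{5.4}), so the ``root valuation'' of $\alpha$ on a generic shallow $\gamma$, namely $\val\,\alpha(\gamma)$, is the smallest $k/d$ with $\alpha(\fkt_{\underline k})\ne 0$, and this equals $\frac{1}{n_\alpha}$ where $n_\alpha$ is the smallest $n$ with $(\alpha,V_w^{2\pi/n})\ne 0$. Then $\val(\Delta(\gamma))=\sum_{\alpha>0}\val\,\alpha(\gamma)+\val\,\alpha(-\gamma)=2\sum_{\alpha>0}\val\,\alpha(\gamma)$ for a generic (shallow) $\gamma$, and a direct bookkeeping shows $2\sum_{\alpha>0,\ \alpha|_{\fkt^w}\ne 0}\frac{1}{n_\alpha}$ equals $l_{\good}(\CC)$ as given by Lemma \ref{3.3} (telescoping the hyperplane differences against the angles $\frac{2\pi}{n_{i_j}}$). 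Plugging into Bezrukavnikov's dimension formula $\dim\Gr_\gamma=\tfrac12(\val(\Delta(\gamma))-(r-r_\CC))$ and taking $\gamma$ shallow yields the claim; one still needs that a shallow $\gamma$ indeed achieves $\val\,\alpha(\gamma)=\frac{1}{n_\alpha}$ simultaneously for all $\alpha$, which is the genericity input from \cite{Y}.

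\textbf{Main obstacle.} The hard part is the reduction step and the genericity claim: showing that the minimum of $\dim\Gr_\gamma$ over $\gamma\in(L^\heart\fkg)_\CC$ is realized by a $\gamma$ supported on the non-split graded pieces of $L\fkt_w$ with root valuations all equal to the ``optimal'' value $\frac{1}{n_\alpha}$, and that this matches the elliptic minimal-length count inside $W'$. Yun's argument handles the elliptic case by a careful root-valuation analysis; I expect the non-elliptic case to go through once one checks that adding split directions $\varpi^0\fkt_0$ to $\gamma$ can only increase (or leave unchanged) the relevant valuations, so they may be discarded — but verifying that $\CC$ maps to an \emph{elliptic} class of the reflection subgroup $W'$, and that $l_{\good}$ is insensitive to passing between $W$ and $W'$, requires the combinatorial input of Lemmas \ref{5.2}–\ref{5.3} precisely so that the irredundant sequence and the hyperplane-difference counts transport correctly.
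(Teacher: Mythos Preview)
Your ``cleaner way'' in the third paragraph \emph{is} the whole proof, and it is exactly what the paper does. The paper never reduces to an elliptic class in a smaller reflection group; it works directly with a shallow $\gamma\in(L^\heart\fkt_w)^{sh}$ and shows $\val(\Delta(\gamma))=l_{\good}(\CC)$ root by root. For a positive root $\alpha$ with $H_\alpha\in\fkH_{F_{i_{j-1}}}\setminus\fkH_{F_{i_j}}$ (so $(\alpha,V_w^{2\pi/n_j})\neq 0$ but $(\alpha,V_w^\theta)=0$ for all smaller $\theta\in\Gamma_w$, the latter using Lemmas~\ref{5.2}--\ref{5.3}), the equivalence $\alpha(\fkt_{d/n})=0\iff(\alpha,V_w^{2\pi/n})=0$ forces $\val(\alpha(\gamma))\ge 1/n_j$ automatically, and \cite[Lemma~2.2]{Y} gives $\val(\alpha(\gamma))\le 1/n_j$ for shallow $\gamma$. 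Summing and plugging into Bezrukavnikov's formula finishes. So the ``simultaneous genericity'' you flag as the main obstacle is not an additional difficulty: the lower bound is structural and the upper bound is precisely the content of Yun's lemma, applied one root at a time.

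The Levi-reduction strategy in your first two paragraphs is an unnecessary detour, and as written it has a genuine problem: the set of roots $\alpha$ with $\alpha|_{\fkt^w}\neq 0$ is the \emph{complement} of a parabolic subsystem, hence not itself a root subsystem, so the ``reflection subgroup $W'$'' you generate from it is not the Weyl group of a Levi and the image class $\CC'$ has no clean meaning. One could instead pass to a standard parabolic $W_J$ containing minimal-length representatives of $\CC$ (where they become elliptic), but then matching $l_{\min}$ in $W_J$ with $l_{\good}(\CC)$ in $W$ requires exactly the root-by-root valuation count you already wrote down---at which point the reduction has bought nothing. Drop paragraphs one and two, promote paragraph three to the proof, and cite \cite[Lemma~2.2]{Y} for the upper bound; that is the paper's argument. (Minor slip: your $n_\alpha$ should be the \emph{largest} $n$ with $(\alpha,V_w^{2\pi/n})\neq 0$, i.e.\ the one giving the smallest angle, not the smallest $n$.)
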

\begin{proof}
    It suffices to show $\val (\Delta(\gamma))=l_{\good}(\CC)$ for any shallow element $\gamma\in (L^{\heart}\fkg)_{\CC}$. Let $b=b(w)$ be a good position braid representative of $\CC$. By symmetry of affine Springer fibers, one may assume $\gamma\in (L^{\heart} \fkt_w)^{sh}$. 
    
    By lemma \ref{5.2} and \ref{5.3}, we have $r(\underline{\Theta})=(\frac{2\pi}{n_1} ,\frac{2\pi}{n_2} ,\ldots,\frac{2\pi}{n_l} )$ where $n_1>\cdots >n_l$ are all integers. Recall by lemma \ref{3.3} we have
    \begin{equation*}
        l_{\good}(\CC)=2\sum\limits_{j=1}^l \frac{1}{n_j} \lvert \fkH_{F_{i_{j-1}}}-\fkH_{F_{i_j}} \rvert .
    \end{equation*}
    Therefore, it suffices to show that $\val (\alpha(\gamma))= \frac{1}{n_j}$, for any positive root $\alpha$ with $H_{\alpha}\in  \fkH_{F_{i_{j-1}}}-\fkH_{F_{i_j}}$. 

    Let $\alpha$ be a positive root with $H_{\alpha}\in  \fkH_{F_{i_{j-1}}}-\fkH_{F_{i_{j}}}$ for some $1\leq j\leq l$. Then we have $(\alpha, V_w^{\frac{2\pi}{n_j}})\neq 0$. By \cite[Lemma 2.2]{Y} and our discussion after lemma \ref{5.3}, we have $\val(\alpha(\gamma))\leq \frac{1}{n_j}$ since $\gamma$ is a shallow element. Moreover, since $(\alpha,V_w^{\theta})=0$ for any $\theta\in \Gamma_w$ smaller than $\frac{2\pi}{n_j}$, we have $\val(\alpha(\gamma))= \frac{1}{n_j}$ and our statement is proved.
\end{proof}

\subsection{Dimension of affine Springer fibers}
In this section, we concern about the dimensions of affine Springer fibers of any given type $\CC\in [W]$ and prove our last main result.

\begin{proposition}\label{c}
        Let $\CC$ be any conjugacy class in $W$ and $\gamma\in (L^{\heart}\fkg)_{\CC}$. There exists an indecomposably-good position braid representative $b$ such that $b$ projects to some element in $\CC$ under the natural projection and
        \begin{equation*}
        \dim \Gr_{\gamma}=\dim \Fl_{\gamma}=\frac{l(b)-(r -r_{\CC})}{2},
\end{equation*}   
    where $r$ (resp. $r_{\CC}$) is the semisimple rank of $G$ (resp. maximal split subtorus of $T_\gamma$).
\end{proposition}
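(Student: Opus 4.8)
The strategy is to upgrade the proof of Proposition \ref{5.4} so that it applies to \emph{all} elements of type $\CC$, not just shallow ones. Recall that for an arbitrary $\gamma\in(L^{\heart}\fkg)_\CC$ the Bezrukavnikov/Kazhdan--Lusztig dimension formula gives
\begin{equation*}
    \dim\Gr_\gamma=\dim\Fl_\gamma=\tfrac12\bigl(\val(\Delta(\gamma))-(r-r_\CC)\bigr),
\end{equation*}
since $r-r_\CC=\dim\fkt-\dim\fkt^\CC$. Hence it suffices to produce, for each such $\gamma$, an indecomposably-good position braid representative $b$ projecting into $\CC$ with $l(b)=\val(\Delta(\gamma))$. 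By the symmetry of affine Springer fibers under $LG$-conjugation we may assume $\gamma\in L^\heart\fkt_w$ for a fixed $w\in\CC$ of order $d$, and use the description $L\fkt_w=\sum_{k\ge 0}\varpi^{k/d}\fkt_{\underline k}$ from \cite[\S 4.3]{GKM}. Since $\val(\Delta(\gamma))=\sum_{\alpha\in R^+}2\val(\alpha(\gamma))$, the whole problem reduces to computing the root valuations $\val(\alpha(\gamma))$ and packaging them into the length formula of Corollary \ref{3.9}.

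First I would recall from \cite[Lemma 2.2]{Y} (and the discussion following Lemma \ref{5.3}) that for $\gamma\in L^\heart\fkt_w$ and a positive root $\alpha$, the valuation $\val(\alpha(\gamma))$ is of the form $\tfrac{m}{n}$ where $n$ is the smallest integer with $\alpha(\fkt_{d/n})\neq 0$, equivalently (by Lemmas \ref{5.2}, \ref{5.3}) the smallest $n$ with $(\alpha,V_w^{2\pi/n})\neq 0$; moreover $m$ is determined by the actual coefficient of $\gamma$ in the relevant graded piece and can be any integer coprime to $n$ (and $\ge 1$ by topological nilpotence). Group the positive roots by the pair $(n,m)$ giving their valuation; for roots with valuation $\tfrac{m}{n}$ let $\theta=\tfrac{2\pi m}{n}$. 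The plan is then to build a weakly-increasing sequence $\underline\Theta_\CV$ of these angles $\theta$ together with a compatible sequence $\CV$ of indecomposable real eigenspaces of $w$: for each $\theta=\tfrac{2\pi m}{n}$ appearing, pick indecomposable real eigenspaces inside $V_w^{\theta}$ one at a time, ordered so that the associated filtration $F_i=\sum_{j\le i}V_j$ has $\overline{C_0}\cap F_i^{\reg}\neq\emptyset$ at every stage — exactly as in the proof of Proposition \ref{3.10}, where at each step one finds a regular point of the partial sum lying in $\overline{C_0}$ by taking a small perturbation. One must arrange the sequence to be \emph{complete} (i.e. $\sum_j V_j=V$), which is possible by Lemma \ref{3.11} provided the indecomposable eigenspaces chosen already span a subspace on which $W$ acts trivially; this holds because $w$ is regular semisimple as an endomorphism of $V$ in the elliptic directions and the remaining fixed part contributes the split torus. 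Then $(w,\CV,\underline\Theta_\CV)$ is an indecomposably-good position triple, Proposition \ref{3.7} produces the braid $b=b(w,\CV,\underline\Theta_\CV)$ projecting to $w\in\CC$, and Corollary \ref{3.9} gives
\begin{equation*}
    l(b)=\sum_{j=1}^{l}\frac{\theta_{i_j}}{\pi}\bigl|\fkH_{F_{i_{j-1}}}-\fkH_{F_{i_j}}\bigr|
    =\sum_{\alpha\in R^+}\frac{\theta(\alpha)}{\pi}
    =\sum_{\alpha\in R^+}2\val(\alpha(\gamma))
    =\val(\Delta(\gamma)),
\end{equation*}
where $\theta(\alpha)$ is the angle attached above to $\alpha$ and the middle equality is the standard telescoping identity that a root $\alpha$ with $H_\alpha\in\fkH_{F_{i_{j-1}}}\setminus\fkH_{F_{i_j}}$ contributes precisely $\theta_{i_j}$, together with the fact that $\theta_{i_j}=\tfrac{2\pi m_j}{n_j}$ is exactly $2\pi\val(\alpha(\gamma))$ for each such $\alpha$ by our choice of the angles. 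Combining with the dimension formula finishes the proof.

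The main obstacle will be the bookkeeping that matches the root-valuation data of $\gamma$ to a genuinely \emph{good position} (indecomposable) sequence: one has to check that the angles $\tfrac{2\pi m}{n}$ coming from the nonzero coefficients of $\gamma$ can be simultaneously realized by a \emph{single} weakly-increasing admissible sequence whose irredundant subsequence recovers, step by step, the correct ``drop'' in $\fkH_{F_i}$ — i.e. that the order in which eigenspaces for different $(n,m)$ are listed can be taken weakly-increasing in $\theta$ without destroying the condition $\overline{C_0}\cap F_i^{\reg}\neq\emptyset$. I expect this to follow from the same perturbation argument as in Lemmas \ref{3.4} and \ref{3.10} (small positive combinations keep a point inside $\overline{C_0}$ and make it regular on the larger subspace), but one must be careful that when several roots share the same hyperplane but receive different valuations the minimal $n$ is the one that governs, which is guaranteed by ``$\gamma$ shallow in its graded pieces is not assumed'' — here instead the valuation can be larger, and the angle attached is correspondingly $\tfrac{2\pi m}{n}$ with $m>1$ allowed, which is exactly why we need the \emph{indecomposably}-good (rather than ordinary good position) framework and the weakly-increasing (rather than strictly increasing) condition. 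Once this combinatorial compatibility is established, the rest is the telescoping identity and an appeal to Proposition \ref{3.7}, Corollary \ref{3.9}, \cite[Lemma 2.2]{Y}, and the dimension formula.
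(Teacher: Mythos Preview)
Your overall strategy is the same as the paper's: reduce via the Bezrukavnikov formula to showing $l(b)=\val(\Delta(\gamma))$, read off a sequence of angles from the graded pieces of $\gamma$, and then invoke Proposition~\ref{3.7} and Corollary~\ref{3.9}. Two points, however, do not go through as written.

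First, your description of the root valuations is incorrect for non-shallow $\gamma$. The statement you attribute to \cite[Lemma~2.2]{Y} concerns only shallow elements. For general $\gamma=\sum_{k\ge 1}\varpi^{k/d}\gamma_k\in L^{\heart}\fkt_w$ one simply has $\val(\alpha(\gamma))=n_\alpha/d$ with $n_\alpha=\min\{k:\alpha(\gamma_k)\neq 0\}$; the denominator in lowest terms is \emph{not} characterised as ``the smallest $n$ with $\alpha(\fkt_{d/n})\neq 0$'' (nor the largest). For instance, if some $\gamma_{d/n'}$ with $n'$ large happens to vanish, the valuation of $\alpha$ can be forced into a graded piece with a much smaller $n$ even though $(\alpha,V_w^{2\pi/n'})\neq 0$.

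Second, and this is the genuine gap, you never say \emph{which} indecomposable eigenspaces $V_k\subset V_w^{\theta_k}$ to take. Picking them ``one at a time so that $\overline{C_0}\cap F_i^{\reg}\neq\emptyset$'' only controls the good-position condition; it does nothing to guarantee that the jump $\fkH_{F_{k-1}}\setminus\fkH_{F_k}$ consists exactly of the hyperplanes $H_\alpha$ with $\val(\alpha(\gamma))=\theta_k/2\pi$, which is precisely what the telescoping identity in your displayed equation needs. The perturbation arguments of Lemmas~\ref{3.4} and~\ref{3.10} cannot supply this matching, because they are insensitive to $\gamma$. The paper closes this gap by a direct construction: writing $\gamma_{n_k}=\sum_{j}f_{n_k,j}(e^{2\pi i n_k/d})\alpha_j^\vee$ with $f_{n_k,j}\in\BQ[x]$, it sets $v_k=\sum_j\bigl(f_{n_k,j}(e^{2\pi i n_k/d})+f_{n_k,j}(e^{-2\pi i n_k/d})\bigr)\alpha_j\in V_w^{2\pi n_k/d}$ and takes $V_k=\BR v_k+\BR\,w(v_k)$. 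A Galois-conjugate argument (as in Lemma~\ref{5.2}) then gives $(\alpha,V_k)=0\Longleftrightarrow\alpha(\gamma_{n_k})=0$, so that $R_{F_{k-1}}\setminus R_{F_k}$ is exactly the set of roots with $n_\alpha=n_k$, and $r(\CV)=(V_1,\dots,V_m)$. With this specific choice the telescoping computation is immediate; good position is then obtained by conjugating $w$ inside $\CC$ (as in the discussion following Definition~\ref{3.2}, extended to the indecomposable setting in \S3.5), which leaves the right-hand side of Corollary~\ref{3.9} unchanged.
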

\begin{proof}
    Similar to the proof of proposition \ref{5.4}, it suffices to find an indecomposably-good position braid representative $b=b(w,\CV,\underline{\Theta}_{\CV})$ such that $l(b)=\val(\Delta(\gamma))$. We determine the sequence $\CV$ and $\underline{\Theta}_{\CV}$ by investigating all $\alpha(\gamma)$. Then the existence of $w$ and $b$ follows directly from our discussion in \S 3.5.

    Let $d$ be the order of any element in $\CC$. Without loss of generality, fix $\omega=e^{i\frac{2\pi}{d}}$ as a primitive $d$-th root of unity. Recall that one can write $\gamma=\sum\limits_{n\in \BZ_{\geq 0}} \varpi^{\frac{n}{d}}\gamma_n$, where $\gamma_n\in \fkt_{\underline{n}}$. Since $\gamma$ is topologically nilpotent, we have $\gamma_0=0$. For any $n\geq 0$, define 
    \begin{equation*}
        R_n=\{\alpha\in R\mid \alpha(\gamma_j)\neq 0 \text{ for some } j\leq n\}.
    \end{equation*}
    For any root $\alpha$, we then have $\val (\alpha(\gamma))=\frac{n_{\alpha}}{d}$ where $n_{\alpha}=\min \{n\mid \alpha\in R_n\}$. 
    
    Let $(n_1,n_2,\ldots,n_m)$ be an increasing subsequence consisting of all $n$ such that $R_n\setminus R_{n-1}$ is nonempty. This sequence is finite since there are only finitely many roots. By definition, we have $\gamma_{n_1}\neq 0$ and $w(\gamma_{n_1})=\omega^{n_1}\gamma_{n_1}$. By similar argument as in lemma \ref{5.2}, we have
    \begin{center}
        $\gamma_{n_1}=\sum\limits_{j\in I}f_{n_1,j}(e^{i\frac{2n_1\pi}{d}})\alpha_j^\vee$,
    \end{center}
    where $f_{n_1,j}\in \BQ[x]$. Define $\gamma_{n_1}'=\sum\limits_{j\in I}f_{n_1,j}(e^{i\frac{-2n_1\pi}{d}})\alpha_j^\vee$. Again by the proof of lemma \ref{5.2}, we have $w(\gamma_{n_1}')=\omega^{-n_1}\gamma_{n_1}'$. Therefore, we have
    \begin{equation*}
        w(\gamma_{n_1}+\gamma_{n_1}')+w^{-1}(\gamma_{n_1}+\gamma_{n_1}')=2\cos \frac{2n_1\pi}{d}(\gamma_{n_1}+\gamma_{n_1}').
    \end{equation*}
    Meanwhile, since $f_{n_1,j}$ are all rational polynomials, we have $f_{n_1,j}(e^{i\frac{2n_1\pi}{d}})+f_{n_1,j}(e^{i\frac{-2n_1\pi}{d}})$ is a real number for all $j$. Thus $\gamma_{n_1}+\gamma_{n_1}'\in \fkt_{\BR}$.
    
    Now we consider $v_1:=\sum\limits_{j\in I}(f_{n_1,j}(e^{i\frac{2n_1\pi}{d}})+f_{n_1,j}(e^{i\frac{-2n_1\pi}{d}}))\alpha_j\in V\otimes \BC$. By above we have $v_1\in V_w^{\frac{2n_1\pi}{d}}$. Let $V_1=\BR v_1+\BR w(v_1)\subset V$. This gives us an indecomposable real eigenspace of $w$ in $V_w^{\frac{2n_1\pi}{d}}\subset V$ as $w^2(v_1)+v_1\in \BR w(v_1)$. Set $\theta_1=\frac{2n_1\pi}{d}$. Similarly, for each $\gamma_{n_k}$, we can construct an indecomposable real eigenspace $V_k$ of $w$ in $V_w^{\frac{2n_k\pi}{d}}\subset V$ and set $\theta_k=\frac{2n_k\pi}{d}$. Therefore, we have construct the sequences $\CV'=(V_1,\ldots,V_m)$ and $\underline{\Theta}_{\CV'}=(\theta_1,\ldots,\theta_m)$. By definition we know $\underline{\Theta}_{\CV'}$ is increasing. As for admissibility, by lemma \ref{3.11}, one can always extend $(\CV',\underline{\Theta}_{\CV'})$ into $(\CV,\underline{\Theta}_{\CV})$ where $\CV=(V_1,\ldots, V_m,V_{m+1},\ldots), \underline{\Theta}_{\CV}=(\theta_1,\ldots,\theta_m,\theta_{m+1},\ldots)$ such that $\CV$ is admissible (actually even complete) and $\underline{\Theta}_{\CV}$ is weakly-increasing. 

    Next we determine the irredundant subsequence $r(\CV)$ of $\CV$. As in \S2 and \S3, we set $F_j=\sum\limits_{k=1}^j V_k$ and $R_{F_j}=\{\alpha\in R \mid H_{\alpha}\supset F_j\}$ for $1\leq j\leq m$, where $H_{\alpha}$ is the root hyperplane corresponding to $\alpha$. Set $R_{F_0}=R$. Let $\alpha$ be any root, by definition we have $\alpha\in R_{n_k}\setminus R_{n_k-1}=R_{n_k}\setminus R_{n_{k-1}}$ for some $k$. By our definition of $\gamma_{n_k}$ and $\gamma_{n_k}'$, we know $V_k\otimes \BC =\BC v_k+\BC w(v_k)=\BC (\sum\limits_{j\in I}f_{n_k,j}(e^{i\frac{2n_k\pi}{d}})\alpha_j) +\BC (\sum\limits_{j\in I}f_{n_k,j}(e^{-i\frac{2n_k\pi}{d}})\alpha_j)$. Therefore, we have $(\alpha,V_k)=0$ is equivalent to $\alpha(\gamma_{n_k})=\alpha(\gamma_{n_k}')=0$. This tells that $R_{F_{k-1}}\setminus R_{F_k}=R_{n_k}\setminus R_{n_{k-1}}$. Moreover, we have the irredundant subsequence $r(\CV)$ is exactly $\CV'$.
    
    Finally, by corollary \ref{3.9}, we have
    \begin{eqnarray*}
        l(b)&=&\sum\limits_{k=1}^m \frac{\theta_k}{\pi}\lvert \fkH_{F_{k-1}}-\fkH_{F_k}\rvert 
        = \sum\limits_{k=1}^m \frac{n_1}{d}\lvert R_{F_{k-1}}-R_{F_k}\rvert \\
        &=& \sum\limits_{k=1}^m \frac{n_1}{d}\lvert R_{n_{k}}-R_{n_{k-1}}\rvert=\sum\limits_{\alpha\in R} \val (\alpha(\gamma))=\val(
        \Delta(\gamma)),
    \end{eqnarray*}
    and thus the theorem is proved. 
    \end{proof}

In a recent work of Trinh \cite{Tr}, he related conjugacy classes in the braid groups to affine Springer fibers in a different approach. It is interesting to see if there are any hidden math of this connection.

\end{document}